\theoremstyle{plain}
	\newtheorem{thm}{Theorem}[section]
	\newtheorem{cor}[thm]{Corollary}
	\newtheorem{lem}[thm]{Lemma}
	\newtheorem{prop}[thm]{Proposition}
\theoremstyle{definition}
	\newtheorem{dfn}[thm]{Definition}
	\newtheorem{ntn}[thm]{Notation}
	\newtheorem{dfns}[thm]{Definitions}
	\newtheorem{ntns}[thm]{Notations}
\theoremstyle{remark}
	\newtheorem{rem}[thm]{Remark}
	\newtheorem{rems}[thm]{Remarks}
	\newtheorem{ex}[thm]{Example}
	\newtheorem{exs}[thm]{Examples}
\numberwithin{equation}{section}
\newcommand{\C}{\mathbb{C}}
\newcommand{\I}{\mathcal{I}}
\newcommand{\J}{\mathcal{J}}
\newcommand{\K}{\mathcal{K}}
\newcommand{\N}{\mathbb{N}}
\newcommand{\Q}{\mathbb{Q}}
\newcommand{\R}{\mathbb{R}}
\newcommand{\dx}{\dmesure\!}
\newcommand{\deron}[2]{\frac{\partial #1}{\partial #2}}
\newcommand{\esp}[2][]{\mathbb{E}_{#1}\!\left[ #2 \right]}
\newcommand{\espcond}[3][]{\mathbb{E}_{#1}\!\left[ #2\hspace{-1mm} \mvert \! #3 \right]}
\newcommand{\mvert}{\mathrel{}\middle|\mathrel{}}
\newcommand{\norm}[1]{\left\lvert #1 \right\rvert}
\newcommand{\Norm}[1]{\left\lVert #1 \right\rVert}
\newcommand{\pa}{\mathcal{P}\!}
\newcommand{\pp}{\mathcal{P\!P}\!}
\newcommand{\prsc}[2]{\left\langle #1\,, #2 \right\rangle}
\newcommand{\trans}[1]{\prescript{\text{t}}{}{#1}}
\newcommand{\var}[1]{\Var\!\left( #1 \right)}
\renewcommand{\P}{\mathbb{P}}
\renewcommand{\bar}{\overline}
\renewcommand{\epsilon}{\varepsilon}
\renewcommand{\geq}{\geqslant}
\renewcommand{\leq}{\leqslant}
\renewcommand{\tilde}{\widetilde}
\DeclareMathOperator{\card}{Card}
\DeclareMathOperator{\dmesure}{d}
\DeclareMathOperator{\ev}{ev}
\DeclareMathOperator{\Id}{Id}
\DeclareMathOperator{\sinc}{sinc}
\DeclareMathOperator{\sym}{Sym}
\DeclareMathOperator{\Var}{Var}
\author{Michele Ancona\,\thanks{Michele Ancona, Tel Aviv University, School of Mathematical Sciences; e-mail: \url{michi.ancona@gmail.com}. Michele Ancona is supported by the Israeli Science Foundation through the ISF Grants 382/15 and 501/18.} \and Thomas Letendre\,\thanks{Thomas Letendre, Université Paris-Saclay, CNRS, Laboratoire de Mathématiques d’Orsay, 91405 Orsay, France; e-mail: \url{letendre@math.cnrs.fr}. Thomas Letendre is supported by the French National Research Agency through the ANR grants UniRaNDom (ANR-17-CE40-0008) and SpInQS (ANR-17-CE40-0011).}}
\date{\today}
\title{Zeros of smooth stationary Gaussian processes}
\begin{document}

\maketitle

\begin{abstract}
Let $f:\R \to \R$ be a stationary centered Gaussian process. For any $R>0$, let $\nu_R$ denote the counting measure of $\{x \in \R \mid f(Rx)=0\}$. In this paper, we study the large $R$ asymptotic distribution of $\nu_R$. Under suitable assumptions on the regularity of $f$ and the decay of its correlation function at infinity, we derive the asymptotics as $R \to +\infty$ of the central moments of the linear statistics of $\nu_R$. In particular, we derive an asymptotics of order $R^\frac{p}{2}$ for the $p$-th central moment of the number of zeros of $f$ in $[0,R]$. As an application, we prove a functional Law of Large Numbers and a functional Central Limit Theorem for the random measures~$\nu_R$. More precisely, after a proper rescaling, $\nu_R$ converges almost surely towards the Lebesgue measure in weak-$*$ sense. Moreover, the fluctuation of $\nu_R$ around its mean converges in distribution towards the standard Gaussian White Noise. The proof of our moments estimates relies on a careful study of the $k$-point function of the zero point process of~$f$, for any $k \geq 2$. Our analysis yields two results of independent interest. First, we derive an equivalent of this $k$-point function near any point of the large diagonal in~$\R^k$, thus quantifying the short-range repulsion between zeros of $f$. Second, we prove a clustering property which quantifies the long-range decorrelation between zeros of $f$.
\end{abstract}

\paragraph{Keywords:} Central Limit Theorem, central moments, clustering, Gaussian process, Kac--Rice formula, Law of Large Numbers, $k$-point function.

\paragraph{MSC 2020:} 60F05, 60F15, 60F17, 60F25, 60G15, 60G55, 60G57.


\tableofcontents


\section{Introduction}
\label{sec introduction}

Let $Z$ denote the zero set of a smooth centered stationary Gaussian process $f$ on $\R$. A classical problem in probability is to understand the number of zeros of $f$ in a growing interval, that is the asymptotics of $\card(Z \cap [0,R])$ as $R \to +\infty$. This problem has a long history, starting with the articles of Kac~\cite{Kac1943} and Rice~\cite{Ric1944} who computed the mean number of zeros of $f$ in an interval. We refer to Section~\ref{subsec related works} below for further discussion of related works.

In this paper, we compute the large $R$ asymptotics of the central moments of any order of $\card(Z \cap [0,R])$, under suitable conditions on $f$. The starting point of our analysis is the Kac--Rice formula, which allows to write the $k$-th factorial moment of $\card(Z \cap [0,R])$ as the integral over $[0,R]^k$ of the $k$-point function of the random point process $Z$. Most of the paper is devoted to the study of this $k$-point function $\rho_k$, that we believe to be of independent interest. A priori, $\rho_k$ is only well-defined on $\R^k \setminus \Delta_k$, where $\Delta_k$ denotes the large diagonal in $\R^k$. We prove that $\rho_k$ vanishes along $\Delta_k$, which is the sign of a repulsion between the zeros of $f$. In fact, we characterize this repulsion by deriving an equivalent of $\rho_k(x)$ as $x \to y$, for any $y \in \Delta_k$. We also prove that $\rho_k$ satisfies a clustering property if the correlation function of the process $f$ decays fast enough. This clustering property can be interpreted as a clue that zeros of $f$ in two disjoint intervals that are far from one another are quasi-independent. Our main tool in the study of $\rho_k$ and its singularities are the divided differences associated with $f$. We believe that the methods we develop below regarding these divided differences can have applications beyond the scope of this paper.


\subsection{Linear statistics associated with the zeros of a Gaussian process}
\label{subsec linear statistics associated with the zeros of a Gaussian process}

Let us introduce quickly the object of our study. More details are given in Section~\ref{sec framework}. Let $f: \R \to \R$ be a stationary centered Gaussian process of class $\mathcal{C}^1$. Let $\kappa:x \mapsto \esp{f(0)f(x)}$ denote the \emph{correlation function} of $f$. We assume that $f$ is \emph{normalized} so that $\kappa(0)=1=-\kappa''(0)$ (see Section~\ref{subsec stationary Gaussian processes and correlation functions}). The zero set $Z= f^{-1}(0)$ is then almost surely a closed discrete subset of $\R$ (see Lemma~\ref{lem Z as closed discrete}). 

We denote by $\nu = \sum_{x \in Z} \delta_x$ the \emph{counting measure} of $Z$, where $\delta_x$ is the unit Dirac mass at $x$. Let $\phi:\R \to \R$, we denote by $\prsc{\nu}{\phi} = \sum_{x \in Z} \phi(x)$ whenever this makes sense. Besides, for any $R>0$, we denote by $\phi_R:x \mapsto \phi(\frac{x}{R})$. Finally, for any $A \subset \R$, we denote by $\mathbf{1}_A$ the indicator function of $A$. Then, for any $R>0$, we have:
\begin{equation*}
\card(Z \cap [0,R]) = \prsc{\nu}{\mathbf{1}_{[0,R]}} = \prsc{\nu}{(\mathbf{1}_{[0,1]})_R}.
\end{equation*}
More generally, we can consider the asymptotics of $\prsc{\nu}{\phi_R}$ as $R \to +\infty$, where $\phi:\R \to \R$ is a nice enough test-function. It turns out that the dual point of view is more relevant, and this is the one we adopt in this paper: instead of integrating $\phi_R$ over $Z$, we consider the integral of a fixed test-function $\phi$ over homothetical copies of $Z$. Let $R >0$, we denote by $Z_R = \{ x \in \R \mid f(Rx)=0\}$ and by $\nu_R = \sum_{x \in Z_R} \delta_x$ its counting measure. Then, for all $\phi:\R \to \R$, we have $\prsc{\nu}{\phi_R} = \prsc{\nu_R}{\phi}$. In particular, $\card(Z \cap [0,R]) = \prsc{\nu_R}{\mathbf{1}_{[0,1]}}$. Quantities of the form $\prsc{\nu_R}{\phi}$ are called the \emph{linear statistics} of $\nu_R$. In the following, we study the large $R$ asymptotic distribution of the random measure $\nu_R$, mostly through the central moments of its linear statistics.


\subsection{Moments asymptotics}
\label{subsec moments asymptotics}

Our first theorem describes the large $R$ asymptotics of the central moments of the linear statistics $\prsc{\nu_R}{\phi}$ of the random measure $\nu_R$. To the best of our knowledge, this is the first result of this kind for Gaussian processes on $\R$, even in the simplest case of $\prsc{\nu_R}{\mathbf{1}_{[0,1]}}=\card\left(Z \cap [0,R]\right)$. We will consider the following quantities, that are slightly more general.

\begin{dfn}[Central moments]
\label{def mp nu R}
Let $p \geq 2$ be an integer and let $R >0$. For any test-functions $\phi_1,\dots,\phi_p$, we denote by
\begin{equation*}
m_p(\nu_R)(\phi_1,\dots,\phi_p) = \esp{\prod_{i=1}^p \left(\prsc{\nu_R}{\phi_i}-\rule{0em}{2.5ex}\esp{\prsc{\nu_R}{\phi_i}}\right)},
\end{equation*}
whenever the right-hand side makes sense. In particular, $m_p(\nu_R)(\phi,\dots,\phi)$ equals $m_p(\prsc{\nu_R}{\phi})$, the $p$-th central moment of $\prsc{\nu_R}{\phi}$, when this quantity is well-defined. When $p=2$, we use the standard notation $\var{\prsc{\nu_R}{\phi}} = m_2(\prsc{\nu_R}{\phi})$ for the variance of $\prsc{\nu_R}{\phi}$.
\end{dfn}

In this paper, we use the following class of test-functions. Note that this class contains, among others, the indicator functions of bounded intervals and the continuous functions decreasing fast enough at infinity.

\begin{dfn}[Test-function]
\label{def test-function}
We say that a measurable function $\phi:\R \to \R$ is a \emph{test-function} if $\phi$ is integrable, essentially bounded and continuous almost everywhere with respect to the Lebesgue measure.
\end{dfn}

In order to say something about central moments, we need to make some assumptions on the random process $f$. These assumptions are further discussed in Section~\ref{subsec stationary Gaussian processes and correlation functions}, and in Appendix~\ref{sec examples of smooth non-degenerate processes} where we build examples of processes satisfying these conditions. For now, let us just give one concrete example. The hypotheses of all the theorems in the present paper are satisfied by the so-called \emph{Bargmann--Fock process}. This process is the centered stationary Gaussian process on $\R$ whose correlation function is $x \mapsto e^{-\frac{1}{2}x^2}$. See Appendix~\ref{sec examples of smooth non-degenerate processes} for more details, especially Examples~\ref{ex non-degenerate and fast-decreasing}.

\begin{ntn}[$\mathcal{C}^k$-norms]
\label{ntn norm kappa}
Let $k \in \N$ and let $g:\R \to \R$ be a $\mathcal{C}^k$-function such that $g$ and all its derivatives of order at most $k$ are bounded on $\R$. For any $\eta \geq 0$, we denote by:
\begin{equation*}
\Norm{g}_{k,\eta} = \sup \left\{\norm{g^{(l)}(x)} \mvert 0 \leq l \leq k, \norm{x} \geq \eta \right\},
\end{equation*}
where $g^{(l)}$ stands for the $l$-th derivative of $g$. If $\eta=0$, we simply denote $\Norm{g}_{k,0}$ by $\Norm{g}_k$.
\end{ntn}

If $f$ is a $\mathcal{C}^p$-process, then its correlation function satisfies $\kappa \in \mathcal{C}^{2p}(\R)$. Moreover, by Cauchy-Schwarz's Inequality, $\kappa^{(k)}$ is bounded for all $k \in \{0,\dots,2p\}$. Hence $\Norm{\kappa}_{k,\eta}$ is well-defined for any $k \in \{0,\dots,2p\}$ and $\eta \geq 0$. We refer to Section~\ref{subsec stationary Gaussian processes and correlation functions} for more details.

\begin{dfn}[Partitions]
\label{def partitions}
Let $A$ be a non-empty finite set, a \emph{partition} of the set $A$ is a family $\I = \{I_1,\dots,I_m\}$ of non-empty disjoint subsets of $A$ such that $\bigsqcup_{i=1}^m I_i =A$. We denote by $\pa_A$ (resp.~$\pa_k$) the set of partitions of~$A$ (resp.~$\{1,\dots,k\}$). A \emph{partition into pairs} of $A$ is a partition $\I \in \pa_A$ such that $\card(I)=2$ for all $I \in \I$. We denote by $\pp_A$, (resp.~$\pp_k$) the set of partitions into pairs of $A$ (resp.~$\{1,\dots,k\}$). We also use the convention that $\pa_\emptyset = \{ \emptyset \} = \pp_\emptyset$.
\end{dfn}

\begin{ntn}[Gaussian moments]
\label{ntn mu p}
For all $p \in \N$, we denote by $\mu_p$ the $p$-th moment of a centered Gaussian variable of variance $1$. Recall that we have $\mu_p = \card(\pp_p)$, that is $\mu_p = 2^{-\frac{p}{2}}p! \left(\frac{p}{2}!\right)^{-1}$ if $p$ is even, and $\mu_p=0$ if $p$ is odd.
\end{ntn}

\begin{thm}[Central moments asymptotics]
\label{thm moments}
Let $p \geq 2$ be an integer. Let $f$ be a normalized stationary centered Gaussian $\mathcal{C}^p$-process and let $\kappa$ denote its correlation function. We assume that, $\Norm{\kappa}_{p,\eta} = o(\eta^{-4p})$ as $\eta \to +\infty$. For all $R>0$, we denote by $\nu_R$ the counting measure of $\{x \in \R \mid f(Rx)=0\}$.

Let $\phi_1,\dots, \phi_p$ be test-functions in the sense of Definition~\ref{def test-function}. Then, as $R \to +\infty$, we have:
\begin{equation*}
m_p(\nu_R)(\phi_1,\dots,\phi_p) = \sum_{\left\{ \{a_i,b_i\} \mvert 1 \leq i \leq \frac{p}{2}\right\} \in \pp_p} \prod_{i=1}^\frac{p}{2} m_2(\nu_R)(\phi_{a_i},\phi_{b_i}) + o(R^\frac{p}{2}).
\end{equation*}
In particular, if $\phi$ is a test-function then, as $R \to +\infty$, we have:
\begin{equation*}
m_p\left(\prsc{\nu_R}{\phi}\right) = \mu_p \var{\prsc{\nu_R}{\phi}}^\frac{p}{2} + o(R^\frac{p}{2}).
\end{equation*}
\end{thm}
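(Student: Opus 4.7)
The plan is to use the moment-to-cumulant dictionary to exhibit a Gaussian Wick-type structure, and then to exploit the clustering property of $\rho_k$ announced in the introduction to kill every non-Gaussian contribution. Since the variables $\prsc{\nu_R}{\phi_i} - \esp{\prsc{\nu_R}{\phi_i}}$ are centered, the classical moment-to-cumulant inversion reads
\[
 m_p(\nu_R)(\phi_1,\dots,\phi_p) = \sum_{\substack{\I \in \pa_p \\ \forall I \in \I,\ \card(I) \geq 2}}\ \prod_{I \in \I} c_{\card(I)}\bigl((\prsc{\nu_R}{\phi_i})_{i \in I}\bigr),
\]
where $c_k$ denotes the $k$-th joint cumulant (singletons vanish by centering). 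Pair partitions $\I \in \pp_p$ contribute exactly $\prod_i c_2(\prsc{\nu_R}{\phi_{a_i}},\prsc{\nu_R}{\phi_{b_i}}) = \prod_i m_2(\nu_R)(\phi_{a_i},\phi_{b_i})$, which is the leading term we want to recover.

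The heart of the argument is then the uniform cumulant bound
\[
 c_k\bigl(\prsc{\nu_R}{\psi_1},\dots,\prsc{\nu_R}{\psi_k}\bigr) = O(R), \qquad k \geq 2,
\]
for any test-functions $\psi_1,\dots,\psi_k$. The plan is to write $c_k$, via the standard moment/cumulant inclusion-exclusion formulas and Kac--Rice, as an integral of $\psi_1 \otimes \cdots \otimes \psi_k$ against the truncated (\emph{connected}) $k$-point function $\rho_k^T$ of the zero set $Z_R$, plus lower-dimensional diagonal terms coming from coincidences. The clustering property says that $\rho_k^T$ decays as soon as the configuration splits; under the decay assumption $\Norm{\kappa}_{p,\eta} = o(\eta^{-4p})$, this decay is fast enough to be integrable on $\R^{k-1}$ after one coordinate is fixed by translation invariance. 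The rescaling $x \mapsto Rx$ then leaves a single unabsorbed factor $R$, producing the announced order. The short-range repulsion of $\rho_k$ near the large diagonal $\Delta_k$ ensures that the diagonal corrections remain at most $O(R)$ as well.

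With this in hand, a counting argument closes the proof: if a partition $\I$ has $m$ blocks, each of size at least $2$ and at least one of size at least $3$, then $p = \sum_{I \in \I} \card(I) \geq 2m+1$, so $m \leq (p-1)/2$ and the corresponding product of cumulants is $O(R^m) = o(R^{p/2})$. Only pair partitions survive to leading order, which yields the first formula of the theorem. The second formula follows by specialising $\phi_1 = \cdots = \phi_p = \phi$ and using $\card(\pp_p) = \mu_p$ (Notation~\ref{ntn mu p}).

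The main obstacle is precisely the uniform cumulant estimate of the second paragraph. It distils the two main technical ingredients of the paper: the short-range repulsion of $\rho_k$ (needed to tame the coincidence terms near $\Delta_k$) and the long-range clustering of $\rho_k$ (needed to integrate the connected $k$-point function at infinity, which is where the hypothesis $\Norm{\kappa}_{p,\eta} = o(\eta^{-4p})$ is genuinely used). Once this estimate is available, the moment-cumulant inversion and the partition counting are essentially bookkeeping.
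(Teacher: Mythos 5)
Your cumulant route is valid in outline and genuinely different from the paper's. The paper never invokes cumulants or truncated correlation functions: Lemma~\ref{lem integral expression} expands the $p$-th central moment directly via Kac--Rice and factorial moments, producing an integral representation against ``central-moment densities'' $F_\I$, and then partitions the domain $\R^\I = \bigsqcup_{\J\in\pa_\I}\R^\I_{\J,\eta(R)}$ according to an $R$-dependent clustering scale $\eta(R)$ (Lemma~\ref{lem scale parameter}), estimating each piece separately (Lemmas~\ref{lem upper bound}, \ref{lem order of the error terms}, \ref{lem leading term}). Your route instead partitions the algebraic quantity $m_p$ into products of joint cumulants and asserts a single uniform bound $c_k=O(R)$, after which the partition counting is elementary. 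Once that bound is available your bookkeeping is markedly lighter; the paper's route avoids having to prove anything at all about the truncated correlation functions $\rho_k^T$.

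The genuine gap is the step you flag as the ``main obstacle,'' but it is more than a remark. The assertion ``the clustering property says that $\rho_k^T$ decays as soon as the configuration splits'' does not follow immediately from Theorem~\ref{thm clustering}: that theorem is a \emph{multiplicative} statement about $\rho_k$, namely $\prod_{I}\rho_{\norm{I}}(\underline{x}_I)=\rho_k(x)\bigl(1+O(\Norm{\kappa}_{k,\eta}^{1/2})\bigr)$ when the inter-cluster gaps exceed $\eta$. To extract \emph{additive} decay for
\[
\rho_k^T=\sum_{\I\in\pa_k}(-1)^{\norm{\I}-1}(\norm{\I}-1)!\prod_{I\in\I}\rho_{\norm{I}}(\underline{x}_I),
\]
you must insert the approximate factorization across a separating pair $B\sqcup C$ into every product in the M\"obius inversion, observe that the exactly factorized sum vanishes identically (the combinatorial fact that cumulants of independent point processes are supported on connected blocks), and then track the error terms through the whole inclusion--exclusion. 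This cluster-expansion lemma is precisely the analytic heart that the paper develops --- with a different packaging, via the densities $F_\I$, the cancellation for singleton blocks (Lemma~\ref{lem partitions with a singleton}), and the $R$-dependent domain splitting --- in Sections~\ref{sec Kac-Rice densities revisited and clustering} and~\ref{sec proof theorem moments}. You would also need the bound uniformly near the diagonal to control the factorial-moment coincidence terms; the boundedness of Lemma~\ref{lem bound rho I} and the repulsion of Theorem~\ref{thm vanishing order} supply what is needed, but those in turn rest on the divided-differences machinery of Section~\ref{sec divided differences}, which your sketch does not touch. In short: the moment-to-cumulant inversion, the counting $m\leq(p-1)/2$, and the way the hypothesis $\Norm{\kappa}_{p,\eta}=o(\eta^{-4p})$ enters are all used correctly, but the entire technical core of the paper has been relocated into one unproven sentence.
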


\begin{rem}
\label{rem moments}
If $p$ is odd, then $\pp_p=\emptyset$ and $\mu_p=0$. In this case, only the term $o(R^\frac{p}{2})$ remains on the right-hand side of the asymptotics in Theorem~\ref{thm moments}. On the other hand, if $p$ is even, then $\pp_p\neq \emptyset$ and $\mu_p>0$. In this latter case, the leading terms in the asymptotics of Theorem~\ref{thm moments} are of order $R^\frac{p}{2}$ for generic test-functions, see Proposition~\ref{prop variance} below. Note however that we obtain $o(R^\frac{p}{2})$ if, for example, the $(\phi_i)_{1\leq i \leq p}$ are pairwise orthogonal for the $L^2$ inner product defined by the Lebesgue measure.
\end{rem}

In order to interpret this result, let us describe the expectation and the covariance structure of the linear statistics of $\nu_R$. First, we describe the expectation of $\nu_R$ for any fixed $R>0$. Note that Proposition~\ref{prop expectation} below is a natural extension of the results of Kac~\cite{Kac1943} and Rice~\cite{Ric1944}, who computed the expectation of $\card\left(Z \cap [0,R]\right)$. Recall that a \emph{Radon measure} is a continuous linear form on $\left(\mathcal{C}^0_c(\R),\Norm{\cdot}_\infty\right)$, the space of compactly supported continuous functions equipped with the sup-norm.

\begin{prop}[Expectation of the linear statistics]
\label{prop expectation}
Let $f$ be a normalized stationary centered Gaussian $\mathcal{C}^1$-process. Let $R>0$ and let $\nu_R$ denote the counting measure of $\{x \in \R \mid f(Rx)=0\}$. Let $\phi:\R \to \R$ be a Borel-measurable function. If $\phi$ is non-negative or Lebesgue-integrable then,
\begin{equation*}
\esp{\prsc{\nu_R}{\phi}} = \frac{R}{\pi} \int_{-\infty}^{+\infty} \phi(x) \dx x.
\end{equation*}
In particular, as Radon measures $\esp{\nu_R} = \frac{R}{\pi} \dx x$, where $\dx x$ stands for the Lebesgue measure of $\R$.
\end{prop}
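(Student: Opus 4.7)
The plan is to apply the one-point Kac--Rice formula to compute the intensity $\rho_1$ of the zero point process of $f$, and then deduce the statement by a change of variables and a standard density argument.

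First I would establish that, because $f$ is a stationary centered Gaussian $\mathcal{C}^1$-process with $\kappa(0)=1$ and $\kappa''(0)=-1$, the random variables $f(y)$ and $f'(y)$ are jointly centered Gaussian for every $y \in \R$, with $f(y) \sim \mathcal{N}(0,1)$ and $f'(y) \sim \mathcal{N}(0,1)$. Moreover, since $\kappa$ is even, $\kappa'(0)=0$, so $\esp{f(y)f'(y)} = \kappa'(0) = 0$, hence $f(y)$ and $f'(y)$ are independent. This non-degeneracy at every point ensures that the standard Kac--Rice formula applies, and it also ensures (as stated in Lemma~\ref{lem Z as closed discrete}) that $Z$ is almost surely closed and discrete, so that $\prsc{\nu}{\phi}$ is a well-defined sum.

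Next, I would apply the one-point Kac--Rice formula, which in this setting yields that the first intensity of $Z = f^{-1}(0)$ at $y \in \R$ equals
\begin{equation*}
\rho_1(y) = p_{f(y)}(0) \, \esp{\norm{f'(y)} \mvert f(y)=0} = \frac{1}{\sqrt{2\pi}} \cdot \sqrt{\frac{2}{\pi}} = \frac{1}{\pi},
\end{equation*}
where I used the independence of $f(y)$ and $f'(y)$ to drop the conditioning, and the classical value $\esp{\norm{N}} = \sqrt{2/\pi}$ for $N \sim \mathcal{N}(0,1)$. Thus $\rho_1 \equiv \frac{1}{\pi}$, which already reflects stationarity.

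Finally, I would apply Campbell's formula to the point process $Z$: for any non-negative or integrable Borel function $\psi$,
\begin{equation*}
\esp{\sum_{y \in Z} \psi(y)} = \int_\R \psi(y) \rho_1(y) \dx y = \frac{1}{\pi} \int_\R \psi(y) \dx y.
\end{equation*}
Setting $\psi = \phi_R$, so that $\prsc{\nu}{\phi_R} = \prsc{\nu_R}{\phi}$, and performing the change of variables $x = y/R$ yields $\esp{\prsc{\nu_R}{\phi}} = \frac{R}{\pi} \int_\R \phi(x) \dx x$. Taking $\phi \in \mathcal{C}^0_c(\R)$ identifies the Radon measure $\esp{\nu_R}$ with $\frac{R}{\pi} \dx x$. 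The main technical point, which is essentially routine here because of the non-degeneracy and $\mathcal{C}^1$-regularity assumptions, is the justification of the Kac--Rice formula and of the interchange of expectation and sum underlying Campbell's formula; both reduce to standard arguments using the continuity of the Gaussian density and dominated convergence on compact sub-intervals, then monotone convergence to extend to non-negative or integrable $\phi$.
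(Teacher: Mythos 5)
Your proof is correct and follows essentially the same route as the paper: compute $\rho_1\equiv\frac{1}{\pi}$ from the independence of $f(y)$ and $f'(y)$ (this is the paper's Example~\ref{ex Kac-Rice densities}), apply the one-point Kac--Rice formula (Proposition~\ref{prop Kac-Rice formula} with $k=1$, which is exactly your Campbell's formula), and rescale. The only cosmetic difference is that the paper phrases the integral identity via $\nu^{[1]}$ and its Kac--Rice Proposition rather than invoking Campbell's formula by name.
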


\begin{rem}
\label{rem as well defined}
If $\phi: \R \to \R$ is Lebesgue-integrable we can apply Proposition~\ref{prop expectation} to $\norm{\phi}$. This proves that, for all $R>0$, almost surely $\prsc{\nu_R}{\norm{\phi}} <+\infty$. Thus, the random variable $\prsc{\nu_R}{\phi}$ is almost surely well-defined. Moreover $\norm{\prsc{\nu_R}{\phi}} \leq \prsc{\nu_R}{\norm{\phi}}$, so that $\prsc{\nu_R}{\phi}$ is an integrable random variable, and it makes sense to consider its expectation.
\end{rem}

\begin{rem}
\label{rem integrable not L1}
In this paper, we consider quantities of the form $\prsc{\nu}{\phi} = \sum_{x \in Z} \phi(x)$, where $Z$ is discrete. If $\phi$ is only defined up to modifications on a negligible set these quantities are ill-defined. However, let $\nu_R$ be is as in Proposition~\ref{prop expectation} and let $\phi_1$ and $\phi_2$ be test-functions such that $\phi_1= \phi_2$ almost everywhere. Then, we have $\prsc{\nu_R}{\phi_1}=\prsc{\nu_R}{\phi_2}$ almost surely. Indeed, by Proposition~\ref{prop expectation} we have $\esp{\prsc{\nu_R}{\norm{\phi_1-\phi_2}}}=0$, so that $\prsc{\nu_R}{\norm{\phi_1-\phi_2}}=0$ almost surely. Then, the result follows from $\norm{\prsc{\nu_R}{\phi_1}-\prsc{\nu_R}{\phi_2}} \leq \prsc{\nu_R}{\norm{\phi_1-\phi_2}}$.
\end{rem}

The following result gives the large $R$ asymptotics of the covariance of $\prsc{\nu_R}{\phi_1}$ and $\prsc{\nu_R}{\phi_2}$, where $\phi_1$ and $\phi_2$ are test-functions (see Definition~\ref{def test-function}). To the best of our knowledge, this result was only known for $\phi_1 = \phi_2 = \mathbf{1}_{[0,1]}$ until now, see~\cite[Theorem~1]{Cuz1976}. Note that the positivity of the leading constant $\sigma$ in not proved in~\cite{Cuz1976}.

\begin{prop}[Covariances asymptotics]
\label{prop variance}
Let $f$ be a normalized stationary centered Gaussian $\mathcal{C}^2$-process and let $\kappa$ denote its correlation function. We assume that $\kappa$ and $\kappa''$ are square-integrable and that $\Norm{\kappa}_{2,\eta}$ tends to $0$ as $\eta \to +\infty$.

Then there exists $\sigma >0$ such that, for any test-functions $\phi_1$ and $\phi_2$ we have:
\begin{equation}
\label{eq asymp m2}
m_2(\nu_R)(\phi_1,\phi_2) = R \sigma^2 \int_{-\infty}^{+\infty} \phi_1(x)\phi_2(x) \dx x + o(R)
\end{equation}
as $R \to +\infty$. Moreover, we have:
\begin{equation}
\label{eq def sigma}
\sigma^2 =\frac{1}{\pi} + \frac{2}{\pi^2} \int_0^{+\infty}\left(\frac{1-\kappa(t)^2-\kappa'(t)^2}{\left(1-\kappa(t)^2\right)^\frac{3}{2}}\left(\sqrt{1-a(t)^2} + a(t) \arcsin(a(t))\right) - 1\right)\dx t,
\end{equation}
where $a:(0,+\infty) \to [-1,1]$ is the map defined by: $\forall t >0$, $a(t) = \dfrac{\kappa(t)\kappa'(t)^2 - \kappa(t)^2\kappa''(t) + \kappa''(t)}{1 - \kappa(t)^2 -\kappa'(t)^2}$.
\end{prop}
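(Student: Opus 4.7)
The approach goes through the Kac--Rice formula applied to the $2$-point function $\rho_2$ of the zero process $Z$ of $f$, combined with the stationarity of $f$ and a rescaling. Starting from Definition~\ref{def mp nu R}, I split the double sum $\sum_{x,y \in Z_R}\phi_1(x)\phi_2(y)$ defining $\esp{\prsc{\nu_R}{\phi_1}\prsc{\nu_R}{\phi_2}}$ into diagonal ($x=y$) and off-diagonal ($x\neq y$) parts. The diagonal contributes $\esp{\prsc{\nu_R}{\phi_1\phi_2}} = \frac{R}{\pi}\int \phi_1\phi_2$ by Proposition~\ref{prop expectation}, while the Kac--Rice formula expresses the off-diagonal part as $\iint \rho_2(x,y)(\phi_1)_R(x)(\phi_2)_R(y)\dx x \dx y$. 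By stationarity, $\rho_2(x,y)$ depends only on $|y-x|$; after the changes of variables $(x,y)=(Ru,Rv)$ then $t=R(v-u)$, the product $\esp{\prsc{\nu_R}{\phi_1}}\esp{\prsc{\nu_R}{\phi_2}} = \frac{R^2}{\pi^2}\int\phi_1\int\phi_2$ cancels a corresponding piece of the integral, leaving
\begin{equation*}
m_2(\nu_R)(\phi_1,\phi_2) = \frac{R}{\pi}\int_\R\phi_1\phi_2\,\dx u + R\iint_{\R^2}\left(\rho_2(|t|) - \frac{1}{\pi^2}\right)\phi_1(u)\phi_2\!\left(u + \tfrac{t}{R}\right)\dx u\,\dx t.
\end{equation*}

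Next, I would make $\rho_2(t)$ explicit via Kac--Rice applied to the Gaussian vector $(f(0), f(t), f'(0), f'(t))$. The density of $(f(0),f(t))$ at the origin is $(2\pi\sqrt{1-\kappa(t)^2})^{-1}$, and a Schur-complement computation gives the conditional law of $(f'(0), f'(t))$ given $f(0)=f(t)=0$ as a centered Gaussian with common variance $s(t)^2 = \frac{1-\kappa^2-\kappa'^2}{1-\kappa^2}$ and correlation $\pm a(t)$; the sign is immaterial because of the absolute value in $\espcond{\norm{f'(0)f'(t)}}{f(0)=f(t)=0}$. The identity $\esp{\norm{XY}} = \frac{2\sigma_X\sigma_Y}{\pi}(\sqrt{1-r^2} + r\arcsin r)$ for bivariate centered Gaussians then yields exactly the integrand in \eqref{eq def sigma}. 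It remains to show $\rho_2(|\cdot|) - \pi^{-2} \in L^1(\R)$, after which dominated convergence (using $\int\phi_1(u)\phi_2(u+t/R)\dx u \to \int\phi_1\phi_2$ for each fixed $t$, bounded by $\Norm{\phi_1}_\infty\Norm{\phi_2}_1$) produces \eqref{eq asymp m2} with $\sigma^2$ as in \eqref{eq def sigma}.

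The $L^1$ control on $\rho_2(|\cdot|)-\pi^{-2}$ splits at both ends. Near $t=0$, the normalization $\kappa(0)=1=-\kappa''(0)$ gives by Taylor expansion $1-\kappa^2 \sim t^2$ and $1-\kappa^2-\kappa'^2=O(t^4)$, so $\rho_2(t)=O(t)$, which encodes the short-range repulsion of zeros. At infinity, $\Norm{\kappa}_{2,\eta}\to 0$ allows expanding the integrand of \eqref{eq def sigma} around $\kappa=\kappa'=\kappa''=0$, giving $\rho_2(t)-\pi^{-2} = O(\kappa(t)^2+\kappa'(t)^2+\kappa''(t)^2)$; square-integrability of $\kappa$ and $\kappa''$, which forces $\kappa'\in L^2$ by integration by parts, then yields the required integrable control. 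The main obstacle is positivity $\sigma>0$, which Cuzick~\cite{Cuz1976} left open in the analogous calculation for $\phi_1=\phi_2=\mathbf{1}_{[0,1]}$. My plan would be to expand the centered number of zeros in an orthogonal Wiener chaos decomposition in the underlying Gaussian field $f$ and extract a non-trivial second-chaos contribution under the non-degeneracy hypotheses, producing a lower bound $\var{\prsc{\nu_R}{\mathbf{1}_{[0,1]}}} \geq cR$ for large $R$ that forces $\sigma^2>0$; this is the genuinely delicate input of the argument, while the other steps amount to a careful but routine Kac--Rice calculation.
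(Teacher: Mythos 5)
Your proposal follows essentially the same route as the paper: the diagonal/off-diagonal split and Kac--Rice rescaling reproduce Lemma~\ref{lem formula m2}, the Schur-complement computation of the conditional law of $(f'(0),f'(t))$ and the bivariate Gaussian absolute-moment identity give Lemma~\ref{lem expression F}, the $L^1$ control of $F$ at zero via normalization and at infinity via $\kappa,\kappa',\kappa''$ decay matches Lemma~\ref{lem integrability F}, and the dominated-convergence passage to the limit is exactly Section~\ref{subsec asymptotics of the covariances}. Your plan for positivity — extracting the second-chaos contribution from the Wiener--Itô expansion of $\card(Z\cap[0,R])$ and using orthogonality of chaoses to get a linear-in-$R$ lower bound — is precisely what Section~\ref{subsec positivity of the leading constant} does via the Kratz--Léon expansion, where the degree-two chaos $\frac{1}{2\pi}\int_0^R (f'(x)^2-f(x)^2)\,\dx x$ yields the explicit lower bound of Corollary~\ref{cor sigma positive}.
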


\begin{rem}
\label{rem explicit lower bound}
The fact that $\sigma >0$ is non-trivial. It is proved in Section~\ref{subsec positivity of the leading constant}, using the Wiener--Itô expansion of $\card\left(Z\cap [0,R]\right)$ derived in~\cite{KL1997}. In Corollary~\ref{cor sigma positive}, we obtain the following explicit lower bound:
\begin{equation*}
\sigma^2 \geq \frac{1}{\pi^2} \int_0^{+\infty}(\kappa(z) + \kappa''(z))^2 \dx z >0.
\end{equation*}
If we consider the Bargmann--Fock process $f_{BF}$, that is if $\kappa:x \mapsto e^{-\frac{1}{2}x^2}$, the previous lower bound gives $\sigma_{BF}^2 \geq (2\pi^3)^{-\frac{1}{2}} \simeq 0.12\dots$. In~\cite[Proposition~3.1 and Remark~1]{Dal2015}, Dalmao computed $\sigma_{BF}^2$ up to a factor $\frac{1}{\pi}$. Using his result, we get $\sigma_{BF}^2 \simeq 0,18\dots$. Note that these values are smaller than~$\frac{1}{\pi}$, hence the integral on the right-hand side of Equation~\eqref{eq def sigma} is negative.
\end{rem}


\subsection{Clustering for the \texorpdfstring{$k$}{}-point functions}
\label{subsec clustering for the k-point functions}

Let $p \geq 2$ be an integer and let $f$ be as above a normalized centered stationary Gaussian process. The first step in the proof of our moments asymptotics (Theorem~\ref{thm moments}) is to derive a tractable integral expression of the central moments $m_p(\nu_R)(\phi_1,\dots,\phi_p)$ that we want to estimate. Using the Kac--Rice formula (see Proposition~\ref{prop Kac-Rice formula}), we write $m_p(\nu_R)(\phi_1,\dots,\phi_p)$ as a linear combination of terms of the form
\begin{equation}
\label{eq intro clustering}
\int_{\R^k} \Phi_R(x) \rho_k(x) \dx x,
\end{equation}
where $1 \leq k \leq p$ and $\Phi:\R^k \to \R$ is an integrable function built from the $(\phi_i)_{1 \leq i \leq p}$. In this equation, the function $\rho_k$ is the Kac--Rice density of order $k$ (cf.~Definition~\ref{def Kac-Rice densities}). In order to give some meaning to this density, notice that it coincides with the $k$-point function of the random point process $Z= f^{-1}(0)$, see Lemma~\ref{lem k point function}. By this we mean that, for any $x=(x_i)_{1 \leq i \leq k} \in \R^k$ such that $\rho_k(x)$ is well-defined, we have:
\begin{equation*}
\frac{1}{(2\epsilon)^k} \esp{\prod_{i=1}^k \card\left(Z \cap [x_i-\epsilon,x_i+\epsilon]\right)} \xrightarrow[\epsilon \to 0]{} \rho_k(x).
\end{equation*}

The core of the proof of Theorem~\ref{thm moments} is to understand the large $R$ asymptotics of integrals of the form~\eqref{eq intro clustering}. This leads to a detailed study of the Kac--Rice densities $(\rho_k)_{k \in \N^*}$. Given $k \in \N^*$, Definition~\ref{def Kac-Rice densities} allows to define $\rho_k(x)$ for any $x=(x_i)_{1 \leq i \leq k} \in \R^k$ such that the Gaussian vector $(f(x_i))_{1 \leq i \leq k}$ is non-degenerate. In particular, if the correlation function $\kappa$ of $f$ tends to $0$ at infinity, as in Theorem~\ref{thm moments} and Proposition~\ref{prop variance}, the ergodicity of $f$ implies that $\rho_k$ is well-defined on $\R^k \setminus \Delta_k$, where $\Delta_k = \left\{(x_1,\dots,x_k) \in \R^k \mvert \exists i,j\in \{1,\dots,k\}, i \neq j\ \text{and}\ x_i = x_j \right\}$ denotes the large diagonal in $\R^k$ (cf.~Lemma~\ref{lem non-degeneracy} for more details). In general, $\rho_k$ is a continuous symmetric function defined on some symmetric open subset of $\R^k \setminus \Delta_k$, see Lemmas~\ref{lem Kac-Rice densities symmetric} and~\ref{lem Dk and Nk continuous}.

Interpreting $\rho_k$ as the $k$-point function of $Z$, some of the intermediate results in the proof of Theorem~\ref{thm moments} appear to be of independent interest. Theorems~\ref{thm vanishing order} and~\ref{thm clustering} below are analogous to the main results of~\cite{NS2012}, where Nazarov and Sodin studied the $k$-point function of a Gaussian Entire Function. Note however that our methods are completely different. In particular, we do not require any form of analyticity.

\begin{thm}[Vanishing order of the $k$-point function]
\label{thm vanishing order}
Let $k \in \N^*$, let $f$ be a normalized stationary centered Gaussian $\mathcal{C}^k$-process. Let $y=(y_i)_{1\leq i \leq k} \in \R^k$ and let $\I \in \pa_k$ be the partition defined by:
\begin{equation*}
\forall i,j \in \{1,\dots,k\}, \qquad y_i=y_j \iff \exists I \in \I, \{i,j\} \subset I.
\end{equation*}
For any $I \in \I$, we denote by $\norm{I}$ the cardinality of $I$ and by $y_I \in \R$ the common value of the $(y_i)_{i \in I}$. Let us assume that the Gaussian vector $\left(f^{(i)}(y_I)\right)_{I \in \I, 0 \leq i < \norm{I}}$ is non-degenerate and denote by
\begin{equation}
\label{eq def ly}
\ell(y) = \left(\prod_{I \in \I} \prod_{i=0}^{\norm{I}-1}\frac{i!}{\norm{I}!}\right) \frac{\espcond{\prod_{I \in \I} \norm{f^{(\norm{I})}(y_I)}^{\norm{I}}}{\forall I \in \I, \forall i \in \{0,\dots,\norm{I}-1\}, f^{(i)}(y_I)=0}}{(2\pi)^\frac{k}{2} \det\left(\var{\left(f^{(i)}(y_I)\right)_{I \in \I, 0 \leq i < \norm{I}}}\right)^\frac{1}{2}},
\end{equation}
where $\espcond{\prod_{I \in \I} \norm{f^{(\norm{I})}(y_I)}^{\norm{I}}}{\forall I \in \I, \forall i \in \{0,\dots,\norm{I}-1\}, f^{(i)}(y_I)=0}$ stands for the conditional expectation of $\prod_{I \in \I} \norm{f^{(\norm{I})}(y_I)}^{\norm{I}}$ given that $f^{(i)}(y_I)=0$ for all $I \in \I$ and $i \in \{0,\dots,\norm{I}-1\}$.

Then, there exists a neighborhood $U$ of $y$ in $\R^k$ such that the $k$-point function $\rho_k$ of $f^{-1}(0)$ is well-defined on $U \setminus \Delta_k$ and, as $x \to y$ with $x=(x_i)_{1 \leq i \leq k} \in U \setminus \Delta_k$, we have:
\begin{equation*}
\left(\prod_{I \in \I} \prod_{\{(i,j) \in I^2 \mid i < j\}} \frac{1}{\norm{x_i-x_j}} \right) \rho_k(x) \xrightarrow[x \to y]{} \ell(y).
\end{equation*}
Moreover, if $\left(f^{(i)}(y_I)\right)_{I \in \I, 0 \leq i \leq \norm{I}}$ is non-degenerate, then $\ell(y) >0$.
\end{thm}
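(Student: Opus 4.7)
My plan is to start from the Kac--Rice formula (Proposition~\ref{prop Kac-Rice formula}), which for $x \in \R^k \setminus \Delta_k$ with $F(x) := (f(x_1), \dots, f(x_k))$ non-degenerate gives
\begin{equation*}
\rho_k(x) = p_{F(x)}(0) \cdot \espcond{\prod_{i=1}^k \norm{f'(x_i)}}{F(x) = 0},
\end{equation*}
and analyze each factor as $x \to y$. Since $F(x)$ degenerates inside each cluster, the first move is a change of basis: inside each block $I = \{i_1 < \cdots < i_{n_I}\} \in \I$, with $n_I := \norm{I}$, I replace $(f(x_{i_j}))_j$ by the divided differences $W_{I,j}(x) := f[x_{i_1}, \dots, x_{i_j}]$, $1 \le j \le n_I$. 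This is a lower-triangular linear change of variables with absolute Jacobian $\prod_I \prod_{i<j \in I} \norm{x_i-x_j}^{-1}$, and the events $\{F(x) = 0\}$ and $\{W(x) = 0\}$ coincide. By continuity of divided differences, $W(x)$ converges to $\tilde W(y) := (f^{(j-1)}(y_I)/(j-1)!)_{I,\, 1 \le j \le n_I}$, which is non-degenerate by hypothesis, so $p_{W(x)}(0) \to p_{\tilde W(y)}(0)$.

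Next, I rewrite the conditional expectation via a polynomial factorization. On the event $\{f(x_i)=0,\ 1 \le i \le k\}$, one has $f(t) = \pi_x(t) Q_x(t)$ with $\pi_x(t) = \prod_i (t - x_i)$ and $Q_x$ smooth; differentiating at the nodes gives $f'(x_i) = Q_x(x_i) \prod_{j \neq i}(x_i - x_j)$. Pulling the deterministic factors out yields
\begin{equation*}
\espcond{\prod_i \norm{f'(x_i)}}{W(x) = 0} = \prod_{i<j} (x_i-x_j)^2 \cdot \espcond{\prod_i \norm{Q_x(x_i)}}{W(x) = 0}.
\end{equation*}
Splitting $\prod_{i<j}(x_i-x_j)^2$ into within-block and between-block parts and combining with the prefactor $\prod_I \prod_{i<j \in I} \norm{x_i-x_j}^{-1}$ from the statement, the within-block squares produce the vanishing factor $\prod_I \prod_{i<j\in I}\norm{x_i-x_j}$ (this is precisely the announced vanishing rate of $\rho_k$), while the between-block contribution tends to $\prod_{\{I,I'\},\,I\neq I'}\norm{y_I-y_{I'}}^{2 n_I n_{I'}}$.

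The last step is to identify the limit of $\esp{\prod_i \norm{Q_x(x_i)} \mid W(x) = 0}$. The conditioning converges to $\{f^{(j)}(y_I) = 0 : I \in \I,\, 0 \le j < n_I\}$, under which the Taylor expansion $f(t) = \frac{f^{(n_I)}(y_I)}{n_I!}(t-y_I)^{n_I} + o((t-y_I)^{n_I})$ at each $y_I$ forces $Q_x(x_i) \to \frac{f^{(n_I)}(y_I)/n_I!}{\prod_{I' \neq I}(y_I - y_{I'})^{n_{I'}}}$ for $i \in I$. The between-block distances from the previous step cancel exactly the denominators produced here, and collecting the remaining Jacobian factors (the $j!$ coming from $\tilde W(y)$ and the $(n_I!)^{n_I}$ from the Taylor coefficients) assembles into the constant $\prod_I \prod_{i=0}^{n_I-1}(i!/n_I!)$ multiplying the conditional expectation of $\prod_I \norm{f^{(n_I)}(y_I)}^{n_I}$ divided by $(2\pi)^{k/2}\det(\Sigma)^{1/2}$, where $\Sigma = \var{(f^{(j)}(y_I))_{I,\, 0 \le j < n_I}}$: this is precisely $\ell(y)$. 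Strict positivity under the stronger hypothesis then follows because conditioning a non-degenerate Gaussian vector on a proper subcollection vanishing leaves a non-degenerate Gaussian sub-vector, so $(f^{(n_I)}(y_I))_I$ is almost surely componentwise non-zero under the conditioning.

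The main technical obstacle will be justifying the passage to the limit in the conditional expectation: while the Gaussian covariance of $W(x)$ depends continuously on $x$ near $y$ by the hypothesis, so the regular conditional law is continuous in $x$, I also need a domination estimate on $\prod_i \norm{Q_x(x_i)}$ uniform in $x$, which should follow from a uniform lower bound on the conditional covariance matrix on a neighborhood of $y$.
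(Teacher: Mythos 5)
Your proof is correct and recovers the constant $\ell(y)$; the structure is close to the paper's but the decomposition of the conditional expectation is genuinely different, and the difference is instructive. Both arguments change basis from $(f(x_1),\dots,f(x_k))$ to the within-cluster divided differences $W(x)$, which handles the density factor and makes $p_{W(x)}(0)$ continuous in $x$. For the conditional expectation $\espcond{\prod_i\norm{f'(x_i)}}{W(x)=0}$ you use the \emph{global} factorization $f'(x_i)=Q_x(x_i)\prod_{j\neq i}(x_i-x_j)$ with $Q_x(x_i)=[f]_{k+1}(x_1,\dots,x_k,x_i)$, thus extracting all $\binom{k}{2}$ pairwise differences, both within and between clusters. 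The paper (Definition~\ref{def Kac-Rice densities partition}, Lemma~\ref{lem expression NIx}) instead uses the \emph{cluster-local} identity $f'(x_i)=[f]_{\norm{I}+1}(\underline{x}_I,x_i)\prod_{j\in I\setminus\{i\}}(x_i-x_j)$ for $i\in I$, pulling out only the within-cluster differences. The paper's choice makes $N_\I(x)$ directly the continuous function whose value at $y$ is the conditional expectation appearing in $\ell(y)$, with no between-cluster factors to chase; your variant produces the extra Vandermonde $\prod_{I\neq I'}\prod_{i\in I,j\in I'}(x_i-x_j)^2$, which must cancel against the denominators arising in the confluent limit $Q_x(x_i)\to f^{(n_I)}(y_I)/\bigl(n_I!\prod_{I'\neq I}(y_I-y_{I'})^{n_{I'}}\bigr)$. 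You verify this cancellation correctly, so the end result is the same; but the paper's local extraction is also what makes its auxiliary densities $\rho_\I$ uniformly controllable and reusable in the clustering theorem, which is the deeper reason for preferring it.

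Two remarks on the technical points you flag. First, the ``domination estimate'' you anticipate needing is not quite the right tool: since $(Q_x(x_i))_i$ given $W(x)=0$ is a centered Gaussian vector whose covariance matrix $\Lambda(x)$ depends continuously on $x$ near $y$ (non-degeneracy of $W(y)$ plus continuity of the double divided differences of $\kappa$), the conditional expectation equals $\Pi_k(\Lambda(x))$ with $\Pi_k:\Lambda\mapsto\esp{\prod_i\norm{Z_i}}$, $Z\sim\mathcal{N}(0,\Lambda)$, and $\Pi_k$ is $\tfrac12$-H\"older on compacts of positive semi-definite matrices (Corollary~\ref{cor Pi k}). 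That continuity is exactly what legitimises passing to the limit, no lower bound on $\Lambda(x)$ is needed. Second, the confluent limit of $Q_x(x_i)$ should not rest on a heuristic Taylor expansion of $f$ inside a moving conditioning event; the clean route is to recognise $Q_x(x_i)=[f]_{k+1}(x,x_i)$ as a \emph{continuous} function of $x$ (Lemma~\ref{lem divided differences are continuous}), evaluate it at the confluent argument $(y,y_I)$, and observe that on the limiting conditioning event the degree-$k$ Hermite interpolant of $f$ at $(y,y_I)$ is forced to be $c\prod_{I'}(t-y_{I'})^{n_{I'}}$, whence the leading coefficient $c=[f]_{k+1}(y,y_I)$ takes the value you announce.
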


If the process $f$ is of class $\mathcal{C}^k$ and such that $\kappa(x) \xrightarrow[x \to +\infty]{}0$, then the non-degeneracy conditions in Theorem~\ref{thm vanishing order} are satisfied for all $y \in \R^k$ (cf.~Lemma~\ref{lem non-degeneracy} below). In this case, $\rho_k$ is well-defined on $\R^k \setminus \Delta_k$, and $\ell(y)$ is positive for any $y \in \R^k$. If $y \in \R^k \setminus \Delta_k$, the partition associated with $y$ is $\I = \{\{i\} \mid 1 \leq i \leq k\} \in \pa_k$. Then, $\ell(y) = \rho_k(y)$ (see Equation~\eqref{eq def ly} and Definition~\ref{def Kac-Rice densities}) and Theorem~\ref{thm vanishing order} only states that $\rho_k$ is continuous at $y$ and that $\rho_k(y) >0$. If $y \in \Delta_k$, Theorem~\ref{thm vanishing order} shows that $\rho_k(x) \xrightarrow[x \to y]{}0$. In particular, under the assumption that $\kappa$ tends to $0$ at infinity, the $k$-point function of $Z$ can be uniquely extended into a continuous function on $\R^k$ that vanishes exactly on $\Delta_k$. In this case, the last part of the theorem gives the vanishing order of $\rho_k$ near any point of the diagonal. The fact that $\rho_k$ vanishes along $\Delta_k$ is interpreted as the sign of a short-range repulsion between zeros of $f$. The estimates of Theorem~\ref{thm vanishing order} quantify this phenomenon.

Let us now consider the long-range correlations between zeros of $f$. We still concern ourselves with the case where $\kappa$ tends to $0$ at infinity. Let $A$ and $B$ be two non-empty disjoint intervals of~$\R$. If $A$ and $B$ are far enough from one another, the values of $f$ on $A$ are essentially uncorrelated with those of $f$ on $B$. It is then reasonable to expect the point processes $Z \cap A$ and $Z \cap B$ to be roughly independent. The independence of $Z \cap A$ and $Z \cap B$ would imply that $\rho_{k+l}(x,y) = \rho_k(x)\rho_l(y)$ for any $k,l \in \N^*$, any $x \in A^k \setminus \Delta_k$ and any $y \in B^l \setminus \Delta_l$. The following result shows that a relation of this form holds, up to an error term.

\begin{thm}[Clustering for $k$-point functions]
\label{thm clustering}
Let $k \in \N^*$, let $f$ be a normalized stationary centered Gaussian $\mathcal{C}^k$-process whose correlation function $\kappa$ satisfies $\Norm{\kappa}_{k,\eta} \xrightarrow[\eta \to +\infty]{}0$. For any $l \in \{1,\dots,k\}$, let $\rho_l$ denote the $l$-point function of $f^{-1}(0)$.

Then, there exists $C >0$ such that for all $x=(x_i)_{1 \leq i \leq k} \in \R^k \setminus \Delta_k$ we have:
\begin{equation*}
0 \leq \rho_k(x) \leq C \prod_{1 \leq i < j \leq k} \min\left(\norm{x_i-x_j},1\right).
\end{equation*}
Moreover, for all $\eta \geq 1$, for all $\I \in \pa_k$, for all $x = (x_i)_{1 \leq i \leq k} \in \R^k \setminus \Delta_k$ satisfying:
\begin{equation*}
\forall I, J \in \I \ \text{such that} \ I \neq J, \ \forall i \in I, \ \forall j \in J, \ \norm{x_i -x _j} > \eta,
\end{equation*}
we have:
\begin{equation*}
\prod_{I \in \I} \rho_{\norm{I}}(\underline{x}_I) = \rho_k(x) \left(1 +O\!\left(\left(\Norm{\kappa}_{k,\eta}\right)^\frac{1}{2}\right)\right),
\end{equation*}
where the constant involved in the error term $O\!\left(\left(\Norm{\kappa}_{k,\eta}\right)^\frac{1}{2}\right)$ does not depend on $\eta$, $\I$ nor $x$. Here, we denoted by $\norm{I}$ the cardinality of $I$ and by $\underline{x}_I = (x_i)_{i \in I}$, for all $I \in \I$.
\end{thm}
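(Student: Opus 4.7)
The starting point is the Kac--Rice representation (cf.~Definition~\ref{def Kac-Rice densities} and Lemma~\ref{lem k point function}):
\begin{equation*}
\rho_k(x) = \frac{1}{(2\pi)^{k/2} \det \Sigma(x)^{1/2}} \espcond{\prod_{i=1}^k \norm{f'(x_i)}}{f(x_1) = \cdots = f(x_k) = 0},
\end{equation*}
where $\Sigma(x) = (\kappa(x_i - x_j))_{i,j}$. Enlarging to the Gaussian vector $V(x) = (f(x_i), f'(x_i))_{1 \leq i \leq k}$, the covariance $K(x)$ consists of $2 \times 2$ blocks whose entries involve $\kappa$, $\pm\kappa'$ or $-\kappa''$ evaluated at the differences $x_i - x_j$. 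When the coordinates split into clusters $\I \in \pa_k$ with inter-cluster gaps exceeding $\eta$, every cross-cluster entry of $K(x)$ has modulus at most $\Norm{\kappa}_{k, \eta}$. Writing $K(x) = K_\I(x) + E(x)$ with $K_\I(x)$ block-diagonal in $\I$ (its blocks being the covariances of the sub-vectors $V_I(x) = (f(x_i), f'(x_i))_{i \in I}$), the matrix $E(x)$ has all entries bounded by $\Norm{\kappa}_{k, \eta}$.

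For the \emph{clustering identity}, the plan is to propagate this approximate block-diagonality through both factors in the Kac--Rice formula. A standard multilinear expansion of the determinant yields $\det \Sigma(x) = \det \Sigma_\I(x) \bigl(1 + O(\Norm{\kappa}_{k, \eta})\bigr)$, once the blocks $\Sigma_I(\underline{x}_I)$ are known to be uniformly well-conditioned. For the conditional expectation, the distribution of $(f'(x_i))_i$ given $(f(x_i))_i = 0$ is centered Gaussian with covariance a Schur complement $\Gamma(x) = \Gamma_\I(x) + O(\Norm{\kappa}_{k, \eta})$, where $\Gamma_\I$ is block-diagonal with blocks the conditional covariances $\Gamma_I$ within each cluster. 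A direct comparison of the two Gaussian laws via a Pinsker-type estimate on their total variation distance --- combined with Cauchy--Schwarz to control the unbounded weight $\prod_i \norm{f'(x_i)}$ by its second moment --- produces
\begin{equation*}
\espcond{\prod_{i=1}^k \norm{f'(x_i)}}{f(x_i) = 0\ \forall i} = \Bigl(1 + O\bigl(\Norm{\kappa}_{k, \eta}^{1/2}\bigr)\Bigr) \prod_{I \in \I} \espcond{\prod_{i \in I}\norm{f'(x_i)}}{f(x_j) = 0\ \forall j \in I},
\end{equation*}
the square root arising naturally from the Pinsker inequality. Combining both estimates with the Kac--Rice formulas for $\rho_k$ and for each $\rho_{\norm{I}}$ yields the clustering identity.

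For the \emph{upper bound}, iterate the clustering. Pick $\eta_0 \geq 1$ large enough that the error term in the clustering identity is at most $1$, so that $\rho_k(x) \leq 2 \prod_{I \in \I} \rho_{\norm{I}}(\underline{x}_I)$ for any admissible partition $\I$. Given $x \in \R^k \setminus \Delta_k$, form the partition $\I$ whose blocks are the maximal $\eta_0$-clusters of the $(x_i)_i$; by construction, intra-cluster pairs have distance at most $(k-1) \eta_0$ and inter-cluster gaps exceed $\eta_0 \geq 1$. Apply the clustering identity and bound each factor as follows: by stationarity, translate so that one coordinate of $\underline{x}_I$ sits at $0$, placing it in a fixed compact subset of $\R^{\norm{I}}$; Theorem~\ref{thm vanishing order} provides a continuous extension of $\rho_{\norm{I}}$ vanishing on $\Delta_{\norm{I}}$ at the correct order, hence by compactness $\rho_{\norm{I}}(\underline{x}_I) \leq C' \prod_{i < j,\, i, j \in I} \norm{x_i - x_j}$. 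Since pairs $(i,j)$ in distinct clusters of $\I$ satisfy $\min(\norm{x_i - x_j}, 1) = 1$, multiplying over $I \in \I$ delivers the stated inequality.

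The main obstacle is ensuring that the perturbation estimates of the conditional expectation are \emph{uniform} in the partition $\I$, in the configuration $x$, and in the parameter $\eta$. The covariance blocks $\Sigma_I(\underline{x}_I)$ become singular as points within a cluster coalesce, which would make naive Schur-complement and determinant perturbation bounds blow up. The circumvention --- advertised in the introduction as a tool of independent interest --- is to pass to the basis of \emph{divided differences} of $f$ at each cluster, in which the Gaussian covariance remains uniformly non-degenerate even at the coalescence limit. Re-expressed in this basis, the cross-cluster perturbation $E(x)$ retains its $\Norm{\kappa}_{k, \eta}$-smallness, and all the perturbation estimates become uniform with respect to the intra-cluster geometry.
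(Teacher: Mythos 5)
Your proposal follows essentially the same strategy as the paper: decompose the joint covariance into an $\I$-block-diagonal part plus a small cross-cluster perturbation, treat the determinant and the conditional expectation separately, and use divided differences to keep everything uniformly non-degenerate near the diagonal. Two technical points, however, deviate from the paper's route and would need a bit more care. First, for the conditional-expectation comparison you invoke Pinsker plus Cauchy--Schwarz; this does produce the $\Norm{\kappa}_{k,\eta}^{1/2}$ rate (the half comes from the Cauchy--Schwarz step against the coupling event, not from Pinsker itself), but it requires the limiting block-diagonal conditional covariance to be \emph{uniformly well-conditioned} so that the KL divergence stays $O(\Norm{\kappa}^2_{k,\eta})$; you would have to prove a uniform lower bound on the smallest eigenvalue of $\Lambda_\J$ by a separate compactness argument. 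The paper avoids this entirely: Lemma~\ref{lem Pi k} proves $\frac12$-Hölder continuity of $\Lambda \mapsto \esp{\prod_i \norm{Z_i}}$ on compacts of $\sym_k^+(\R)$ via a direct coupling $Y = X + T$, and this works for degenerate $\Lambda$ as well, so no conditioning estimate is needed. Second, for the pointwise upper bound you invoke Theorem~\ref{thm vanishing order} plus a compactness argument to bound $\rho_{\norm{I}}(\underline{x}_I)$ by $C' \prod_{i<j}\norm{x_i - x_j}$; but Theorem~\ref{thm vanishing order} only gives a pointwise limit of the quotient $\rho_{\norm{I}}(x)/\prod\norm{x_i-x_j}$ at each diagonal point, not the continuity of that quotient, which is what the compactness argument actually needs. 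The continuity of the quotient is precisely the content of the expression $\rho_\I(x) = \bigl(\prod_{I,\,i\neq j}\norm{x_i-x_j}^{1/2}\bigr) N_\I(x)/((2\pi)^{k/2}D_\I(x)^{1/2})$ with $N_\I$ and $D_\I$ continuous; the paper's Lemma~\ref{lem bound rho I} reads the bound directly off this expression using the uniform bound $N_\I \leq C$ (Corollary~\ref{cor boundedness NI}) and the uniform lower bound $D_\I \geq \epsilon_1$ (Lemma~\ref{lem lower bound DI induction step}), bypassing Theorem~\ref{thm vanishing order} entirely. Both of your variants are patchable, but each replaces a cleaner step of the paper with one that quietly relies on a fact you have not established.
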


An important new idea in the proof of Theorems \ref{thm vanishing order} and \ref{thm clustering} is that we derive a whole family of new expressions for the $k$-point functions $\rho_k$, indexed by the partitions of $\{1,\dots,k\}$. For any point $x \in \R^k$, at least one of these expressions is uniformly non-degenerate in a neighborhood of~$x$. Then, studying $\rho_k$ is mostly a matter of choosing the right expression, depending on the domain we are considering. These new expressions are introduced and studied in Section~\ref{sec Kac-Rice densities revisited and clustering}, using the divided differences introduced in Section \ref{sec divided differences}. A key idea is that divided differences allow to replace the random vector $(f(x_i))_{1 \leq i \leq k}$, appearing the original expression of $\rho_k(x_1,\dots,x_k)$, by another Gaussian vector which is never degenerate even on the diagonal. We will discuss these ideas further in Section \ref{subsec sketch of proof} below.


\subsection{Law of Large Numbers and Central Limit Theorem}
\label{subsec law of large numbers and central limit theorem}

As an application of the moments estimates of Theorem~\ref{thm moments}, we prove a strong Law of Large Numbers and a Central Limit Theorem. These theorems hold in the large $R$ limit, for the linear statistics $\prsc{\nu_R}{\phi}$ with $\phi$ a test-function (cf.~Definition~\ref{def test-function}), but also for the random measures~$\nu_R$.

\begin{rem}
\label{rem weak LLN}
Under the hypotheses of Proposition~\ref{prop variance}, we immediately obtain a weak Law of Large Number for the linear statistics by applying Markov's Inequality and using the variance estimates of Proposition~\ref{prop variance}. That is, for any test-function $\phi$, for all $\epsilon >0$, we have:
\begin{equation*}
\P \left( \norm{\frac{1}{R}\prsc{\nu_R}{\phi} -\frac{1}{\pi}\int_\R \phi(x) \dx x} > \epsilon\right) = O\!\left(R^{-1}\right).
\end{equation*}
\end{rem}

In fact, if the correlation function $\kappa$ of $f$ decays fast enough, we can combine the moments estimates of Theorem~\ref{thm moments} with Markov's Inequality and the Borel--Cantelli Lemma to obtain the following.

\begin{thm}[Strong Law of Large Numbers]
\label{thm LLN}
Let $p \in \N^*$. Let $f$ be a normalized stationary centered Gaussian $\mathcal{C}^{2p}$-process whose correlation function $\kappa$ satisfies $\Norm{\kappa}_{2p,\eta} = o(\eta^{-8p})$ as $\eta \to +\infty$. Finally, let $(R_n)_{n \in \N}$ be a sequence of positive numbers such that $\sum_{n \in \N} R_n^{-p}<+\infty$.

Then, for any test-function $\phi$, the following holds almost surely:
\begin{equation*}
\frac{1}{R_n}\prsc{\nu_{R_n}}{\phi} \xrightarrow[n \to +\infty]{} \frac{1}{\pi} \int_\R \phi(x) \dx x.
\end{equation*}
Moreover, as Radon measures, we have $\displaystyle\frac{1}{R_n}\nu_{R_n} \xrightarrow[n \to +\infty]{} \frac{1}{\pi} \dx x$ almost surely in the weak-$*$ sense.
\end{thm}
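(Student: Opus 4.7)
The plan is to combine the Kac--Rice expectation formula (Proposition~\ref{prop expectation}), the moment bound from Theorem~\ref{thm moments} (applied with $2p$ in place of $p$), Markov's inequality and the Borel--Cantelli Lemma. Note that the hypothesis $\Norm{\kappa}_{2p,\eta}=o(\eta^{-8p})$ is exactly what is required to invoke Theorem~\ref{thm moments} at order $2p$, and that $\Norm{\kappa}_{2,\eta}\to 0$ together with square-integrability of $\kappa,\kappa''$ (consequences of the hypothesis) permit the use of Proposition~\ref{prop variance}.

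Fix a test-function $\phi$ and set $X_R=\prsc{\nu_R}{\phi}-\esp{\prsc{\nu_R}{\phi}}$. By Proposition~\ref{prop expectation}, $\esp{\prsc{\nu_R}{\phi}}=\frac{R}{\pi}\int_\R \phi(x)\dx x$, so the claimed limit amounts to $R_n^{-1}X_{R_n}\to 0$ almost surely. From Theorem~\ref{thm moments} applied with $2p$ and from Proposition~\ref{prop variance} we get
\begin{equation*}
\esp{X_R^{2p}} = m_{2p}(\prsc{\nu_R}{\phi}) = \mu_{2p}\var{\prsc{\nu_R}{\phi}}^{p} + o(R^p) = O(R^p).
\end{equation*}
Markov's inequality then gives, for every $\epsilon>0$,
\begin{equation*}
\P\!\left(\left|\tfrac{X_R}{R}\right|>\epsilon\right) \leq \frac{\esp{X_R^{2p}}}{\epsilon^{2p}R^{2p}} = O(\epsilon^{-2p}R^{-p}).
\end{equation*}
Along the sequence $(R_n)$ this is summable by assumption, so the Borel--Cantelli Lemma yields $\limsup_n |X_{R_n}/R_n|\leq \epsilon$ almost surely. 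Applying this to a sequence $\epsilon_k\to 0$ and intersecting the corresponding probability-$1$ events gives $R_n^{-1}X_{R_n}\to 0$ almost surely, which proves the first assertion.

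For the weak-$*$ convergence of the measures $R_n^{-1}\nu_{R_n}$, the issue is that the exceptional event depends on $\phi$. To deal with this, choose a countable family $(\phi_j)_{j\in\N}$ in $\mathcal{C}^0_c(\R)$ that is dense in the following sense: for every compactly supported continuous $\phi$ and every $\epsilon>0$, there exists $\phi_j$ with $\operatorname{supp}(\phi_j)\subset[-M,M]$ for some $M$ containing $\operatorname{supp}(\phi)$, and $\Norm{\phi-\phi_j}_\infty<\epsilon$; this is possible since the spaces $\mathcal{C}^0([-M,M])$ are separable. Applying the first part of the theorem simultaneously to each $\phi_j$ and to each indicator $\mathbf{1}_{[-M,M]}$ for $M\in\N$ (which are test-functions in the sense of Definition~\ref{def test-function}), we obtain a probability-$1$ event $\Omega_0$ on which
\begin{equation*}
\tfrac{1}{R_n}\prsc{\nu_{R_n}}{\phi_j}\to \tfrac{1}{\pi}\textstyle\int \phi_j \quad\text{and}\quad \tfrac{1}{R_n}\nu_{R_n}([-M,M])\to \tfrac{2M}{\pi}
\end{equation*}
for every $j$ and every $M$. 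On $\Omega_0$, the sequence $R_n^{-1}\nu_{R_n}([-M,M])$ is bounded for each $M$, so for any $\phi\in\mathcal{C}^0_c(\R)$ supported in $[-M,M]$ the triangle inequality
\begin{equation*}
\left|\tfrac{1}{R_n}\prsc{\nu_{R_n}}{\phi}-\tfrac{1}{\pi}\textstyle\int \phi\right| \leq \tfrac{1}{R_n}\nu_{R_n}([-M,M])\Norm{\phi-\phi_j}_\infty + \left|\tfrac{1}{R_n}\prsc{\nu_{R_n}}{\phi_j}-\tfrac{1}{\pi}\textstyle\int \phi_j\right| + \tfrac{1}{\pi}\Norm{\phi-\phi_j}_\infty\cdot 2M
\end{equation*}
yields the convergence $R_n^{-1}\prsc{\nu_{R_n}}{\phi}\to \pi^{-1}\int\phi$ by first choosing $j$ to make the uniform approximation small and then letting $n\to\infty$. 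This is weak-$*$ convergence of Radon measures, concluding the proof.

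The only genuine subtlety is the passage from countably many $\phi_j$ to all compactly supported continuous $\phi$; everything else is a mechanical combination of the prior results. The moment estimates of Theorem~\ref{thm moments} do all the work.
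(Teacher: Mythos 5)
Your proof is correct and follows essentially the same two-step strategy as the paper: moment estimates from Theorem~\ref{thm moments} and Proposition~\ref{prop variance}, combined with Borel--Cantelli for a fixed test-function, then a countable dense family plus a triangle inequality for the functional statement. The only differences are cosmetic: the paper bypasses Markov and Borel--Cantelli by directly observing that $\esp{\sum_n R_n^{-2p}(\cdots)^{2p}}<\infty$ forces the summands to tend to $0$ almost surely, and it uses the continuous tent functions $\chi_N$ instead of the indicators $\mathbf{1}_{[-M,M]}$ to control the mass $\nu_{R_n}$ puts near the support of $\phi$; your indicator variant works equally well since $\mathbf{1}_{[-M,M]}$ is a test-function in the sense of Definition~\ref{def test-function}. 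One small imprecision: your description of the dense family is slightly circular (you ask for $\phi_j$ supported in an interval determined by $\phi$), but the intended construction — enumerate dense sequences in each $\mathcal{C}^0_c((-M,M))$ for $M\in\N$ and take the union, or alternatively allow $\phi_j$ to have its own support $[-M',M']$ and use the LLN for $\mathbf{1}_{[-\max(M,M'),\max(M,M')]}$ — is clear from context and easily repaired.
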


Let us now recall some classical definitions before stating our Central Limit Theorem.

\begin{ntn}[Gaussian distributions]
\label{not Gaussian vectors}
Let $n \geq 1$ and let $\Lambda$ be a positive semi-definite square matrix of size $n$. We denote by $\mathcal{N}(0,\Lambda)$ the \emph{centered Gaussian distribution of variance $\Lambda$} in $\R^n$. We denote by $X \sim \mathcal{N}(0,\Lambda)$ the fact that distribution of the random vector $X \in \R^n$ is $\mathcal{N}(0,\Lambda)$.
\end{ntn}

\begin{dfn}[Schwartz space]
\label{def SR}
A function $\phi: \R \to \R$ is said to be \emph{fast-decreasing} if it satisfies $\phi(x) =O(\norm{x}^{-k})$ as $\norm{x}\to +\infty$, for all $k \in \N$. The \emph{Schwartz space} $\mathcal{S}(\R)$ is the space of $\mathcal{C}^\infty$ functions~$\phi$ such that $\phi$ and all its derivatives are fast-decreasing. Finally, we denote by $\mathcal{S}'(\R)$ the space of \emph{tempered generalized functions}.
\end{dfn}

\begin{rems}
\label{rem SR}
\begin{itemize}
\item In this paper, we use the terminology ``generalized function'' instead of ``distribution'' to avoid any possible confusion with the distribution of a random variable.
\item Recall that $\mathcal{S}'(\R)$ is indeed the topological dual of $\mathcal{S}(\R)$, for some topology that we do not recall here.
\item We refer to~\cite{BDW2018} for details about the definition of random elements of $\mathcal{S}'(\R)$ and the notion of convergence in distribution in this space.
\end{itemize}
\end{rems}

\begin{dfn}[White Noise]
\label{def white noise}
The \emph{standard Gaussian White Noise} $W$ is a random element of $\mathcal{S}'(\R)$ whose distribution is characterized by:
\begin{equation*}
\forall \phi \in \mathcal{S}(\R), \qquad \prsc{W}{\phi} \sim \mathcal{N}\left(0,\Norm{\phi}_{L^2}^2\right),
\end{equation*}
where $\prsc{\cdot}{\cdot}$ is the canonical pairing between $\mathcal{S}'(\R)$ and $\mathcal{S}(\R)$, and $\Norm{\phi}_{L^2} = \left(\int_\R \phi(x)^2 \dx x\right)^\frac{1}{2}$ is the $L^2$-norm of $\phi$.
\end{dfn}

\begin{thm}[Central Limit Theorem]
\label{thm CLT}
Let $f$ be a normalized stationary centered Gaussian process of class $\mathcal{C}^\infty$ and let us assume that its correlation function satisfies $\kappa \in \mathcal{S}(\R)$. Let $\sigma>0$ be the constant defined by Equation~\eqref{eq def sigma}.

For any test-function $\phi$ (in the sense of Definition~\ref{def test-function}), we have the following convergence in distribution:
\begin{equation*}
\frac{1}{R^\frac{1}{2}\sigma} \left(\prsc{\nu_R}{\phi}-\frac{R}{\pi} \int_\R \phi(x) \dx x \right) \xrightarrow[R \to +\infty]{} \mathcal{N}\!\left(0,\Norm{\phi}_{L^2}^2\right).
\end{equation*}
Moreover, the following holds in distribution in $\mathcal{S}'(\R)$:
\begin{equation*}
\frac{1}{R^\frac{1}{2}\sigma}\left(\nu_R -\frac{R}{\pi} \dx x\right) \xrightarrow[R \to +\infty]{} W,
\end{equation*}
where $W$ is the standard Gaussian White Noise and $\dx x$ is the Lebesgue measure of $\R$.
\end{thm}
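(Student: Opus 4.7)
The plan is to derive both statements from Theorem~\ref{thm moments} via the method of moments, upgrading scalar convergence first to joint convergence by Cramér--Wold, and then to convergence in $\mathcal{S}'(\R)$ using the nuclear Lévy continuity theorem recalled in~\cite{BDW2018}.

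\textbf{Scalar CLT.} Fix a test-function $\phi$ and set $X_R := \frac{1}{R^\frac{1}{2}\sigma}\bigl(\prsc{\nu_R}{\phi} - \frac{R}{\pi}\int_\R \phi(x) \dx x\bigr)$, which is centered by Proposition~\ref{prop expectation}. Since $\kappa \in \mathcal{S}(\R)$, we have $\Norm{\kappa}_{p,\eta} = o(\eta^{-k})$ for every $k \in \N$, so the hypotheses of Theorem~\ref{thm moments} hold for every integer $p \geq 2$. Applied with $\phi_1=\dots=\phi_p=\phi$ and combined with the variance asymptotic of Proposition~\ref{prop variance}, it gives
\begin{equation*}
m_p(X_R) = \frac{\mu_p \var{\prsc{\nu_R}{\phi}}^\frac{p}{2} + o(R^\frac{p}{2})}{R^\frac{p}{2}\sigma^p} \xrightarrow[R \to +\infty]{} \mu_p \Norm{\phi}_{L^2}^p,
\end{equation*}
which is precisely the $p$-th moment of $\mathcal{N}(0,\Norm{\phi}_{L^2}^2)$. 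Since a Gaussian distribution is determined by its moments, the Fréchet--Shohat theorem yields $X_R \to \mathcal{N}(0,\Norm{\phi}_{L^2}^2)$ in distribution, which is the first claim of the theorem.

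\textbf{Joint convergence.} The class of test-functions of Definition~\ref{def test-function} is a real vector space, so any linear combination $\psi = \sum_i c_i \phi_i$ of test-functions $\phi_1,\dots,\phi_n$ is again a test-function. Applying the scalar step to $\psi$ and noting that $\Norm{\psi}_{L^2}^2 = \sum_{i,j} c_i c_j \prsc{\phi_i}{\phi_j}_{L^2}$, the Cramér--Wold device upgrades the scalar convergence to joint convergence: writing $Y_R := \frac{1}{R^\frac{1}{2}\sigma}\bigl(\nu_R - \frac{R}{\pi}\dx x\bigr)$, the random vector $(\prsc{Y_R}{\phi_i})_{1\leq i\leq n}$ converges in distribution to the centered Gaussian vector with covariance matrix $\bigl(\prsc{\phi_i}{\phi_j}_{L^2}\bigr)_{i,j}$, which is the law of $(\prsc{W}{\phi_i})_{1\leq i\leq n}$.

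\textbf{Convergence in $\mathcal{S}'(\R)$.} Every Schwartz function is integrable, bounded and continuous, hence a test-function in the sense of Definition~\ref{def test-function}. The previous step therefore yields convergence of all finite-dimensional marginals of $Y_R$ tested against $\mathcal{S}(\R)$ towards those of the White Noise $W$. The limiting characteristic functional $\phi \mapsto \exp\bigl(-\tfrac{1}{2}\Norm{\phi}_{L^2}^2\bigr)$ is continuous on the nuclear Fréchet space $\mathcal{S}(\R)$ because the $L^2$-norm is, so the nuclear version of Lévy's continuity theorem, as recorded in~\cite{BDW2018}, delivers $Y_R \to W$ in distribution in $\mathcal{S}'(\R)$.

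The argument rests entirely on Theorem~\ref{thm moments}, which is the substantive content; once that is granted, what remains is essentially bookkeeping. The only step that requires genuine care is the last one, where one must work in the right topological framework for random tempered generalized functions so that pointwise convergence of characteristic functionals on $\mathcal{S}(\R)$, combined with the continuity of the limit, suffices to conclude — precisely what the nuclear Lévy continuity theorem from~\cite{BDW2018} provides. No tightness argument is needed separately, since nuclearity of $\mathcal{S}(\R)$ absorbs it into the characteristic functional criterion.
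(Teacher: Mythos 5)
Your proof is correct and takes essentially the same route as the paper's: method of moments via Theorem~\ref{thm moments} and Proposition~\ref{prop variance} for the scalar CLT, then an appeal to the nuclear Lévy--Fernique framework of~\cite{BDW2018} for the functional statement. Two small remarks. First, the Cramér--Wold detour is superfluous: the result the paper cites from~\cite{BDW2018} (their Corollary~2.4) concludes convergence in $\mathcal{S}'(\R)$ directly from convergence in distribution of the one-dimensional marginals $\prsc{Y_R}{\phi}$ for each $\phi\in\mathcal{S}(\R)$, so the joint-convergence step buys nothing. Second, before invoking any continuity theorem on $\mathcal{S}'(\R)$ you need to know that $Y_R$ is almost surely an element of $\mathcal{S}'(\R)$ — this is the content of Lemma~\ref{lem nu R as tempered distribution}, which should be cited at the start of the functional step.
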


The fact that almost surely $\nu_R \in \mathcal{S}'(\R)$ for all $R >0$ is not obvious. This is proved in Lemma~\ref{lem nu R as tempered distribution} as a consequence of Proposition~\ref{prop expectation}, see Section~\ref{subsec proof of proposition expectation}.


\subsection{Sketch of proof}
\label{subsec sketch of proof}

In this section, we discuss the main ideas of the proofs of our main results (Theorems~\ref{thm moments}, \ref{thm vanishing order} and~\ref{thm clustering}). First, let us outline the proof of Theorem~\ref{thm moments} assuming the results of Theorem~\ref{thm clustering}. The starting point of the proof is the Kac--Rice formula, see Proposition~\ref{prop Kac-Rice formula} below. It allows to write the non-central moments of the linear statistics associated with the random measure $\nu_R$ as follows:
\begin{equation}
\label{eq Kac-Rice intro}
\esp{\prod_{i=1}^k \prsc{\nu_R}{\phi_i}} = \int_{\R^k} \left(\prod_{i=1}^k \phi_i\left(\frac{x_i}{R}\right)\right) \rho_k(x_1,\dots,x_k) \dx x_1 \dots \dx x_k,
\end{equation}
where $(\phi_i)_{1 \leq i \leq k}$ are test-functions satisfying the hypotheses of Theorem~\ref{thm moments} and $\rho_k$ is the function defined by Definition~\ref{def Kac-Rice densities} below. Here we are cheating a bit: Equation~\eqref{eq Kac-Rice intro} is false and the $k$-th non-central moment on the left-hand side should be replaced be the so-called $k$-th factorial moment for this relation to hold. However, the $k$-th non-central moment can be expressed in terms of the factorial moments of order at most $k$ by some combinatorics, so that a more complicated version of Equation~\eqref{eq Kac-Rice intro} holds. Dealing with these combinatorics is one of the difficulties of the proof of Theorem~\ref{thm moments} given in Section~\ref{sec proof theorem moments}. For the sake of clarity, in this sketch of proof we will not give more details about this, and simply pretend that Equation~\eqref{eq Kac-Rice intro} holds. This is enough to understand the main ideas of the proof.

Under the hypotheses of Theorem~\ref{thm moments}, for any $k \in \{1,\dots,p\}$ the density $\rho_k$ is well-defined from $\R^k\setminus \Delta_k$ to $\R$, but it is a priori singular along $\Delta_k$. As discussed in Section~\ref{subsec clustering for the k-point functions}, the Kac--Rice density~$\rho_k$ is equal to the $k$-point function of the zero point process of $f$. By the first point in Theorem~\ref{thm clustering}, it admits a unique continuous extension to $\R^k$ which is bounded. In particular, the right-hand side of Equation~\eqref{eq Kac-Rice intro} is well-defined and finite. Let $A \subset \{1,\dots,p\}$, we denote by $\norm{A}$ its cardinality. Moreover, for any $x=(x_i)_{1 \leq i \leq p} \in \R^p$, we denote by $\underline{x}_A = (x_i)_{i \in A}$. Then, using Equation~\eqref{eq Kac-Rice intro}, we can write $m_p(\nu_R)(\phi_1,\dots,\phi_p)$ as:
\begin{equation}
\label{eq mp integral intro}
m_p(\nu_R)(\phi_1,\dots,\phi_p) = \int_{\R^p} \left(\prod_{i=1}^p \phi_i\left(\frac{x_i}{R}\right)\right) F_p(x) \dx x,
\end{equation}
where,
\begin{equation}
\label{eq def Fp intro}
F_p : x \longmapsto \sum_{A \subset \{1,\dots,p\}} (-1)^{p - \norm{A}} \rho_{\norm{A}}(\underline{x}_A) \prod_{i \notin A} \rho_1(x_i).
\end{equation}
See Lemma~\ref{lem integral expression} for the rigorous statement corresponding to Equation~\eqref{eq mp integral intro}. Note that we only use the notation $F_p$ in the present section. In Section~\ref{sec proof theorem moments}, this function is the one denoted by $F_{\I_{\min}(p)}$.

Apart from proving Theorem~\ref{thm clustering}, the main difficulty in the proof of our moments estimates is to understand the large $R$ asymptotics of the integral appearing in Equation~\eqref{eq mp integral intro}. In order to do so, we cut $\R^p$ into pieces as follows. Let $\eta \geq 0$, for any $x=(x_i)_{1 \leq i \leq p} \in \R^p$, we denote by $G_\eta(x)$ the graph whose set of vertices is $\{1,\dots,p\}$ and such that there is an edge between $i$ and $j$ if and only if $i \neq j$ and $\norm{x_i-x_j} \leq \eta$. We denote by $\I_\eta(x) \in \pa_p$ the partition defined by the connected components of $G_\eta(x)$. This partition encodes how the components of $x$ are clustered in $\R$, at scale $\eta$. Finally, for any $\I \in \pa_p$, we denote by $\R^p_{\I,\eta} = \{ x \in \R^p \mid \I_\eta(x) =\I\}$. We have $\R^p = \bigsqcup_{\I \in \pa_p} \R^p_{\I,\eta}$, so that it is enough to understand the contribution of each $\R^p_{\I,\eta}$ to the integral appearing in Equation~\eqref{eq mp integral intro}.

Since we are interested in the asymptotics as $R \to +\infty$, we choose a scale parameter $\eta(R)$ that depends on $R$. The most convenient choice for $\eta$ is the following. Under the hypotheses of Theorem~\ref{thm moments}, there exists a function $\alpha$ such that, setting $\eta:R \mapsto R^\frac{1}{4}\alpha(R)$, we have the following as $R\to +\infty$: $\eta(R) \to +\infty$, $\alpha(R) \to 0$ and $\Norm{\kappa}_{p,\eta(R)} = o(R^{-p})$. In particular, the error term in Theorem~\ref{thm clustering} becomes $o(R^{-\frac{p}{2}})$. Then, the contribution of $\R^p_{\I,\eta(R)}$ to~\eqref{eq mp integral intro} depends on the combinatorics of $\I$, and one of the following three situations occurs.
\begin{enumerate}
\item The partition $\I$ contains a singleton, say $\{p\} \in \I$. This means that if $x \in \R^p_{\I,\eta(R)}$, then $x_p$ is far from the other components of $x$, at scale $\eta(R)$. In this case, for each $A \subset \{1,\dots,p-1\}$, we regroup the terms indexed by $A$ and $A \sqcup \{p\}$ in Equation~\eqref{eq def Fp intro}. Using the clustering property of Theorem~\ref{thm clustering}, these two terms cancel each other out, up to an error term of order $o(R^{-\frac{p}{2}})$. Summing over $A \subset \{1,\dots,p-1\}$, we obtain $F_p(x)=o(R^{-\frac{p}{2}})$ uniformly on~$\R^p_{\I,\eta(R)}$. This implies that $\R^p_{\I,\eta(R)}$ only contributes $o(R^\frac{p}{2})$ to~\eqref{eq mp integral intro}.
\item If $\I$ does not contain any singletons, we denote by $a$ the number of pairs in $\I$ and by $b$ the number of elements of $\I$ of cardinality at least $3$. In the second situation we consider, we assume that $b \geq 1$. In this case, we prove that the contribution of $\R^p_{\I,\eta(R)}$ to~\eqref{eq mp integral intro} is $O(R^{a+b}\eta(R)^{p-2a-b})$. This bound is obtained by using the clustering property of Theorem~\ref{thm clustering} in a way similar to what we did in the previous case. The dissymmetry between the pairs and the other elements of $\I$ comes from the integrability of the function $z \mapsto F_2(0,z)$ on $\R$. This dissymmetry is crucial in the following. Using the relation $\eta(R) = R^\frac{1}{4}\alpha(R)$ and $2a+3b \leq p$, we have:
\begin{equation*}
R^{a+b}\eta(R)^{p-2a-b} = O(R^\frac{p}{2}\alpha(R)^{p-2a-b}).
\end{equation*}
Since $b \geq 1$, we have $2a +b<p$ and the previous term is $o(R^\frac{p}{2})$. Once again, $\R^p_{\I,\eta(R)}$ only contributes $o(R^\frac{p}{2})$ to~\eqref{eq mp integral intro}.
\item The last situation is when $\I = \left\{\{a_i,b_i\} \mvert 1 \leq i \leq \frac{p}{2}\right\}$ is a partition into pairs, which can only happen if $p$ is even. In this case, the clustering property of Theorem~\ref{thm clustering} implies that $F_p(x) = \prod_{i=1}^\frac{p}{2} F_2(x_{a_i},x_{b_i}) +o(R^{-\frac{p}{2}})$, uniformly on $\R^p_{\I,\eta(R)}$. This implies that the contribution of $\R^p_{\I,\eta(R)}$ to~\eqref{eq mp integral intro} equals:
\begin{equation*}
\prod_{i=1}^\frac{p}{2} \int_{\R^2} \phi_{a_i}\!\left(\frac{x}{R}\right)\phi_{b_i}\!\left(\frac{y}{R}\right)F_2(x,y) \dx x \dx y +o(R^\frac{p}{2}) = \prod_{i=1}^\frac{p}{2} m_2(\nu_R)(\phi_{a_i},\phi_{b_i}) +o(R^\frac{p}{2}).
\end{equation*}
\end{enumerate}
We conclude the proof of Theorem~\ref{thm moments} by summing up over $\I \in \pa_p$ the contributions of each $\R^p_{\I,\eta(R)}$ to the integral in Equation~\eqref{eq mp integral intro}. Note that the leading term comes from the pieces indexed by partitions into pairs.

Let us now consider the proofs of Theorems~\ref{thm vanishing order} and~\ref{thm clustering}. In this sketch of proof, we assume that the correlation function~$\kappa$ of $f$ tends to $0$ at infinity. This ensures that $\rho_k$ is well-defined on $\R^k \setminus \Delta_k$. By Definition~\ref{def Kac-Rice densities}, for any $x=(x_i)_{1 \leq i \leq k} \in \R^k \setminus \Delta_k$ we have $\rho_k(x) = (2\pi)^{-\frac{k}{2}}N_k(x) D_k(x)^{-\frac{1}{2}}$, where $D_k(x)$ is the determinant of the variance matrix of $\left(f(x_1),\dots,f(x_k)\right)$ and $N_k(x)$ is the conditional expectation of $\prod_{i=1}^k \norm{f'(x_i)}$ given that $f(x_1) = \dots = f(x_k)=0$. The density $\rho_k$ is a priori singular along the large diagonal $\Delta_k \subset \R^k$, since $D_k$ vanishes along $\Delta_k$. The main problem here is to understand to behavior of $\rho_k$, that is of $N_k$ and $D_k$, near $\Delta_k$. This is what we focus on in the remainder of this section. Once this is done, the clustering result of Theorem~\ref{thm clustering} is a (non-trivial) consequence of the decay at infinity of $\kappa$ and its derivatives.

Our study of $N_k$ and $D_k$ near $\Delta_k$ relies on the use of the divided differences associated with the process~$f$. Let us explain our strategy on the simplest non-trivial case, that is for $D_2$. A direct computation, using the Taylor expansion of $\kappa$ around $0$, shows that, in the setting of this paper, we have $D_2(x,y) \sim (y-x)^2$ as $y \to x$. This proof is very simple, but its extension to $3$ points or more seems intractable. Here is another proof of the same result that can be generalized to $k \geq 3$. If $y \neq x$, we can write:
\begin{equation}
\label{eq divided differences intro}
\begin{pmatrix}
f(x) \\ f(y)
\end{pmatrix} = \begin{pmatrix}
1 & 0 \\ 1 & y-x
\end{pmatrix} \begin{pmatrix}
f(x) \\ \frac{f(y)-f(x)}{y-x}
\end{pmatrix}.
\end{equation}
As $y \to x$, we have $\left(f(x),\frac{f(y)-f(x)}{y-x}\right) \xrightarrow[]{}(f(x),f'(x))$. By stationarity and normalization of $f$, the matrix $\var{f(x),f'(x)}$ is the identity. Hence, taking the determinant of the variance of~\eqref{eq divided differences intro}, we recover $D_2(x,y) \sim (y-x)^2$ as $y \to x$.

In Equation~\eqref{eq divided differences intro}, by stationarity, normalization and regularity of~$f$, the Gaussian vector $\left(f(x),\frac{f(y)-f(x)}{y-x}\right)$ is uniformly non-degenerate in a neighborhood of $\Delta_2$. Thus, the degeneracy of $(f(x),f(y))$ along $\Delta_2$ is completely accounted for by the degeneracy of the matrix $\left(\begin{smallmatrix}
1 & 0 \\ 1 & y-x
\end{smallmatrix}\right)$, whose coefficients are deterministic polynomial in $(y-x)$. The divided differences generalize this situation to any number of points. By definition of the divided differences $\left([f]_j(x_1,\dots,x_j)\right)_{1 \leq j \leq k}$ associated with $f$ and $x=(x_i)_{1 \leq i \leq k} \in\R^k \setminus \Delta_k$ (see Section~\ref{subsec Hermite interpolation and divided differences}), we have:
\begin{equation*}
\begin{pmatrix}
f(x_1) \\ \vdots \\ f(x_k)
\end{pmatrix} = M(x) \begin{pmatrix}
[f]_1(x_1) \\ \vdots \\ [f]_k(x_1,\dots,x_k)
\end{pmatrix},
\end{equation*}
where $M(x)$ is a matrix whose coefficients are deterministic polynomials in $(x_j-x_i)_{1 \leq i < j \leq k}$. In fact, $\det M(x) = \prod_{1 \leq i < j \leq k} (x_j-x_i)$ and we have:
\begin{equation*}
\left(\rule{0em}{2.5ex}[f]_1(x_1), [f]_2(x_1,x_2), \dots, [f]_k(x_1,\dots,x_k)\right) \xrightarrow[x \to (z,z,\dots,z)]{} \left(f(z),f'(z), \dots, \frac{f^{(k-1)}(z)}{(k-1)!}\right).
\end{equation*}
Our hypotheses ensure that the Gaussian vector on the right-hand side is non-degenerate. Denoting by $D >0$ the determinant of its variance, this proves that $D_k(x) \sim D \prod_{1 \leq i < j \leq k} (x_j-x_i)^2$, as $x \to (z,z,\dots,z)$. Note that $D$ does not depend on $z$, by stationarity. A refinement of this argument shows that $N_k(x) \sim N \prod_{1 \leq i < j \leq k} (x_j-x_i)^2$ for some $N>0$. Hence, as $x \to (z,z,\dots,z)$ we have:
\begin{equation*}
\rho_k(x) \sim \frac{N}{(2\pi)^\frac{k}{2}D^\frac{1}{2}} \prod_{1 \leq i < j \leq k} \norm{x_j-x_i}.
\end{equation*}

The previous discussion explains how the divided differences allow to understand the apparent singularity of $\rho_k$ near $\{(z,z,\dots,z) \mid z \in \R\} \subset \R^k$, which is the stratum of smallest dimension in~$\Delta_k$. Near other strata, the situation is more intricate, yet tractable by similar methods. The key point is that, using the divided differences associated with $f$, we define a family of alternative expressions of $\rho_k$ indexed by the partitions of $\{1,\dots,k\}$, see Definition~\ref{def Kac-Rice densities partition}. Then, for any $y \in \R^k$, we prove the local estimate of Theorem~\ref{thm vanishing order} by choosing the right expression of $\rho_k$, depending on how the components of $y$ are clustered. Precisely, we use the expression indexed by the partition $\I_0(y)$ defined previously, see also Definition~\ref{def I eta}.


\subsection{Related works}
\label{subsec related works}

The study of the zeros of a Gaussian process goes back to Kac~\cite{Kac1943}, who obtained a formula for the mean number of roots of some Gaussian polynomials in an interval. This was generalized to other Gaussian processes by Rice~\cite{Ric1944}. The mean number of zeros in an interval of any continuous stationary Gaussian process was computed by Ylvisaker, see~\cite{Ylv1965}. The proofs of Kac and Rice rely on an integral formula for the mean number of zeros. Extensions of their work lead to what are now called the Kac--Rice formulas. Modern references for these are~\cite{AT2007} and~\cite{AW2009}, but formulas of this kind already appear in~\cite{CL1965}.

Among other things, Kac--Rice formulas were used to derive conditions for the finiteness of the moments of the number of zeros of Gaussian processes. Geman derived a necessary and sufficient condition for the finiteness of the second moment in~\cite{Gem1972}. The case of higher moments was studied by Cuzick in~\cite{Cuz1975,Cuz1978}. Note that~\cite{Cuz1978} already uses divided differences in order to obtain criteria for the finiteness of the moments of the number of zeros of a Gaussian process. The results of~\cite{Cuz1978} do not apply to the Bargmann--Fock field, whose correlation function is $z \mapsto e^{-\frac{1}{2}z^2}$, and which is one of our main example in this paper (cf.~Example~\ref{ex non-degenerate and fast-decreasing}). Much more recently, a necessary condition for the finiteness of the moment of order $p$ was derived in~\cite{AAD+2019}. We emphasize that the methods developed in the present paper allow to prove the finiteness of the higher moments of the number of zeros of a Gaussian process in an interval under three simple conditions: stationarity, sufficient regularity of the process, and fast enough decay at infinity of the correlation function and its first derivatives. While being easy to state and rather general, these conditions are quite strong and probably far from necessary.

In~\cite{Cuz1976}, Cuzick studied the asymptotic variance as $R \to +\infty$ of the number of zeros of a stationary Gaussian process $f$ in $[0,R]$. He obtained the same asymptotics as in Proposition~\ref{prop variance} for $\phi_1=\mathbf{1}_{[0,1]} = \phi_2$, under slightly weaker conditions. However, he did not prove the positivity of the constant $\sigma$ (cf.~Equation~\eqref{eq def sigma}). Assuming that $\sigma >0$, he also derived a Central Limit Theorem for $\card(Z \cap [0,R])$ as $R \to +\infty$. Piterbarg proved similar results and the positivity of $\sigma$ under different assumptions, see~\cite[Theorem~3.5]{Pit1996} for example.

In~\cite{KL2001}, Kratz and Leòn developed a method for proving Central Limit Theorems for level functionals of Gaussian processes. In particular, it should allow to prove Theorem~\ref{thm CLT} under weaker hypotheses than those we gave. The method of~\cite{KL2001} is completely different and relies on the Wiener--Itô expansion of the functional under study. The Wiener--Itô expansion of $\card(Z \cap [0,R])$ was computed in~\cite{KL1997}. The same proof should yield the expansion of $\prsc{\nu_R}{\phi}$ for any Lebesgue-integrable $\phi$. The results of Kratz--Leòn also show that the variance of $\card(Z \cap [0,R])$ is equivalent to $\sigma^2 R$ as $R\to +\infty$, for some $\sigma >0$. In Section~\ref{subsec positivity of the leading constant}, we use the result of~\cite{KL1997} to derive the lower bound on $\sigma^2$ mentioned in Remark~\ref{rem explicit lower bound}. Let us mention that, very recently, Lachièze-Rey~\cite{Lac2020} proved that:
\begin{equation*}
\liminf_{R \to +\infty} \frac{1}{R} \var{\card(Z \cap [0,R])} >0,
\end{equation*}
under essentially no hypothesis on the process $f$. This implies the positivity of $\sigma^2$ in Proposition~\ref{prop variance}. The present paper partially overlaps with~\cite{Lac2020} since we obtained independently a similar lower bound for $\sigma^2$ by the same method, see~\cite[Section~4]{Lac2020} and Corollary~\ref{cor sigma positive} below.

The references cited previously are concerned with the number of zeros of $f$ in an interval. More generally, a lot of them consider the number of crossings, or up-crossings, of a level by~$f$ in an interval. For an in depth survey of the existing literature on the subject we refer to~\cite{Kra2006}.

A special case of~\cite[Theorem~1]{NS2016} gives the strong Law of Large Numbers for the number of zeros of a stationary Gaussian process $f$ in $[0,R]$, under weaker assumptions than Theorem~\ref{thm LLN}. Nazarov and Sodin also studied the $k$-point functions of the point process defined by the complex zeros of a Gaussian Entire Function, see~\cite{NS2012}. Theorems~\ref{thm vanishing order} and~\ref{thm clustering} are analogous to the main results of~\cite{NS2012}, but for the real zeros of a stationary Gaussian process. We stress that our techniques are different from those of~\cite{NS2012}. In particular, in~\cite{NS2012} the authors require the analiticity of the Gaussian process and use techniques from complex analysis, such as the Residue Theorem, whereas we only require our Gaussian fields to be $\mathcal{C}^{k}$ in order to obtain a clustering property of the $k$-point function.

The $k$-point functions $\rho_k$ of the real zeros of the Bargmann--Fock process were studied by Do and Vu. In~\cite[Lemma~9]{DV2020}, they proved that the $\rho_k$ satisfy a clustering property similar to Theorem~\ref{thm clustering}, with an exponentially small error term. They also derived the vanishing order of the $\rho_k$ along the diagonal, see~\cite[Lemma~10]{DV2020}. Their methods build on the work of~\cite{NS2012} and also relies on complex analysis. In particular, it is paramount in their work that the Bargmann--Fock is the restriction to $\R$ of a Gaussian Entire Function.

In both~\cite{NS2012} and~\cite{DV2020}, the authors deduce from their clustering result a Central Limit Theorem for the (compactly supported) linear statistics of the point processes they study. Their proofs rely on the cumulants method. This strategy was generalized in~\cite[Theorem~13]{BYY2019}, where the authors show that a strong clustering property of the kind of Theorem~\ref{thm clustering}, with a fast-decreasing error term, implies a Central Limit Theorem for the compactly supported linear statistics of the underlying point process. Note that one can not deduce the moments estimates of Theorem~\ref{thm moments} from this kind of results, even when the correlation function $\kappa$ lies in $\mathcal{S}(\R)$.

Under the hypotheses of Theorem~\ref{thm moments}, Markov's Inequality implies the concentration in probability of $\frac{1}{R}\card(Z \cap [0,R])$, more generally of the normalized linear statistics, around its mean at polynomial speed in $R$. Under stronger assumptions, in~\cite{BDFZ2020}, the authors proved a large deviation result for $\frac{1}{R}\card(Z \cap [0,R])$, that is concentration around the mean at exponential speed in $R$. Their proof relies on the existence of an analytic extension of~$f$ to horizontal strips in the complex plane. Note that the Bargmann--Fock process satisfies the hypotheses of~\cite[Theorem~1.1]{BDFZ2020}.

In this paper, we study the zeros of a stationary Gaussian process in an interval of size $R$ as $R \to +\infty$. In~\cite{AL2019}, we studied the real zeros of a Gaussian section of the $d$-th tensor power of an ample line bundle over a real algebraic curve, as $d \to +\infty$. The model of Gaussian section considered in~\cite{AL2019} is known as the \emph{complex Fubini--Study ensemble} and was introduced in~\cite{GW2011}. It is the real analogue of the complex model studied by Shiffman--Zelditch in~\cite{SZ1999} and subsequent papers. The idea to study the random measure associated with the zero set of a Gaussian section already appears in~\cite{SZ1999}. In~\cite{BSZ2000}, the authors study the scaling limit of the $k$-point function of the complex zero set of a random section in their model. They also relate this function with the non-central moments of the linear statistics associated with these complex zeros.

In~\cite{AL2019}, we derived the large $d$ asymptotics for the central moments of the linear statistics associated with the real zero set of a Gaussian section of degree $d$ in the complex Fubini--Study ensemble. These results are the counterpart of Theorems~\ref{thm moments}, \ref{thm LLN} and~\ref{thm CLT} in this context. Note that~\cite[Theorem~1.12]{AL2019} generalizes the variance estimate derived by Letendre--Puchol in~\cite{LP2019}, in the case where the ambient dimension is $1$. Its proof relies on results of Ancona, who proved the counterpart of Theorem~\ref{thm clustering} in~\cite[Theorems~4.1 and 5.7 and Proposition~4.2]{Anc2019}. However, note that Theorems~\ref{thm clustering} and \ref{thm vanishing order} are more precise than their counterparts in ~\cite{Anc2019}. For example, \cite[Theorem~5.7]{Anc2019} says that the $k$-point function $\rho_k$ vanishes along the diagonal $\Delta_k$, while in Theorem \ref{thm vanishing order} we also compute the vanishing order of $\rho_k$ along the diagonal $\Delta_k$, also giving conditions on the process $f$ for which this vanishing order is sharp. As explained in the last paragraph of Section \ref{subsec sketch of proof}, one of the fundamental parts of studying the $k$-point function is finding good expression for $\rho_k(x)$, depending on how the components of $x$ are clustered. The expressions used in the present article are different from those used in \cite{Anc2019} (one should compare the expression appearing in Definition \ref{def Kac-Rice densities partition} with the one in \cite[Proposition 5.21]{Anc2019}). The new expressions used in the present paper turn out to be more precise for estimating $\rho_k$ along the diagonal. The results of~\cite{Anc2019,AL2019} apply to the number of real roots of a Kostlan polynomial of degree $d$, see~\cite{Kos1993}. In this case, the variance asymptotics and the Central Limit Theorem were proved by Dalmao~\cite{Dal2015}.

To conclude this section, let us mention that the setting of the present paper is related with that of~\cite{Anc2019,AL2019,GW2011,LP2019}. Indeed, the Bargmann--Fock process introduced previously is the universal local scaling limit, as $d \to +\infty$, of a random section of degree $d$ in the complex Fubini--Study ensemble. See~\cite{AL2019} for more details.


\subsection{Organization of the paper}
\label{subsec organization of the paper}

The content of this paper is organized as follows. In Section~\ref{sec framework}, we introduce our framework and the random measures $\nu_R$ we are interested in. We also introduce some useful notations that will appear throughout the paper. In Section~\ref{sec Kac-Rice formulas and mean number of zeros}, we recall the Kac--Rice formulas. As first applications, we prove that the Kac--Rice density $\rho_k$ is the $k$-point function of the random point process $Z$ and Proposition~\ref{prop expectation}. Section~\ref{sec proof of proposition variance} is dedicated to the proof of the covariance estimates of Proposition~\ref{prop variance}. In Section~\ref{sec divided differences}, we introduce the divided differences associated with a function and study the distribution of the divided differences associated with a stationary Gaussian process. In Section~\ref{sec Kac-Rice densities revisited and clustering}, we use the divided differences to derive alternative expressions of the Kac--Rice densities. In particular, we prove Theorem~\ref{thm vanishing order} in Section~\ref{subsec proof of theorem vanishing order} and Theorem~\ref{thm clustering} in Section~\ref{subsec proof of theorem clustering}. Section~\ref{sec proof theorem moments} is concerned with the proof of Theorem~\ref{thm moments} and Section~\ref{sec limit theorems} is concerned with the proofs of the limit Theorems~\ref{thm LLN} and~\ref{thm CLT}. This paper also contains three appendices. In Appendix~\ref{sec examples of smooth non-degenerate processes}, we build examples of Gaussian processes satisfying the hypotheses of our main theorems. Appendix~\ref{sec properties of the density function F} contains the proofs of some auxiliary results related to the proof of Proposition~\ref{prop variance}. Finally, Appendix~\ref{sec a Gaussian lemma} is dedicated to the proof of a lemma pertaining to the regularity of the Kac--Rice densities.


\section{Framework}
\label{sec framework}

In this section, we introduce the random measures we study in this paper. First, in Section~\ref{subsec partitions, products and diagonal inclusions}, we introduce some notations related to partitions of finite sets and diagonals in Cartesian products. In Section~\ref{subsec stationary Gaussian processes and correlation functions}, we introduce properly the random processes we are interested in and their correlation functions. Finally, in Section~\ref{subsec zeros of stationary Gaussian processes}, we prove that the vanishing locus of the processes introduced in Section~\ref{subsec stationary Gaussian processes and correlation functions} is almost surely nice (see Lemma~\ref{lem Z as closed discrete}), and we introduce several counting measures associated with this random set.


\subsection{Partitions, products and diagonal inclusions}
\label{subsec partitions, products and diagonal inclusions}

Let us first introduce some notations that will be useful throughout the paper. Recall that we already defined the set $\pa_A$ (resp.~$\pa_k$) of partitions of a finite set $A$ (resp.~of $\{1,\dots,k\}$) and the set $\pp_A$ (resp.~$\pp_k$) of its partitions into pairs (see Definition~\ref{def partitions}).

\begin{ntns}
\label{ntn product indexed by A}
Let $A$ be a finite set and let $Z$ be any set.
\begin{itemize}
\item We denote by $\card(A)$ or by $\norm{A}$ the cardinality of $A$.
\item We denote by $Z^A$ the Cartesian product of $\norm{A}$ copies of $Z$, indexed by the elements of $A$.
\item A generic element of $Z^A$ is denoted by $\underline{x}_A =(x_a)_{a \in A}$, or more simply by $x$. If $B \subset A$ we denote by $\underline{x}_B =(x_a)_{a \in B}$.
\item Let $(\phi_a)_{a \in A}$ be functions on $Z$, we denote by $\phi_A = \boxtimes_{a\in A}\phi_a$ the function on $Z^A$ defined by: $\phi_A(\underline{x}_A)=\prod_{a\in A}\phi_a(x_a)$, for all $\underline{x}_A = (x_a)_{a \in A} \in Z^A$. If $A$ is of the form $\{1,\dots,k\}$ with $k \in \N^*$, we use the simpler notation $\phi=\phi_A$.
\end{itemize}
\end{ntns}

\begin{dfn}[Diagonals]
\label{def diagonals}
Let $A$ be a non-empty finite set, we denote by $\Delta_A$ the \emph{large diagonal} of $\R^A$:
\begin{equation*}
\Delta_A = \left\{(x_a)_{a \in A}\in \R^A \mvert \exists a,b \in A \text{ such that } a \neq b \text{ and } x_a = x_b \right\}.
\end{equation*}
Moreover, for all $\I \in \pa_A$, we denote by
\begin{equation*}
\Delta_{A,\I} = \left\{(x_a)_{a \in A}\in \R^A \mvert \forall a,b \in A, \left(x_a=x_b \iff \exists I \in \I \text{ such that } a \in I \text{ and } b \in I \right)\right\}.
\end{equation*}
If $A = \{1,\dots,k\}$, we use the simpler notations $\Delta_k=\Delta_A$ and $\Delta_{k,\I}=\Delta_{A,\I}$.
\end{dfn}

\begin{dfn}[Diagonal inclusions]
\label{def diagonal inclusions}
Let $A$ be a non-empty finite set and let $\I \in \pa_A$. The \emph{diagonal inclusion} $\iota_\I$ is the function from $\R^\I$ to $\R^A$ defined by: for all $\underline{x}_\I= (x_I)_{I \in \I} \in \R^\I$, $\iota_\I(\underline{x}_\I)= \underline{y}_A= (y_a)_{a\in A}$, where for all $I \in \I$, for all $a \in I$, we set $y_a = x_I$.
\end{dfn}

\begin{rem}
\label{rem diagonal inclusions}
With these definitions, we have $\R^A = \bigsqcup_{\I \in \pa_A} \Delta_{A,\I}$ and $\Delta_A = \bigsqcup_{\I \in \pa_A \setminus \{\I_{\min}(A)\}} \Delta_{A,\I}$, where we denoted $\I_{\min}(A)= \left\{ \{a\} \mvert a \in A\right\}$ (this notation comes from the fact that $\I_{\min}(A)$ is the minimum of $\pa_A$ for some partial order, see Definition~\ref{def partial order pa p}). Moreover, for all $\I \in \pa_A$, the map $\iota_\I$ is a smooth diffeomorphism from $\R^\I \setminus \Delta_\I$ onto $\Delta_{A,\I} \subset \R^A$. Note that $\Delta_{A,\I_{\min}(A)}$ is the configuration space $\R^A \setminus \Delta_A$ of $\norm{A}$ distinct points in~$\R$. In the following, we avoid using the notation $\Delta_{A,\I_{\min}(A)}$ and use $\R^A \setminus \Delta_A$ instead.
\end{rem}

\begin{rem}
\label{rem diagonal inclusions and vanishing order}
Let $y \in \R^k$, the partition $\I$ defined in Theorem~\ref{thm vanishing order} is the unique $\I \in \pa_k$ such that $y \in \Delta_{k,\I}$. With the notations of Theorem~\ref{thm vanishing order}, there exists $(y_I)_{I \in \I} \in \R^\I \setminus \Delta_\I$ such that $y= \iota_\I((y_I)_{I \in \I})$.
\end{rem}

Let $Z \subset \R$ be a closed discrete subset. In particular, for any $K \subset \R$ compact, $Z \cap K$ is finite. As in the introduction, we denote by $\nu = \sum_{x \in Z} \delta_x$ the \emph{counting measure} of $Z$. More generally, for any non-empty finite set $A$, we can define the counting measure of $Z^A \subset \R^A$.

\begin{dfn}[Counting measures]
\label{def nu A}
Let $Z \subset \R$ be closed and discrete and let $A$ be a non-empty finite set. We denote~by:
\begin{align*}
\nu^A &= \sum_{x \in Z^A} \delta_x & &\text{and} & \nu^{[A]} &= \sum_{x \in Z^A \setminus \Delta_A} \delta_x,
\end{align*}
where $\delta_x$ is the unit Dirac mass at $x \in \R^A$ and $\Delta_A$ is defined by Definition~\ref{def diagonals}. These counting measures act on a function $\phi:\R^A \to \R$ as follows:
\begin{itemize}
\item if $\phi \geq 0$ or $\sum_{x \in Z^A} \norm{\phi(x)} < +\infty$ then $\prsc{\nu^A}{\phi} = \sum_{x \in Z^A} \phi(x)$,
\item if $\phi \geq 0$ or $\sum_{x \in Z^A \setminus \Delta_A} \norm{\phi(x)} < +\infty$ then $\prsc{\nu^{[A]}}{\phi} = \sum_{x \in Z^A \setminus \Delta_A} \phi(x)$,
\end{itemize}
Quantities of the form $\prsc{\nu^A}{\phi}$ (resp.~$\prsc{\nu^{[A]}}{\phi}$) are called the \emph{linear statistics} of $\nu^A$ (resp.~$\nu^{[A]}$). As usual, if $A = \{1,\dots,k\}$, we denote $\nu^k=\nu^A$ and $\nu^{[k]}=\nu^{[A]}$.
\end{dfn}

Note that $\nu^A$ (resp.~$\nu^{[A]}$) defines a Radon measure on $\R^A$, that is a continuous linear form on $\left(\mathcal{C}^0_c(\R^A),\Norm{\cdot}_\infty\right)$, the space of compactly supported continuous functions on $\R^A$ equipped with the sup-norm. Note also that the measure $\nu^A$ and $\nu^{[A]}$ are completely characterized by the linear statistics $\left\{ \prsc{\nu^A}{\phi} \mvert \phi \in \mathcal{C}^0_c(\R^A) \right\}$ and $\left\{ \prsc{\nu^{[A]}}{\phi} \mvert \phi \in \mathcal{C}^0_c(\R^A) \right\}$ respectively.

\begin{lem}
\label{lem decomposition nu ks}
Let $Z \subset \R$ be closed and discrete and let $A$ be a non-empty finite set. Using the notations introduced above, we have $\nu^A = \sum_{\I \in \pa_A} (\iota_\I)_*\nu^{[\I]}$.
\end{lem}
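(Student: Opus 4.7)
The plan is to reduce the identity to a set-theoretic bijection between $Z^\I \setminus \Delta_\I$ and $Z^A \cap \Delta_{A,\I}$, induced by the diagonal inclusion $\iota_\I$, and then assemble these bijections using the partition of $\R^A$ recalled in Remark~\ref{rem diagonal inclusions}. Since $\nu^A$ and $\nu^{[\I]}$ are defined as sums of Dirac masses, pushforward and summation commute with one another, so the proof is purely combinatorial; no measure-theoretic subtlety should arise.

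More concretely, here is how I would proceed. First, I would recall from Remark~\ref{rem diagonal inclusions} that $\R^A = \bigsqcup_{\I \in \pa_A} \Delta_{A,\I}$, so that $Z^A = \bigsqcup_{\I \in \pa_A} \bigl(Z^A \cap \Delta_{A,\I}\bigr)$ and consequently
\begin{equation*}
\nu^A = \sum_{x \in Z^A} \delta_x = \sum_{\I \in \pa_A} \sum_{x \in Z^A \cap \Delta_{A,\I}} \delta_x.
\end{equation*}
Next I would fix $\I \in \pa_A$ and check that $\iota_\I$ restricts to a bijection from $Z^\I \setminus \Delta_\I$ onto $Z^A \cap \Delta_{A,\I}$. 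On the one hand, if $\underline{y}_\I = (y_I)_{I \in \I} \in Z^\I \setminus \Delta_\I$, then each component of $\iota_\I(\underline{y}_\I)$ belongs to $Z$, and two components indexed by $a, b \in A$ coincide if and only if $a$ and $b$ lie in the same block of $\I$; hence $\iota_\I(\underline{y}_\I) \in Z^A \cap \Delta_{A,\I}$. On the other hand, if $\underline{x}_A \in Z^A \cap \Delta_{A,\I}$, then the common value of $(x_a)_{a \in I}$ for each $I \in \I$ defines an element of $Z^\I$ whose components are pairwise distinct, and which is the unique preimage of $\underline{x}_A$ under $\iota_\I$.

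With this bijection in hand, by definition of the pushforward of a sum of Dirac masses we get
\begin{equation*}
(\iota_\I)_* \nu^{[\I]} = \sum_{\underline{y}_\I \in Z^\I \setminus \Delta_\I} \delta_{\iota_\I(\underline{y}_\I)} = \sum_{x \in Z^A \cap \Delta_{A,\I}} \delta_x.
\end{equation*}
Summing over $\I \in \pa_A$ and comparing with the first display yields the desired identity. The only point one might want to state carefully is that $(\iota_\I)_*\nu^{[\I]}$ is well-defined as a Radon measure on $\R^A$, which follows from the fact that $\iota_\I$ is a proper continuous map (its image $\Delta_{A,\I}$ is closed in $\R^A$ and $\iota_\I$ is a homeomorphism onto a closed subset when restricted to its domain, hence preimages of compacts are compact); I do not expect any genuine obstacle here.
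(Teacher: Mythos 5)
Your proposal is correct and follows the same route as the paper's proof: decompose $\R^A$ into the strata $\Delta_{A,\I}$, observe that $\iota_\I$ restricts to a bijection from $Z^\I \setminus \Delta_\I$ onto $Z^A \cap \Delta_{A,\I}$, and sum over $\I$. You spell out the bijection and the properness point a bit more explicitly than the paper does, but the argument is the same.
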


\begin{proof}
Recall that $\R^A = \bigsqcup_{\I \in \pa_A} \Delta_{A,\I}$. Then, we have:
\begin{equation*}
\nu^A = \sum_{x \in Z^A} \delta_x = \sum_{\I \in \pa_A} \left(\sum_{x \in Z^A \cap \Delta_{A,\I}} \delta_x \right).
\end{equation*}
Let $\I \in \pa_A$, recall that $\iota_\I$ defines a smooth diffeomorphism from $\R^\I \setminus \Delta_\I$ onto $\Delta_{A,\I}$. Moreover, $\iota_\I(Z^\I \setminus \Delta_\I) = Z^A \cap \Delta_{A,\I}$ (see Definition~\ref{def diagonals} and~\ref{def diagonal inclusions}). Hence,
\begin{equation*}
\sum_{x \in Z^A \cap \Delta_{A,\I}} \delta_x = \sum_{y \in Z^\I \setminus \Delta_\I} \delta_{\iota_\I(y)} = \sum_{y \in Z^\I \setminus \Delta_\I} (\iota_\I)_*\delta_y = (\iota_\I)_*\nu^{[\I]}.\qedhere
\end{equation*}
\end{proof}


\subsection{Stationary Gaussian processes and correlation functions}
\label{subsec stationary Gaussian processes and correlation functions}

In this section, we introduce the random processes we study and how they are normalized. Let $f:\R \to \R$ be a stationary centered Gaussian process. By \emph{stationary}, we mean that, for all $t \in \R$, the process $x \mapsto f(x+t)$ is distributed as $f$. Let $K:\R^2 \to \R$ be the \emph{correlation kernel} of $f$, defined by $K:(x,y) \mapsto \esp{f(x)f(y)}$. Since $f$ is centered, its distribution is characterized by $K$. Let $\kappa:x \mapsto K(0,x)$ denote the \emph{correlation function} of $f$. The stationarity of $f$ is equivalent to the fact that $K(x,y) = \kappa(y-x)$ for all $(x,y) \in \R^2$. Note that, since $K$ is symmetric, then $\kappa$ is an even function.

\begin{dfn}[$\mathcal{C}^p$-process]
\label{def Cp process}
Let $p \in \N \cup \{\infty\}$, we say that $f$ is a process \emph{of class $\mathcal{C}^p$} (or a \emph{$\mathcal{C}^p$-process}) if its trajectories are almost surely of class~$\mathcal{C}^p$.
\end{dfn}

Let us assume that $f$ is of class $\mathcal{C}^p$, for some $p \in \N \cup \{\infty\}$. For all $k \in \{0,\dots,p\}$ we denote by $f^{(k)}$ the $k$-th derivative of $f$. We also use the usual notations $f'=f^{(1)}$ and $f''=f^{(2)}$. Then, for all $m \in \N^*$, for all $x_1,\dots,x_m \in \R$, for all $k_1, \dots,k_m \in \{0,\dots,p\}$, the random vector $(f^{(k_j)}(x_j))_{1 \leq j \leq m}$ is a centered Gaussian vector in $\R^m$. Let us denote by $\partial_1$ (resp.~$\partial_2$) the partial derivative with respect to the first (resp.~second) variable for functions from $\R^2$ to $\R$. For all $k$ and $l$ in $\{0,\dots,p\}$, the partial derivative $\partial_1^k\partial_2^l K$ is well-defined and continuous on $\R^2$. Moreover, $\kappa$ is of class $\mathcal{C}^{2p}$ and, for all $k,l \in \{0,\dots,p\}$, for all $x,y \in \R$, we have:
\begin{equation}
\label{eq covariance}
\esp{f^{(k)}(x)f^{(l)}(y)} = \partial_1^k\partial_2^lK(x,y) = (-1)^k\kappa^{(k+l)}(y-x).
\end{equation}
In particular, the variance matrix of $(f^{(k_j)}(x_j))_{1 \leq j \leq m}$ is $\begin{pmatrix}\partial_1^{k_i}\partial_2^{k_j}K(x_i,x_j)\end{pmatrix}_{1 \leq i,j \leq m}$. This material is standard. We refer the interested reader to~\cite[Appendix~A.2 and~A.3]{NS2016} for more details.

Let us now assume that $f$ is a $\mathcal{C}^1$-process. If $\kappa(0)=0$, then for all $x \in \R$, almost surely $f(x)=0$. Then, almost surely, $f$ is continuous and for all $x \in \Q$, $f(x)=0$. Hence $f$ is almost surely the zero function. Similarly, if $\kappa''(0)=0$ then $f'$ is almost surely the zero function. Hence $f$ is almost surely constant, equal to $f(0) \sim \mathcal{N}(0,\kappa(0))$. These degenerate situations are well-understood, and we will not consider them in the following. That is, we assume that $\var{f(0)}=\kappa(0)>0$ and $\var{f'(0)} = -\kappa''(0) >0$. Without loss of generality, up to replacing $f$ by:
\begin{equation*}
x \longmapsto \frac{1}{\sqrt{\kappa(0)}}f\left(\sqrt{-\frac{\kappa(0)}{\kappa''(0)}}x\right),
\end{equation*}
we may assume that $\kappa(0)=1=-\kappa''(0)$.

\begin{dfn}[Normalization]
\label{def normalized process}
We say that a stationary centered Gaussian process $f$ of class $\mathcal{C}^1$ is \emph{normalized} if its correlation function $\kappa$ satisfies $\kappa(0)=1=-\kappa''(0)$.
\end{dfn}

\emph{In the rest of this paper, unless otherwise specified, the random process $f$ is always assumed to be a normalized stationary centered Gaussian process at least of class $\mathcal{C}^1$.}

Recall that, in Theorems~\ref{thm moments} and~\ref{thm clustering}, we consider a normalized Gaussian $\mathcal{C}^k$-process $f$ whose correlation function $\kappa$ satisfies some form of decay at infinity, as well as its first derivatives. In the remainder of this section, we discuss these conditions. Let us first check that they make sense. Let $l \in \{0,\dots,k\}$, for all $x \in \R$ we have:
\begin{equation*}
\norm{\kappa^{(2l)}(x)} = \norm{\esp{f^{(l)}(0)f^{(l)}(x)}} \leq \esp{f^{(l)}(0)^2}^\frac{1}{2} \esp{f^{(l)}(x)^2}^\frac{1}{2} \leq \kappa^{(2l)}(0),
\end{equation*}
and, if $l < k$,
\begin{equation*}
\norm{\kappa^{(2l+1)}(x)} = \norm{\esp{f^{(l+1)}(0)f^{(l)}(x)}} \leq \esp{f^{(l+1)}(0)^2}^\frac{1}{2} \esp{f^{(l)}(x)^2}^\frac{1}{2} \leq \left(\kappa^{(2l+2)}(0)\kappa^{(2l)}(0)\right)^\frac{1}{2}.
\end{equation*}
Hence, $\kappa$ and all its derivatives of order at most $2k$ are bounded on $\R$. Recalling Notation~\ref{ntn norm kappa}, this means that $\Norm{\kappa}_{l,\eta}$ is well-defined for any $l \in \{0,\dots,2k\}$ and $\eta \geq 0$. Moreover, the previous inequalities show that $\Norm{\kappa}_{2k} = \max \left\{\kappa^{(2l)}(0) \mvert 0 \leq l \leq k \right\}$. Note that asking for $\Norm{\kappa}_{k,\eta}$ to decay at some rate as $\eta \to +\infty$, is just a way to require that $\kappa$ and all its derivatives of order at most $k$ decay at said rate at infinity. For example, taking into account the parity of $\kappa$, the hypothesis that $\Norm{\kappa}_{k,\eta} \xrightarrow[\eta \to +\infty]{} 0$ in Theorem~\ref{thm clustering} is equivalent to asking that $\kappa^{(k)}(x) \xrightarrow[x \to +\infty]{}0$ for all $l \in \{0,\dots,k\}$.

The fact that $\kappa$ tends to $0$ infinity ensures the non-degeneracy of the finite-dimensional marginal distributions of the process $f$. Let us make this statement precise.

\begin{lem}[Non-degeneracy of the marginals]
\label{lem non-degeneracy}
Let $p \in \N$ and let $f$ be a stationary centered Gaussian process of class $\mathcal{C}^p$ whose correlation function is denoted by $\kappa$. Let us assume that $\kappa(x) \xrightarrow[x \to +\infty]{}0$. Let $m \in \N^*$, let $x_1,\dots,x_m \in \R$ and let $k_1,\dots,k_m \in \{0,\dots,p\}$ be such that the couples $((x_j,k_j))_{1 \leq j \leq m}$ are pairwise distinct. Then, the random vector $\left(f^{(k_j)}(x_j)\right)_{1 \leq j \leq m}$ is a non-degenerate centered Gaussian vector in $\R^m$.
\end{lem}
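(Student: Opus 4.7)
My plan is to argue by contradiction via the spectral decomposition of $f$. Assume that the Gaussian vector in question is degenerate, so that there exist reals $(c_j)_{1 \leq j \leq m}$, not all zero, with $L := \sum_j c_j f^{(k_j)}(x_j) = 0$ almost surely, and in particular $\esp{L^2}=0$. Since $\kappa$ is continuous and positive-definite, Bochner's theorem provides a finite positive Borel measure $\mu$ on $\R$ (the spectral measure of $f$) such that $\kappa(t) = \int_\R e^{it\lambda}\dx\mu(\lambda)$. Moreover, $\kappa$ is $\mathcal{C}^{2p}$ so differentiation under the integral gives $\kappa^{(l)}(t) = \int_\R (i\lambda)^l e^{it\lambda}\dx\mu(\lambda)$ for all $l \in \{0,\dots,2p\}$; in particular the moment $\int_\R \lambda^{2p}\dx\mu(\lambda) = (-1)^p \kappa^{(2p)}(0)$ is finite, which will justify the integral manipulations below.

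The key step will be to convert $\esp{L^2}=0$ into a pointwise vanishing statement for an exponential polynomial. Expanding $\esp{L^2}$ via Equation~\eqref{eq covariance} and inserting the Fourier representation of each $\kappa^{(k_i+k_j)}$, the double sum factors and yields
\begin{equation*}
0 \,=\, \esp{L^2} \,=\, \int_\R |P(\lambda)|^2\dx\mu(\lambda), \qquad P(\lambda) \,:=\, \sum_{j=1}^m c_j \,(i\lambda)^{k_j}\, e^{ix_j\lambda}.
\end{equation*}
Thus $P$ vanishes $\mu$-almost everywhere. Since the pairs $(x_j,k_j)$ are pairwise distinct and the $c_j$ are not all zero, the functions $\lambda \mapsto \lambda^{k_j} e^{ix_j\lambda}$ form a linearly independent family of entire functions, so $P$ is a non-zero entire function and its zero set is discrete, hence countable. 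Therefore $\mu$ is purely atomic: $\mu = \sum_{n} a_n \delta_{\lambda_n}$ for some $a_n \geq 0$ and distinct $\lambda_n \in \R$, with $\sum_n a_n = \kappa(0) = 1$.

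I will conclude by showing that an atomic $\mu$ is incompatible with the assumption $\kappa(x) \to 0$ at infinity. With $\kappa(t) = \sum_n a_n e^{it\lambda_n}$, the classical orthogonality $\frac{1}{2T}\int_{-T}^T e^{it(\lambda_n-\lambda_m)}\dx t \to \mathbf{1}_{\{\lambda_n=\lambda_m\}}$ combined with Fubini yield
\begin{equation*}
\lim_{T \to +\infty}\frac{1}{2T}\int_{-T}^{T} |\kappa(t)|^2\dx t \,=\, \sum_{n} a_n^2 \,>\, 0,
\end{equation*}
whereas $|\kappa|\leq 1$ together with dominated convergence force this limit to be $0$ under the assumption $\kappa(t)\to 0$. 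The resulting contradiction proves the lemma.

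The main obstacle I anticipate is the careful derivation of the spectral identity for $\esp{L^2}$, which requires interchanging the finite sum defining $L$ with the spectral integral and recognizing the resulting kernel as $|P|^2$; this is handled cleanly once one observes that the moment bound $\int\lambda^{2p}\dx\mu(\lambda)<+\infty$ makes Fubini applicable to the full double sum. A secondary (essentially routine) point is the linear independence over $\C$ of the family $\{\lambda\mapsto\lambda^{k_j}e^{ix_j\lambda}\}$ associated with distinct pairs $(x_j,k_j)$, which can be obtained either by a Wronskian argument or by examining the asymptotic behavior of $P$ along well-chosen directions in $\C$.
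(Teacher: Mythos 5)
Your argument is correct, and it rests on the same spectral identity
$\esp{L^2} = \int_\R \lvert P(\lambda)\rvert^2 \dx\mu(\lambda)$
that the paper uses (in the proof of Lemma~\ref{lem spectral measure and non-degeneracy}). Where you differ is in how the decay hypothesis $\kappa(x)\to 0$ enters. The paper first shows (Lemma~\ref{lem spectral measure and non-degeneracy}) that non-degeneracy follows whenever the support of the spectral measure has an accumulation point, and then deduces this condition from $\kappa\to 0$ via the Fomin--Grenander--Maruyama theorem: $\kappa\to 0$ forces $f$ to be ergodic, ergodicity is equivalent to a non-atomic spectral measure, and a non-atomic probability measure cannot be concentrated on a countable set, so its support must accumulate. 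You instead run the contrapositive directly and self-containedly: assuming a nontrivial degeneracy, the nonzero entire function $P$ has only countably many zeros, so $\mu$ is purely atomic, and Wiener's lemma (the Cesàro limit $\frac{1}{2T}\int_{-T}^T\lvert\kappa\rvert^2 \to \sum_n a_n^2 > 0$) then contradicts $\kappa\to 0$. This is a genuine, if modest, variant: it replaces the citation of the ergodicity characterization with a short explicit computation, which makes the proof more elementary and avoids introducing ergodicity as a concept. The paper's two-lemma structure has the compensating advantage that Lemma~\ref{lem spectral measure and non-degeneracy} isolates a strictly weaker sufficient condition (support has an accumulation point) that is reused in Remarks~\ref{rems normalized and non-degenerate} and Examples~\ref{ex non-degenerate and fast-decreasing}, whereas your contradiction argument is tied to the full strength of $\kappa\to 0$. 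One stylistic nit: the final step you describe as ``dominated convergence'' is really the standard Cesàro-averaging observation that a function tending to $0$ at infinity has vanishing mean-square average; the conclusion is right, but the name is slightly off.
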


\begin{proof}
Let us just sketch the proof here. The details are given in Appendix~\ref{subsec non-degeneracy spectral measure and ergodicity}. The condition that~$\kappa$ tends to $0$ at infinity implies that the process $f$ is ergodic, which is equivalent to the fact that its spectral measure has no atom. In particular, the spectral measure of $f$ has an accumulation point. This condition is enough to ensure the non-degeneracy of $\left(f^{(k_j)}(x_j)\right)_{1 \leq j \leq m}$ as soon as the couples $((x_j,k_j))_{1 \leq j \leq m}$ are pairwise distinct.
\end{proof}

We conclude this section by making a few remarks about the content of this section and its relation to the hypotheses of Theorem~\ref{thm vanishing order}.

\begin{rems}
\label{rems normalized and non-degenerate}
Let $f$ be a normalized stationary centered Gaussian process and let $\kappa$ denote its correlation function.
\begin{itemize}
\item Since $\kappa$ is even, $\kappa'(0)=0$. In particular, for all $x \in \R$, the random vector $(f(x),f'(x))$ is a standard Gaussian vector in $\R^2$. That is, for all $x \in \R$, $f(x)$ and $f'(x)$ are independent $\mathcal{N}(0,1)$ variables.
\item Let $x,y \in \R$ be such that $x \neq y$, the determinant of the variance matrix of $(f(x),f(y))$ equals $1 - \kappa(y-x)^2$. Hence, this Gaussian vector is degenerate if and only if $\norm{\kappa(y-x)}=1$. By Cauchy--Schwarz's inequality, we have $\norm{\kappa(x)} \leq \kappa(0)=1$ for all $x \in \R$. Thus, for $k=2$, the first non-degeneracy condition in Theorem~\ref{thm vanishing order} is equivalent to the fact that $\norm{\kappa(x)} < 1$ for any $x \neq 0$.
\item Let $k \in \N^*$, if $f$ is of class $\mathcal{C}^k$ then, by Lemma~\ref{lem non-degeneracy}, the fact $\kappa(x) \xrightarrow[x \to +\infty]{}0$ is enough to ensure that $f$ satisfies the hypotheses of Theorem~\ref{thm vanishing order} at any point $y \in \R^k$. This condition is sufficient but not necessary, see Lemma~\ref{lem spectral measure and non-degeneracy} below.
\end{itemize}
\end{rems}


\subsection{Zeros of stationary Gaussian processes}
\label{subsec zeros of stationary Gaussian processes}

Let us now introduce more precisely the random sets we study. Let $f$ be a normalized centered stationary Gaussian process and let us denote by $Z= f^{-1}(0)$ its vanishing locus.

\begin{lem}
\label{lem Z as closed discrete}
Let $f: \R \to \R$ be a normalized centered stationary Gaussian process and let $Z= f^{-1}(0)$. Then, almost surely, $Z$ is a closed discrete subset of $\R$.
\end{lem}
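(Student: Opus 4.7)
The plan is to dispatch the two claims (closedness, discreteness) separately, with the discreteness part being the substantial one.

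Closedness is immediate: since $f$ is assumed to be $\mathcal{C}^1$, its trajectories are almost surely continuous, and $Z=f^{-1}(0)$ is the preimage of a closed set under a continuous map. For discreteness, since $\R=\bigcup_{n \in \N}[-n,n]$, it suffices to prove that $Z\cap[a,b]$ is almost surely finite for each fixed bounded interval $[a,b]$. Suppose, on the event we wish to rule out, that $Z\cap[a,b]$ is infinite; by compactness it admits an accumulation point $x^*\in[a,b]$. Continuity of $f$ forces $f(x^*)=0$, and Rolle's theorem, applied between consecutive distinct zeros converging to $x^*$, produces a sequence $y_n\to x^*$ with $f'(y_n)=0$; since $f'$ is continuous, $f'(x^*)=0$. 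Thus the discreteness of $Z$ reduces to showing that, almost surely, there exists no $x\in[a,b]$ with $f(x)=0=f'(x)$.

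To rule out simultaneous zeros, I would apply a Bulinskaya-type argument to the continuous random field $X:[a,b]\to\R^2$ defined by $X(x)=(f(x),f'(x))$. The normalization $\kappa(0)=1=-\kappa''(0)$ together with $\kappa'(0)=0$ (from $\kappa$ being even) yields $\var{X(x)} = I_2$, so $X(x)\sim\mathcal{N}(0,I_2)$ has density bounded by $(2\pi)^{-1}$ uniformly in $x$. Since the target dimension $2$ strictly exceeds the source dimension $1$, Bulinskaya's lemma (in the form of \cite[Proposition~1.20]{AW2009}, for instance) yields
\begin{equation*}
\P\!\left(\exists x\in[a,b]:\; X(x)=0\right)=0,
\end{equation*}
as desired. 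Concretely, one partitions $[a,b]$ into $N\sim (b-a)/h$ subintervals of length $h$, observes that $\P(|X(x_k)|\leq \epsilon)=O(\epsilon^2)$ by the uniform density bound, and controls the event $\{\exists x,\,|X(x)|\leq \epsilon/2\}$ by the modulus of continuity $\omega_X(h)\to 0$ of $X$ on the compact interval.

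The only delicate point is the application of Bulinskaya's lemma under the mere $\mathcal{C}^1$ assumption on $f$. The standard statement requires $X=(f,f')$ to be $\mathcal{C}^1$, i.e., $f$ to be $\mathcal{C}^2$. If one insists on working strictly within the $\mathcal{C}^1$ hypothesis, this is the technical obstacle: one needs a Bulinskaya-type lemma for merely continuous random fields with controlled pointwise density, which can be obtained by combining the partition estimate above with the convergence in probability $\omega_X(h)\xrightarrow[h\to 0]{}0$ and a careful choice of $h=h(\epsilon)$. Alternatively, one may invoke a Gaussian-specific refinement (Dudley's entropy integral gives quantitative moduli of continuity for Gaussian fields, so the $\mathcal{C}^1$ assumption on $f$ gives more than uniform continuity of $X$ on compact sets).
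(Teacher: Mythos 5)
Your overall plan is exactly the paper's: closedness is immediate, and discreteness reduces to showing that almost surely there is no $x$ with $f(x)=0=f'(x)$, which is handled by a Bulinskaya-type lemma. The paper, however, invokes \cite[Proposition~1.20]{AW2009} in its \emph{scalar} form: for a real-valued $\mathcal{C}^1$-process $Z$ whose pointwise marginals have a locally bounded density, almost surely $\{t : Z(t)=0,\, Z'(t)=0\}$ is empty. Applied to $Z=f$, with $f(x)\sim\mathcal{N}(0,1)$, this needs nothing beyond the standing $\mathcal{C}^1$ hypothesis. The reason this scalar form closes under $\mathcal{C}^1$ is that a degenerate zero at $t_0$ forces \emph{second-order} vanishing of $f$: for a grid point $x_k$ at distance at most $h$ from $t_0$, $\norm{f(x_k)} = \norm{\int_{t_0}^{x_k}(f'(s)-f'(t_0))\dx s} \leq h\,\omega_{f'}(h)$. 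Summing $\P(\norm{f(x_k)}\leq h\delta) = O(h\delta)$ over $N\sim (b-a)/h$ grid points gives $O(\delta)$ \emph{uniformly} in $h$, and one concludes by sending $h\to 0$ (which kills $\P(\omega_{f'}(h)>\delta)$ by a.s.\ uniform continuity of $f'$ on compacts) and then $\delta\to 0$. No rate on $\omega_{f'}$ is required.

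Your reformulation via the $\R^2$-valued field $X=(f,f')$ throws away exactly this quadratic gain. At a zero of $X$ you only have $\norm{X(x_k)}\leq\omega_X(h)$, and the density bound gives $\P(\norm{X(x_k)}\leq\epsilon)=O(\epsilon^2)$; the union bound is then $O(\omega_X(h)^2/h)$, which requires $\omega_X(h)=o(\sqrt{h})$, i.e.\ $f'$ to be $\alpha$-H\"older for some $\alpha>\tfrac{1}{2}$. You correctly sense that the $\mathcal{C}^1$ hypothesis does not deliver this, but neither of your repairs does either. The ``careful choice of $h=h(\epsilon)$'' is a union bound against a \emph{random} modulus of continuity: with $\omega_X(h)\to 0$ in probability but no rate, one cannot make $\P(\omega_X(h)>\epsilon)\to 0$ and $N\epsilon^2\to 0$ simultaneously. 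And Dudley's entropy bound controls $\omega_{f'}$ through the increment variance $\esp{(f'(s)-f'(t))^2} = 2\bigl(\kappa''(t-s)-\kappa''(0)\bigr)$; under the mere $\mathcal{C}^1$ hypothesis $\kappa$ is only known to be $\mathcal{C}^2$, so this tends to $0$ with no power-law rate, and no H\"older exponent for $f'$ follows. The fix is simply to apply the lemma in its scalar, degenerate-zero form directly to $f$, as the paper does; the rest of your argument (accumulation point of zeros yields a degenerate zero) then goes through unchanged.
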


\begin{proof}
The process $f$ is almost surely of class $\mathcal{C}^1$. By Bulinskaya's Lemma (see~\cite[Proposition.~1.20]{AW2009}), since $f(x) \sim \mathcal{N}(0,1)$ for all $x \in \R$, we have that $f$ vanishes transversally almost surely. That is, almost surely, for all $x \in \R$ such that $f(x) = 0$ we have $f'(x) \neq 0$. Then, $Z$ is almost surely a closed $0$-dimensional submanifold of $\R$. Equivalently, $Z$ is almost surely a closed discrete subset of $\R$.
\end{proof}

\begin{dfn}
\label{def ZR nuR}
Let $R >0$.
\begin{itemize}
\item We set $Z_R= \frac{1}{R}Z = \{ x \in \R \mid f(Rx) = 0\}$.
\item Let $\nu_R = \sum_{x \in Z_R} \delta_x$ (resp.~$\nu = \sum_{x \in Z} \delta_x$) denote the counting measure of $Z_R$ (resp.~$Z$).
\item As in Definition~\ref{def nu A}, for any non-empty finite set $A$, we denote by $\nu^A$ (resp.~$\nu^{[A]}$) the counting measure of the random set $Z^A$ (resp.~$Z^A \setminus \Delta_A$), .
\end{itemize}
\end{dfn}

In this paper, we study the asymptotic distribution of $\nu_R$ as $R \to +\infty$ through the asymptotics of its linear statistics $\prsc{\nu_R}{\phi}$, where $\phi:\R \to \R$ is a nice enough function.

\begin{ntns}
\label{ntn phiR and indicator}
As in Section~\ref{sec introduction}, we will use the following notations.
\begin{itemize}
\item Let $\Phi:\R^A \to \R$, for any $R>0$ we set $\Phi_R:x \mapsto \phi(\frac{x}{R})$. In particular, if $\phi:\R \to \R$, we have $\prsc{\nu_R}{\phi} = \prsc{\nu}{\phi_R}$.
\item Let $A$ be a subset of some set $B$, we denote by $\mathbf{1}_A:B \to \R$ the indicator function of $A$. For example, if $A \subset \R$, we have $\card(Z \cap A) = \prsc{\nu}{\mathbf{1}_A}$.
\end{itemize}
\end{ntns}

\begin{rem}
\label{rem factorial moments}
Let $k \in \N^*$, then $\nu^{[k]}$ is the counting measure of $Z^k \setminus \Delta_k$. Let $B$ be a Borel subset of $\R$, we denote by $\mathcal{N}_B = \card(Z \cap B) = \prsc{\nu}{\mathbf{1}_B}$. The $k$-th \emph{factorial moment} of~$\mathcal{N}_B$ is defined as the expectation of $\mathcal{N}_B^{[k]}=\mathcal{N}_B(\mathcal{N}_B-1) \cdots (\mathcal{N}_B-k+1)$. As explained in~\cite[p.~58]{AW2009}, we have $\mathcal{N}_B^{[k]} = \prsc{\nu^{[k]}}{\boxtimes_{i=1}^k \mathbf{1}_B} = \prsc{\nu^{[k]}}{\mathbf{1}_{B^k}}$, hence $\esp{\prsc{\nu^{[k]}}{\mathbf{1}_{B^k}}}$ is the $k$-th factorial moment of $\card(Z \cap B)$. We will see below that this quantities are well-defined in $[0,+\infty]$.
\end{rem}


\section{Kac--Rice formulas and mean number of zeros}
\label{sec Kac-Rice formulas and mean number of zeros}

In this section, we state the so-called Kac--Rice formulas, that are one of the tools in the proofs of Theorem~\ref{thm moments} and Propositions~\ref{prop expectation} and~\ref{prop variance}. The Kac--Rice formulas are recalled in Section~\ref{subsec Kac-Rice formulas}. In Section~\ref{subsec Kac-Rice density and k point functions}, we related the Kac--Rice density introduced in Definition~\ref{def Kac-Rice densities} with the $k$-point function of the random set $Z=f^{-1}(0)$ defined in Section~\ref{subsec zeros of stationary Gaussian processes}. Then, in Section~\ref{subsec proof of proposition expectation}, we prove Proposition~\ref{prop expectation}.


\subsection{Kac--Rice formulas}
\label{subsec Kac-Rice formulas}

In this section, we recall the Kac--Rice formulas (see Proposition~\ref{prop Kac-Rice formula}). A standard reference for this material is \cite[Chapters~3 and~6]{AW2009}, see also~\cite[Chapter~11]{AT2007}. Note however that formulas of this kind already appear in the work of Cramér and Leadbetter~\cite{CL1965}.

First, we need to introduce the Kac--Rice densities associated with a non-degenerate Gaussian process of class $\mathcal{C}^1$. 

\begin{dfn}[Kac--Rice densities]
\label{def Kac-Rice densities}
Let $f$ be a centered Gaussian $\mathcal{C}^1$-process. Let $k \in \N^*$ and let $x=(x_i)_{1 \leq i \leq k} \in \R^k$. We denote by
\begin{equation}
\label{eq def Dkx}
D_k(x) = \det\left(\var{f(x_1),\dots,f(x_k)}\right).
\end{equation}
If $\left(f(x_1),\dots,f(x_k)\right)$ is non-degenerate, i.e.~if $D_k(x) \neq 0$, we denote by
\begin{equation}
\label{eq def Nkx}
N_k(x) = \espcond{\prod_{i=1}^k \norm{f'(x_i)}}{\forall i \in \{1,\dots,k\}, f(x_i)=0},
\end{equation}
the conditional expectation of $\norm{f'(x_1)}\cdots \norm{f'(x_k)}$ given that $f(x_1)= \dots = f(x_k)=0$, and by
\begin{equation}
\label{eq def rho kx}
\rho_k(x) = \frac{N_k(x)}{(2\pi)^\frac{k}{2}D_k(x)^\frac{1}{2}}.
\end{equation}
We refer to $\rho_k$ as the \emph{Kac--Rice density} of order $k$ associated with $f$.
\end{dfn}

\begin{rem}
\label{rem Kac-Rice densities}
By Lemma~\ref{lem non-degeneracy}, if $\kappa(x) \xrightarrow[x \to +\infty]{}0$ then, for all $k \in \N^*$ the Kac--Rice density $\rho_k$ is well-defined on $\R^k \setminus \Delta_k$. Note however that $D_k$ always vanishes along $\Delta_k$.
\end{rem}

\begin{ex}
\label{ex Kac-Rice densities}
Let $f$ be a normalized Gaussian process (see Definition~\ref{def normalized process})
\begin{itemize}
\item For all $x \in \R$, $f(x)$ and $f'(x)$ are independent $\mathcal{N}(0,1)$ variables (see Remark~\ref{rems normalized and non-degenerate}). Hence, $D_1(x) = \var{f(x)}=1$ and $N_1(x) =\esp{\norm{f'(x)}}=\sqrt{\frac{2}{\pi}}$. Thus, $\rho_1$ is constant equal to $\frac{1}{\pi}$.
\item Let $\kappa$ denote the correlation function of $f$. For all $(x,y) \in \R^2$, we have $D_2(x,y) = 1 - \kappa(y-x)^2$. Hence $\rho_2$ is well-defined on $\R^2 \setminus \Delta_2$ if and only if $\norm{\kappa(x)} < 1$ for all $x \in \R \setminus \{0\}$.
\end{itemize}
\end{ex}

\begin{ntn}[Symmetric group]
\label{ntn permutation}
Let $A$ be a non-empty finite set, we denote by $\mathfrak{S}_A$ the group of permutations of~$A$. For all $\sigma \in \mathfrak{S}_A$ and $x=(x_a)_{a \in A} \in \R^A$, we denote by $\sigma \cdot x = (x_{\sigma(a)})_{a \in A}$. If $A=\{1,\dots,k\}$, we denote $\mathfrak{S}_k = \mathfrak{S}_{A}$ for simplicity.
\end{ntn}

\begin{lem}[Symmetry]
\label{lem Kac-Rice densities symmetric}
Let $k \in \N^*$, we have $D_k(\sigma \cdot x) = D_k(x)$ for all $x \in \R^k$. Moreover, if $D_k(x) \neq 0$, then $N_k(\sigma \cdot x) = N_k(x)$ and $\rho_k(\sigma \cdot x) = \rho_k(x)$.
\end{lem}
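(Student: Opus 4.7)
The plan is to observe that all three quantities are built from objects that are manifestly invariant under reordering the coordinates of $x$, so the symmetry is essentially a bookkeeping statement about permutation matrices and symmetric functionals.

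First I would treat $D_k$. For $\sigma \in \mathfrak{S}_k$, let $P_\sigma$ denote the associated permutation matrix, so that $(f(x_{\sigma(1)}),\dots,f(x_{\sigma(k)})) = P_\sigma \cdot (f(x_1),\dots,f(x_k))^{\mathrm{t}}$ as column vectors (with the appropriate convention). Then by the standard transformation rule for variances,
\begin{equation*}
\var\bigl(f(x_{\sigma(1)}),\dots,f(x_{\sigma(k)})\bigr) = P_\sigma \var\bigl(f(x_1),\dots,f(x_k)\bigr) \trans{P_\sigma}.
\end{equation*}
Taking determinants, and using $\det(P_\sigma) = \pm 1$, yields $D_k(\sigma \cdot x) = D_k(x)$ at once. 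In particular the condition $D_k(x) \neq 0$ is itself invariant under permutation, so $N_k(\sigma \cdot x)$ makes sense as soon as $N_k(x)$ does.

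Next, for $N_k$, the key point is that both the random quantity being averaged and the event on which we condition are symmetric in the indices. Indeed, the product $\prod_{i=1}^k \norm{f'(x_i)}$ is literally unchanged when the factors are reordered, i.e.\ $\prod_{i=1}^k \norm{f'(x_{\sigma(i)})} = \prod_{i=1}^k \norm{f'(x_i)}$, and the conditioning event $\{\forall i,\ f(x_i)=0\}$ coincides with $\{\forall i,\ f(x_{\sigma(i)})=0\}$. Taking conditional expectations of equal random variables with respect to equal $\sigma$-algebras (equivalently, with respect to equal events of positive density in the Gaussian sense) gives $N_k(\sigma \cdot x) = N_k(x)$.

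Finally, the identity $\rho_k(\sigma \cdot x) = \rho_k(x)$ follows immediately by combining the two previous symmetries with the definition $\rho_k(x) = (2\pi)^{-k/2} N_k(x) D_k(x)^{-1/2}$. There is no real obstacle here: the whole statement is a direct consequence of the invariance of determinants under conjugation by permutation matrices and the symmetry of the integrand and the conditioning in the definition of $N_k$.
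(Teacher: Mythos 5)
Your proof is correct and takes essentially the same approach as the paper: both reduce the $D_k$ symmetry to $\det(P_\sigma)^2 = 1$ after conjugation by a permutation matrix, and both observe that $N_k$ is symmetric because the product and the conditioning event are individually invariant under reindexing. Your write-up is simply a slightly more explicit version of the paper's terse "this follows from the definition."
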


\begin{proof}
Let $\sigma \in \mathfrak{S}_k$ and let $\Sigma$ denote the linear map $x \mapsto \sigma \cdot x$. For all $x \in \R^k$, we have:
\begin{equation*}
D_k(\sigma \cdot x) = \det \var{f(x_{\sigma(1)}),\dots,f(x_{\sigma(k)})} = \det \var{\Sigma \left(f(x_1),\dots,f(x_k)\right)} = \det(\Sigma)^2 D_k(x).
\end{equation*}
Since the matrix of $\Sigma$ in the canonical basis of $\R^k$ is a permutation matrix, $\det(\Sigma)^2=1$. This proves that $D_k$ is symmetric on $\R^k$.

If $D_k(x) \neq 0$, the first point shows that $D_k(\sigma \cdot x) \neq 0$, so that $N_k(x)$, $N_k(\sigma \cdot x)$, $\rho_k(x)$ and $\rho_k(\sigma \cdot x)$ are well-defined. To conclude the proof it is enough to check that $N_k(\sigma \cdot x) = N_k(x)$. This follows from the definition of $N_k$, see Equation~\eqref{eq def Nkx}.
\end{proof}

We can now state the Kac--Rice formula itself.

\begin{prop}[Kac--Rice formula]
\label{prop Kac-Rice formula}
Let $f$ be a centered Gaussian process of class $\mathcal{C}^1$ and let $Z$ denote its zero set. Let $k \in \N^*$ and let $\nu^{[k]}$ be the counting measure of $Z^k \setminus \Delta_k$. Let $U$ be an open subset of $\R^k$ such that, for all $x \in U \setminus \Delta_k$, $D_k(x) \neq 0$ (cf.~Definition~\ref{def Kac-Rice densities}). Let $\Phi:\R^k \to \R$ be a Borel function supported in $U$ satisfying one of the following conditions:
\begin{itemize}
\item the function $\Phi$ is non-negative;
\item the function $\Phi\rho_k$ is Lebesgue-integrable on $\R^k$;
\item the random variable $\prsc{\nu^{[k]}}{\Phi}$ is integrable.
\end{itemize}
Then we have:
\begin{equation*}
\esp{\prsc{\nu^{[k]}}{\Phi}} = \int_{x \in \R^k} \Phi(x) \rho_k(x) \dx x,
\end{equation*}
where $\dx x$ denote the Lebesgue measure of $\R^k$.
\end{prop}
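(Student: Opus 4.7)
The plan is to follow the classical approach to Kac--Rice formulas (see e.g.~\cite{AW2009,CL1965}): prove the identity first for well-behaved test functions via an approximation-of-identity argument, and then extend by monotone class / linearity to the three integrability regimes listed. More precisely, I first reduce to the case where $\Phi$ is nonnegative, continuous, and compactly supported in an open set $V$ whose closure is contained in $U \setminus \Delta_k$. On such a $V$, the standing assumption $D_k > 0$ combined with Lemma~\ref{lem Kac-Rice densities symmetric} and standard Gaussian regression guarantees that $\rho_k$ is continuous, hence bounded, on $\overline{V}$.

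For such $\Phi$, let $(\delta_\epsilon)_{\epsilon>0}$ be a smooth nonnegative mollifier on $\R$ with mass $1$ and support in $[-\epsilon,\epsilon]$. By Bulinskaya's lemma (as used in the proof of Lemma~\ref{lem Z as closed discrete}), $f$ almost surely vanishes transversally on any bounded interval containing the projection of $\overline{V}$. Hence, for $\epsilon$ small enough, the set $\{x \in \R \mid \norm{f(x)} \leq \epsilon\}$ decomposes locally near each zero as a small interval on which $f$ is a diffeomorphism, and the change of variable $u = f(x)$ gives, almost surely,
\begin{equation*}
\prsc{\nu^{[k]}}{\Phi} = \lim_{\epsilon \to 0^+} \int_{\R^k} \Phi(x) \prod_{i=1}^k \delta_\epsilon(f(x_i)) \norm{f'(x_i)} \dx x.
\end{equation*}

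I then take expectations on both sides. Using Fubini (the integrand is nonnegative), the right-hand side equals $\lim_{\epsilon \to 0} \int_{\R^k} \Phi(x)\, I_\epsilon(x) \dx x$, where
\begin{equation*}
I_\epsilon(x) = \esp{\prod_{i=1}^k \delta_\epsilon(f(x_i)) \norm{f'(x_i)}} = \int_{\R^k} \Bigl(\prod_{i=1}^k \delta_\epsilon(u_i)\Bigr) \espcond{\prod_{i=1}^k \norm{f'(x_i)}}{f(x_j) = u_j, \, 1 \leq j \leq k} p_x(u) \dx u,
\end{equation*}
and $p_x$ is the Gaussian density of $(f(x_1),\dots,f(x_k))$ at $u \in \R^k$. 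For $x \in \overline{V}$, the non-degeneracy of this Gaussian vector makes $p_x(u)$ and the Gaussian regression $\espcond{\cdots}{f(x_j)=u_j}$ jointly continuous in $(x,u)$ and uniformly bounded for $(x,u)$ in a neighborhood of $\overline{V} \times \{0\}$. Letting $\epsilon \to 0$, $I_\epsilon(x)$ converges uniformly on $\overline{V}$ to $p_x(0) \cdot N_k(x) = \rho_k(x)$, and dominated convergence yields the identity on $V$.

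The main obstacle is the justification of these limit exchanges, which requires the uniform non-degeneracy of $(f(x_1),\dots,f(x_k))$ on $\overline{V}$; this is exactly where the assumption $D_k \neq 0$ on $U \setminus \Delta_k$, combined with the restriction of $\Phi$'s support, is used. The extension from this nice class of $\Phi$ to the three bulleted cases is then routine: if $\Phi \geq 0$, write $\Phi$ as an increasing pointwise limit of nonnegative continuous compactly supported test functions in $U \setminus \Delta_k$ (using that $\Phi$ is Borel and supported in the open set $U$, and that $U \setminus \Delta_k$ differs from $U$ by a Lebesgue-negligible set along which $\Phi \rho_k$ contributes nothing), and apply monotone convergence on both sides; the two integrability conditions are handled by splitting $\Phi = \Phi^+ - \Phi^-$ and applying the nonnegative case to each part, the hypothesis ensuring that the resulting difference on the right-hand side (respectively left-hand side) is well-defined.
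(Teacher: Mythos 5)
Your proposal reproves the theorem from scratch via the classical mollifier argument, whereas the paper's proof is essentially a citation: it invokes \cite[Theorems~3.2, 6.2 and 6.3]{AW2009} for the case $\Phi = \mathbf{1}_{B^k}$ with $B$ Borel, and then extends by linearity over simple functions and Beppo Levi's Monotone Convergence Theorem. Re-proving the base case is legitimate in principle (your sketch is in the same spirit as the proof in \cite{AW2009}), but you leave unjustified a nontrivial step: passing from the almost-sure limit $\prsc{\nu^{[k]}}{\Phi} = \lim_{\epsilon\to 0}\int\Phi(x)\prod_i\delta_\epsilon(f(x_i))\norm{f'(x_i)}\dx x$ to the same limit in expectation requires uniform integrability or a Fatou-plus-upper-bound argument, not Fubini alone, since Fubini only exchanges the $\dx x$-integral with the expectation, not the $\epsilon\to 0$ limit.

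The more serious gap is in your final extension step. You claim that a nonnegative Borel $\Phi$ supported in $U$ can be written as an increasing pointwise limit of nonnegative continuous compactly supported functions. This is false in general: an increasing pointwise limit of continuous functions is lower semicontinuous, so for instance $\mathbf{1}_{\{y\}}$ with $y\in U\setminus\Delta_k$, or the indicator of any Borel set that is not open modulo a Lebesgue-negligible set, cannot be obtained this way. The case you most need, $\Phi=\mathbf{1}_{B^k}$ for $B$ Borel (which gives the factorial moments of $\card(Z\cap B)$), is precisely one of the cases your approximation scheme cannot reach. The fix is what the paper does: obtain the formula first for indicators of arbitrary Borel sets (this is what \cite{AW2009} proves, and it already contains a monotone-class type argument), then for simple functions by linearity, and then for all nonnegative Borel $\Phi$ by Beppo Levi, using that any nonnegative measurable function is an increasing limit of nonnegative \emph{simple} functions. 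Your treatment of the remaining two bullet points via $\Phi=\Phi^+-\Phi^-$ is fine once the nonnegative case is repaired.
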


\begin{proof}
We refer to~\cite[Theorem~3.2]{AW2009} for a proof of this result (see also \cite[Theorem~6.2 and 6.3]{AW2009}). Our statement of the Kac--Rice formula differs from those that can be found in~\cite{AW2009}. Let us comment upon the differences.

In~\cite{AW2009}, the authors are concerned with the so-called factorial moments of the number of zeros of~$f$ in some Borel set $B \subset \R$. As we already explained in Remark~\ref{rem factorial moments}, the $k$-th factorial moment of $\card(Z \cap B)$ is $\esp{\prsc{\nu^{[k]}}{\mathbf{1}_{B^k}}}$. Hence, Azaïs and Wschebor state and prove Proposition~\ref{prop Kac-Rice formula} in the case where $\Phi$ is the indicator function $\mathbf{1}_{B^k}$, where $B$ is an interval in~\cite[Theorem~3.2]{AW2009} and a Borel set in~\cite[Theorem~6.2 and 6.3]{AW2009}. Their proofs can be adapted to deal with a Borel function~$\Phi$. Alternatively, once the result is proved for the indicator function of a Borel set, it also holds for simple functions. Then, we conclude by approximating the positive and negative part of $\Phi$ by simple functions and applying Beppo Levi's Monotone Convergence Theorem.
\end{proof}

\begin{rem}
\label{rem U in Kac-Rice formula}
The only place where we use the Kac--Rice formula with $U \neq \R^k$ is the proof of Lemma~\ref{lem k point function}, where we prove that $\rho_k$ coincides with the $k$-point function of $Z$.
\end{rem}

\begin{rem}
\label{rem Kac-Rice formula}
We prove below that, if $f$ is of class $\mathcal{C}^k$ and its correlation function $\kappa$ is such that $\Norm{\kappa}_{k,\eta}$ tends to $0$ as $\eta \to +\infty$, then the function $\rho_k$ is bounded (see Theorem~\ref{thm clustering}). In this case, the second condition in Proposition~\ref{prop Kac-Rice formula} can be replaced by the Lebesgue-integrability of $\Phi$ on $\R^k$. In particular, this implies that for any integrable function $\Phi:\R^k \to \R$ the random variable $\prsc{\nu^{[k]}}{\Phi}$ is almost surely well-defined.
\end{rem}


\subsection{Kac--Rice density and \texorpdfstring{$k$}{}-point functions}
\label{subsec Kac-Rice density and k point functions}

In this section, we show that the Kac--Rice density $\rho_k$ introduced in Definition~\ref{def Kac-Rice densities} is in fact the $k$-point function of the point process $Z = f^{-1}(0)$ introduced in Section~\ref{subsec zeros of stationary Gaussian processes}. First, we need to prove the continuity of $\rho_k$.

\begin{lem}[Continuity]
\label{lem Dk and Nk continuous}
Let $f$ be a centered Gaussian process of class $\mathcal{C}^1$. For all $k \in \N^*$, the maps $D_k$, $N_k$ and $\rho_k$ appearing in Definition~\ref{def Kac-Rice densities} are continuous on their domains of definition.
\end{lem}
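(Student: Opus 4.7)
The plan is to handle the three functions in order, $D_k$ then $N_k$ then $\rho_k$, each time reducing the continuity question to a standard fact about Gaussian vectors whose covariance depends continuously on the parameter $x$.

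First, $D_k$ is the determinant of the matrix $\left(\kappa(x_j-x_i)\right)_{1\leq i,j \leq k}$, whose entries are continuous functions of $x \in \R^k$ since $\kappa = \esp{f(0)f(\cdot)}$ is continuous by Equation~\eqref{eq covariance} applied with $k=l=0$. As the determinant is polynomial in the entries, $D_k$ is continuous on all of $\R^k$.

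Next, let $U_k = \{x \in \R^k \mid D_k(x) \neq 0\}$, which is open by continuity of $D_k$. For $x \in U_k$, the joint Gaussian vector $\left(f(x_1),\dots,f(x_k),f'(x_1),\dots,f'(x_k)\right)$ has covariance matrix
\begin{equation*}
\begin{pmatrix} A(x) & B(x) \\ \trans{B}(x) & C(x) \end{pmatrix},
\end{equation*}
whose blocks $A(x),B(x),C(x)$ depend continuously on $x$ via the derivatives of $\kappa$ (again by Equation~\eqref{eq covariance}). The conditional distribution of $\left(f'(x_1),\dots,f'(x_k)\right)$ given $\left(f(x_1),\dots,f(x_k)\right) = 0$ is then centered Gaussian with covariance matrix $\Sigma(x) = C(x) - \trans{B}(x) A(x)^{-1} B(x)$. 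Since $A(x)^{-1}$ exists and is continuous on $U_k$, the matrix $\Sigma(x)$ is continuous on $U_k$ and, being a conditional covariance, is positive semi-definite. Let $S(x)$ denote its unique positive semi-definite square root; recall that $\Sigma \mapsto \Sigma^{1/2}$ is continuous on the cone of positive semi-definite matrices, so $S: U_k \to \R^{k \times k}$ is continuous.

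With this setup, if $Z$ is a standard Gaussian vector in $\R^k$ then $S(x) Z$ realizes the conditional distribution above, and
\begin{equation*}
N_k(x) = \esp{\prod_{i=1}^k \norm{(S(x)Z)_i}}.
\end{equation*}
For any compact $K \subset U_k$, the entries of $S(\cdot)$ are bounded on $K$ by some constant $M_K$, so on $K$ the integrand is dominated by $M_K^k \left(\sum_{j=1}^k \norm{Z_j}\right)^k$, which is $Z$-integrable. Dominated convergence then gives continuity of $N_k$ at every point of $K$, hence on $U_k$. Finally, $\rho_k = (2\pi)^{-k/2} N_k D_k^{-1/2}$ is continuous on $U_k$ as a composition and quotient of continuous functions with non-vanishing denominator.

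I expect the only delicate point to be the argument for $N_k$: one must justify both that the conditional covariance formula $\Sigma(x) = C(x) - \trans{B}(x)A(x)^{-1}B(x)$ is the correct object (routine for non-degenerate Gaussian conditioning) and that it is legitimate to realize the conditional law via a continuous square root. The paper already anticipates this by devoting Appendix~\ref{sec a Gaussian lemma} to a Gaussian lemma on regularity of Kac--Rice densities, so the clean way to write the proof is to isolate the statement ``if a positive semi-definite matrix $\Sigma(x)$ depends continuously on $x$, then $x \mapsto \esp{\prod_i \norm{(\Sigma(x)^{1/2}Z)_i}}$ is continuous'' and invoke it; everything else is bookkeeping.
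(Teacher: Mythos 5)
Your proof is correct, and the first two thirds — continuity of $D_k$ as a determinant of continuously varying entries, and continuity of the conditional covariance $\Sigma(x) = C(x) - \trans{B}(x)A(x)^{-1}B(x)$ on the open set $U_k$ — are essentially identical to the paper's argument (the paper calls the blocks $\Theta_k,\Xi_k,\Omega_k$ and the conditional covariance $\Lambda_k$). Where you diverge is the final step, turning continuity of $\Sigma(x)$ into continuity of $N_k$. You factor through the positive semi-definite square root $S(x)=\Sigma(x)^{1/2}$, realize the conditional law as $S(x)Z$ for a fixed standard Gaussian $Z$, and apply dominated convergence on compacta; the paper instead introduces the map $\Pi_k\colon\sym_k^+(\R)\to\R$, $\Pi_k(\Sigma)=\esp{\prod_i\norm{X_i}}$ with $X\sim\mathcal{N}(0,\Sigma)$, and proves in Appendix~\ref{sec a Gaussian lemma} (Lemma~\ref{lem Pi k}, Corollary~\ref{cor Pi k}) that $\Pi_k$ is $\tfrac12$-Hölder on compacts via a coupling argument (when $V-U$ is psd it writes $Y=X+T$ with $T\sim\mathcal{N}(0,V-U)$ independent, telescopes the product, then reduces the general case by inflating to $U+k\epsilon\Id$). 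Your route is somewhat more elementary for the present purpose — it needs only the continuity of $\Sigma\mapsto\Sigma^{1/2}$ on the psd cone (a standard fact, e.g.\ from $\Norm{A^{1/2}-B^{1/2}}\le\Norm{A-B}^{1/2}$), whereas the paper's route delivers a quantitative Hölder modulus. The paper's choice is not gratuitous, though: that same Hölder estimate on $\Pi_k$ is reused in the clustering argument (Lemma~\ref{lem clustering NI add}), where plain continuity would not suffice. So your proof is a valid and slightly leaner alternative for this lemma in isolation, but it does not subsume the appendix lemma the paper actually needs downstream.
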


\begin{proof}
Let $k \in \N^*$, for all $x =(x_i)_{1 \leq i \leq k} \in \R^k$, let us denote by $X_k(x) =(f(x_i))_{1 \leq i \leq k}$ and $Y_k(x) =(f'(x_i))_{1 \leq i \leq k}$. Then, $(X_k(x),Y_k(x))_{x \in \R^k}$ is a continuous centered Gaussian field with values in $\R^{2k}$. We write the variance matrix of $(X_k(x),Y_k(x))$ by square blocks of size $k$ as:
\begin{equation*}
\begin{pmatrix}
\Theta_k(x) & \trans{\Xi_k(x)} \\ \Xi_k(x) & \Omega_k(x)
\end{pmatrix},
\end{equation*}
where $\Theta_k$, $\Xi_k$ and $\Omega_k$ are continuous maps on~$\R^k$. Then, $D_k = \det(\Theta_k)$ is continuous on $\R^k$.

If $x \in \R^k$ is such that $D_k(x) \neq 0$, then $Y_k(x)$ given that $X_k(x) = 0$ is a well-defined centered Gaussian vector of variance matrix $\Lambda_k(x) = \Omega_k(x) - \Xi_k(x)\Theta_k(x)^{-1} \trans{\Xi_k(x)}$ (see~\cite[Proposition~1.2]{AW2009}). Note that $\Lambda_k$ is continuous on $\{x \in \R^k \mid D_k(x) \neq 0\}$. Then,
\begin{equation*}
N_k(x) = \esp{\prod_{i=1}^k \norm{Z_i(x)}},
\end{equation*}
where $(Z_1(x),\dots,Z_k(x)) \sim \mathcal{N}(0,\Lambda_k(x))$. That is, $N_k(x) = \Pi_k(\Lambda_k(x))$, where $\Pi_k$ is the map defined in Definition~\ref{def Pi k}. Since $\Pi_k$ is continuous (see Corollary~\ref{cor Pi k}), the function $N_k$ is continuous on $\{x \in \R^k \mid D_k(x) \neq 0\}$, and so is $\rho_k$.
\end{proof}

Let us now consider a normalized centered stationary Gaussian process $f$ which is $\mathcal{C}^1$. By Lemma~\ref{lem Z as closed discrete}, its zero set $Z$ is a discrete closed subset of $\R$ almost surely. That is $Z$ is random point process in $\R$.

\begin{dfn}[$k$-point function]
\label{def k point function}
Let $x=(x_i)_{1 \leq i \leq k} \in \R^k \setminus \Delta_k$, the value at $x$ of the \emph{$k$-point function} of a random point process $Z$ is defined as:
\begin{equation*}
\lim_{\epsilon \to 0} \frac{1}{(2\epsilon)^k} \esp{\prod_{i=1}^k \card \left(Z \cap [x_i-\epsilon,x_i+\epsilon]\right)},
\end{equation*}
if this limit is well-defined.
\end{dfn}

We can now make precise our claim that $\rho_k$ is the $k$-point function of $Z$.

\begin{lem}
\label{lem k point function}
Let $k \in \N^*$ and let $f$ be a normalized centered stationary Gaussian $\mathcal{C}^1$-process. Let us denote by $Z$ the vanishing locus of $f$. Then, for all $x =(x_i)_{1 \leq i \leq k} \in \R^k$ such that $D_k(x)\neq 0$, we have:
\begin{equation*}
\frac{1}{(2\epsilon)^k} \esp{\prod_{i=1}^k \card\left(Z \cap [x_i-\epsilon,x_i+\epsilon]\right)} \xrightarrow[\epsilon \to 0]{} \rho_k(x),
\end{equation*}
where $\rho_k$ is the function appearing in Definition~\ref{def Kac-Rice densities}.
\end{lem}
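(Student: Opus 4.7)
The plan is to reduce the product $\prod_{i=1}^k \card(Z \cap [x_i-\epsilon,x_i+\epsilon])$ to a linear statistic of $\nu^{[k]}$, apply the Kac--Rice formula, and then use the continuity of $\rho_k$.

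First, I would observe that $D_k(x) \neq 0$ forces the components $x_1,\dots,x_k$ to be pairwise distinct. Indeed, if $x_i=x_j$ for some $i\neq j$, then $f(x_i)=f(x_j)$ and the variance matrix of $(f(x_1),\dots,f(x_k))$ has two identical rows, so $D_k(x)=0$. In particular, there exists $\epsilon_0>0$ such that for all $\epsilon \in (0,\epsilon_0)$ the closed intervals $[x_i-\epsilon,x_i+\epsilon]$ for $i \in \{1,\dots,k\}$ are pairwise disjoint. Set $B_\epsilon = \prod_{i=1}^k [x_i-\epsilon,x_i+\epsilon] \subset \R^k$, so that, by definition of $\nu^k$,
\begin{equation*}
\prod_{i=1}^k \card\left(Z \cap [x_i-\epsilon,x_i+\epsilon]\right) = \prsc{\nu^k}{\mathbf{1}_{B_\epsilon}}.
\end{equation*}

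Next, I would apply Lemma~\ref{lem decomposition nu ks} to split $\nu^k = \sum_{\I \in \pa_k} (\iota_\I)_*\nu^{[\I]}$. For any $\I \in \pa_k$ containing a block $I$ with $\norm{I} \geq 2$, pick $a,b \in I$ with $a \neq b$: an element $\iota_\I(\underline{y}_\I) \in B_\epsilon$ would force $y_I \in [x_a-\epsilon,x_a+\epsilon] \cap [x_b-\epsilon,x_b+\epsilon]$, which is empty as soon as $\epsilon < \frac{1}{2}\norm{x_a-x_b}$. Hence for $\epsilon$ small enough, only the partition $\I_{\min}(k)=\{\{1\},\dots,\{k\}\}$ contributes, and $(\iota_{\I_{\min}(k)})_*\nu^{[\I_{\min}(k)]}$ is (under the canonical identification $\R^{\I_{\min}(k)} \simeq \R^k$) just $\nu^{[k]}$. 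Therefore, almost surely, for all sufficiently small $\epsilon>0$,
\begin{equation*}
\prod_{i=1}^k \card\left(Z \cap [x_i-\epsilon,x_i+\epsilon]\right) = \prsc{\nu^{[k]}}{\mathbf{1}_{B_\epsilon}}.
\end{equation*}

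Then, by continuity of $D_k$ (Lemma~\ref{lem Dk and Nk continuous}) and the hypothesis $D_k(x)\neq 0$, there is an open neighborhood $U$ of $x$ in $\R^k$ on which $D_k$ does not vanish; and $B_\epsilon \subset U$ for $\epsilon$ small enough. The function $\mathbf{1}_{B_\epsilon}$ is Borel, non-negative and supported in $U$, so I can apply the Kac--Rice formula of Proposition~\ref{prop Kac-Rice formula} to get
\begin{equation*}
\esp{\prsc{\nu^{[k]}}{\mathbf{1}_{B_\epsilon}}} = \int_{B_\epsilon} \rho_k(y)\dx y.
\end{equation*}

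Finally, I would divide by $(2\epsilon)^k = \mathrm{vol}(B_\epsilon)$ and invoke the continuity of $\rho_k$ at $x$ (Lemma~\ref{lem Dk and Nk continuous}) to conclude that
\begin{equation*}
\frac{1}{(2\epsilon)^k}\int_{B_\epsilon}\rho_k(y)\dx y \xrightarrow[\epsilon \to 0]{} \rho_k(x),
\end{equation*}
which is the claimed limit. The proof is essentially unobstructed; the only substantive point is the verification that the partitions $\I \neq \I_{\min}(k)$ contribute nothing once $\epsilon$ is small enough, which relies crucially on the fact that $D_k(x)\neq 0$ forces the $x_i$ to be distinct.
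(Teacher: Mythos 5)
Your proof is correct and follows essentially the same route as the paper's: both reduce $\prsc{\nu^k}{\mathbf{1}_{B_\epsilon}}$ to $\prsc{\nu^{[k]}}{\mathbf{1}_{B_\epsilon}}$ for small $\epsilon$ because $B_\epsilon$ misses $\Delta_k$ (the paper notes this directly, you reach it via Lemma~\ref{lem decomposition nu ks}), and then both conclude by the Kac--Rice formula and the continuity of $\rho_k$ from Lemma~\ref{lem Dk and Nk continuous}.
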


\begin{proof}
Since $D_k(x)\neq 0$, by continuity of $D_k$ (see Lemma~\ref{lem Dk and Nk continuous}) there exists a neighborhood $U$ of $x$ such that $D_k$ does not vanish on $U$. Note that this implies $U \subset \R^k \setminus \Delta_k$.

Let $\epsilon \neq 0$. Without loss of generality, we can assume that $\epsilon$ is positive, and small enough that $\prod_{i=1}^k [x_i-\epsilon,x_i+\epsilon] \subset U \subset \R^k \setminus \Delta_k$. In particular, the intervals $([x_i-\epsilon,x_i+\epsilon])_{1 \leq i \leq k}$ are pairwise disjoint. Let $C$ denote the cube $[-1,1]^k$. Using the notations introduced in Definition~\ref{def ZR nuR} and Notations~\ref{ntn phiR and indicator}, we have:
\begin{align*}
\prod_{i=1}^k \card\left(Z \cap [x_i-\epsilon,x_i+\epsilon]\right) = \card\left(Z^k \cap \left(x+\epsilon C\right)\right) = \prsc{\nu^k}{\mathbf{1}_{x+\epsilon C}} = \prsc{\nu^{[k]}}{\mathbf{1}_{x+\epsilon C}},
\end{align*}
since $x+\epsilon C$ does not intersect $\Delta_k$. The function $\rho_k$ is well-defined and continuous on $U$. Then, by the Kac--Rice formula of order $k$ (see Proposition~\ref{prop Kac-Rice formula}), we have:
\begin{equation*}
\frac{1}{(2\epsilon)^k} \esp{\prod_{i=1}^k \card\left(Z \cap [x_i-\epsilon,x_i+\epsilon]\right)} = \frac{1}{(2\epsilon)^k} \int_{x+\epsilon C} \rho_k(y) \dx y \xrightarrow[\epsilon \to 0]{} \rho_k(x),
\end{equation*}
since $x+\epsilon C$ has volume $(2\epsilon)^k$ and $\rho_k$ is continuous at $x$.
\end{proof}


\subsection{Proof of Proposition~\ref{prop expectation}: expectation of the linear statistics}
\label{subsec proof of proposition expectation}

A first application of the Kac--Rice formulas (cf.~Proposition~\ref{prop Kac-Rice formula}) is the computation of the expectation of the linear statistics $\prsc{\nu_R}{\phi}$ (see Section~\ref{subsec zeros of stationary Gaussian processes}), where $R >0$ and $\phi:\R \to \R$ is either non-negative or integrable. In this section, we address this problem and prove Proposition~\ref{prop expectation}.

\begin{proof}[Proof of Proposition~\ref{prop expectation}]
Let $R>0$ and let $\phi: \R \to \R$ be non-negative or integrable. By definition of $\nu_R$ and $\phi_R$ (see Section~\ref{subsec zeros of stationary Gaussian processes}), we have $\esp{\prsc{\nu_R}{\phi}} = \esp{\prsc{\nu}{\phi_R}}$. We apply the Kac--Rice formula for $k=1$, bearing in mind that $\rho_1$ is constant equal to $\frac{1}{\pi}$ (see Example~\ref{ex Kac-Rice densities}). We obtain:
\begin{equation*}
\esp{\prsc{\nu_R}{\phi}}=\esp{\prsc{\nu}{\phi_R}} = \esp{\prsc{\nu^{[1]}}{\phi_R}} = \frac{1}{\pi} \int_{-\infty}^{+\infty} \phi_R(x)\dx x = \frac{R}{\pi} \int_{-\infty}^{+\infty} \phi(x) \dx x.
\end{equation*}
For all $\phi \in \mathcal{C}^0_c(\R)$ we have: $\prsc{\esp{\nu_R}}{\phi} = \esp{\prsc{\nu_R}{\phi}} = \prsc{\frac{R}{\pi}\dx x}{\phi}$. Hence, $\esp{\nu_R} = \frac{R}{\pi}\dx x$.
\end{proof}

As explained in Remark~\ref{rem as well defined}, applying Proposition~\ref{prop expectation} for the positive function $\norm{\phi}$ allows to prove that, if $\phi$ is integrable then, for all $R >0$, $\prsc{\nu_R}{\phi}$ is almost surely well-defined. We can do a bit better than that. For example, let $\mathcal{E}$ denote the following space of functions:
\begin{equation*}
\mathcal{E} = \left\{ \phi:\R \to \R \mvert \phi \ \text{is Lebesgue-measurable and} \ \exists C >0, \exists \alpha >1, \forall x \in \R, \norm{\phi(x)} \leq \frac{C}{1 + \norm{x}^\alpha} \right\}.
\end{equation*}
For all $C>0$ and $\alpha > 1$, we denote by $\psi_{C,\alpha}: x \mapsto \frac{C}{1+\norm{x}^\alpha}$, from $\R$ to $\R$. By Proposition~\ref{prop expectation}, almost surely, for all $C >0$ and $\alpha >1$ such that $C \in \Q$ and $\alpha \in \Q$, we have $\prsc{\nu}{\psi_{C,\alpha}} <+\infty$. Hence, almost surely, for all $\phi \in \mathcal{E}$, we have $\prsc{\nu}{\norm{\phi}} < +\infty$. A function $\phi$ belongs to $\mathcal{E}$ if and only if it is bounded and $\phi(x) = O(\norm{x}^{-\alpha})$ as $\norm{x} \to +\infty$, for some $\alpha >1$. Thus, if $\phi \in \mathcal{E}$, then $\phi_R \in \mathcal{E}$ for all $R>0$. Finally, we obtain that, almost surely, for all $\phi \in \mathcal{E}$, for all $R >0$, we have $\prsc{\nu}{\norm{\phi_R}} <+\infty$, i.e.~$\prsc{\nu_R}{\phi}=\prsc{\nu}{\phi_R}$ is well-defined. Of course, in this example, the family $\{\psi_{C,\alpha}\mid C>0,\alpha>1\}$ can be replaced by any countable family of non-negative integrable functions. The same idea shows that $\nu_R$ almost surely defines a tempered distribution.

\begin{lem}
\label{lem nu R as tempered distribution}
Using the same notations as in Proposition~\ref{prop expectation}, almost surely, for all $R >0$ we have $\nu_R \in \mathcal{S}'(\R)$.
\end{lem}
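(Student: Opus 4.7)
The plan is to first show that $\nu$ itself almost surely belongs to $\mathcal{S}'(\R)$, and then deduce the statement for every $R > 0$ by rescaling. The key observation is that a single integrable non-negative dominating function handles the entire Schwartz class at once, thanks to the fast decay of Schwartz functions.

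First, apply Proposition~\ref{prop expectation} to the non-negative Lebesgue-integrable function $\psi : x \mapsto (1+x^2)^{-1}$. This yields
\begin{equation*}
\esp{\prsc{\nu}{\psi}} = \frac{1}{\pi} \int_{\R} \frac{\dx x}{1+x^2} = 1 < +\infty,
\end{equation*}
so there exists an almost sure event $\Omega_0$ on which $\prsc{\nu}{\psi} < +\infty$.

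Next, work on $\Omega_0$. For any $\phi \in \mathcal{S}(\R)$ the quantity $\Norm{\phi}_\ast := \sup_{y\in\R}(1+y^2)\norm{\phi(y)}$ is finite and defines a continuous seminorm on $\mathcal{S}(\R)$, and $\norm{\phi(y)} \leq \Norm{\phi}_\ast\, \psi(y)$ for every $y \in \R$. Summing over $y \in Z$ gives $\sum_{y\in Z}\norm{\phi(y)} \leq \Norm{\phi}_\ast \prsc{\nu}{\psi} < +\infty$, so $\prsc{\nu}{\phi}$ is defined as an absolutely convergent series, and satisfies
\begin{equation*}
\norm{\prsc{\nu}{\phi}} \leq \prsc{\nu}{\psi}\cdot\Norm{\phi}_\ast.
\end{equation*}
Since $\Norm{\cdot}_\ast$ is a continuous seminorm on $\mathcal{S}(\R)$, the linear form $\phi \mapsto \prsc{\nu}{\phi}$ is continuous, so $\nu \in \mathcal{S}'(\R)$ on $\Omega_0$.

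Finally, fix $R > 0$ and $\phi \in \mathcal{S}(\R)$. The rescaled function $\phi_R : y \mapsto \phi(y/R)$ is again Schwartz, and the identity $\prsc{\nu_R}{\phi} = \prsc{\nu}{\phi_R}$ from Notations~\ref{ntn phiR and indicator} together with the bound proved above gives, on $\Omega_0$,
\begin{equation*}
\norm{\prsc{\nu_R}{\phi}} \leq \prsc{\nu}{\psi} \cdot \Norm{\phi_R}_\ast = \prsc{\nu}{\psi}\cdot \sup_{z\in\R}(1+R^2 z^2)\norm{\phi(z)},
\end{equation*}
and the right-hand side depends continuously on $\phi$ in the Schwartz topology. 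Hence, on the single event $\Omega_0$, we have $\nu_R \in \mathcal{S}'(\R)$ for every $R > 0$ simultaneously. There is no substantive obstacle in this argument: the only points requiring care are establishing continuity (not merely well-definedness) of the linear form, and pooling the a.s.~statement over all $R > 0$, both of which are resolved by reducing to the single a.s.~inequality $\prsc{\nu}{\psi} < +\infty$ and exploiting the invariance of $\mathcal{S}(\R)$ under dilation.
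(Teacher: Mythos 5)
Your proof is correct and follows essentially the same route as the paper: you dominate by $\psi : x \mapsto (1+x^2)^{-1}$, invoke Proposition~\ref{prop expectation} to get a single full-measure event where $\prsc{\nu}{\psi} < +\infty$, and then control $\prsc{\nu_R}{\phi}$ by the continuous seminorm $\phi \mapsto \sup_z (1+x^2)\norm{\phi(x)}$ after rescaling. The only cosmetic difference is that you spell out the intermediate $R=1$ case before rescaling, whereas the paper handles all $R>0$ directly by bounding $C(\phi_R)$ in terms of $C(\phi)$.
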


\begin{proof}
The definitions of $\mathcal{S}(\R)$ and $\mathcal{S}'(\R)$ were recalled in Definition~\ref{def SR}. For all $\phi \in \mathcal{S}(\R)$, we denote by $C(\phi) = \max_{x \in \R^n} \norm{(1+x^2)\phi(x)}$. Note that $\phi \mapsto C(\phi)$ is one of the norms defining the topology of $\mathcal{S}(\R)$. In particular, $C(\phi) \to 0$ as $\phi \to 0$ in $\mathcal{S}(\R)$.

Let $\psi :x \mapsto \frac{1}{1+x^2}$. By Proposition~\ref{prop expectation}, almost surely $\prsc{\nu}{\psi} < +\infty$. Let us consider a fixed realization of $\nu$ in the full probability event such that $\prsc{\nu}{\psi} <+\infty$. For all $R >0$ and all $\phi \in \mathcal{S}(\R)$ we have:
\begin{equation*}
\prsc{\nu_R}{\norm{\phi}} = \prsc{\nu}{\norm{\phi_R}} \leq C(\phi_R) \prsc{\nu}{\psi} \leq C(\phi) R^2 \prsc{\nu}{\psi}.
\end{equation*}
On the one hand, this shows that $\prsc{\nu_R}{\phi}$ is well-defined. On the other hand, for all $R >0$,
\begin{equation*}
\norm{\prsc{\nu_R}{\phi}} \leq C(\phi) R^2 \prsc{\nu}{\psi} \xrightarrow[\phi \to 0]{} 0.
\end{equation*}
Thus the linear form $\phi \mapsto \prsc{\nu_R}{\phi}$ is continuous on $\mathcal{S}(\R)$, i.e.~$\nu_R \in \mathcal{S}'(\R)$.
\end{proof}


\section{Proof of Proposition~\ref{prop variance}: asymptotics of the covariances}
\label{sec proof of proposition variance}

This section is concerned with the proof of Proposition~\ref{prop variance}. In all this section, we consider a Gaussian process $f$ satisfying the hypotheses of Proposition~\ref{prop variance}, that is $f$ is a normalized stationary centered Gaussian $\mathcal{C}^2$-process. Moreover, the correlation function~$\kappa$ of $f$ is such that $\kappa$ and $\kappa''$ are square-integrable functions that tend to $0$ at infinity.

First, in Section~\ref{subsec asymptotics of the covariances}, we prove that the asymptotics given in Equation~\eqref{eq asymp m2} holds. Then, we prove the positivity of the constant $\sigma$ (see~Equation~\eqref{eq def sigma}) in Section~\ref{subsec positivity of the leading constant}.


\subsection{Asymptotics of the covariances}
\label{subsec asymptotics of the covariances}

In this section, we prove that Equation~\eqref{eq asymp m2} in Proposition~\ref{prop variance} holds. The content of this section is close to what can already be found in the literature, for example in the work of Cuzick~\cite{Cuz1976}. The main difference is that we added test-functions $\phi_1$ and $\phi_2$ in Equation~\eqref{eq asymp m2}, where other authors generally consider the case $\phi_1=\phi_2=\mathbf{1}_{[0,1]}$. However, some of the notations and auxiliary results of this section will also be used in the proof of Theorem~\ref{thm moments} (see Section~\ref{subsec conclusion of the proof}). Besides, the proof of~\eqref{eq asymp m2} is a good toy-model for the proof of Theorem~\ref{thm moments}, which is another reason to write it in full here.

We first introduce a density function~$F$ (see Definition~\ref{def F}) and state some of its properties in Lemmas~\ref{lem expression F} and~\ref{lem integrability F}. The proofs of these lemmas are postponed until Appendix~\ref{sec properties of the density function F}. Then we establish Equation~\eqref{eq asymp m2}.

Since $\kappa$ tends to $0$ at infinity, by Lemma~\ref{lem non-degeneracy}, the Kac--Rice density $\rho_2$ is well-defined on $\R^2 \setminus \Delta_2$ (see Remark~\ref{rem Kac-Rice densities}). Moreover, since $f$ is stationary, we have $\rho_2(x,x+z) = \rho_2(0,z)$ for all $z \neq 0$ (see Definition~\ref{def Kac-Rice densities}).

\begin{dfn}
\label{def F}
We denote by $F:z \mapsto \rho_2(0,z)-\frac{1}{\pi^2}$ from $\R \setminus \{0\}$ to $\R$.
\end{dfn}

Note that, for all $x \neq y$, we have $\rho_2(x,y)-\rho_1(x)\rho_1(y) = F(y-x)$. It is possible to compute a somewhat more explicit expression of $F$.

\begin{lem}
\label{lem expression F}
For all $z >0$, we have:
\begin{equation*}
F(z) = F(-z) = \frac{1}{\pi^2} \left(\frac{1-\kappa(z)^2-\kappa'(z)^2}{\left(1-\kappa(z)^2\right)^\frac{3}{2}}\left(\sqrt{1-a(z)^2} + a(z) \arcsin(a(z))\right) - 1\right),
\end{equation*}
where $a(z) = \dfrac{\kappa(z)\kappa'(z)^2 - \kappa(z)^2\kappa''(z) + \kappa''(z)}{1 - \kappa(z)^2 -\kappa'(z)^2} \in [-1,1]$.
\end{lem}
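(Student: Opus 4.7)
The goal is to unpack the definition $\rho_2(0,z) = \frac{N_2(0,z)}{2\pi D_2(0,z)^{1/2}}$ and compute each piece explicitly. For the denominator, Example~\ref{ex Kac-Rice densities} already gives $D_2(0,z) = 1-\kappa(z)^2$, which is positive for $z \neq 0$ by non-degeneracy (Lemma~\ref{lem non-degeneracy}). The main work is to compute the numerator $N_2(0,z) = \espcond{\norm{f'(0)}\norm{f'(z)}}{f(0)=f(z)=0}$, and this is where I would concentrate my effort.

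First I would write down the covariance matrix of the Gaussian vector $(f(0),f(z),f'(0),f'(z))$. Using Equation~\eqref{eq covariance} and the normalization $\kappa(0)=1=-\kappa''(0)$ (so also $\kappa'(0)=0$), this matrix has the block form
\begin{equation*}
\begin{pmatrix} A & \trans{B} \\ B & C \end{pmatrix}, \quad A = \begin{pmatrix} 1 & \kappa(z) \\ \kappa(z) & 1 \end{pmatrix}, \quad B = \begin{pmatrix} 0 & -\kappa'(z) \\ \kappa'(z) & 0 \end{pmatrix}, \quad C = \begin{pmatrix} 1 & -\kappa''(z) \\ -\kappa''(z) & 1 \end{pmatrix}.
\end{equation*}
Then I would apply the standard Gaussian conditioning formula (as used in the proof of Lemma~\ref{lem Dk and Nk continuous}): conditionally on $f(0)=f(z)=0$, the pair $(f'(0),f'(z))$ is centered Gaussian with covariance $\Lambda = C - B A^{-1} \trans{B}$. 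A direct computation with $A^{-1} = (1-\kappa(z)^2)^{-1}\left(\begin{smallmatrix} 1 & -\kappa(z) \\ -\kappa(z) & 1 \end{smallmatrix}\right)$ yields
\begin{equation*}
\Lambda = \frac{1-\kappa(z)^2-\kappa'(z)^2}{1-\kappa(z)^2}\begin{pmatrix} 1 & -a(z) \\ -a(z) & 1 \end{pmatrix},
\end{equation*}
with $a(z)$ exactly the quantity given in the statement. Note in passing that this forces $\norm{a(z)} \leq 1$ since $\Lambda$ is positive semi-definite.

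Next I would invoke the classical identity for the expected product of the absolute values of a centered Gaussian pair $(Y_1,Y_2)$ with variance $\tau^2\bigl(\begin{smallmatrix} 1 & r \\ r & 1\end{smallmatrix}\bigr)$, namely $\esp{\norm{Y_1}\norm{Y_2}} = \tfrac{2\tau^2}{\pi}\bigl(\sqrt{1-r^2}+r\arcsin(r)\bigr)$; observe that the map $r \mapsto r\arcsin(r)$ is even, so replacing $r=-a(z)$ by $a(z)$ leaves the expression unchanged. Plugging this into the Kac--Rice density and subtracting $\rho_1(0)\rho_1(z) = \pi^{-2}$ gives the stated formula. The symmetry $F(z)=F(-z)$ then comes for free from the parities $\kappa(-z)=\kappa(z)$, $\kappa'(-z)=-\kappa'(z)$, $\kappa''(-z)=\kappa''(z)$, which imply $a(-z)=a(z)$ and leave the whole expression invariant.

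The only nonroutine obstacle is keeping track of signs and parities in the $2\times 2$ block computation of $\Lambda$; the appearance of $-a(z)$ rather than $+a(z)$ in the off-diagonal is the one place where a careless sign error would propagate into an incorrect final formula, but it is absorbed harmlessly by the evenness of $r \mapsto r\arcsin(r)$. Everything else is bookkeeping, and I would defer the verification of the stated bound $\norm{a(z)} \leq 1$ to the positive semi-definiteness of $\Lambda$, which is automatic.
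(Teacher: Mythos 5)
Your proposal is correct and follows essentially the same route as the paper's proof (Appendix~\ref{subsec proof of Lemma expression F}, via Lemma~\ref{lem expression N20z}): block covariance of $(f(0),f(z),f'(0),f'(z))$, Schur complement for $\Lambda$, the classical $\esp{\norm{X}\norm{Y}}$ formula for bivariate Gaussians, and parity/symmetry for $F(z)=F(-z)$. The only cosmetic differences are that the paper writes $\Lambda = \bigl(\begin{smallmatrix} b & c \\ c & b\end{smallmatrix}\bigr)$ and only identifies $a=-c/b$ at the end, and explicitly handles the degenerate case $b(z)=\norm{c(z)}$, which you implicitly absorb by continuity.
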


\begin{proof}
See Appendix~\ref{subsec proof of Lemma expression F}.
\end{proof}

\begin{lem}
\label{lem integrability F}
Under the hypotheses of Proposition~\ref{prop variance}, we have:
\begin{align*}
F(z) & \xrightarrow[z \to 0]{} -\frac{1}{\pi^2} & &\text{and} & F(z) & \xrightarrow[\norm{z} \to +\infty]{} 0.
\end{align*}
Moreover, the function $F$ is Lebesgue-integrable on $\R$.
\end{lem}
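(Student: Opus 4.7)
The plan is to work directly from the closed-form expression of Lemma~\ref{lem expression F}, writing $\pi^2 F(z) + 1 = P(z)\,Q(z)$ with
\begin{equation*}
P(z) = \frac{1 - \kappa(z)^2 - \kappa'(z)^2}{\left(1-\kappa(z)^2\right)^{3/2}}, \qquad Q(z) = \sqrt{1 - a(z)^2} + a(z)\arcsin(a(z)).
\end{equation*}
A useful observation throughout is that $a(z) \in [-1,1]$ for every $z \neq 0$: indeed, $-a(z)$ is the correlation coefficient of the conditional Gaussian vector $(f'(0),f'(z))$ given $f(0)=f(z)=0$, so that $\sqrt{1-a(z)^2}$ is real and $Q$ takes values in the bounded interval $[0,\,1+\pi/2]$.

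For the limit at $0$, the normalization $\kappa(0)=1 = -\kappa''(0)$ together with a Taylor expansion yields $1-\kappa(z)^2 = z^2 + O(z^4)$ and $\kappa'(z)^2 = z^2 + O(z^4)$, so $1-\kappa(z)^2-\kappa'(z)^2 = O(z^4)$ and consequently $P(z) = O(\norm{z}) \to 0$. Combined with the boundedness of $Q$, this gives $\rho_2(0,z) \to 0$ and hence $F(z) \to -1/\pi^2$. For the limit at infinity, the assumption $\Norm{\kappa}_{2,\eta} \to 0$ forces $\kappa(z),\kappa'(z),\kappa''(z) \to 0$ as $\norm{z}\to +\infty$; then $P(z) \to 1$, $a(z) \to 0$, $Q(z) \to 1$, and hence $F(z) \to 0$.

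The serious work lies in integrability. Since $F$ is continuous on $\R\setminus\{0\}$ and admits finite limits at $0$ and $\pm\infty$, it is bounded on every compact set, and the only issue is decay at infinity. Expanding $P$ and $Q$ to second order in $(\kappa,\kappa',\kappa'')$ near the origin---using $a(z) = \kappa''(z) + O\!\left(\norm{\kappa''(z)}(\kappa(z)^2+\kappa'(z)^2) + \norm{\kappa(z)}\kappa'(z)^2\right)$ together with $\sqrt{1-a^2}+a\arcsin a = 1 + a^2/2 + O(a^4)$---one obtains
\begin{equation*}
\pi^2 F(z) = \tfrac{1}{2}\kappa(z)^2 - \kappa'(z)^2 + \tfrac{1}{2}\kappa''(z)^2 + O\!\left(\left(\kappa(z)^2+\kappa'(z)^2+\kappa''(z)^2\right)^2\right)
\end{equation*}
as $\norm{z}\to +\infty$. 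The terms $\kappa^2$ and $\kappa''^2$ are integrable by hypothesis. For the middle term, integration by parts gives $\int_\R \kappa'(z)^2 \dx z = -\int_\R \kappa(z)\kappa''(z)\dx z$ (boundary terms vanish since $\kappa$ and $\kappa'$ tend to $0$ at infinity), and this is finite by Cauchy--Schwarz, so $\kappa' \in L^2(\R)$. On $\{\norm{z}\geq\eta\}$ for $\eta$ large, the quartic remainder is dominated by $\Norm{\kappa}_{2,\eta}^2\left(\kappa(z)^2+\kappa'(z)^2+\kappa''(z)^2\right)$, which is integrable. The main obstacle is thus not the pointwise limits but the uniform second-order expansion at infinity together with the (slightly hidden) observation that integrability of $\kappa'^2$ is forced by the hypotheses on $\kappa$ and $\kappa''$ rather than being a separate assumption.
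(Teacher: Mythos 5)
Your proof is correct and follows essentially the same route as the paper's (Lemmas~\ref{lem integrability F 0} and~\ref{lem integrability F infty}): factor $\pi^2 F + 1 = PQ$, use the boundedness of $Q$ and the cancellation $1-\kappa^2-\kappa'^2 = O(z^4)$ for the limit at $0$, expand $P$ and $Q$ in $(\kappa,\kappa',\kappa'')$ at infinity, and conclude via the integration-by-parts observation that $\kappa'\in L^2(\R)$ is forced by the hypotheses on $\kappa$ and $\kappa''$. The one refinement is that you compute the exact second-order coefficients $\pi^2 F(z) = \tfrac12\kappa^2 - \kappa'^2 + \tfrac12\kappa''^2 + O\bigl((\kappa^2+\kappa'^2+\kappa''^2)^2\bigr)$, whereas the paper contents itself with the bound $F(z) = O(\kappa^2 + \kappa'^2 + \kappa''^2)$; the extra precision is unnecessary for integrability but correct, and (after integrating $\kappa\kappa''$ by parts) it is consistent with the lower bound in Corollary~\ref{cor sigma positive}, which is a nice sanity check.
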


\begin{proof}
See Appendix~\ref{subsec proof of Lemma integrability F}.
\end{proof}

Assuming that Lemmas~\ref{lem expression F} and~\ref{lem integrability F} hold, we can now prove the first part of Proposition~\ref{prop variance}. An important step is the following lemma, which will also appear in the proof of Theorem~\ref{thm moments}.

\begin{lem}
\label{lem formula m2}
Under the hypotheses of Proposition~\ref{prop variance}, for all $R>0$ we have:
\begin{equation*}
m_2(\nu_R)(\phi_1,\phi_2) = \int_{\R^2} \phi_1\!\left(\frac{x}{R}\right)\phi_2\!\left(\frac{y}{R}\right)F(y-x)\dx x \dx y + \frac{R}{\pi}\int_\R \phi_1(x)\phi_2(x) \dx x,
\end{equation*}
where $F$ is the function introduced in Definition~\ref{def F}.
\end{lem}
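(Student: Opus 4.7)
The strategy is to expand the second central moment, decompose the product of linear statistics into a diagonal and an off-diagonal part, apply the Kac--Rice formula to the off-diagonal part and Proposition~\ref{prop expectation} to the diagonal part, and finally recognize $F$ from the stationarity of $\rho_2$.

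First, I would expand
\begin{equation*}
m_2(\nu_R)(\phi_1,\phi_2) = \esp{\prsc{\nu_R}{\phi_1}\prsc{\nu_R}{\phi_2}} - \esp{\prsc{\nu_R}{\phi_1}}\esp{\prsc{\nu_R}{\phi_2}}.
\end{equation*}
For the product of linear statistics, the key identity is
\begin{equation*}
\prsc{\nu_R}{\phi_1}\prsc{\nu_R}{\phi_2} = \prsc{\nu_R^{[2]}}{\phi_1 \boxtimes \phi_2} + \prsc{\nu_R}{\phi_1\phi_2},
\end{equation*}
obtained by splitting the sum over $Z_R \times Z_R$ into its off-diagonal and diagonal parts (this is precisely the case $k=2$ of Lemma~\ref{lem decomposition nu ks}, the diagonal contribution being the pushforward by $\iota_{\{\{1,2\}\}}$, along which $\phi_1\boxtimes\phi_2$ restricts to $\phi_1\phi_2$). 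Using the scaling relation $\prsc{\nu_R^{[2]}}{\phi_1\boxtimes\phi_2} = \prsc{\nu^{[2]}}{(\phi_1)_R \boxtimes (\phi_2)_R}$ and the analogous relation for $\nu_R$, the problem reduces to computing expectations of linear statistics of $\nu^{[2]}$ and $\nu$.

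Next, I would invoke the Kac--Rice formula (Proposition~\ref{prop Kac-Rice formula}) with $k=2$, $U=\R^2$, and the Borel function $(\phi_1)_R \boxtimes (\phi_2)_R$. The hypotheses of the proposition are met: $\rho_2$ is well-defined on $\R^2 \setminus \Delta_2$ by Lemma~\ref{lem non-degeneracy}, and by Lemma~\ref{lem integrability F} together with Definition~\ref{def F} the function $\rho_2(x,y)=F(y-x)+\frac{1}{\pi^2}$ is bounded, which combined with the integrability and boundedness of the test-functions guarantees Lebesgue-integrability of the integrand. This yields
\begin{equation*}
\esp{\prsc{\nu_R^{[2]}}{\phi_1\boxtimes\phi_2}} = \int_{\R^2} \phi_1\!\left(\tfrac{x}{R}\right)\phi_2\!\left(\tfrac{y}{R}\right)\rho_2(x,y) \dx x \dx y.
\end{equation*}
For the diagonal term, Proposition~\ref{prop expectation} (applied to the integrable function $\phi_1\phi_2$, which is integrable because each $\phi_i$ is integrable and essentially bounded) gives $\esp{\prsc{\nu_R}{\phi_1\phi_2}} = \frac{R}{\pi}\int_\R \phi_1(x)\phi_2(x)\dx x$.

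Finally, Proposition~\ref{prop expectation} also yields $\esp{\prsc{\nu_R}{\phi_i}} = \frac{R}{\pi}\int \phi_i$, and after the change of variables $u=x/R$ the product of the two expectations rewrites as
\begin{equation*}
\esp{\prsc{\nu_R}{\phi_1}}\esp{\prsc{\nu_R}{\phi_2}} = \int_{\R^2}\phi_1\!\left(\tfrac{x}{R}\right)\phi_2\!\left(\tfrac{y}{R}\right) \tfrac{1}{\pi^2} \dx x \dx y.
\end{equation*}
Subtracting and using the stationarity of $f$, namely $\rho_2(x,y)=\rho_2(0,y-x)$, together with the Definition~\ref{def F} identity $\rho_2(0,z)-\frac{1}{\pi^2}=F(z)$, gives the stated formula. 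The only minor obstacle is bookkeeping the integrability so that all expectations are well-defined and Fubini-type manipulations are justified; this is handled by the boundedness of $F$ (hence of $\rho_2$) together with the test-function assumption (boundedness and integrability of the $\phi_i$), which makes every integrand dominated by a bounded multiple of $|\phi_1(x/R)\phi_2(y/R)|$.
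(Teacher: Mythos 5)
Your proof is correct and follows the same route as the paper's: split $\prsc{\nu_R}{\phi_1}\prsc{\nu_R}{\phi_2}$ into diagonal and off-diagonal parts via Lemma~\ref{lem decomposition nu ks}, compute the diagonal part and the product of expectations with Proposition~\ref{prop expectation}, compute the off-diagonal part with the Kac--Rice formula (justified by boundedness of $\rho_2$), and recognize $F$ from stationarity. The only cosmetic difference is that you rewrite $\esp{\prsc{\nu_R}{\phi_1}}\esp{\prsc{\nu_R}{\phi_2}}$ as a double integral before subtracting, whereas the paper subtracts $\rho_1(x)\rho_1(y)$ directly inside the Kac--Rice integral; the two are the same computation.
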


\begin{proof}
Let $R>0$ and let $\phi_1$ and $\phi_2$ be two Lebesgue-integrable functions such that $\phi_2$ is essentially bounded and continuous almost everywhere. Note that $\phi_1\phi_2$ is integrable. By Remark~\ref{rem as well defined}, the random variables $\prsc{\nu_R}{\phi_1}$, $\prsc{\nu_R}{\phi_2}$ and $\prsc{\nu_R}{\phi_1\phi_2}$ are almost surely well-defined and integrable. Using the Notations~\ref{ntn product indexed by A}, we have $\phi_R = (\phi_1)_R \boxtimes (\phi_2)_R$ and:
\begin{align*}
m_2(\nu_R)(\phi_1,\phi_2) &= \esp{\prsc{\nu_R}{\phi_1}\prsc{\nu_R}{\phi_2}} - \esp{\prsc{\nu_R}{\phi_1}}\esp{\prsc{\nu_R}{\phi_2}}\\
&= \esp{\prsc{\nu^2}{\phi_R}} - \esp{\prsc{\nu}{(\phi_1)_R}}\esp{\prsc{\nu}{(\phi_2)_R}}\\
&= \esp{\prsc{\nu^{[2]}}{\phi_R}} + \esp{\prsc{\nu}{(\phi_1\phi_2)_R}} - \esp{\prsc{\nu}{(\phi_1)_R}}\esp{\prsc{\nu}{(\phi_2)_R}}.
\end{align*}
By Proposition~\ref{prop expectation}, the middle term in this expression is $\esp{\prsc{\nu_R}{\phi_1\phi_2}}=\frac{R}{\pi} \int_\R \phi_1(x)\phi_2(x) \dx x$. We compute the other two terms by the Kac--Rice formulas of order $1$ and $2$. By Lemma~\ref{lem Dk and Nk continuous}, $\rho_2$ is continuous on $\R^2 \setminus \Delta_2$. By Lemma~\ref{lem integrability F} and Definition~\ref{def F}, the function $\rho_2$ is bounded on $\R^2 \setminus \Delta_2$. Thus, $\phi_R \rho_2$ is Lebesgue-integrable on $\R^2$. Then, by Lemma~\ref{lem non-degeneracy}, the hypotheses of Proposition~\ref{prop Kac-Rice formula} are satisfied. Recalling that $\rho_1$ is constant equal to $\frac{1}{\pi}$ (see Example~\ref{ex Kac-Rice densities}), we obtain:
\begin{align*}
\esp{\prsc{\nu^{[2]}}{\phi_R}} - \esp{\prsc{\nu}{(\phi_1)_R}}\esp{\prsc{\nu}{(\phi_2)_R}} &= \int_{\R^2} \phi_1\!\left(\frac{x}{R}\right)\phi_2\!\left(\frac{y}{R}\right)\left(\rho_2(x,y)-\rho_1(x)\rho_1(y)\right)\dx x \dx y\\
&= \int_{\R^2} \phi_1\!\left(\frac{x}{R}\right)\phi_2\!\left(\frac{y}{R}\right)F(y-x)\dx x \dx y.\qedhere
\end{align*}
\end{proof}

\begin{proof}[Proof of Equation~\eqref{eq asymp m2}]
Under the hypotheses of Proposition~\ref{prop variance}, we apply Lemma~\ref{lem formula m2}, which yields:
\begin{equation*}
m_2(\nu_R)(\phi_1,\phi_2) = \int_{\R^2} \phi_1\!\left(\frac{x}{R}\right)\phi_2\!\left(\frac{y}{R}\right)F(y-x)\dx x \dx y + \frac{R}{\pi}\int_\R \phi_1(x)\phi_2(x) \dx x.
\end{equation*}
By a change of variable, we obtain:
\begin{equation*}
\int_{\R^2} \phi_1\!\left(\frac{x}{R}\right)\phi_2\!\left(\frac{y}{R}\right)F(y-x)\dx x \dx y = R \int_{\R^2} \phi_1(x) \phi_2\!\left(x+\frac{z}{R}\right)F(z) \dx x \dx z.
\end{equation*}

Let us define $g:(x,z) \mapsto \phi_1(x) \phi_2(x)F(z)$ and $g_R:(x,z) \mapsto \phi_1(x) \phi_2\!\left(x+\frac{z}{R}\right)F(z)$ for all $R>0$. Since $\phi_2$ is continuous almost everywhere, $g_R$ simply converges toward $g$ almost everywhere on $\R^2$. Besides, for all $(x,z) \in \R^2$ we have:
\begin{equation*}
\norm{g_R(x,z)} \leq \Norm{\phi_2}_\infty \norm{\phi_1(x)}\norm{F(z)},
\end{equation*}
where $\Norm{\phi_2}_\infty$ stands for the essential supremum of $\phi_2$. Since $\phi_1$ and $F$ are integrable on $\R$ (see Lemma~\ref{lem integrability F}), the right-hand side is integrable on $\R^2$. By Lebesgue's Dominated Convergence Theorem, we get:
\begin{equation*}
\int_{\R^2} \phi_1(x) \phi_2\!\left(x+\frac{z}{R}\right)F(z) \dx x \dx z \xrightarrow[R \to +\infty]{} \left(\int_{x \in \R} \phi_1(x) \phi_2(x) \dx x\right) \left(\int_{z \in \R} F(z) \dx z\right).
\end{equation*}

Putting together everything we have done so far, as $R \to +\infty$, we have:
\begin{equation*}
m_2(\nu_R)(\phi_1,\phi_2) = R \left(\int_{-\infty}^{+\infty} \phi_1(x)\phi_2(x) \dx x\right) \left(\frac{1}{\pi}+\int_{-\infty}^{+\infty} F(z) \dx z\right) + o(R).
\end{equation*}
Finally, by Lemma~\ref{lem expression F} and Equation~\eqref{eq def sigma}, we have: $\frac{1}{\pi}+\int_{-\infty}^{+\infty} F(z) \dx z = \sigma^2$, hence the result.
\end{proof}


\subsection{Positivity of the leading constant}
\label{subsec positivity of the leading constant}

The goal of this section is to conclude the proof of Proposition~\ref{prop variance}, by proving that $\sigma^2 >0$, see Corollary~\ref{cor sigma positive} below. Recall that~$\sigma^2$ is given by Equation~\eqref{eq def sigma} and that $\sigma$ is its non-negative square root. It is not clear from its expression that $\sigma^2$ is positive. Indeed, Equation~\eqref{eq def sigma} can be rewritten (cf.~Section~\ref{subsec asymptotics of the covariances}) as:
\begin{equation*}
\sigma^2 = \frac{1}{\pi} + 2 \int_0^{+\infty} F(z) \dx z,
\end{equation*}
where $F$ is defined by Definition~\ref{def F}. The function $F$ is not non-negative since it tends to $-\frac{1}{\pi^2}$ as $z \to 0$ (see Lemma~\ref{lem integrability F}). In fact, on several examples $2\int_0^{+\infty} F(z)\dx z <0$ and we would need to compare this integral with $-\frac{1}{\pi}$ in order to deduce the positivity of $\sigma^2$ from the previous expression.

Our proof does not use Equation~\eqref{eq def sigma}, but relies on the Wiener--Itô expansion of $\prsc{\nu_R}{\mathbf{1}_{[0,1]}}$ derived by Kratz--Leòn in~\cite{KL1997}. It is not necessary to know about these Wiener--Itô expansions to understand what follows, and we refer the interested reader to~\cite{KL1997}.

\begin{prop}
\label{prop chaotic expansion}
Let $f$ be a normalized centered stationary Gaussian $\mathcal{C}^2$-process and let $Z$ denote its zero set. Then, for any $R>0$, there exists a square-integrable centered random variable $X_R$ such that:
\begin{equation*}
\card\left( Z \cap [0,R] \right) = \frac{R}{\pi} + \frac{1}{2\pi} \int_0^R f'(x)^2 - f(x)^2 \dx x + X_R.
\end{equation*}
Moreover, $\displaystyle\int_0^R f'(x)^2 - f(x)^2 \dx x$ is a square-integrable centered random variable and we have:
\begin{equation*}
\esp{\left(\int_0^R f'(x)^2 - f(x)^2 \dx x\right) X_R}=0.
\end{equation*}
\end{prop}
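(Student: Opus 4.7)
The plan is to derive this decomposition by isolating the low-order contributions in the Wiener--It\^o chaos expansion of $\card(Z \cap [0,R])$, following the strategy of Kratz--Le\'on~\cite{KL1997}. I would start from the Kac--Rice approximation
$$\card(Z \cap [0,R]) = \lim_{\epsilon \to 0} \frac{1}{2\epsilon} \int_0^R \norm{f'(x)}\, \mathbf{1}_{[-\epsilon,\epsilon]}(f(x)) \dx x,$$
which holds in $L^2$ under our standing assumptions (the integrability of $\kappa''$ implies Geman's second moment condition~\cite{Gem1972}). For each $\epsilon > 0$, I would expand the function $\Phi_\epsilon(y_1,y_2) = \frac{1}{2\epsilon} \norm{y_2} \mathbf{1}_{[-\epsilon,\epsilon]}(y_1)$ in the Hermite basis of $L^2(\R^2, \gamma_2)$, where $\gamma_2$ is the standard Gaussian measure; since $(f(x),f'(x))$ is a standard Gaussian vector by normalization, substituting $(y_1,y_2) = (f(x),f'(x))$ and integrating over $[0,R]$ produces a chaos decomposition of the approximating functional. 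By parity of $\Phi_\epsilon$ in each variable, only terms with both indices even can contribute.

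The next step is to identify the low-order coefficients. A direct computation using $\esp{\norm{Y}} = \sqrt{2/\pi}$ and $\esp{\norm{Y}^3} = 2\sqrt{2/\pi}$ for $Y \sim \mathcal{N}(0,1)$ gives $a^{(\epsilon)}_{0,0} \to \frac{1}{\pi}$, $a^{(\epsilon)}_{2,0} \to -\frac{1}{2\pi}$, and $a^{(\epsilon)}_{0,2} \to \frac{1}{2\pi}$ as $\epsilon \to 0$. Using $H_2(t) = t^2 - 1$ and integrating over $[0,R]$, the sum of the chaoses of orders $0$ and $2$ converges to
$$\frac{R}{\pi} + \frac{1}{2\pi}\int_0^R \bigl(f'(x)^2 - f(x)^2\bigr) \dx x.$$
I then define $X_R$ as the remainder, equal to the sum of all chaoses of order at least $4$ (odd orders vanish by parity). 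The $L^2$-orthogonality between distinct Wiener--It\^o chaoses is then automatic, which yields the cross-covariance identity claimed in the proposition.

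For the integrability claims, I would verify that the quadratic term is itself square-integrable by Wick's formula:
$$\esp{\left(\int_0^R f'(x)^2 - f(x)^2 \dx x\right)^2} = 2 \int_{[0,R]^2} \bigl(\kappa(y-x)^2 + \kappa''(y-x)^2 - 2\kappa'(y-x)^2\bigr) \dx x \dx y,$$
using $\esp{f(x)f(y)} = \kappa(y-x)$, $\esp{f'(x)f'(y)} = -\kappa''(y-x)$ and $\esp{f(x)f'(y)} = \kappa'(y-x)$. Under the hypothesis $\kappa, \kappa'' \in L^2(\R)$, we also have $\kappa' \in L^2(\R)$ by integration by parts and Cauchy--Schwarz ($\int (\kappa')^2 = -\int \kappa\kappa''$), so the right-hand side is $O(R)$. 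This term is centered because $\esp{f(x)^2} = \esp{f'(x)^2} = 1$ by normalization, and $\card(Z \cap [0,R])$ is itself in $L^2$ by Proposition~\ref{prop variance}, whence $X_R$ is centered and square-integrable. The main technical obstacle is justifying $L^2$-convergence of the Hermite expansion uniformly in $\epsilon$, which requires quantitative decay of the coefficients $a^{(\epsilon)}_{k_1,k_2}$ as $k_1 + k_2 \to \infty$; this is the analytic heart of~\cite{KL1997}, which I would cite rather than reproduce.
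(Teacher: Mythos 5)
Your proposal takes essentially the same route as the paper: both rely on the Wiener--Itô chaos expansion of $\card(Z\cap[0,R])$ from Kratz--Leòn~\cite{KL1997}, isolate the chaos of degree $2$, compute its coefficients explicitly, and invoke the automatic $L^2$-orthogonality between distinct chaoses for the cross-covariance identity. The paper simply cites \cite[Proposition~1]{KL1997} and plugs in the coefficients $a_0, a_2, b_0(0), b_2(0)$, whereas you reconstruct the expansion via the Kac--Rice approximant $\frac{1}{2\epsilon}\int_0^R \norm{f'}\,\mathbf{1}_{[-\epsilon,\epsilon]}(f)$ before deferring the convergence to~\cite{KL1997}; the two are interchangeable and your coefficient computations are correct.

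One point to tighten: your verification that $\int_0^R f'(x)^2 - f(x)^2\,\dx x$ is square-integrable invokes $\kappa, \kappa'' \in L^2(\R)$, but these are \emph{not} among the hypotheses of this proposition, which only assumes $f$ is a normalized stationary centered $\mathcal{C}^2$-process. The conclusion is still true without them: since $\kappa$, $\kappa'$, $\kappa''$ are bounded (as always for a $\mathcal{C}^2$-process), the Wick formula already gives a finite $O(R^2)$ bound; alternatively one can bypass Wick entirely via $\esp{\left(\int_0^R f(x)^2\dx x\right)^2} \leq \left(\int_0^R \esp{f(x)^4}^{1/2}\dx x\right)^2 = 3R^2$, which is what the paper does in its Remark following the proposition. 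Similarly, the parenthetical ``the integrability of $\kappa''$ implies Geman's second moment condition'' misstates that criterion: Geman's condition is a \emph{local} regularity requirement on $\kappa''$ near $0$, not a decay/integrability condition at infinity, and it is satisfied here because $\kappa \in \mathcal{C}^4$, i.e.\ $\kappa^{(4)}(0) < +\infty$, as the paper notes.
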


\begin{rem}
\label{rem chaotic expansion}
One can check that, since $f$ is normalized (see Definition~\ref{def normalized process}), we have:
\begin{equation*}
\esp{\int_0^R f(x)^2 \dx x} = \int_0^R \esp{f(x)^2} \dx x = \int_0^R \dx x = R
\end{equation*}
and, by Cauchy--Schwarz's Inequality:
\begin{equation*}
\esp{\left(\int_0^R f(x)^2\dx x\right)^2} = \int_0^R \int_0^R \esp{f(x)^2f(y)^2} \dx x \dx y \leq \left(\int_0^R \esp{f(x)^4}^\frac{1}{2} \dx x\right)^2 = 3R^2.
\end{equation*}
Thus $\int_0^R f(x)^2 \dx x$ is square-integrable of mean $R$. Similarly, $\int_0^R f'(x)^2 \dx x$ is square-integrable of mean $R$. Hence the difference of these terms is indeed square-integrable and centered.
\end{rem}

\begin{proof}[Proof of Proposition~\ref{prop chaotic expansion}]
This result is a simplified version of~\cite[Proposition~1]{KL1997}. Note that this result holds for a normalized process $f$ whose correlation function $\kappa$ satisfies $\kappa^{(4)}(0) <+\infty$ (see \cite[Condition~(1), p.~238]{KL1997}). Here, our process $f$ is of class $\mathcal{C}^2$, hence this condition is satisfied.

Let us denote by $\mathcal{N}_R = \card(Z\cap [0,R])$. Kratz and Leòn prove that an expansion of the form:
\begin{equation*}
\mathcal{N}_R = \esp{\mathcal{N}_R} + \sum_{q \geq 1} \mathcal{N}_R[q]
\end{equation*}
holds in the space of $L^2$-random variables, where the $(\mathcal{N}_R[q])_{q \geq 1}$ are uncorrelated centered random variables. Using Proposition~\ref{prop expectation} and setting $X_R = \sum_{q \geq 2} \mathcal{N}_R[q]$, we have:
\begin{equation*}
\mathcal{N}_R = \frac{R}{\pi} + \mathcal{N}_R[1] + X_R,
\end{equation*}
where $\mathcal{N}_R[1]$ and $X_R$ are centered $L^2$-random variables such that $\esp{\rule{0pt}{2ex}\mathcal{N}_R[1] \ X_R}=0$. Then, \cite[Proposition~1]{KL1997} gives an expression of $\mathcal{N}_R[q]$, for all $q \geq 0$. In particular, we have:
\begin{equation*}
\mathcal{N}_R[1] = a_0 b_2(0) \int_0^R H_2(f(x))H_0(f'(x))\dx x + a_2 b_0(0) \int_0^R H_0(f(x))H_2(f'(x))\dx x.
\end{equation*}
Here $H_0(X)=1$ and $H_2(X) = X^2-1$ are the Hermite polynomials of degree $0$ and $2$ respectively, $a_0 = \sqrt{\frac{2}{\pi}}$ and $a_2 = \frac{1}{\sqrt{2\pi}}$ by~\cite[Lemma~2]{KL1997}, $b_0(0) = \frac{1}{\sqrt{2\pi}}$ and $b_2(0) = \frac{1}{\sqrt{8\pi}}$ by~\cite[Proposition~1]{KL1997}. Finally, a direct computation yields:
\begin{equation*}
\mathcal{N}_R[1] = \frac{1}{2\pi} \int_0^R f'(x)^2 - f(x)^2 \dx x. \qedhere
\end{equation*}
\end{proof}

\begin{lem}
\label{lem lower bound variance NR1}
Let $f$ be a normalized centered stationary Gaussian $\mathcal{C}^2$-process and let $\kappa$ denote its correlation function. We assume that $\kappa$ and $\kappa''$ are square-integrable and that $\kappa(x)\kappa'(x) \to 0$ as $x \to +\infty$. Then,
\begin{equation*}
\frac{1}{R} \var{\int_0^R f'(x)^2 - f(x)^2 \dx x} \xrightarrow[R \to +\infty]{} 4 \int_0^{+\infty} \left(\kappa(x) +\kappa''(x)\right)^2 \dx x.
\end{equation*}
\end{lem}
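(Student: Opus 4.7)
The plan is to compute $\var(Y_R) = \E[Y_R^2]$ (where $Y_R = \int_0^R f'(x)^2 - f(x)^2 \dx x$ is already known to be centered), write it as a double integral, apply Wick's formula, reduce to a one-dimensional integral via stationarity, and conclude by dominated convergence together with an integration by parts.

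First I would use Fubini's theorem (the integrand is $L^1$ by the moment bound in Remark~\ref{rem chaotic expansion}) to write
\begin{equation*}
\var{Y_R} = \int_0^R \!\!\int_0^R \esp{\rule{0ex}{2.5ex}\left(f'(x)^2-f(x)^2\right)\!\left(f'(y)^2-f(y)^2\right)} \dx x \dx y.
\end{equation*}
For fixed $x,y$ the vector $(f(x),f(y),f'(x),f'(y))$ is centered Gaussian, so Isserlis' formula yields, using Equation~\eqref{eq covariance} and $\kappa(0)=1=-\kappa''(0)$, that
\begin{equation*}
\esp{f(x)^2f(y)^2} = 1+2\kappa(y-x)^2, \quad \esp{f'(x)^2 f'(y)^2} = 1+2\kappa''(y-x)^2,
\end{equation*}
and $\esp{f(x)^2f'(y)^2} = \esp{f'(x)^2f(y)^2} = 1+2\kappa'(y-x)^2$. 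Expanding the product, the constant terms cancel and the integrand depends only on $z=y-x$: it equals $h(z) := 2\kappa(z)^2 - 4\kappa'(z)^2 + 2\kappa''(z)^2$.

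Next I would change variables to obtain
\begin{equation*}
\var{Y_R} = \int_{-R}^{R}(R-\norm{z}) h(z) \dx z, \qquad \frac{1}{R}\var{Y_R} = \int_{-R}^{R}\left(1-\tfrac{\norm{z}}{R}\right) h(z) \dx z.
\end{equation*}
To apply Lebesgue's Dominated Convergence Theorem I need $h \in L^1(\R)$; the hypotheses give $\kappa,\kappa''\in L^2(\R)$, and I would show $\kappa'\in L^2(\R)$ by integrating by parts on $[0,R]$: since $\kappa'(0)=0$ (as $\kappa$ is even) and $\kappa(x)\kappa'(x)\to 0$, one has
\begin{equation*}
\int_0^R \kappa'(z)^2 \dx z = -[\kappa\kappa'\rule{0ex}{2ex}]_0^R + \int_0^R \kappa(z)\kappa''(z) \dx z \xrightarrow[R \to +\infty]{} -\int_0^{+\infty} \kappa(z)\kappa''(z) \dx z,
\end{equation*}
and the right-hand side is finite by Cauchy--Schwarz. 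Hence $\kappa'\in L^2(\R)$ and $h \in L^1(\R)$. Dominated convergence (with dominating function $\norm{h}$) then gives
\begin{equation*}
\frac{1}{R}\var{Y_R} \xrightarrow[R \to +\infty]{} \int_{-\infty}^{+\infty} h(z) \dx z = 4\int_0^{+\infty}\!\!\left(\kappa(z)^2 - 2\kappa'(z)^2 + \kappa''(z)^2\right) \dx z,
\end{equation*}
using that $h$ is even.

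Finally I would identify this limit with $4\int_0^{+\infty}(\kappa+\kappa'')^2 \dx z$ by the same integration-by-parts identity: expanding $(\kappa+\kappa'')^2 = \kappa^2 + 2\kappa\kappa'' + \kappa''^2$ and using $\int_0^{+\infty}\kappa\kappa'' \dx z = -\int_0^{+\infty}(\kappa')^2 \dx z$ (justified above), one obtains exactly the desired expression. There is no serious obstacle here; the only point requiring a little care is verifying that all the boundary terms in the integrations by parts vanish, which follows from $\kappa'(0)=0$ and the decay hypothesis $\kappa(x)\kappa'(x)\to 0$.
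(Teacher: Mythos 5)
Your proof is correct and uses essentially the same strategy as the paper (Wick's formula to reduce the variance to an integral of $h(y-x)$, a change of variables to a one-dimensional integral, dominated convergence, and an integration by parts to identify the limit with $4\int_0^{+\infty}(\kappa+\kappa'')^2\,\dx z$); the only difference is that you integrate by parts after passing to the limit while the paper does so before, which is an equivalent reorganization. One small slip: in your integration-by-parts display you should have $\int_0^R \kappa'(z)^2\,\dx z = [\kappa\kappa']_0^R - \int_0^R \kappa(z)\kappa''(z)\,\dx z$ (not the signs you wrote), but since you also flipped the sign of the stated limit, the final identity $\int_0^{+\infty}\kappa'(z)^2\,\dx z = -\int_0^{+\infty}\kappa(z)\kappa''(z)\,\dx z$ is correct and the rest of the argument goes through.
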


\begin{proof}
As explained in Remark~\ref{rem chaotic expansion}, $\int_0^R f'(x)^2 - f(x)^2 \dx x$ is square-integrable and centered. For all $R >0$, we have:
\begin{multline*}
\var{\int_0^R f'(x)^2 - f(x)^2 \dx x} = \esp{\left(\int_0^R f'(x)^2 - f(x)^2 \dx x\right)^2}\\
= \int_0^R \int_0^R \esp{f'(x)^2f'(y)^2} - \esp{f'(x)^2f(y)^2} - \esp{f(x)^2f'(y)^2} + \esp{f(x)^2f(y)^2}\dx x \dx y.
\end{multline*}
By Wick's Formula (see~\cite[Lemma~11.6.1]{AT2007}), if $(X,Y)$ is a centered Gaussian vector in $\R^2$, then we have $\esp{X^2 Y^2} = \esp{X^2}\esp{Y^2} + 2 \esp{XY}^2$. For example, using the stationarity and normalization of $f$, we have:
\begin{equation*}
\esp{f(x)^2f(y)^2} = \esp{f(x)^2}\esp{f(y)^2} + 2\esp{f(x)f(y)}^2 = 1 +2\kappa(y-x)^2.
\end{equation*}
Applying Wick's Formula to $(f(x),f(y))$, $(f(x),f'(y))$, $(f'(x),f(y))$ and $(f'(x),f'(y))$ yields:
\begin{align*}
\var{\int_0^R f'(x)^2 - f(x)^2 \dx x} &= 2 \int_0^R \int_0^R \kappa''(y-x)^2 - 2\kappa'(y-x)^2 + \kappa(y-x)^2\dx x \dx y\\
&= 2 \int_0^R \left(\int_{-x}^{R-x} \kappa''(z)^2 - 2\kappa'(z)^2 + \kappa(z)^2\dx z\right) \dx x\\
&= 2R \int_0^1 \left(\int_{-Rx}^{R(1-x)}\kappa''(z)^2 - 2\kappa'(z)^2 + \kappa(z)^2\dx z\right) \dx x.
\end{align*}

Integrating by parts, we have:
\begin{equation*}
\int_{-Rx}^{R(1-x)}\kappa'(z)^2 \dx z = \kappa(R(1-x))\kappa'(R(1-x)) - \kappa(-Rx)\kappa'(-Rx) - \int_{-Rx}^{R(1-x)}\kappa(z) \kappa(z)''\dx x,
\end{equation*}
so that
\begin{multline*}
\int_{-Rx}^{R(1-x)}\kappa''(z)^2 - 2\kappa'(z)^2 + \kappa(z)^2\dx z = \int_{-Rx}^{R(1-x)}\left(\kappa(z)+ \kappa''(z)\right)^2\dx z\\
+\kappa(R(1-x))\kappa'(R(1-x)) - \kappa(-Rx)\kappa'(-Rx).
\end{multline*}
Recall that $\kappa\kappa'$ tends to $0$ at infinity and that $\kappa$ is even. Letting $R \to +\infty$ in the previous equation, we obtain for any $x \in (0,1)$:
\begin{equation*}
\int_{-Rx}^{R(1-x)}\kappa''(z)^2 - 2\kappa'(z)^2 + \kappa(z)^2\dx z \xrightarrow[R \to +\infty]{} \int_{-\infty}^{+\infty}\left(\kappa(z) + \kappa''(z)\right)^2\dx z,
\end{equation*}
where the right-hand side is finite since both $\kappa$ and $\kappa''$ are square-integrable. By Lebesgue's Dominated Convergence Theorem, we get:
\begin{equation*}
\frac{1}{R} \var{\int_0^R f'(x)^2 - f(x)^2 \dx x} \xrightarrow[R \to +\infty]{} 4\int_{0}^{+\infty}\left(\kappa(z) + \kappa''(z)\right)^2\dx z.
\end{equation*}
In this last step the dominating function is constant on $[0,1]$ equal to:
\begin{equation*}
2 \Norm{\kappa}_1^2 + \int_{-\infty}^{+\infty} (\kappa(z)+\kappa''(z))^2\dx z.\qedhere
\end{equation*}
\end{proof}

The following corollary proves the positivity of $\sigma^2$ and concludes the proof of Proposition~\ref{prop variance}.

\begin{cor}[Explicit lower bound on $\sigma^2$]
\label{cor sigma positive}
Let $f$ be a normalized centered stationary centered Gaussian $\mathcal{C}^2$-process and let $\kappa$ denote its correlation function. Under the hypotheses of Proposition~\ref{prop variance}, the constant $\sigma^2$ defined by Equation~\eqref{eq def sigma} satisfies:
\begin{equation*}
\sigma^2 \geq \frac{1}{\pi^2} \int_0^{+\infty} \left(\kappa(z) + \kappa''(z) \right)^2 \dx z >0.
\end{equation*}
\end{cor}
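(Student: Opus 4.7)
The plan is to combine the orthogonal decomposition of Proposition~\ref{prop chaotic expansion} with the asymptotic variance computation of Lemma~\ref{lem lower bound variance NR1} and with the asymptotics~\eqref{eq asymp m2} already established for $\phi_1 = \phi_2 = \mathbf{1}_{[0,1]}$. Indeed, Proposition~\ref{prop chaotic expansion} writes
\begin{equation*}
\card(Z \cap [0,R]) - \frac{R}{\pi} = \frac{1}{2\pi}\int_0^R \left(f'(x)^2 - f(x)^2\right)\dx x + X_R,
\end{equation*}
where the two summands on the right are centred, square-integrable, and orthogonal in $L^2$. Taking variances on both sides gives
\begin{equation*}
\var{\card(Z\cap[0,R])} = \frac{1}{4\pi^2}\var{\int_0^R f'(x)^2 - f(x)^2 \dx x} + \var{X_R} \geq \frac{1}{4\pi^2}\var{\int_0^R f'(x)^2 - f(x)^2 \dx x}.
\end{equation*}

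Dividing by $R$ and passing to the limit $R \to +\infty$, the left-hand side tends to $\sigma^2$ by Equation~\eqref{eq asymp m2} applied with $\phi_1 = \phi_2 = \mathbf{1}_{[0,1]}$, while the right-hand side tends to $\frac{1}{4\pi^2}\cdot 4 \int_0^{+\infty}(\kappa(z)+\kappa''(z))^2 \dx z$ by Lemma~\ref{lem lower bound variance NR1}. (The hypothesis that $\kappa\kappa'$ tends to $0$ at infinity is automatic under the assumptions of Proposition~\ref{prop variance}, since $\Norm{\kappa}_{2,\eta} \to 0$.) This yields the claimed inequality
\begin{equation*}
\sigma^2 \geq \frac{1}{\pi^2}\int_0^{+\infty}(\kappa(z)+\kappa''(z))^2 \dx z.
\end{equation*}

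It remains to argue strict positivity of this integral. Assume for contradiction that it vanishes. Since $\kappa+\kappa''$ is continuous, this forces $\kappa(x) + \kappa''(x) = 0$ for all $x \geq 0$, and by parity of $\kappa$ (hence of $\kappa''$) this holds on all of $\R$. Solving this ODE gives $\kappa(x) = a\cos(x) + b\sin(x)$ for some constants $a,b$. But $\kappa$ is assumed to be square-integrable on $\R$, which forces $a=b=0$, contradicting the normalization $\kappa(0)=1$. Hence the integral is strictly positive, and the corollary is proved. The only real obstacle is the square-integrability argument that rules out the kernel of $1+\partial^2$; everything else is a direct combination of the already-established results.
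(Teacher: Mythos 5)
Your proof is correct and follows essentially the same route as the paper: orthogonality from Proposition~\ref{prop chaotic expansion}, the variance asymptotics of Lemma~\ref{lem lower bound variance NR1}, and the identification of $\lim_{R\to\infty}\frac{1}{R}\var{\card(Z\cap[0,R])}$ with $\sigma^2$ via Equation~\eqref{eq asymp m2}. The only cosmetic difference is in the final positivity step, where you rule out $\kappa=a\cos+b\sin$ via square-integrability while the paper first pins down $\kappa=\cos$ from the initial conditions $\kappa(0)=1$, $\kappa'(0)=0$ and then contradicts the decay of $\kappa$; both are equally valid under the hypotheses of Proposition~\ref{prop variance}.
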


\begin{proof}
Let $Z$ denote the zero set of $f$ and let $\nu$ denote its counting measure, as in Section~\ref{subsec zeros of stationary Gaussian processes}. As we already said, we have: $\card(Z \cap [0,R]) = \prsc{\nu}{\mathbf{1}_{[0,R]}} = \prsc{\nu_R}{\mathbf{1}_{[0,1]}}$. We use the asymptotics given by Equation~\eqref{eq asymp m2} with $\phi_1=\phi_2 = \mathbf{1}_{[0,1]}$. Note that this asymptotics was already proved to hold, in Section~\ref{subsec asymptotics of the covariances}. Then, as $R \to +\infty$,
\begin{equation*}
m_2(\nu_R)(\mathbf{1}_{[0,1]},\mathbf{1}_{[0,1]}) = \var{\prsc{\nu_R}{\mathbf{1}_{[0,1]}}} = R\sigma^2 +o(R).
\end{equation*}
That is, $\frac{1}{R}\var{\card(Z \cap [0,R])} \xrightarrow[R \to +\infty]{} \sigma^2$.

By Proposition~\ref{prop chaotic expansion}, we have:
\begin{equation*}
\var{\card(Z \cap [0,R])} \geq \frac{1}{4\pi^2} \var{\int_0^R f'(x)^2 - f(x)^2 \dx x}.
\end{equation*}
We divide by $R$ and let $R \to +\infty$. By Lemma~\ref{lem lower bound variance NR1}, we have:
\begin{equation*}
\sigma^2 = \lim_{R \to +\infty} \frac{1}{R}\var{\card(Z \cap [0,R])} \geq \frac{1}{\pi^2} \int_0^{+\infty} \left(\kappa(z) + \kappa''(z) \right)^2 \dx z.
\end{equation*}

In order to conclude the proof, we need to check that the right-hand side of the previous equation is positive. It is clearly non-negative. If it were zero, then $\kappa$ would be an even function of class $\mathcal{C}^2$ such that $\kappa(0)=1$, $\kappa'(0)=0$ and $\forall z \geq 0$, $\kappa(z)+\kappa''(z) =0$. That is we would have $\kappa(z) =\cos(z)$ for all $z \in \R$. This would contradict our hypotheses on $\kappa$, for example the fact that $\kappa(z) \xrightarrow[z \to +\infty]{}0$. Thus,
\begin{equation*}
\int_0^{+\infty} \left(\kappa(z) + \kappa''(z) \right)^2 \dx z >0. \qedhere
\end{equation*}
\end{proof}


\section{Divided differences}
\label{sec divided differences}

In this section, we introduce another important tool that we will use in the proofs of Theorems~\ref{thm moments}, \ref{thm vanishing order} and~\ref{thm clustering}: the divided differences. The divided differences associated with a point $x \in \R^p$ and a function $f \in \mathcal{C}^p(\R)$ are coefficients of the Hermite interpolation polynomial of $f$ at $x$ (see Definition~\ref{def divided differences} below). As such, they are an important object in polynomial approximation and are well-studied. In Section~\ref{subsec Hermite interpolation and divided differences}, we define the divided differences and the related Hermite interpolation polynomials. In Section~\ref{subsec properties of the divided differences}, we state the properties of the divided differences that we are interested in. Most of the material of these two sections is classical and can be found in the survey~\cite{Dub2020}. Finally, in Section~\ref{subsec double divided differences and correlation function}, we study the distribution of the divided differences associated with a stationary centered Gaussian process.


\subsection{Hermite interpolation and divided differences}
\label{subsec Hermite interpolation and divided differences}

The goal of this section is to define the so-called divided differences associated with a point $x \in \R^p$ and a function $f \in \mathcal{C}^{p-1}(\R)$. First we define the evaluation at $x \in \R^p$ and introduce some useful notations. Then we define the Hermite interpolation polynomial of $f$ at $x$ and the associated divided differences in Definition~\ref{def divided differences}.

\begin{dfn}[Evaluation map]
\label{def evaluation map}
Let $p \in \N^*$ and let $x =(x_i)_{1 \leq i \leq p} \in \R^p$. For all $i \in \{1,\dots,p\}$ we denote by $c_i(x) = \card \left\{j \in \{1,\dots,i-1\} \mvert x_j = x_i\right\}$. We denote by $\ev_x:\mathcal{C}^{p-1}(\R) \to \R^p$ the \emph{evaluation map} defined by:
\begin{equation*}
\ev_x : f \longmapsto \left(\frac{f^{(c_i(x))}(x_i)}{c_i(x)!}\right)_{1 \leq i \leq p}.
\end{equation*}
\end{dfn}

\begin{ex}
\label{ex evaluation map}
If $x=(x_i)_{1 \leq i \leq p} \in \R^p \setminus \Delta_p$, then $\ev_x:f \mapsto (f(x_1),\dots,f(x_p))$ is the classical evaluation map at the points $(x_i)_{1 \leq i \leq p}$. On the diagonal, we also evaluate derivatives of $f$: if $x=(y_1,\dots,y_1,\dots,y_m,\dots,y_m)$, where the $(y_j)_{1 \leq j \leq m}$ are distinct and $y_j$ is repeated $k_j+1$ times, then $p = \sum_{j=1}^m (k_j+1)$ and
\begin{equation*}
\ev_x : f \longmapsto \left(f(y_1),f'(y_1),\dots,\frac{f^{(k_1)}(y_1)}{k_1!},\dots,f(y_m),f'(y_m),\dots,\frac{f^{(k_m)}(y_m)}{k_m!}\right).
\end{equation*}
More generally, with the notations of Section~\ref{subsec partitions, products and diagonal inclusions}, let $\I \in \pa_p$, let $y = (y_I)_{I \in \I} \in \R^\I \setminus \Delta_\I$ and let $x = \iota_\I(y) \in \Delta_{p,\I}$. Then, for any $f \in \mathcal{C}^{p-1}(\R)$, we have:
\begin{equation*}
\ev_x(f) = \left(\frac{f^{(i)}(y_I)}{i!}\right)_{I \in \I, 0 \leq i < \norm{I}}.
\end{equation*}
\end{ex}

\begin{dfns}[Newton polynomials]
\label{defs Newton polynomials}
Let $p \in \N^*$ and let $x = (x_i)_{1 \leq i \leq p} \in \R^p$.
\begin{itemize}
\item We denote by $\R_{p-1}[X]$ the space of polynomials in $X$ of degree at most $p-1$.
\item For all $j \in \{0,\dots,p-1\}$, we denote by $P_x^j = \prod_{l=1}^j (X - x_l)$ the $j$-th \emph{Newton polynomial} associated with $x$.
\item Let $M(x)$ denote the matrix of the restriction of $\ev_x$ to $\R_{p-1}[X]$, in the basis $(P_x^0,\dots,P_x^{p-1})$ of $\R_{p-1}[X]$ and the canonical basis of $\R^p$ (see Example~\ref{ex divided differences}.\ref{subex DD p=2} below).
\end{itemize}
\end{dfns}

\begin{lem}
\label{lem coeff M x}
Let $p \in \N^*$ and let $x = (x_i)_{1 \leq i \leq p} \in \R^p$. The matrix $M(x)=\begin{pmatrix}M_{ij}(x)\end{pmatrix}_{1,\leq i,j \leq p}$ is lower triangular and, for all $i \in \{1,\dots,p\}$, we have:
\begin{equation*}
M_{ii}(x) = \prod_{\{k \in \{1,\dots,i-1\} \mid x_k \neq x_i\}} (x_i-x_k).
\end{equation*}
Moreover, if $1 \leq j < i \leq p$, the coefficient $M_{ij}(x)$ vanishes when $c_i(x) \geq j$ (cf.~Definition~\ref{def evaluation map}), and is an homogeneous polynomial of degree $j-1-c_i(x)$ in $(x_i-x_l)_{1 \leq l <j}$ when $c_i(x) <j$.
\end{lem}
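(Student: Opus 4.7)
The plan is to compute each entry of $M(x)$ directly from its definition. By construction of the matrix, the $j$-th column is $\ev_x(P_x^{j-1})$, so
\begin{equation*}
M_{ij}(x) = \frac{(P_x^{j-1})^{(c_i(x))}(x_i)}{c_i(x)!}, \qquad P_x^{j-1}(X) = \prod_{l=1}^{j-1}(X-x_l).
\end{equation*}
I will then split the analysis into three regimes according to whether $j>i$, $j=i$ or $j<i$, the last one being further split according to the sign of $j-1-c_i(x)$. All four situations reduce to reading off a Taylor coefficient of a product of affine factors, and I do not anticipate any genuine obstacle; the only step requiring a little care is the change of variables in the sub-diagonal case.

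First I would treat the lower-triangular part. If $j>i$ then $i \in \{1,\dots,j-1\}$, so the factor $(X-x_i)$ appears explicitly in $P_x^{j-1}$, on top of the $c_i(x)$ other factors $(X-x_l)$ (with $l<i$ and $x_l = x_i$) that also vanish at $X=x_i$. Thus $x_i$ is a root of $P_x^{j-1}$ of multiplicity at least $c_i(x)+1$ and $(P_x^{j-1})^{(c_i(x))}(x_i)=0$, which proves $M_{ij}(x)=0$. For the diagonal entry $j=i$, I would write
\begin{equation*}
P_x^{i-1}(X) = (X-x_i)^{c_i(x)}\, Q(X), \qquad Q(X) = \prod_{\{l<i\,\mid\, x_l \neq x_i\}} (X - x_l),
\end{equation*}
and read off that the Taylor coefficient at order $c_i(x)$ of this product at $X=x_i$ is simply $Q(x_i)$, yielding the announced formula for $M_{ii}(x)$.

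Finally, I would handle the sub-diagonal entries $j<i$. If $c_i(x) \geq j$, then $\deg P_x^{j-1} = j-1 < c_i(x)$, so the $c_i(x)$-th derivative of $P_x^{j-1}$ is identically zero and $M_{ij}(x)=0$. If $c_i(x)<j$, I would perform the change of variable $Y = X - x_i$, which turns $P_x^{j-1}$ into $\prod_{l=1}^{j-1}\bigl(Y-(x_l-x_i)\bigr)$. Expanding this product yields
\begin{equation*}
\prod_{l=1}^{j-1}\bigl(Y-(x_l-x_i)\bigr) = \sum_{k=0}^{j-1} (-1)^{j-1-k}\, e_{j-1-k}\bigl((x_l-x_i)_{1 \leq l < j}\bigr)\, Y^k,
\end{equation*}
with $e_m$ the $m$-th elementary symmetric polynomial. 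Since $M_{ij}(x)$ is by definition the coefficient of $Y^{c_i(x)}$, it equals $(-1)^{j-1-c_i(x)} e_{j-1-c_i(x)}\bigl((x_l-x_i)_{1 \leq l < j}\bigr)$. Using $x_l - x_i = -(x_i - x_l)$ absorbs the global sign and presents $M_{ij}(x)$ as the elementary symmetric polynomial $e_{j-1-c_i(x)}$ evaluated at $(x_i-x_l)_{1 \leq l < j}$, which is exactly the homogeneous polynomial of degree $j-1-c_i(x)$ claimed in the statement.
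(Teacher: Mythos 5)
Your proof is correct and follows essentially the same route as the paper: read off $M_{ij}(x)$ as the Taylor coefficient $\frac{(P_x^{j-1})^{(c_i(x))}(x_i)}{c_i(x)!}$ and analyze it case by case ($j>i$, $j=i$, $j<i$ with $c_i(x)\geq j$ or $c_i(x)<j$). The only difference is cosmetic: in the last case you go slightly further and identify the coefficient explicitly as the elementary symmetric polynomial $e_{j-1-c_i(x)}\bigl((x_i-x_l)_{1\leq l<j}\bigr)$, whereas the paper is content to observe it is some homogeneous polynomial of that degree.
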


\begin{proof}
Let $x=(x_i)_{1 \leq i \leq p} \in \R^p$ and let $i,j \in \{1,\dots,p\}$. By definition of $M(x)$, we have
\begin{equation*}
M_{ij}(x) = \frac{(P_x^{j-1})^{(c_i(x))}(x_i)}{c_i(x)!}.
\end{equation*}
If $i < j$, then $\card\{l<j\mid x_l=x_i\} \geq c_i(x)+1$. Hence, $x_i$ is a root of $P_x^{j-1}$ of multiplicity at least $c_i(x)+1$, and $(P_x^{j-1})^{(c_i(x))}(x_i)=0$. Thus $M(x)$ is lower triangular. Then, if $i=j$, we have
\begin{equation*}
P_x^{j-1} = (X-x_i)^{c_i(x)} \prod_{k \in K}(X-x_k),
\end{equation*}
where $K = \{k \in \{1,\dots,i-1\} \mid x_k \neq x_i\}$. Hence, $M_{ii}(x) = \prod_{k \in K} (x_i-x_k)$ as claimed.

Let us now assume that $j < i$. If $c_i(x) \geq j$, since $P^{j-1}_x$ has degree $j-1$ we have $(P^{j-1}_x)^{(c_i(x))}=0$, and $M_{ij}(x)=0$. If $c_i(x) < j$, then $(P^{j-1}_x)^{(c_i(x))}$ is a sum of terms which are products of exactly $j-1-c_i(x)$ factors of the form $(X-x_l)$, where $1 \leq l <j$. Thus $M_{ij}(x)$ is some homogeneous polynomial of degree $j-1-c_i(x)$ evaluated on $(x_i-x_l)_{1 \leq l < j}$.
\end{proof}

\begin{cor}
\label{cor evx isomorphism}
For all $x \in \R^p$, the restriction of $\ev_x$ is an isomorphism from $\R_{p-1}[X]$ to $\R^p$.
\end{cor}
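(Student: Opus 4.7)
The plan is to read off the invertibility of the matrix $M(x)$ directly from Lemma~\ref{lem coeff M x}, and then transfer this conclusion to the linear map $\ev_x\!\mid_{\R_{p-1}[X]}$. The two vector spaces $\R_{p-1}[X]$ and $\R^p$ both have dimension $p$, so it suffices to prove that $\ev_x$ restricted to $\R_{p-1}[X]$ is bijective, which is equivalent to showing that $M(x)$ is invertible.

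By Lemma~\ref{lem coeff M x}, the matrix $M(x)$ is lower triangular, so its determinant is the product of its diagonal entries
\begin{equation*}
\det M(x) = \prod_{i=1}^p M_{ii}(x) = \prod_{i=1}^p \prod_{\{k \in \{1,\dots,i-1\} \mid x_k \neq x_i\}} (x_i-x_k).
\end{equation*}
Each factor $x_i-x_k$ appearing in the right-hand product is nonzero by construction, since $k$ is taken in the set where $x_k \neq x_i$. With the convention that the empty product equals $1$ (which handles the case where all previous $x_k$ agree with $x_i$, in particular $i=1$), every diagonal coefficient $M_{ii}(x)$ is nonzero. Hence $\det M(x) \neq 0$ for every $x \in \R^p$.

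Since $M(x)$ is the matrix of $\ev_x\!\mid_{\R_{p-1}[X]}$ in the basis $(P_x^0, \dots, P_x^{p-1})$ of $\R_{p-1}[X]$ and the canonical basis of $\R^p$, the invertibility of $M(x)$ is equivalent to the fact that $\ev_x$ defines an isomorphism from $\R_{p-1}[X]$ to $\R^p$. There is no real obstacle here: the whole argument is a triangular-matrix computation, and the only subtlety is noting that the condition $x_k \neq x_i$ on the index set of the product in $M_{ii}(x)$ is exactly what guarantees nondegeneracy even when $x$ lies on the diagonal $\Delta_p$.
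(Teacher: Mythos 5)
Your proof is correct and follows exactly the same approach as the paper's: invoke Lemma~\ref{lem coeff M x} to see that $M(x)$ is lower triangular with nonzero diagonal entries, hence invertible. You simply spell out the determinant computation and the empty-product convention, which the paper leaves implicit.
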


\begin{proof}
By Lemma~\ref{lem coeff M x}, the matrix $M(x)$ of this linear map is lower triangular and its diagonal coefficients are non-zero.
\end{proof}

We can now define the Hermite interpolation polynomial of $f$ at $x \in \R^p$ and the divided difference $[f]_p(x)$. The meaning of the name ``divided difference'' is not obvious in the following definition. The terminology will become clearer after we explained how to compute these divided differences recursively (see Lemma~\ref{lem alternative definition divided differences} below).

\begin{dfn}[Divided differences]
\label{def divided differences}
Let $p \in \N^*$ and let $x \in \R^p$. By Corollary~\ref{cor evx isomorphism}, for any $f \in \mathcal{C}^{p-1}(\R)$ there exists a unique $\pi_x^f \in \R_{p-1}[X]$ such that $\ev_x(\pi_x^f)= \ev_x(f)$. This polynomial is called the \emph{Hermite interpolation polynomial} of $f$ at $x$. The \emph{divided difference} $[f]_p(x)$ is defined as its leading coefficient.
\end{dfn}

The following lemma shows that the divided differences are the coordinates of the Hermite interpolation polynomial in the basis of the Newton polynomials defined above, see Definitions~\ref{defs Newton polynomials}.

\begin{lem}
\label{lem coefficients of Hermite interpolation polynomial}
Let $p \in \N^*$ and let $x = (x_i)_{1 \leq i \leq p} \in \R^p$. For all $f \in \mathcal{C}^{p-1}(\R)$, we have:
\begin{equation*}
\pi_x^f = \sum_{j=1}^p [f]_j(x_1,\dots,x_j) P_x^{j-1}.
\end{equation*}
\end{lem}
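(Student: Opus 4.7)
The plan is to expand $\pi_x^f$ in the basis $(P_x^0,\dots,P_x^{p-1})$ of $\R_{p-1}[X]$, write $\pi_x^f = \sum_{j=1}^p c_j(x,f) P_x^{j-1}$, and then identify $c_j(x,f)$ with $[f]_j(x_1,\dots,x_j)$ for every $j \in \{1,\dots,p\}$. The case $j=p$ is immediate: by construction $P_x^{p-1}$ is the unique element of the Newton basis of degree $p-1$, so $c_p(x,f)$ is the leading coefficient of $\pi_x^f$, which by Definition~\ref{def divided differences} is precisely $[f]_p(x_1,\dots,x_p)$.

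For $j<p$ the key observation is that the matrix $M(x)$ is lower triangular by Lemma~\ref{lem coeff M x}. This has two consequences, both following from the fact that the multiplicities $c_i(x)$ of Definition~\ref{def evaluation map} depend only on $x_1,\dots,x_{i-1}$. First, for any $i\le j$, the $i$-th coordinate of $\ev_x$ agrees with the $i$-th coordinate of the evaluation map $\ev_{x_{(j)}}$ associated with the truncated tuple $x_{(j)} = (x_1,\dots,x_j)$, and the Newton polynomials satisfy $P_x^{l-1} = P_{x_{(j)}}^{l-1}$ for every $l\le j$. Second, for $i\le j$ and $l\le i$, the entries $M_{il}(x)$ and $M_{il}(x_{(j)})$ coincide. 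Therefore the first $j$ rows of the equation $\ev_x(\pi_x^f) = \ev_x(f)$ read
\begin{equation*}
\sum_{l=1}^i M_{il}(x_{(j)}) c_l(x,f) = \left(\ev_{x_{(j)}}(f)\right)_i, \qquad i=1,\dots,j.
\end{equation*}

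These $j$ equations express exactly that the polynomial $q_j := \sum_{l=1}^j c_l(x,f)\, P_{x_{(j)}}^{l-1}$, which lies in $\R_{j-1}[X]$, satisfies $\ev_{x_{(j)}}(q_j) = \ev_{x_{(j)}}(f)$. By Corollary~\ref{cor evx isomorphism} applied to $x_{(j)}$, the Hermite interpolation polynomial is unique, so $q_j = \pi_{x_{(j)}}^f$. The leading coefficient of $q_j$ is $c_j(x,f)$, and by Definition~\ref{def divided differences} this leading coefficient is $[f]_j(x_1,\dots,x_j)$, which completes the identification.

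There is no serious obstacle here: everything reduces to exploiting the lower-triangular structure from Lemma~\ref{lem coeff M x} together with the fact that truncating the tuple $x$ to its first $j$ entries is compatible with the evaluation map and the Newton basis. The only point requiring any care is checking this compatibility, which is where the convention $c_i(x) = \card\{j<i\mid x_j=x_i\}$ (rather than some symmetric variant) plays a decisive role.
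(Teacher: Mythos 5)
Your proof is correct, and it rests on exactly the same ingredients as the paper's: the lower-triangularity of $M(x)$ from Lemma~\ref{lem coeff M x}, uniqueness of the Hermite interpolant from Corollary~\ref{cor evx isomorphism}, and the fact that $\ev_x$, $M(x)$ and the Newton basis are compatible with truncating $x$ to its initial segments. The paper packages this as an induction on $p$ (peeling off the top Newton coefficient and reducing to $\tilde{x}=(x_1,\dots,x_p)$), while you identify all coefficients $c_j$ simultaneously by reading the first $j$ rows of the linear system $M(x)c=\ev_x(f)$ — an unrolled form of the same argument.
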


\begin{proof}
We prove this result by induction on $p \in \N^*$. If $p=1$, for any continuous~$f$, the polynomial $\pi_x^f$ is constant equal to $f(x_1)$. Hence, $\pi_x^f=[f]_1(x_1)P_x^0$ where $[f]_1=f$.

Let us assume that the result holds for $p \in \N^*$. Let $x =(x_i)_{1 \leq i \leq p+1} \in \R^{p+1}$, we denote by $\tilde{x} = (x_i)_{1 \leq i \leq p}$. Note that for any $f \in \mathcal{C}^p(\R)$, the components of $\ev_{\tilde{x}}(f)$ are the first~$p$ components of $\ev_x(f)$. Then, by Lemma~\ref{lem coeff M x}, we have $\ev_{\tilde{x}}(P_x^p) = 0$. Hence,
\begin{equation*}
\ev_{\tilde{x}}\left(\pi_x^f - [f]_{p+1}(x) P_x^p\right) = \ev_{\tilde{x}}(\pi_x^f) = \ev_{\tilde{x}}(f).
\end{equation*}
Moreover, by Definition~\ref{def divided differences}, the polynomial $\pi_x^f - [f]_{p+1}(x) P_x^p$ has degree at most $p-1$. Thus, $\pi_x^f - [f]_{p+1}(x) P_x^p = \pi_{\tilde{x}}^f = \sum_{j=1}^p [f]_j(x_1,\dots,x_j) P_x^{j-1}$, where the second equality is given by the induction hypothesis. This concludes the induction step and the proof.
\end{proof}

\begin{dfn}[Divided differences evaluation map]
\label{def evaluation divided differences}
Let $p \in \N^*$ and let $x =(x_i)_{1 \leq i \leq p} \in \R^p$. We denote by $[\ev]_x:\mathcal{C}^{p-1}(\R) \to \R^p$ the linear map defined by
\begin{equation*}
[\ev]_x : f \longmapsto \left([f]_j(x_1,\dots,x_j)\right)_{1 \leq j \leq p}.
\end{equation*}
\end{dfn}

\begin{lem}
\label{lem evaluation divided differences}
Let $p \in \N^*$, for all $x \in \R^p$ we have $M(x) [\ev]_x = \ev_x$, where $\ev_x$ is as in Definition~\ref{def evaluation map} and $M(x)$ is defined by Definitions~\ref{defs Newton polynomials}.
\end{lem}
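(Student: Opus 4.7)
The plan is to observe that Lemma~\ref{lem evaluation divided differences} is essentially a reformulation of the combination of Definition~\ref{def divided differences} and Lemma~\ref{lem coefficients of Hermite interpolation polynomial}, using the definition of $M(x)$ as a change-of-basis/matrix-representation.

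First, I would fix $f \in \mathcal{C}^{p-1}(\mathbb{R})$ and $x=(x_i)_{1\leq i\leq p} \in \mathbb{R}^p$. By Definition~\ref{def divided differences}, the Hermite interpolation polynomial $\pi_x^f \in \mathbb{R}_{p-1}[X]$ satisfies $\ev_x(\pi_x^f) = \ev_x(f)$. Applying Lemma~\ref{lem coefficients of Hermite interpolation polynomial}, I would rewrite the left-hand side as
\begin{equation*}
\ev_x(f) = \ev_x(\pi_x^f) = \ev_x\!\left(\sum_{j=1}^p [f]_j(x_1,\dots,x_j)\, P_x^{j-1}\right) = \sum_{j=1}^p [f]_j(x_1,\dots,x_j)\, \ev_x(P_x^{j-1}),
\end{equation*}
where the last equality uses the linearity of $\ev_x$.

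Next, I would invoke the definition of $M(x)$ in Definitions~\ref{defs Newton polynomials}: since $M(x)$ is the matrix of the restriction $\ev_x|_{\mathbb{R}_{p-1}[X]}$ in the basis $(P_x^0,\dots,P_x^{p-1})$ of the source and the canonical basis of $\mathbb{R}^p$, its $j$-th column is precisely the column vector $\ev_x(P_x^{j-1}) \in \mathbb{R}^p$. Therefore the sum above is exactly the matrix-vector product $M(x)\,\bigl([f]_j(x_1,\dots,x_j)\bigr)_{1 \leq j \leq p} = M(x)\, [\ev]_x(f)$, where the last identification uses Definition~\ref{def evaluation divided differences}.

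Combining these two steps yields $\ev_x(f) = M(x)\, [\ev]_x(f)$ for every $f \in \mathcal{C}^{p-1}(\mathbb{R})$, which is the claimed equality of linear maps. There is no real obstacle here: the lemma is a direct consequence of the preceding results, essentially a bookkeeping statement making explicit that writing a polynomial in the Newton basis with coefficients given by divided differences is the same as inverting the system $M(x) \cdot \text{(coefficients)} = \ev_x(f)$.
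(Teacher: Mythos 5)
Your proof is correct and takes essentially the same approach as the paper's: both combine Definition~\ref{def divided differences}, Lemma~\ref{lem coefficients of Hermite interpolation polynomial}, and the defining property of $M(x)$ as the matrix of $\ev_x|_{\R_{p-1}[X]}$ in the Newton basis. You simply unpack the column interpretation of $M(x)$ more explicitly than the paper does, but the argument is the same.
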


\begin{proof}
Let $x \in \R^p$ and let $f \in \mathcal{C}^{p-1}(\R)$. By Lemma~\ref{lem coefficients of Hermite interpolation polynomial}, the components of $[\ev]_x(f)$ are the coordinates of the polynomial $\pi_x^f$ in the basis $(P_x^j)_{1 \leq j \leq p}$ of $\R_{p-1}[X]$. Then, by definition of $M(x)$ and $\pi_x^f$, we have:
\begin{equation*}
M(x) [\ev]_x(f) = \ev_x(\pi_x^f) = \ev_x(f).\qedhere
\end{equation*}
\end{proof}

\begin{exs}
\label{ex divided differences}
We conclude this section by giving some examples.
\begin{enumerate}
\item \label{subex DD p=2} Let $f \in \mathcal{C}^1(\R)$ and $(x_1,x_2) \in \R^2 \setminus \Delta_2$, we have $M(x) = \begin{pmatrix}
1 & 0 \\
1 & x_2-x_1
\end{pmatrix}$. Hence
\begin{equation*}
\begin{pmatrix}
[f]_1(x_1) \\ [f]_2(x_1,x_2)
\end{pmatrix} = M(x)^{-1} \begin{pmatrix}
f(x_1) \\ f(x_2)
\end{pmatrix} = \begin{pmatrix}
1 & 0 \\ \frac{-1}{x_2-x_1} & \frac{1}{x_2-x_1}
\end{pmatrix} \begin{pmatrix}
f(x_1) \\ f(x_2)
\end{pmatrix} = \begin{pmatrix}
f(x_1)\\ \frac{f(x_2)-f(x_1)}{x_2-x_1}
\end{pmatrix}.
\end{equation*}
\item \label{subex DD Taylor} Let $p \in \N^*$, let $x=(x_i)_{1 \leq i \leq p} \in \R^p$ and let $f \in \mathcal{C}^{p-1}(\R)$. If there exists $z \in \R$ such that $x_i=z$ for all $i \in \{1,\dots,p\}$, then $\ev_x(f)=\left(f(z),\dots,\frac{f^{(p-1)}(z)}{(p-1)!}\right)$ and $M(x)$ is the identity matrix of size $p$. Then,~$\pi_x^f$ is the Taylor polynomial of degree $p-1$ of $f$ at $z$ and $[f]_j(x_1,\dots,x_j) = \frac{f^{(j-1)}(z)}{(j-1)!}$, for all $j \in \{1,\dots,p\}$.
\item \label{subex DD derivative} Let $p \in \N^*$ and let $(x_i)_{1 \leq i \leq p} \in \R^p \setminus \Delta_p$. Let $f \in \mathcal{C}^p(\R)$ be such that $[f]_j(x_1,\dots,x_j) = 0$ for all $j \in \{1,\dots,p\}$. Let $i \in \{1,\dots,p\}$, we denote by $x=(x_1,\dots,x_p,x_i) \in \R^{p+1}$. By Lemma~\ref{lem coefficients of Hermite interpolation polynomial}, we have $\pi_x^f = [f]_{p+1}(x_1,\dots,x_p,x_i) \prod_{j=1}^p(X-x_j)$. Hence,
\begin{equation*}
f'(x_i) = (\pi_x^f)'(x_i) = [f]_{p+1}(x_1,\dots,x_p,x_i) \prod_{j \in \{1,\dots,p\} \setminus \{i\}}(x_i-x_j).
\end{equation*}
\end{enumerate}
\end{exs}


\subsection{Properties of the divided differences}
\label{subsec properties of the divided differences}

Let us now derive some interesting properties of the divided differences defined in Definition~\ref{def divided differences}. They will be useful in Section~\ref{sec Kac-Rice densities revisited and clustering}, to obtain new expressions of the Kac--Rice densities (cf.~Definition~\ref{def Kac-Rice densities}) and prove clustering results for these densities.

Recall that we denoted by $\sigma \cdot x$ the action of $\sigma \in \mathfrak{S}_p$ on $x \in \R^p$ by permutation of the indices (see Notation~\ref{ntn permutation}).

\begin{lem}[Symmetry]
\label{lem divided differences are symmetric}
Let $p \in \N^*$ and $f \in \mathcal{C}^{p-1}(\R)$. For all $x \in \R^p$ and all $\sigma \in \mathfrak{S}_p$, we have $\pi_{\sigma \cdot x}^f = \pi_x^f$. In particular $[f]_p(\sigma \cdot x) = [f]_p(x)$, that is the function $[f]_p:\R^p \to \R$ is symmetric.
\end{lem}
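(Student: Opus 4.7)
The plan is to derive the symmetry from the uniqueness part of Definition~\ref{def divided differences}: since $\pi_x^f$ is characterized as the unique element of $\R_{p-1}[X]$ satisfying $\ev_x(\pi_x^f) = \ev_x(f)$, it suffices to prove that for any $\sigma \in \mathfrak{S}_p$ the interpolation condition imposed by $\ev_{\sigma \cdot x}$ is the same as the one imposed by $\ev_x$, namely that for any two functions $g,h \in \mathcal{C}^{p-1}(\R)$,
\begin{equation*}
\ev_x(g) = \ev_x(h) \iff \ev_{\sigma \cdot x}(g) = \ev_{\sigma \cdot x}(h).
\end{equation*}
Once this equivalence is established, $\pi_x^f$ satisfies the condition defining $\pi_{\sigma \cdot x}^f$, so these two polynomials coincide by uniqueness (Corollary~\ref{cor evx isomorphism}), and their leading coefficients agree, which gives $[f]_p(\sigma \cdot x) = [f]_p(x)$.

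The key observation is that the system of conditions $\ev_x(g) = \ev_x(h)$ only depends on the multi-set $\{\!\{x_1,\dots,x_p\}\!\}$ (i.e.\ on the partition $\I \in \pa_p$ associated with $x$ and the values $(y_I)_{I \in \I}$), and not on the order in which the components of $x$ are listed. Indeed, as observed in Example~\ref{ex evaluation map}, writing $x = \iota_\I(\underline{y}_\I)$ we have
\begin{equation*}
\ev_x(g) = \left(\frac{g^{(i)}(y_I)}{i!}\right)_{I \in \I,\, 0 \leq i < \norm{I}},
\end{equation*}
up to a reordering of components dictated by $x$. Hence $\ev_x(g) = \ev_x(h)$ is equivalent to the order-free system
\begin{equation*}
\forall I \in \I,\ \forall i \in \{0,\dots,\norm{I}-1\}, \qquad g^{(i)}(y_I) = h^{(i)}(y_I).
\end{equation*}
Since $\sigma \cdot x$ corresponds to the same partition $\I$ and the same family $(y_I)_{I \in \I}$, the same reformulation shows that $\ev_{\sigma \cdot x}(g) = \ev_{\sigma \cdot x}(h)$ is equivalent to the very same system, which proves the claimed equivalence.

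There is no real obstacle here; the only point requiring a bit of care is making the correspondence between the components of $\ev_x(f)$ and the pairs $(y_I, i)$ precise, i.e.\ checking that the counter $c_i(x)$ from Definition~\ref{def evaluation map} runs through $\{0, 1, \dots, \norm{I}-1\}$ as $i$ ranges over the indices with $x_i = y_I$ (in the natural order), independently of how these indices are arranged inside $\{1,\dots,p\}$. Once this bookkeeping is handled, the proof reduces to the one-line application of uniqueness described above.
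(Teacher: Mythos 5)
Your proof is correct and follows essentially the same approach as the paper: both arguments reformulate the Hermite interpolation condition $\ev_x(\pi_x^f) = \ev_x(f)$ as the permutation-invariant system $(\pi_x^f - f)^{(i)}(y_I) = 0$ for $I \in \I$, $0 \leq i < \norm{I}$, observe that this system depends only on the unordered data $\{(y_I, \norm{I}) \mid I \in \I\}$, and conclude $\pi_{\sigma \cdot x}^f = \pi_x^f$ by uniqueness. The bookkeeping point you flag about $c_i(x)$ enumerating $\{0,\dots,\norm{I}-1\}$ is exactly right and is implicit in the paper's appeal to Example~\ref{ex evaluation map}.
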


\begin{proof}
Let $x \in \R^p$. By Remark~\ref{rem diagonal inclusions}, there exists a unique $\I \in \pa_p$ such that $x \in \Delta_{p,\I}$. Moreover, there exists a unique $y = (y_I)_{I \in \I} \in \R^\I \setminus \Delta_\I$ such that $x= \iota_\I(y)$. By Definition~\ref{def divided differences}, the polynomial $\pi_x^f$ is the only element of $\R_{p-1}[X]$ such that$(\pi_x^f-f)^{(i)}(y_I)=0$ for all $I \in \I$ and all $i < \norm{I}$. The set $\{(y_I,\norm{I})\mid I \in \I\}$ is invariant under the action of $\sigma$ on $x$ by permutation of the components. Hence $\pi_{\sigma \cdot x}^f = \pi_x^f$ and, looking at the leading coefficients, we have $[f]_p(\sigma \cdot x) = [f]_p(x)$.
\end{proof}

The following result shows that the divided differences can be computed recursively, at least if the interpolation points $x_1,\dots,x_p \in \R$ are distinct. It also explains the name ``divided differences''.

\begin{lem}[Inductive definition]
\label{lem alternative definition divided differences}
Let $p \in \N^*$ and let $f \in \mathcal{C}^p(\R)$. Let $x=(x_i)_{1 \leq i \leq p+1} \in \R^{p+1}$ be such that $x_p \neq x_{p+1}$, then we have:
\begin{equation*}
[f]_{p+1}(x) = \frac{[f]_p(x_1,\dots,x_{p-1},x_{p+1})-[f]_p(x_1\dots,x_{p-1},x_p)}{x_{p+1}-x_p}.
\end{equation*}
\end{lem}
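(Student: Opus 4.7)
The plan is to identify the Hermite interpolation polynomial $\pi_x^f$ of $f$ at $x=(x_1,\dots,x_{p+1})$ with an explicit combination of the Hermite interpolation polynomials at the two sub-lists $y = (x_1,\dots,x_{p-1},x_p)$ and $z = (x_1,\dots,x_{p-1},x_{p+1})$, and then read off the leading coefficient. Concretely, I would introduce the polynomial
\begin{equation*}
Q(X) = \frac{(X-x_p)\, \pi_z^f(X) - (X-x_{p+1})\, \pi_y^f(X)}{x_{p+1}-x_p},
\end{equation*}
which makes sense since $x_p \neq x_{p+1}$, and which has degree at most $p$. Its leading coefficient is $\bigl([f]_p(x_1,\dots,x_{p-1},x_{p+1}) - [f]_p(x_1,\dots,x_{p-1},x_p)\bigr)/(x_{p+1}-x_p)$, so once we know $Q = \pi_x^f$, comparing with Definition~\ref{def divided differences} yields the claimed formula.

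To show $Q = \pi_x^f$, by Corollary~\ref{cor evx isomorphism} it suffices to verify $\ev_x(Q) = \ev_x(f)$, or equivalently, that for every value $u \in \{x_1,\dots,x_{p+1}\}$ with multiplicity $m$ in the list $x$, one has $Q^{(k)}(u) = f^{(k)}(u)$ for $k \in \{0,\dots,m-1\}$. I would split into three cases according to whether $u = x_p$, $u = x_{p+1}$, or $u \in \{x_1,\dots,x_{p-1}\} \setminus \{x_p, x_{p+1}\}$. In each case, the multiplicity of $u$ in $y$ and in $z$ is easily related to $m$ (for instance, if $u = x_p$ then $\mathrm{mult}(u,y) = m$ and $\mathrm{mult}(u,z) = m-1$), so that $\pi_y^f$ and $\pi_z^f$ agree with $f$ up to the relevant orders at $u$.

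The actual verification is a short computation using Leibniz's rule: for $k \leq m-1$,
\begin{equation*}
\bigl((X-x_p)\pi_z^f(X)\bigr)^{(k)}\big|_{X=u} = (u-x_p)(\pi_z^f)^{(k)}(u) + k (\pi_z^f)^{(k-1)}(u),
\end{equation*}
and similarly for the other term. In all three cases the contributions involving $(\pi_y^f)^{(j)}(u)$ and $(\pi_z^f)^{(j)}(u)$ can be replaced by $f^{(j)}(u)$ (thanks to the multiplicity bookkeeping), and a direct simplification gives $Q^{(k)}(u) = f^{(k)}(u)$.

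The main obstacle is not a conceptual one but a bookkeeping one: carefully tracking multiplicities when some of the $x_i$ with $i < p$ coincide with $x_p$ or $x_{p+1}$, and checking that the Hermite condition holds up to the correct order in the ``degenerate'' cases $u = x_p$ and $u = x_{p+1}$, where the multiplicity in either $y$ or $z$ drops by one compared to the multiplicity in $x$. Once this case-analysis is handled cleanly, the lemma follows immediately from the uniqueness of Hermite interpolation.
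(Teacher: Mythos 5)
Your proof is correct, and it takes a genuinely different route from the paper's. The paper's own proof is a short algebraic argument that leans on Lemma~\ref{lem divided differences are symmetric}: with $\sigma$ the transposition of $p$ and $p+1$, one expands $0 = \pi_x^f - \pi_{\sigma\cdot x}^f$ in the Newton bases of $x$ and $\sigma\cdot x$; the Newton polynomials $P_x^{j-1}$ and $P_{\sigma\cdot x}^{j-1}$ coincide for $j\le p$ and $j=p+1$, and symmetry of $[f]_j$ kills all coefficients of index $j\le p-1$ and $j = p+1$, so after dividing by $P_x^{p-1}$ only the two terms of index $p$ and $p+1$ remain, giving the formula directly. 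What you do instead is the classical Aitken--Neville recursion: you explicitly build the candidate polynomial $Q$ from $\pi_y^f$ and $\pi_z^f$, and verify $\ev_x(Q)=\ev_x(f)$ by a Leibniz computation split into the cases $u = x_p$, $u = x_{p+1}$, and $u$ among the shared nodes (your multiplicity bookkeeping is right: when $u=x_p$ the multiplicity drops by one in $z$ but the vanishing of the prefactor $(u-x_p)$ exactly compensates, and symmetrically for $u=x_{p+1}$). The trade-off: the paper's proof is essentially two lines once symmetry is established, but it relies on Lemma~\ref{lem divided differences are symmetric}; your proof is self-contained, using only Corollary~\ref{cor evx isomorphism} (uniqueness of the Hermite interpolant) and the degree bound on $Q$, at the cost of the three-case verification. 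Both are valid.
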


\begin{proof}
Let $\sigma \in \mathfrak{S}_{p+1}$ be defined by $\sigma(p)=p+1$, $\sigma(p+1)=p$ and $\sigma(i)=i$ for all $i \in \{1,\dots,p-1\}$. Using Notation~\ref{ntn permutation}, by Lemmas~\ref{lem coefficients of Hermite interpolation polynomial} and~\ref{lem divided differences are symmetric}, we have:
\begin{equation*}
0 = \pi_x^f - \pi_{\sigma \cdot x}^f = \sum_{j=1}^{p+1} [f]_j(x_1,\dots,x_j) P_x^{j-1} - \sum_{j=1}^{p+1} [f]_j(x_{\sigma(1)},\dots,x_{\sigma(j)}) P_{\sigma \cdot x}^{j-1}.
\end{equation*}
For all $j \in \{1,\dots,p\}$, we have $P_{\sigma\cdot x}^{j-1} = P_x^{j-1}$. Moreover, $[f]_j(x_{\sigma(1)},\dots,x_{\sigma(j)}) = [f]_j(x_1\dots,x_j)$ for all $j \in \{1,\dots,p+1\} \setminus \{p\}$. Hence, only the terms of index $p$ and $p+1$ do not cancel out in the previous sums. Dividing by $ P_x^{p-1} = \prod_{i=1}^{p-1} (X-x_i)= P_{\sigma\cdot x}^{p-1}$, we obtain:
\begin{equation*}
(x_{p+1}-x_p) [f]_{p+1}(x_1,\dots,x_p,x_{p+1}) + [f]_p(x_1,\dots,x_{p-1},x_{p+1})-[f]_p(x_1\dots,x_{p-1},x_p) = 0.\qedhere
\end{equation*}
\end{proof}

\begin{ntn}
\label{ntn x min x max}
Let $p \in \N^*$ and let $x=(x_i)_{1\leq i \leq p}$, we denote by $x_{\min} = \min \{x_i \mid 1 \leq i \leq p\}$ and by $x_{\max} = \max \{x_i \mid 1 \leq i \leq p\}$.
\end{ntn}

\begin{lem}[Rolle's Property]
\label{lem divided differences Rolle}
Let $p \in \N^*$ and let $f \in \mathcal{C}^{p-1}(\R)$. For all $x=(x_i)_{1 \leq i \leq p} \in \R^p$, there exists $\xi \in [x_{\min},x_{\max}]$ such that $[f]_p(x) = \frac{f^{(p-1)}(\xi)}{(p-1)!}$.
\end{lem}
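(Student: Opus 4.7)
The plan is to realize $[f]_p(x)$ as a value of $f^{(p-1)}$ divided by $(p-1)!$ through an iterated Rolle argument applied to the interpolation error $g := f - \pi_x^f$.

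First, I would record two preliminary observations. On the one hand, by Definition~\ref{def divided differences} we have $\ev_x(g)=0$. Decomposing $x = \iota_\I(\underline{y}_\I)$ with $\I \in \pa_p$ and $\underline{y}_\I = (y_I)_{I \in \I} \in \R^\I \setminus \Delta_\I$ as in Remark~\ref{rem diagonal inclusions}, Example~\ref{ex evaluation map} gives $g^{(i)}(y_I) = 0$ for every $I \in \I$ and every $i \in \{0,\dots,\norm{I}-1\}$. That is, $g$ vanishes at $y_I$ with multiplicity at least $\norm{I}$. On the other hand, since $\pi_x^f \in \R_{p-1}[X]$ has leading coefficient $[f]_p(x)$ by Definition~\ref{def divided differences}, its $(p-1)$-th derivative is the constant $(p-1)!\,[f]_p(x)$, so that $g^{(p-1)} = f^{(p-1)} - (p-1)!\,[f]_p(x)$ on $\R$.

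Next, I would prove by finite induction on $k \in \{0,\dots,p-1\}$ that $g^{(k)}$ admits at least $p-k$ zeros in $[x_{\min},x_{\max}]$, counted with multiplicity, where a zero of order $m$ at a point $y$ counts as $m$ zeros. The base case $k=0$ is the observation of the previous paragraph, since $\sum_{I \in \I} \norm{I} = p$ and all the $y_I$ lie in $[x_{\min},x_{\max}]$. For the induction step, assume $g^{(k)}$ has at least $p-k$ zeros in this interval, listed with multiplicity as $\xi_1 \leq \cdots \leq \xi_{p-k}$. At each point $\xi$ where $g^{(k)}$ has a zero of order $m \geq 2$, the function $g^{(k+1)}$ has a zero of order at least $m-1$ there, which contributes $m-1$ zeros. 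Between two consecutive distinct values among the $\xi_i$, $g^{(k)}$ is differentiable (since $f \in \mathcal{C}^{p-1}(\R)$ forces $g \in \mathcal{C}^{p-1}(\R)$ and $k \leq p-2$), so Rolle's theorem produces an extra zero of $g^{(k+1)}$. A straightforward bookkeeping shows that the total number of zeros of $g^{(k+1)}$ thus obtained is at least $(p-k)-1 = p-(k+1)$, closing the induction.

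Applying this with $k = p-1$, we find some $\xi \in [x_{\min},x_{\max}]$ with $g^{(p-1)}(\xi)=0$, i.e., by the second observation, $f^{(p-1)}(\xi) = (p-1)!\,[f]_p(x)$, which is the desired identity. The only slightly delicate point is the multiplicity accounting in the induction step, but once one is careful that an order-$m$ zero of $g^{(k)}$ automatically yields an order-$(m-1)$ zero of $g^{(k+1)}$, everything reduces to an ordinary application of Rolle's theorem between distinct zeros. No separate argument is needed for $p=1$: the induction is vacuous and one simply takes $\xi = x_1$, using $[f]_1(x_1)=f(x_1)$.
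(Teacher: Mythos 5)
Your proof is correct and follows essentially the same route as the paper's: apply an iterated Rolle argument to the interpolation error $g = f - \pi_x^f$, tracking that $g^{(k)}$ has at least $p-k$ zeros (with multiplicity) in $[x_{\min},x_{\max}]$, then read off the identity from $g^{(p-1)}(\xi)=0$. The multiplicity bookkeeping in your induction step is the same as the paper's, just stated slightly more abstractly.
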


\begin{proof}
Let $x \in \R^p$. There exist $y_1<\dots <y_m$ and $k_1,\dots, k_m \in \N$ such that, for all $j \in \{1,\dots,m\}$, exactly $k_j+1$ components of $x$ are equal to $y_j$. With these notations, $x_{\min} = y_1$ and $x_{\max} = y_m$. By Definition~\ref{def divided differences}, $(f-\pi_x^f)$ has at least $p$ zeros in $[x_{\min},x_{\max}]$, counted with multiplicity. More precisely, $\forall j \in \{1,\dots,m\}$, $\forall k \in \{0,\dots,k_j\}$, we have $(f-\pi_x^f)^{(k)}(y_j)=0$.

For all $j \in \{1,\dots,m-1\}$, there exists $z_j \in (y_j,y_{j+1})$ such that $(f-\pi_x^f)'(z_j)=0$, by Rolle's Theorem. Hence $(f-\pi_x^f)'$ has at least $p-1$ zeros in $[x_{\min},x_{\max}]$, namely $z_1,\dots,z_{m-1}$ with multiplicity~$1$, and $y_j$ with multiplicity $k_j-1$, for all $j \in \{1,\dots,m\}$. Iterating this procedure, for all $k \in \{0,\dots,p-1\}$, the function $(f-\pi_x^f)^{(k)}$ has at least $p-k$ zeros in $[x_{\min},x_{\max}]$, counted with multiplicity. In particular, there exists $\xi \in [x_{\min},x_{\max}]$ such that:
\begin{equation*}
(f-\pi_x^f)^{(p-1)}(\xi) = f^{(p-1)}(\xi) - (p-1)![f]_p(x) = 0.\qedhere
\end{equation*}
\end{proof}

\begin{lem}[Continuity]
\label{lem divided differences are continuous}
Let $f \in \mathcal{C}^{p-1}(\R)$, then the function $[f]_p:\R^p \to \R$ is continuous.
\end{lem}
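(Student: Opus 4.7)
I would proceed by induction on $p \in \N^*$, using Rolle's property (Lemma~\ref{lem divided differences Rolle}) to deal with the diagonal and the recursive formula (Lemma~\ref{lem alternative definition divided differences}) to propagate continuity off the diagonal. The base case $p=1$ is trivial since $[f]_1 = f$ is continuous by assumption.

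For the inductive step, suppose that $[g]_p : \R^p \to \R$ is continuous for every $g \in \mathcal{C}^{p-1}(\R)$, and let $f \in \mathcal{C}^p(\R)$. Fix $x^* = (x^*_1,\dots,x^*_{p+1}) \in \R^{p+1}$; I would split into two cases according to whether $x^*$ lies on the thin diagonal $\{(z,\dots,z) \mid z \in \R\}$.

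\emph{Case 1: $x^*$ has at least two distinct components.} By Lemma~\ref{lem divided differences are symmetric}, $[f]_{p+1}$ is invariant under any permutation of its arguments, so, up to first applying a well-chosen permutation, I may assume $x^*_p \neq x^*_{p+1}$. Then there is an open neighborhood $U$ of $x^*$ on which $y_p \neq y_{p+1}$, and Lemma~\ref{lem alternative definition divided differences} yields, for all $y \in U$,
\begin{equation*}
[f]_{p+1}(y) = \frac{[f]_p(y_1,\dots,y_{p-1},y_{p+1}) - [f]_p(y_1,\dots,y_p)}{y_{p+1} - y_p}.
\end{equation*}
Since $f \in \mathcal{C}^p(\R) \subset \mathcal{C}^{p-1}(\R)$, the induction hypothesis gives continuity of $[f]_p$ on $\R^p$, so the numerator is continuous in $y$; the denominator is continuous and non-vanishing on $U$. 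Hence $[f]_{p+1}$ is continuous at $x^*$.

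\emph{Case 2: $x^* = (z,\dots,z)$ for some $z \in \R$.} By Example~\ref{ex divided differences}.\ref{subex DD Taylor}, $[f]_{p+1}(x^*) = f^{(p)}(z)/p!$. For $y \in \R^{p+1}$, Lemma~\ref{lem divided differences Rolle} provides $\xi(y) \in [y_{\min},y_{\max}]$ with $[f]_{p+1}(y) = f^{(p)}(\xi(y))/p!$. As $y \to x^*$, both $y_{\min}$ and $y_{\max}$ tend to $z$, forcing $\xi(y) \to z$; by continuity of $f^{(p)}$ we conclude $[f]_{p+1}(y) \to f^{(p)}(z)/p! = [f]_{p+1}(x^*)$.

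The only mildly delicate step is Case~2: the recursive formula of Lemma~\ref{lem alternative definition divided differences} degenerates precisely on the thin diagonal, and one must instead exploit the Rolle-type representation together with the hypothesis $f \in \mathcal{C}^p$ to obtain the correct limit. Everything else is a straightforward combination of symmetry and the induction hypothesis.
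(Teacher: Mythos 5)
Your proof is correct and follows the same route as the paper's: induction on $p$, with the recursive formula of Lemma~\ref{lem alternative definition divided differences} plus symmetry handling continuity at points with two distinct coordinates, and the Rolle representation of Lemma~\ref{lem divided differences Rolle} together with Example~\ref{ex divided differences}.\ref{subex DD Taylor} handling the thin diagonal. The only cosmetic difference is that you frame the off-diagonal case pointwise, up to a permutation, whereas the paper directly asserts continuity on the open set $\{x_p \neq x_{p+1}\}$ and then invokes symmetry.
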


\begin{proof}
We prove this result by induction on $p$. If $p=1$, then $[f]_1 =f$ is continuous on $\R$.

Let us now assume that the result holds for some $p \in \N^*$ and let $f \in \mathcal{C}^p(\R)$. Using Lemma~\ref{lem alternative definition divided differences} and the induction hypothesis, $[f]_{p+1}$ is continuous on $\{(x_1,\dots,x_{p+1}) \in \R^{p+1} \mid x_p \neq x_{p+1}\}$. By symmetry (see Lemma~\ref{lem divided differences are symmetric}), this map is in fact continuous at any point $(x_i)_{1 \leq i \leq p+1} \in \R^{p+1}$ such that $x_i \neq x_j$ for some $i, j \in \{1, \dots,p+1\}$. In order to conclude the proof, it is enough to prove that, for all $z \in \R$,
\begin{equation*}
[f]_{p+1}(x) \xrightarrow[x \to (z,z, \dots,z)]{} [f]_{p+1}(z,\dots,z).
\end{equation*}
We have seen in Example~\ref{ex divided differences}.\ref{subex DD Taylor} that $[f]_{p+1}(z,\dots,z) = \frac{f^{(p)}(z)}{p!}$. Let $x \in \R^{p+1}$, by Lemma~\ref{lem divided differences Rolle}, there exists $\xi \in [x_{\min},x_{\max}]$ such that $[f]_{p+1}(x) = \frac{f^{(p)}(\xi)}{p!}$. As $x \to (z,\dots,z)$, we have $x_{\min} \to z$ and $x_{\max} \to z$. The conclusion follows from the continuity of $f^{(p)}$.
\end{proof}

\begin{rem}
\label{rem alternative definition divided differences}
Let $p \in \N^*$ and $f \in \mathcal{C}^p(\R)$, for all $x = (x_i)_{1 \leq i \leq p+1} \in \R^p$ we have:
\begin{equation}
\label{eq rem alternative definition}
[f]_{p+1}(x) = \lim_{z \to x_{p+1}} \frac{[f]_p(x_1,\dots,x_{p-1},z) - [f]_p(x_1,\dots,x_{p-1},x_p)}{z-x_p}.
\end{equation}
If $x_p \neq x_{p+1}$, this is a consequence Lemma~\ref{lem alternative definition divided differences} and the continuity of $[f]_p$ (see Lemma~\ref{lem divided differences are continuous}). If $x_p = x_{p+1}$ this follows from the first case and the continuity of $[f]_{p+1}$. Thus, one can define the divided differences recursively as follows: if $f \in \mathcal{C}^0(\R)$ then $[f]_1 =f$, and if $f \in \mathcal{C}^p(\R)$ the map $[f]_{p+1}:\R^{p+1} \to \R$ is defined by Equation~\eqref{eq rem alternative definition}. This definition is equivalent to Definition~\ref{def divided differences}.
\end{rem}

\begin{lem}[Regularity]
\label{lem regularity of divided differences}
Let $p \in \N^*$ and let $k \in \N$, if $f \in \mathcal{C}^{p+k-1}(\R)$ then $[f]_p:\R^p \to \R$ is of class $\mathcal{C}^k$. Moreover, for all $k_1,\dots,k_p \in \N$ such that $k_1 +\dots+k_p \leq k$, for all $x=(x_i)_{1 \leq i \leq p} \in \R^p$, we have:
\begin{equation}
\label{eq partial derivatives divided differences}
\frac{1}{k_1! \dots k_p!}\frac{\partial^{k_1+\dots+k_p}[f]_p}{\partial x_1^{k_1} \dots \partial x_p^{k_p}} (x) = [f]_{p+k_1+\dots+k_p}(x_1,\dots,x_1,\dots,x_p,\dots,x_p), 
\end{equation}
where each $x_j$ is repeated $k_j+1$ times on the right-hand side.
\end{lem}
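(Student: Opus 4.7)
I prove the lemma by induction on the total order $K := k_1 + \cdots + k_p$, with base case $K = 0$ provided by Lemma~\ref{lem divided differences are continuous}: when $f \in \mathcal{C}^{p-1}(\R)$, $[f]_p$ is continuous and Equation~\eqref{eq partial derivatives divided differences} is tautological. The crux of the argument is the $K=1$ case, which I establish first and then iterate.

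\emph{Single-derivative step.} Assume $f \in \mathcal{C}^p(\R)$; I claim $[f]_p$ admits a continuous partial derivative with respect to each $x_j$, given by
\[
\frac{\partial [f]_p}{\partial x_j}(x_1, \dots, x_p) = [f]_{p+1}(x_1, \dots, x_{j-1}, x_j, x_j, x_{j+1}, \dots, x_p).
\]
By the symmetry of divided differences in their arguments (Lemma~\ref{lem divided differences are symmetric}), it suffices to treat $j = p$. Specializing Remark~\ref{rem alternative definition divided differences} to $x_{p+1} = x_p$, the limit defining $[f]_{p+1}(x_1, \dots, x_{p-1}, x_p, x_p)$ is exactly the difference quotient defining $\partial[f]_p/\partial x_p$ at $(x_1, \dots, x_p)$. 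The partial derivative therefore exists and equals the stated expression, and its continuity in $x$ is inherited from Lemma~\ref{lem divided differences are continuous} applied to $[f]_{p+1}$ (valid since $f \in \mathcal{C}^{(p+1)-1}$). In particular $[f]_p \in \mathcal{C}^1(\R^p)$.

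\emph{Inductive step.} Suppose the lemma holds at order $K$ and let $f \in \mathcal{C}^{p+K}(\R)$. Fix a multi-index $(k_1, \dots, k_p)$ of total order $K+1$, choose $j$ with $k_j \geq 1$, and set $k'_i = k_i - \delta_{ij}$. The induction hypothesis gives
\[
\frac{1}{\prod_i k'_i!}\frac{\partial^K [f]_p}{\prod_i \partial x_i^{k'_i}}(x) = [f]_{p+K}\bigl(\underbrace{x_1, \dots, x_1}_{k'_1 + 1}, \dots, \underbrace{x_p, \dots, x_p}_{k'_p + 1}\bigr),
\]
where $x_j$ appears in $k_j = k'_j + 1$ slots on the right. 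Applying the single-derivative step to $[f]_{p+K}$ (legitimate because $f \in \mathcal{C}^{p+K}$) shows that $[f]_{p+K}$ is $\mathcal{C}^1$ and that each of its first partials is $[f]_{p+K+1}$ with one argument duplicated. Differentiating the identity above with respect to $x_j$ via the chain rule, the $k_j$ slots containing $x_j$ contribute the same value, which by symmetry of $[f]_{p+K+1}$ (Lemma~\ref{lem divided differences are symmetric}) is independent of the chosen slot. Hence
\[
\frac{\partial}{\partial x_j}\Bigl([f]_{p+K}(\dots)\Bigr) = k_j\, [f]_{p+K+1}\bigl(\underbrace{x_1, \dots, x_1}_{k_1 + 1}, \dots, \underbrace{x_j, \dots, x_j}_{k_j + 1}, \dots, \underbrace{x_p, \dots, x_p}_{k_p + 1}\bigr).
\]
Multiplying through by $\prod_i k'_i!$ and using $k_j \cdot k'_j! = k_j!$ yields Equation~\eqref{eq partial derivatives divided differences} for the multi-index $(k_1, \dots, k_p)$. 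Continuity of this $(K+1)$-th partial derivative follows once more from Lemma~\ref{lem divided differences are continuous} applied to $[f]_{p+K+1}$, so $[f]_p \in \mathcal{C}^{K+1}(\R^p)$, closing the induction.

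\emph{Main obstacle.} The delicate point is the chain-rule calculation in the inductive step: one has to argue that the $k_j$ contributions coming from the distinct slots in which $x_j$ appears all collapse to a single value (via the symmetry of divided differences), producing exactly the factor $k_j = k_j!/k'_j!$ needed to upgrade the denominator $\prod_i k'_i!$ to $\prod_i k_i!$ in Equation~\eqref{eq partial derivatives divided differences}.
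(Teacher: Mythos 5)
Your proof is correct and follows essentially the same route as the paper's: the single-derivative ($k=1$) step via Remark~\ref{rem alternative definition divided differences} and symmetry, then an induction in which one more $\partial_{x_j}$ is applied to the order-$K$ identity, with the chain rule producing $k_j$ identical terms (by Lemma~\ref{lem divided differences are symmetric}) that supply the factor converting $k'_j!$ to $k_j!$. The only cosmetic difference is that you induct on the exact total order $K$ of the multi-index, whereas the paper inducts on the bound $k$; these are equivalent and the mechanism of the step is identical.
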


\begin{proof}
We prove this result by induction on $k$. The case $k=0$ is given by Lemma~\ref{lem divided differences are continuous}.

For $k=1$, let $p \in \N^*$ and let $f \in \mathcal{C}^p(\R)$. By Lemmas~\ref{lem alternative definition divided differences} and~\ref{lem divided differences are continuous} (see also Remark~\ref{rem alternative definition divided differences}, Equation~\eqref{eq rem alternative definition}), the map $[f]_p$ admits a continuous partial derivative with respect to the $p$-th variable, given by:
\begin{equation*}
\deron{[f]_p}{x_p}: (x_1,\dots,x_p) \mapsto [f]_{p+1}(x_1,\dots,x_p,x_p).
\end{equation*}
The symmetry of the divided differences (see Lemma~\ref{lem divided differences are symmetric}) yields that $[f]_p$ is of class $\mathcal{C}^1$, with partial derivatives given by Equation~\eqref{eq partial derivatives divided differences}.

Let $k \in \N^*$ and let us assume that the result holds for $k$ and any $p \in \N^*$. Let $p \in \N^*$ and let $f \in\mathcal{C}^{p+k}(\R)$. Using the case $k=1$, the map $[f]_p$ is $\mathcal{C}^1$ and its partial derivatives of order~$1$ are given by Equation~\eqref{eq partial derivatives divided differences}. The induction hypothesis shows that $[f]_{p+1}$ is of class $\mathcal{C}^k$, hence $[f]_p$ is of class $\mathcal{C}^{k+1}$. The induction hypothesis also shows that the partial derivatives of order at most~$k$ of $[f]_p$ are given by~\eqref{eq partial derivatives divided differences}. Let $k_1,\dots,k_p \in \N$ be such that $k_1 + \dots + k_p =k$ and let $i \in \{1,\dots,p\}$. We have:
\begin{equation*}
\frac{1}{k_i+1}\deron{}{x_i}\left(\frac{1}{k_1! \dots k_p!}\frac{\partial^{k_1+\dots+k_p}[f]_p}{\partial x_1^{k_1} \dots \partial x_p^{k_p}}\right) = \frac{1}{(k_i+1)} \deron{}{x_i}\left(x \mapsto [f]_{p+k}(x_1,\dots,x_1,\dots,x_p,\dots,x_p)\right),
\end{equation*}
where each $x_j$ is repeated $k_j+1$ times on the right-hand side. Using the case $k=1$ for $[f]_{p+k}$ proves that the partial derivatives of order $k+1$ of $[f]_p$ satisfy Equation~\eqref{eq partial derivatives divided differences}.
\end{proof}

We conclude this section by stating facts that provide some insight on divided differences. Let $p \in \N^*$ and let $x =(x_i)_{1 \leq i \leq p} \in \R^p \setminus \Delta_p$, for all $f \in \mathcal{C}^{p-1}(\R)$, we have:
\begin{equation*}
[f]_p(x) = \sum_{i=1}^p f(x_i) \prod_{l \in \{1,\dots,p\} \setminus \{i\}} \frac{1}{x_i-x_l}
\end{equation*}
This formula is proved by induction on $p \in \N^*$, using Lemma~\ref{lem alternative definition divided differences} in the induction step. Taking partial derivatives in the previous formula and using Lemma~\ref{lem regularity of divided differences} allows to derive an expression of $[f]_p(x)$ for any $p \in \N^*$, any $x=(x_i)_{1 \leq i \leq p} \in \R^p$ and any $f \in \mathcal{C}^{p-1}(\R)$. One obtains that $[f]_p(x)$ is a linear combination of the $f^{(k)}(x_i)$ with $i \in \{1,\dots,p\}$ and $k < \card\{ j \in \{1,\dots,p\} \mid x_j = x_i\}$. The coefficients of this linear combination are rational functions in $(x_i-x_j)_{1 \leq j < i \leq p}$, independent of~$f$. This can already be deduced from the fact that $[f]_p(x)$ is the last coordinate of $M(x)^{-1}\ev_x(f)$ and the expression of $M(x)$ (see Definitions~\ref{defs Newton polynomials} and Lemmas~\ref{lem coeff M x} and~\ref{lem evaluation divided differences}).


\subsection{Double divided differences and correlation function}
\label{subsec double divided differences and correlation function}

In the previous two sections, we defined and studied the divided differences of some regular enough function. The upshot is to consider the divided differences of the Gaussian process~$f$ that we are interested in. Since the evaluation $[\ev]_x$ is linear, $[\ev]_x(f)$ is a centered Gaussian vector. The goal of this section is to compute and study its variance.

Let $K:\R^2 \to \R$ denote the correlation kernel of $f$. In order to compute the coefficients of the variance matrix of $[\ev]_x(f)$, we need to take divided differences of $K$ with respect to the first variable, then take divided differences of the result with respect to the second variable. If $f$ was not stationary, this would require to develop a notion of ``partial divided differences'' and prove parametric versions of the regularity results of Section~\ref{subsec properties of the divided differences}. This can be done but is a bit cumbersome. Since we consider stationary processes in this paper, we can avoid these complications and only consider divided differences associated with the correlation function $\kappa:x \mapsto K(0,x)$. We need however to introduce some additional notations.

Let $\kappa : \R \to \R$ and let $K:\R^2 \to \R$ be defined by $K:(z,w) \mapsto \kappa(w-z)$. If $\kappa$ is $\mathcal{C}^{p-1}$ then, for all $y \in \R$, the map $K(\cdot,y):z \mapsto K(z,y)$ is of class $\mathcal{C}^{p-1}$. In particular, the divided differences $[K(\cdot,y)]_k$ are well-defined for all $k \in \{1,\dots,p\}$.

\begin{dfn}
\label{def divided differences K}
Let $p \in \N^*$, let $k \in \{1,\dots,p\}$ and let $\kappa \in \mathcal{C}^{p-1}(\R)$. For all $x \in \R^k$ and all $y \in \R$, we denote by $[\kappa]_{(k,1)}(x,y)= [K(\cdot,y)]_k(x)$, where $K:(z,w) \mapsto \kappa(w-z)$.
\end{dfn}

\begin{lem}
\label{lem divided differences kappa}
Let $p \in \N^*$ and let $\kappa \in \mathcal{C}^{p-1}(\R)$. Let $k \in \{1,\dots,p\}$, for all $x=(x_i)_{1 \leq i \leq k} \in \R^k$, for all $y \in \R$, we have:
\begin{equation*}
[\kappa]_{(k,1)}(x,y) = (-1)^{k-1}[\kappa]_k(y-x_1,\dots,y-x_k).
\end{equation*}
\end{lem}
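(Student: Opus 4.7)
In essence, the identity is an affine change-of-variables formula for divided differences: precomposition with the reflection $z \mapsto y-z$ reflects the evaluation points and introduces an overall factor $(-1)^{k-1}$ coming from the slope $-1$. My plan is to prove it by induction on $k$, establishing the formula first for points with $x_{k-1}\neq x_k$ via the recursive identity of Lemma~\ref{lem alternative definition divided differences}, then extending to all of $\R^k$ by density and the continuity of divided differences.

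For $k=1$, the formula reduces to $\kappa(y-x_1) = \kappa(y-x_1)$. For the inductive step, I would fix $y \in \R$, set $g_y(z) = \kappa(y-z)$ so that $[\kappa]_{(j,1)}(\cdot,y) = [g_y]_j(\cdot)$ by Definition~\ref{def divided differences K}, and assume $x_{k-1} \neq x_k$. Lemma~\ref{lem alternative definition divided differences} applied to $g_y$ then writes $[g_y]_k(x_1,\dots,x_k)$ as
\begin{equation*}
\frac{[g_y]_{k-1}(x_1,\dots,x_{k-2},x_k)-[g_y]_{k-1}(x_1,\dots,x_{k-2},x_{k-1})}{x_k-x_{k-1}}.
\end{equation*}
The induction hypothesis rewrites the numerator in terms of $[\kappa]_{k-1}$ evaluated at the reflected points $(y-x_i)$, pulling out an overall sign $(-1)^{k-2}$. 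The two resulting arguments differ only in their last coordinate, and Lemma~\ref{lem alternative definition divided differences} applied to $\kappa$ itself identifies their difference, divided by $(y-x_k)-(y-x_{k-1})$, with $[\kappa]_k(y-x_1,\dots,y-x_k)$. Since $(y-x_k)-(y-x_{k-1}) = -(x_k-x_{k-1})$, the change of variable in the denominator produces one extra sign flip; combining it with the inductive sign gives $(-1)^{k-2}\cdot(-1) = (-1)^{k-1}$, as required.

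It remains to handle the case $x_{k-1}=x_k$. Here I would invoke Lemma~\ref{lem divided differences are continuous}: both sides of the identity are continuous functions of $x \in \R^k$ (the left-hand side because $g_y \in \mathcal{C}^{p-1}(\R)$, the right-hand side because $[\kappa]_k$ is continuous and precomposed with an affine map). Since the open set $\{x \in \R^k : x_{k-1}\neq x_k\}$ is dense in $\R^k$, the identity extends to the whole space by continuity, closing the induction. The only delicate point in the argument is the careful bookkeeping of the sign introduced by the reflection $z \mapsto y-z$; the recursive characterization and the continuity of the divided differences already established in Section~\ref{subsec properties of the divided differences} take care of everything else.
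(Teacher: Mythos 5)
Your proof is correct, but it follows a different route than the paper's. The paper establishes the identity in a single stroke: it shows directly that the Hermite interpolation polynomial $\pi_x^{K(\cdot,y)}$ of $K(\cdot,y)=\kappa(y-\cdot)$ at $x$ coincides with $\pi_{y-x}^\kappa(y-X)$, i.e., the Hermite interpolation polynomial of $\kappa$ at $y-x$, precomposed with $X \mapsto y - X$. This is done by computing $\ev_x$ of both polynomials (using $c_i(y-x)=c_i(x)$) and invoking the uniqueness in Definition~\ref{def divided differences}; the factor $(-1)^{k-1}$ then falls out of reading the leading coefficient of the precomposed polynomial. That argument covers all $x \in \R^k$ at once, with no case distinction and no appeal to continuity. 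Your proof instead goes by induction via the recursive characterization of Lemma~\ref{lem alternative definition divided differences}, which requires separating the generic case $x_{k-1}\neq x_k$ from the degenerate one and closing the gap by density and Lemma~\ref{lem divided differences are continuous}. Both are sound and comparable in length; the paper's proof has the advantage of exhibiting the underlying structural fact (that reflection of the variable reflects the interpolant) and not relying on the continuity lemma, while yours reuses the recursive identity that is already one of the paper's main computational tools, which is a perfectly reasonable trade-off.
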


\begin{proof}
Let $K:(z,w) \mapsto K(w-z)$. By Definition~\ref{def divided differences}, we know that $[\kappa]_{(k,1)}(x,y)=[K(\cdot,y)]_k(x)$ is the leading coefficient of $\pi_x^{K(\cdot,y)}$. Now, recalling Definition~\ref{def evaluation map}, we have:
\begin{equation*}
\ev_x(\pi_x^{K(\cdot,y)}) = \ev_x(K(\cdot,y)) = \begin{pmatrix}
\frac{1}{c_i(x)!} \deron{^{c_i(x)}K}{x^{c_i(x)}}(x_i,y)
\end{pmatrix}_{1 \leq i \leq k} = \begin{pmatrix}
(-1)^{c_i(x)}\frac{\kappa^{(c_i(x))}(y-x_i)}{c_i(x)!}
\end{pmatrix}_{1 \leq i \leq k},
\end{equation*}
and $\pi_x^{K(\cdot,y)}$ is the only polynomial in $\R_{k-1}[X]$ satisfying this condition. On the other hand, let us denote by $y-x=(y-x_1,\dots,y-x_k) \in \R^k$. For all $i \in \{1,\dots,k\}$, we have $c_i(y-x) = c_i(x)$. Then, $\pi_{y-x}^\kappa(y-X) \in \R_{k-1}[X]$ satisfies:
\begin{equation*}
\ev_x(\pi_{y-x}^\kappa(y-X)) = \begin{pmatrix}
\frac{(-1)^{c_i(x)}}{c_i(x)!} (\pi_{y-x}^\kappa)^{(c_i(x))}(y-x_i)
\end{pmatrix}_{1 \leq i \leq k} = \begin{pmatrix}
(-1)^{c_i(x)}\frac{\kappa^{(c_i(x))}(y-x_i)}{c_i(x)!}
\end{pmatrix}_{1 \leq i \leq k}.
\end{equation*}
Thus $\pi_x^{K(\cdot,y)} = \pi_{y-x}^\kappa(y-X)$, and its leading coefficient equals $(-1)^{k-1}[\kappa]_k(y-x)$.
\end{proof}

A consequence of Lemma~\ref{lem divided differences kappa} is that, if $\kappa \in \mathcal{C}^{p-1}(\R)$ and $k \in \{1,\dots,p\}$, then for all $x \in \R^k$ the function $[\kappa]_{(k,1)}(x,\cdot):w \mapsto [\kappa]_{(k,1)}(x,w)$ is of class $\mathcal{C}^{p-k}$ from $\R$ to $\R$ (see Lemma~\ref{lem regularity of divided differences}). In particular, its the divided differences of order at most $p-k+1$ are well-defined, and the following makes sense.

\begin{dfn}[Double divided differences]
\label{def double divided differences}
Let $p \in \N^*$ and let $\kappa \in \mathcal{C}^{p-1}(\R)$. Let $k$ and $l \in \N^*$ be such that $k+l \leq p+1$, we denote by $[\kappa]_{(k,l)}:\R^k \times \R^l \to \R$ the map defined by $[\kappa]_{(k,l)}(x,y) = \left[[\kappa]_{(k,1)}(x,\cdot) \right]_l(y)$ for all $x \in \R^k$ and $y \in \R^l$.
\end{dfn}

Thanks to Lemma~\ref{lem divided differences Rolle}, we can give bounds on the double divided differences $[\kappa]_{(k,l)}(x,y)$. This is the object of the following result.

\begin{lem}
\label{lem double divided differences Rolle}
Let $k$ and $l \in \N^*$ and let $\kappa \in \mathcal{C}^{k+l-2}(\R)$, for all $x=(x_i)_{1 \leq i \leq k} \in \R^k$ and all $y =(y_j)_{1 \leq j \leq l} \in \R^l$, we have:
\begin{equation*}
\norm{[\kappa]_{(k,l)}(x,y)} \leq \max \left\{\norm{\kappa^{(k+l-2)}(\xi)} \mvert y_{\min} - x_{\max} \leq \xi \leq y_{\max} - x_{\min} \right\}.
\end{equation*}
\end{lem}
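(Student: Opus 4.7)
The plan is to apply Rolle's property for divided differences (Lemma~\ref{lem divided differences Rolle}) twice: once for the outer divided difference in $y$ and once for an inner divided difference in $x$ that will appear after differentiating. Setting $g:w \mapsto [\kappa]_{(k,1)}(x,w)$, Definition~\ref{def double divided differences} gives $[\kappa]_{(k,l)}(x,y)=[g]_l(y)$. By Lemma~\ref{lem divided differences kappa}, $g(w)=(-1)^{k-1}[\kappa]_k(w-x_1,\dots,w-x_k)$, and Lemma~\ref{lem regularity of divided differences} shows that $g \in \mathcal{C}^{l-1}(\R)$ since $\kappa \in \mathcal{C}^{k+l-2}(\R)$. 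So $[g]_l(y)$ is well-defined and we may legitimately try to apply Rolle.

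The main step will be to compute $g^{(l-1)}$. The key observation is that, for any fixed $x \in \R^k$, the map $f \mapsto [f]_k(x)$ is a continuous linear functional on $\mathcal{C}^{k-1}(\R)$: indeed, by Lemma~\ref{lem evaluation divided differences} and Definition~\ref{def evaluation map}, it is an explicit linear combination, with coefficients depending only on $x$, of the numbers $f^{(c_i(x))}(x_i)/c_i(x)!$. Applied to the smooth family $w \mapsto K(\cdot,w)=\kappa(w-\cdot)$, this lets me commute $\partial_w^{l-1}$ with $[\cdot]_k(x)$ to obtain
\begin{equation*}
g^{(l-1)}(w) = \bigl[\kappa^{(l-1)}(w-\cdot)\bigr]_k(x) = (-1)^{k-1}[\kappa^{(l-1)}]_k(w-x_1,\dots,w-x_k),
\end{equation*}
where the last equality is Lemma~\ref{lem divided differences kappa} applied to $\kappa^{(l-1)} \in \mathcal{C}^{k-1}(\R)$. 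Applying Lemma~\ref{lem divided differences Rolle} to $\kappa^{(l-1)}$ then produces some $\zeta_w \in [w-x_{\max},w-x_{\min}]$ such that $g^{(l-1)}(w) = \frac{(-1)^{k-1}}{(k-1)!}\kappa^{(k+l-2)}(\zeta_w)$.

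To conclude, I apply Lemma~\ref{lem divided differences Rolle} a second time, now to $g$ at the points $(y_j)_{1 \leq j \leq l}$, obtaining $\eta \in [y_{\min},y_{\max}]$ with $[\kappa]_{(k,l)}(x,y)=g^{(l-1)}(\eta)/(l-1)!$. Since $[\eta-x_{\max},\eta-x_{\min}] \subset [y_{\min}-x_{\max},y_{\max}-x_{\min}]$, this yields
\begin{equation*}
\bigl|[\kappa]_{(k,l)}(x,y)\bigr| \leq \frac{1}{(k-1)!(l-1)!} \max\bigl\{|\kappa^{(k+l-2)}(\xi)| \mvert y_{\min}-x_{\max} \leq \xi \leq y_{\max}-x_{\min}\bigr\},
\end{equation*}
which is in fact strictly stronger than the claimed bound since the factorial prefactor is at most~$1$. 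The only delicate point in the argument is the interchange of $\partial_w^{l-1}$ with $[\cdot]_k(x)$ described above; this is where the regularity assumption $\kappa \in \mathcal{C}^{k+l-2}(\R)$ is used in its strongest form.
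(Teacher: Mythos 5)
Your proof is correct, and it takes a genuinely cleaner route than the paper's for the key computational step.

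Both you and the paper begin identically: set $g(w)=[\kappa]_{(k,1)}(x,w)$, apply Rolle's property (Lemma~\ref{lem divided differences Rolle}) to $[g]_l(y)$ to produce a point in $[y_{\min},y_{\max}]$, and reduce to bounding $g^{(l-1)}$. The difference lies in how each computes $g^{(l-1)}$. The paper first writes $g(w)=(-1)^{k-1}[\kappa]_k(w-x_1,\dots,w-x_k)$, then applies the multivariate chain rule together with Lemma~\ref{lem regularity of divided differences} to express $g^{(l-1)}(w)$ as a sum over all multi-indices $(l_1,\dots,l_k)$ with $l_1+\dots+l_k=l-1$ of higher-order divided differences $[\kappa]_{k+l-1}$ at repeated arguments; it then applies Rolle to each summand and closes with a counting estimate $\binom{k+l-2}{k-1}\leq(k+l-2)!$. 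You instead observe that $f\mapsto[f]_k(x)$ is a fixed finite linear combination of the evaluations $f^{(c_i(x))}(x_i)$, so $\partial_w^{l-1}$ commutes with it when applied to the translation family $K(\cdot,w)=\kappa(w-\cdot)$; this yields the single divided difference $g^{(l-1)}(w)=(-1)^{k-1}[\kappa^{(l-1)}]_k(w-x_1,\dots,w-x_k)$, to which Rolle applies directly, and the counting argument disappears. Your route also makes the sharper constant $\tfrac{1}{(k-1)!(l-1)!}$ visible at a glance (the paper's computation in fact implicitly produces the same constant, since $\binom{k+l-2}{k-1}/(k+l-2)!=\tfrac{1}{(k-1)!(l-1)!}$, but only states the rounded-up bound). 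The underlying identity you exploit, namely $\partial_w[\kappa]_k(w-x_1,\dots,w-x_k)=[\kappa']_k(w-x_1,\dots,w-x_k)$, is equivalent to summing the paper's chain-rule terms, but recognizing it as a consequence of linearity shortens the argument.
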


\begin{proof}
Since $[\kappa]_{(k,l)}(x,y) = \left[ [\kappa]_{(k,1)}(x,\cdot) \right]_l (y)$, by Lemma~\ref{lem divided differences Rolle} there exists $w_0 \in [y_{\min},y_{\max}]$ such that:
\begin{equation*}
[\kappa]_{(k,l)}(x,y) = \frac{1}{(l-1)!} \deron{^{(l-1)}}{w^{(l-1)}}_{| w=w_0}[\kappa]_{(k,1)}(x,w).
\end{equation*}
Then, by Lemma~\ref{lem divided differences kappa} and Lemma~\ref{lem regularity of divided differences}, we have:
\begin{align*}
[\kappa]_{(k,l)}(x,y) &= \frac{(-1)^{k-1}}{(l-1)!}\deron{^{(l-1)}}{w^{(l-1)}}_{| w=w_0}[\kappa]_k(w-x_1,\dots,w-x_k)\\
&= (-1)^{k-1} \sum_{l_1 + \dots + l_k = l-1} \frac{1}{l_1! \dots l_k!} \frac{\partial^{l-1}[\kappa]_k}{\partial x_1^{l_1} \dots \partial x_k^{l_k}}(w_0-x_1,\dots,w_0-x_k)\\
&= (-1)^{k-1} \sum_{l_1 + \dots + l_k = l-1} [\kappa]_{k+l-1}(w_0-x_1,\dots,w_0-x_1,\dots,w_0-x_k,\dots,w_0-x_k),
\end{align*}
where the last two sums are indexed by $\{(l_1,\dots,l_k) \in \N^k \mid l_1 +\dots +l_k = l-1\}$, and each $w_0-x_i$ is repeated exactly $l_i +1$ times in the term indexed by $(l_1,\dots,l_k)$.

Let $(l_1,\dots,l_k) \in \N^k$ be such that $l_1 +\dots +l_k = l-1$. By Lemma~\ref{lem divided differences Rolle} there exists $\xi_{(l_1,\dots,l_k)} \in \R$ such that:
\begin{equation*}
y_{\min} - x_{\max} \leq w_0 - x_{\max} \leq \xi_{(l_1,\dots,l_k)} \leq w_0 - x_{\min} \leq y_{\max} - x_{\min},
\end{equation*}
and
\begin{equation*}
[\kappa]_{k+l-1}(w_0-x_1,\dots,w_0-x_1,\dots,w_0-x_k,\dots,w_0-x_k) = \frac{\kappa^{(k+l-2)}(\xi_{(l_1,\dots,l_k)})}{(k+l-2)!},
\end{equation*}
where each term $w_0-x_i$ is repeated $l_i +1$ times on the right-hand side. Thus,
\begin{equation*}
\norm{[\kappa]_{(k,l)}(x,y)} \leq \max \left\{\norm{\kappa^{(k+l-2)}(\xi)} \mvert y_{\min} - x_{\max} \leq \xi \leq y_{\max} - x_{\min} \right\}
\end{equation*}
provided that $\card \left\{(l_1,\dots,l_k) \in \N^k \mvert l_1 + \dots + l_k = l-1 \right\} \leq (k+l-2)!$. This cardinal is the dimension of the space of homogeneous polynomials of degree $(l-1)$ in $k$ variables. Thus, it is equal to $\binom{k+l-2}{k-1} \leq (k+l-2)!$.
\end{proof}

The \emph{double divided differences} $[\kappa]_{(k,l)}$ will appear in the coefficients of the variance matrix of the Gaussian vector $[\ev]_x(f)$. The key step in this direction is the following lemma. It also shows how to compute efficiently $[\kappa]_{(k,l)}$ from the values of $\kappa$ and its derivatives. Finally, Lemma~\ref{lem double divided differences} shows that taking divided differences in the $x$ variable then in the $y$ variable gives the same result as the converse, which is hinted by the notation but is not obvious from the definition.

\begin{lem}
\label{lem double divided differences}
Let $p \in \N^*$ and let $k,l \in \N^*$ be such that $k+l \leq p+1$. Let $\kappa \in \mathcal{C}^{p-1}(\R)$, for all $x= (x_i)_{1 \leq i \leq k} \in \R^k$ and $y=(y_j)_{1 \leq j \leq l} \in \R^l$ we have:
\begin{equation*}
\begin{pmatrix}
[\kappa]_{(i,j)}(x_1,\dots,x_i,y_1,\dots,y_j)\rule{0em}{2.5ex}
\end{pmatrix}_{\substack{1 \leq i \leq k\\ 1 \leq j \leq l}} = M(x)^{-1} \begin{pmatrix}
\dfrac{(-1)^{c_i(x)}\kappa^{(c_i(x) + c_j(y))}(y_j-x_i)}{c_i(x)! c_j(y)!}
\end{pmatrix}_{\substack{1 \leq i \leq k\\ 1 \leq j \leq l}} \trans{M(y)^{-1}},
\end{equation*}
where $c_i(\cdot)$ is as in Definition~\ref{def evaluation map} and $M(\cdot)$ is as in Definitions~\ref{defs Newton polynomials}. In particular,
\begin{equation*}
\left[[\kappa]_{(k,1)}(x,\cdot)\right]_l(y) = [\kappa]_{(k,l)}(x,y) = \left[[\kappa]_{(1,l)}(\cdot,y)\right]_k(x).
\end{equation*}
\end{lem}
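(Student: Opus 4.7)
The plan is to prove the matrix identity by computing the product $M(x)\,A\,\trans{M(y)}$ directly, where $A$ denotes the left-hand side matrix, and showing that it equals the matrix $C$ on the right-hand side; inverting then yields the announced formula, and the symmetry claim will follow by running the same computation with the roles of the two variables swapped.

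First I would apply Lemma~\ref{lem evaluation divided differences}, in the form $\ev_x(f) = M(x)\,[\ev]_x(f)$, to the function $f_w : z \mapsto K(z,w) = \kappa(w-z)$, for each fixed $w$. Taking the $i$-th component and using that $M(x)$ is lower triangular (Lemma~\ref{lem coeff M x}), this reads
\begin{equation*}
\frac{(-1)^{c_i(x)}\kappa^{(c_i(x))}(w-x_i)}{c_i(x)!} = \sum_{a=1}^{i} M(x)_{ia}\,[\kappa]_{(a,1)}(x_1,\dots,x_a,w).
\end{equation*}
Next, I would differentiate this identity $c_j(y)$ times in $w$ and evaluate at $w=y_j$. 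The one delicate point, and the main obstacle, is to commute the $w$-derivative with the inner divided difference $[\,\cdot\,]_a$ taken in the $z$-variable. I would justify this commutation via the linearity of $[\,\cdot\,]_a$: by Lemma~\ref{lem evaluation divided differences}, the map $g \mapsto [g]_a(x_1,\dots,x_a)$ is the $a$-th coordinate of $M(x)^{-1}\ev_x(g)$, hence a fixed $\R$-linear combination of evaluations of $g$ and of its derivatives at $x_1,\dots,x_a$. Consequently $\partial_w^{c_j(y)}|_{w=y_j}[\kappa]_{(a,1)}(x_{\leq a},w)$ equals the order-$a$ divided difference at $(x_1,\dots,x_a)$ of the function $z \mapsto \kappa^{(c_j(y))}(y_j-z)$, all derivatives being available thanks to $\kappa \in \mathcal{C}^{p-1}$ and $k+l\leq p+1$.

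After dividing by $c_j(y)!$, the identity becomes $C_{ij} = \sum_{a} M(x)_{ia}\,\beta_{a,j}$, where $\beta_{a,j}$ is the $j$-th component of $\ev_y(\phi_a)$, with $\phi_a(w) = [\kappa]_{(a,1)}(x_{\leq a},w)$. Applying Lemma~\ref{lem evaluation divided differences} once more, this time in the $y$-variable, yields $\beta_{a,j} = \sum_{b} M(y)_{jb}\,[\phi_a]_b(y_{\leq b}) = \sum_b M(y)_{jb}\,A_{ab}$ by the very definition of $[\kappa]_{(a,b)}$. Substituting gives $C_{ij} = \bigl(M(x)\,A\,\trans{M(y)}\bigr)_{ij}$, and since $M(x)$ and $M(y)$ are invertible by Corollary~\ref{cor evx isomorphism}, inverting produces the announced formula $A = M(x)^{-1}\,C\,\trans{M(y)^{-1}}$.

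For the symmetry claim, I would introduce the analogous matrix $\tilde A_{ij} = \bigl[[\kappa]_{(1,j)}(\cdot,y_{\leq j})\bigr]_i(x_{\leq i})$, obtained by reversing the order of the two sets of divided differences. Running exactly the same two-step argument with the roles of $x$ and $y$ interchanged (first expanding $K(z,\cdot)$ via Lemma~\ref{lem evaluation divided differences} in the $y$-variable, then differentiating in $z$ and expanding in the $x$-variable), one obtains the identity $C = M(x)\,\tilde A\,\trans{M(y)}$ with the same $C$. Cancelling by the invertible matrices $M(x)$ and $M(y)$ yields $\tilde A = A$, and in particular the $(k,l)$ entries coincide, which is exactly the claimed equality $\left[[\kappa]_{(1,l)}(\cdot,y)\right]_k(x) = [\kappa]_{(k,l)}(x,y) = \left[[\kappa]_{(k,1)}(x,\cdot)\right]_l(y)$.
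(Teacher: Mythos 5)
Your proof is correct and takes essentially the same route as the paper: both rest on two applications of Lemma~\ref{lem evaluation divided differences} (once in each variable, linked by a $w$-derivative), with you establishing $M(x)\,A\,\trans{M(y)}=C$ and inverting while the paper peels $M(x)^{-1}$ and $\trans{M(y)^{-1}}$ directly off $C$. Your treatment of the symmetry claim via the auxiliary matrix $\tilde{A}$ is likewise the same two-sided computation that the paper compresses into the remark about associativity of the matrix product.
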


\begin{proof}
Let $K:(z,w) \mapsto \kappa(w-z)$. We denote by $C(x,y)$ the matrix
\begin{equation*}
C(x,y) = \begin{pmatrix}
\dfrac{(-1)^{c_i(x)}\kappa^{(c_i(x) + c_j(y))}(y_j-x_i)}{c_i(x)! c_j(y)!}
\end{pmatrix}_{\substack{1 \leq i \leq k\\ 1 \leq j \leq l}} = \begin{pmatrix}
\dfrac{1}{c_i(x)! c_j(y)!} \dfrac{\partial^{c_i(x)+c_j(y)}K}{\partial z^{c_i(x)} \partial w^{c_j(y)}}(x_i,y_j)
\end{pmatrix}_{\substack{1 \leq i \leq k\\ 1 \leq j \leq l}}.
\end{equation*}
The $j$-th column of $C(x,y)$ equals:
\begin{equation*}
\frac{1}{c_j(y)!} \deron{^{c_j(y)}}{w^{c_j(y)}}_{| w=y_j}\left( \frac{1}{c_i(x)!} \deron{^{c_i(x)}K}{z^{c_i(x)}}(x_i,w)\right)_{1 \leq i \leq k} = \frac{1}{c_j(y)!} \deron{^{c_j(y)}}{w^{c_j(y)}}_{| w=y_j} \ev_x\left(K(\cdot,w)\right).
\end{equation*}
Then, by Lemma~\ref{lem evaluation divided differences}, the $j$-th column of $M(x)^{-1}C(x,y)$ is:
\begin{align*}
\frac{1}{c_j(y)!} \deron{^{c_j(y)}}{w^{c_j(y)}}_{| w=y_j} [\ev]_x(K(\cdot,w)) &= \left(\frac{1}{c_j(y)!} \deron{^{c_j(y)}}{w^{c_j(y)}}_{| w=y_j} [K(\cdot,w)]_i(x_1,\dots,x_i)\right)_{1 \leq i \leq k}\\
&= \left(\frac{1}{c_j(y)!} \deron{^{c_j(y)}[\kappa]_{(i,1)}}{w^{c_j(y)}}(x_1,\dots,x_i,y_j)\right)_{1 \leq i \leq k}.
\end{align*}
This shows that the $i$-th row of $M(x)^{-1}C(x,y)$ equals $\trans{\ev_y\left([\kappa]_{(i,1)}(x_1,\dots,x_i,\cdot)\right)}$. Then, the $i$-th row of $M(x)^{-1}C(x,y)\trans{M(y)^{-1}}$ equals:
\begin{align*}
\trans{\left(M(y)^{-1}\ev_y\left([\kappa]_{(i,1)}(x_1,\dots,x_i,\cdot)\right)\right)} &= \trans{[\ev]_y\left([\kappa]_{(i,1)}(x_1,\dots,x_i,\cdot)\right)}\\
&= \left(\left[[\kappa]_{(i,1)}(x_1,\dots,x_i,\cdot)\right]_j(y_1,\dots,y_j)\right)_{1 \leq j \leq l}\\
&= \left([\kappa]_{(i,j)}(x_1,\dots,x_i,y_1,\dots,y_j)\right)_{1 \leq j \leq l}.
\end{align*}
This proves that the coefficients of $M(x)^{-1}C(x,y)\trans{M(y)^{-1}}$ are as claimed.

The previous computation proved that the bottom-right coefficient of $M(x)^{-1} C(x,y) \trans{M(y)^{-1}}$ equals $\left[ [\kappa]_{(k,1)}(x,\cdot) \right]_l(y) = [\kappa]_{(k,l)}(x,y)$. This reflects the fact that we first multiplied $C(x,y)$ by $M(x)^{-1}$ on the left, thus acting on each column of $C(x,y)$ and taking divided differences in the $x$ variables, then we multiplied the result by $\trans{M(y)^{-1}}$ on the right, thus acting on the rows and taking divided differences in the $y$ variables. If we first multiply $C(x,y)$ by $\trans{M(y)^{-1}}$ on the right then multiply the result by $M(x)^{-1}$ on the left, we first act on the rows of $C(x,y)$ then on the columns of $C(x,y)\trans{M(y)^{-1}}$. In this case, we start by computing divided differences in the $y$ variables, then we take divided differences in the $x$ variables. The same kind of computation as above shows that the bottom-right coefficient of $M(x)^{-1} C(x,y) \trans{M(y)^{-1}}$ equals $\left[[\kappa]_{(1,l)}(\cdot,y)\right]_k(x)$. Thus, the desired relation is just a consequence of the associativity of the matrix product.
\end{proof}

We conclude this section by studying the distribution of the divided differences associated with a regular enough Gaussian process. Note that the following result shows that, if $\kappa$ is the correlation function of a $\mathcal{C}^{p-1}$ Gaussian process, then $[\kappa]_{(k,l)}$ is continuous on $\R^k \times \R^l$, for all $k,l \in \{1,\dots,p\}$.

\begin{lem}[Distribution of divided differences]
\label{lem variance divided differences}
Let $p \in \N^*$, let $f$ be a stationary centered Gaussian process of class $\mathcal{C}^{p-1}$ and let~$\kappa$ denote the correlation function of $f$. The map $x \mapsto [\ev]_x(f)$ from $\R^p$ to itself defines a continuous centered Gaussian field. Its distribution is characterized by the fact that for all $k,l \in \{1,\dots,p\}$, for all $x \in \R^k$ and $y \in \R^l$, $\esp{\rule{0em}{2ex}[f]_k(x) [f]_l(y)} = [\kappa]_{(k,l)}(x,y)$. Moreover, the distribution of $\left([\ev]_x(f)\right)_{x \in \R^p}$ is invariant under the diagonal action of $\R$ on $\R^p$ by translation. That is, for all $t \in \R$, we have $\left([\ev]_{x+(t,\dots,t)}(f)\right)_{x \in \R^p} = \left([\ev]_x(f)\right)_{x \in \R^p}$ in distribution.
\end{lem}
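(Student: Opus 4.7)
The proof will proceed by reducing everything to results already established in the excerpt, principally Lemmas~\ref{lem evaluation divided differences} and~\ref{lem double divided differences}. The key observation is that $[\ev]_x = M(x)^{-1}\ev_x$ exhibits each $[f]_k(x)$ as a deterministic linear combination of the Gaussian variables $f^{(c_i(x))}(x_i)/c_i(x)!$, so everything lives inside the Gaussian world generated by $f$ and its derivatives.

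The plan is as follows. First, I will establish that $x \mapsto [\ev]_x(f)$ is a centered Gaussian field. For any finite family $(x^{(r)})_{1 \leq r \leq n}$ of points in $\R^p$, each component $[f]_{k}(x^{(r)}_1,\dots,x^{(r)}_k)$ is a linear functional of $\ev_{x^{(r)}}(f)$ by Lemma~\ref{lem evaluation divided differences}, and therefore a linear combination of $\{f^{(j)}(x^{(r)}_i)\mid 0 \leq j \leq p-1,\ 1 \leq i \leq p\}$. Since $f$ is a centered Gaussian process of class $\mathcal{C}^{p-1}$, this finite family is jointly centered Gaussian, hence so are their linear combinations. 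Continuity of sample paths of the field then follows from the regularity statement of Lemma~\ref{lem regularity of divided differences} (or directly Lemma~\ref{lem divided differences are continuous}), applied componentwise.

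Second, I will compute the covariance kernel. Fix $k,l \in \{1,\dots,p\}$, $x \in \R^k$ and $y \in \R^l$. By bilinearity,
\begin{equation*}
\esp{\ev_x(f)\, \trans{\ev_y(f)}} = \left(\frac{1}{c_i(x)!\,c_j(y)!}\esp{f^{(c_i(x))}(x_i)f^{(c_j(y))}(y_j)}\right)_{i,j}.
\end{equation*}
Equation~\eqref{eq covariance} (applied to $K(z,w)=\kappa(w-z)$) transforms the right-hand side into exactly the matrix $C(x,y) = \left(\frac{(-1)^{c_i(x)}\kappa^{(c_i(x)+c_j(y))}(y_j-x_i)}{c_i(x)!c_j(y)!}\right)_{i,j}$ appearing in the proof of Lemma~\ref{lem double divided differences}. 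Applying Lemma~\ref{lem evaluation divided differences} on both sides, we obtain
\begin{equation*}
\esp{[\ev]_x(f)\,\trans{[\ev]_y(f)}} = M(x)^{-1}C(x,y)\trans{M(y)^{-1}}.
\end{equation*}
Lemma~\ref{lem double divided differences} identifies the $(i,j)$ entry of the right-hand side with $[\kappa]_{(i,j)}(x_1,\dots,x_i,y_1,\dots,y_j)$; reading off the $(k,l)$ entry gives the desired formula $\esp{[f]_k(x)[f]_l(y)} = [\kappa]_{(k,l)}(x,y)$. Since the distribution of a centered Gaussian field is determined by its covariance kernel, this characterizes the distribution of $x \mapsto [\ev]_x(f)$.

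Finally, translation invariance will be immediate from the explicit formula. By Lemma~\ref{lem divided differences kappa}, $[\kappa]_{(k,1)}(x,y) = (-1)^{k-1}[\kappa]_k(y-x_1,\dots,y-x_k)$ depends only on the differences $y-x_i$; iterating via Definition~\ref{def double divided differences} shows that $[\kappa]_{(k,l)}(x,y)$ depends only on $(y_j - x_i)_{i,j}$. Hence for all $t \in \R$,
\begin{equation*}
[\kappa]_{(k,l)}(x+(t,\dots,t),\, y+(t,\dots,t)) = [\kappa]_{(k,l)}(x,y),
\end{equation*}
so the covariance of the centered Gaussian field $([\ev]_x(f))_{x \in \R^p}$ is invariant under the diagonal action of $\R$ by translation, and hence so is the field itself in distribution. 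No step of this proof looks to be a real obstacle: everything follows either from linearity, from Lemma~\ref{lem double divided differences}, or from the translation-invariant form of $\kappa$.
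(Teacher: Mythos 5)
Your proof follows the paper's own argument for the Gaussianity, continuity, and covariance computation: the same factorization $[\ev]_x = M(x)^{-1}\ev_x$, the same reduction of $\esp{\ev_x(f)\,\trans{\ev_y(f)}}$ to the matrix $C(x,y)$ via Equation~\eqref{eq covariance}, and the same invocation of Lemma~\ref{lem double divided differences} to identify the entries. For the translation invariance you verify it on the covariance kernel $[\kappa]_{(k,l)}$, whereas the paper's primary route uses $M(x+(t,\dots,t))=M(x)$ (Lemma~\ref{lem coeff M x}) together with the stationarity of $(\ev_x(f))_x$ to translate at the level of the random field; the paper explicitly flags your covariance-kernel check as a valid alternative in its closing sentence, so both routes are accepted. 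One small imprecision worth tightening: the claim that ``iterating'' Definition~\ref{def double divided differences} shows $[\kappa]_{(k,l)}(x,y)$ depends only on $(y_j-x_i)_{i,j}$ glosses over the real mechanism. Lemma~\ref{lem divided differences kappa} gives $[\kappa]_{(k,1)}(x+(t,\dots,t),w)=[\kappa]_{(k,1)}(x,w-t)$, and you then need that applying $[\,\cdot\,]_l$ in the $y$-variable commutes with the diagonal shift, i.e.\ $[g(\cdot-t)]_l(y+(t,\dots,t))=[g]_l(y)$; this holds because $\ev_{y+(t,\dots,t)}(g(\cdot-t))=\ev_y(g)$ by the chain rule and $M(y+(t,\dots,t))=M(y)$ by Lemma~\ref{lem coeff M x}. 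This is easy to fill in but should be said explicitly.
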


\begin{proof}
Since $[\ev]_x$ is linear for all $x \in \R^p$, the finite-dimensional marginal distributions of the field $\left([\ev]_x(f)\right)_{x \in \R^p}$ are centered and Gaussian. By Lemma~\ref{lem regularity of divided differences}, since $f$ is almost surely $\mathcal{C}^{p-1}$, then $x \mapsto [\ev]_x(f)$ is almost surely continuous. Thus, $\left([\ev]_x(f)\right)_{x \in \R^p}$ is a continuous centered Gaussian field, and characterizing its distribution amounts to computing the variance matrix of $[\ev]_x(f)$ and $[\ev]_y(f)$ for any $x,y \in \R^p$.

Recall that, since $f$ is of class $\mathcal{C}^{p-1}$, its correlation function $\kappa$ is at least $\mathcal{C}^{2p-2}$. Let $x =(x_i)_{1 \leq i \leq p}$ and $y=(y_j)_{1 \leq j \leq p} \in \R^p$, by Lemmas~\ref{lem evaluation divided differences} and~\ref{lem double divided differences}, the variance matrix of $[\ev]_x(f)$ and $[\ev]_y(f)$ equals:
\begin{align*}
\esp{[\ev]_x(f) \trans{[\ev]_y(f)}} &= M(x)^{-1} \esp{ \ev_x(f) \trans{ \ev_y(f)}} \trans{M(y)^{-1}}\\
&= M(x)^{-1} \begin{pmatrix}
\dfrac{(-1)^{c_i(x)}\kappa^{(c_i(x) + c_j(y))}(y_j-x_i)}{c_i(x)! c_j(y)!}
\end{pmatrix}_{\substack{1 \leq i ,j \leq p}} \trans{M(y)^{-1}}\\
&= \begin{pmatrix}
[\kappa]_{(i,j)}(x_1,\dots,x_i,y_1,\dots,y_j)\rule{0em}{2.5ex}
\end{pmatrix}_{\substack{1 \leq i,j \leq p}}
\end{align*}
where $M(x)$ (resp.~$M(y)$) is defined in Definitions~\ref{defs Newton polynomials}, and is invertible by Lemma~\ref{lem coeff M x}. Equivalently, for any $x \in \R^k$ and $y \in \R^l$ with $1 \leq k,l \leq p$, we have $\esp{\rule{0em}{2ex}[f]_k(x) [f]_l(y)} = [\kappa]_{(k,l)}(x,y)$.

By Lemma~\ref{lem coeff M x}, for all $x \in \R^p$ and all $t \in \R$, we have $M(x+(t,\dots,t)) = M(x)$. Hence, using Lemma~\ref{lem evaluation divided differences},
\begin{equation*}
[\ev]_{x+(t,\dots,t)}(f) = M(x)^{-1} \ev_{x+(t,\dots,t)}(f).
\end{equation*}
The stationarity of $f$ implies that for any $t \in \R$, $(\ev_{x+(t,\dots,t)}(f))_{x \in \R^p} = (\ev_x(f))_{x \in \R^p}$ in distribution. Thus, $([\ev]_{x+(t,\dots,t)}(f))_{x \in \R^p}$ is distributed as $\left(M(x)^{-1} \ev_x(f)\right)_{x \in \R^k} = \left([\ev]_x(f)\right)_{x \in \R^k}$. One can also check this distributional invariance directly on the expression of the variance matrix $\esp{[\ev]_x(f) \trans{[\ev]_y(f)}}$ above.
\end{proof}


\section{Kac--Rice densities revisited and clustering}
\label{sec Kac-Rice densities revisited and clustering}

The purpose of this section is to derive alternative expressions for the Kac--Rice density $\rho_k$ defined by Equation~\eqref{eq def rho kx}. The upshot is to be able to choose the nicest of these expressions depending on the point $x \in \R^k$ we are considering. These new expressions use the divided differences introduced in Section \ref{sec divided differences}. In particular, the divided differences allow us to replace $(f(x_1),\dots,f(x_k))$ in Equation~\eqref{eq def rho kx} by another Gaussian vector which is never degenerate, even on the diagonal. We also study the properties of $\rho_k$ using these new expressions. This allows us to prove Theorems~\ref{thm vanishing order} and~\ref{thm clustering}.

In Section~\ref{subsec graph partition}, we define a nice partition of $\R^k$ that we use in the following. In Section~\ref{subsec Kac--Rice densities revisited}, we derive the alternative expressions of the Kac--Rice densities that we are interested in, using the formalism of divided differences introduced in Section~\ref{sec divided differences}. The main result of Section~\ref{subsec Kac--Rice densities revisited} is Proposition~\ref{prop Kac-Rice revisited}. In Section~\ref{subsec proof of theorem vanishing order}, we deduce Theorem~\ref{thm vanishing order} from Proposition~\ref{prop Kac-Rice revisited}. In Section~\ref{subsec variance and covariance matrices}, we introduce notations allowing to study the distribution of the random vectors appearing in the definition of $\rho_k$. Then, we study the clustering properties of the Kac--Rice densities in Sections~\ref{subsec denominator clustering},~\ref{subsec numerator clustering} and~\ref{subsec proof of theorem clustering}. We prove Theorem~\ref{thm clustering} in Section~\ref{subsec proof of theorem clustering}. Several results of this section will also be useful in the proof of Theorem~\ref{thm moments} in Section~\ref{sec proof theorem moments}.


\subsection{Graph partitions}
\label{subsec graph partition}

In this section, given a finite set $A \neq \emptyset$ and a scale parameter $\eta \geq 0$, we define a partition of the Cartesian product $\R^A$ into disjoint pieces. These pieces are indexed by the set $\pa_A$ of partitions of~$A$. In order to do this, we first need to define the graph and the partition associated with a point $x \in \R^A$ and the scale parameter $\eta$. Along the way, we also endow $\pa_A$ with a partially ordered set structure.

\begin{dfn}
\label{def G eta}
Let $A$ be a non-empty finite set and let $\eta \geq 0$. For any $x =(x_a)_{a \in A} \in \R^A$, we define a graph $G_\eta(x)$ as follows:
\begin{itemize}
\item the vertices of $G_\eta(x)$ are the elements of $A$;
\item two vertices $a$ and $b \in A$ are joined by an edge of $G_\eta(x)$ if and only if $a \neq b$ and $\norm{x_a-x_b} \leq \eta$.
\end{itemize}
\end{dfn}

We are not interested in the graph $G_\eta(x)$ itself, but rather in the partition of $A$ defined by its connected components. This partition encodes how the components $(x_a)_{a \in A}$ of $x$ are clustered in~$\R$, at scale $\eta$. An example of this construction is given on Figure~\ref{fig clusters} below.

\begin{dfn}
\label{def I eta}
Let $A$ be a non-empty finite set and let $\eta \geq 0$. We define a map $\I_\eta : \R^A \to \pa_A$ as follows: for all $x=(x_a)_{a \in A} \in \R^A$, $\I_\eta(x)$ is the partition of $A$ given by the connected components of $G_\eta(x)$. That is $a$ and $b \in A$ belong to the same element of $\I_\eta(x)$ if and only if they are in the same connected component of $G_\eta(x)$. An element $I \in \I_\eta(x)$, or equivalently the set $\{x_i \mid i \in I\}$, is called a \emph{cluster} of components of $x$ at scale $\eta$.
\end{dfn}

\begin{figure}[ht]
\hspace{0.03\textwidth}
\subfloat[A configuration $x = (x_1,\dots,x_6) \in \R^6$. The circle shows the points at distance at most $\eta$ from $x_4$. \label{fig config}]{\begin{tikzpicture}[x=2.7cm,y=2.7cm]
\draw[-,color=black] (-1.3,0) -- (1.3,0);
\clip(-1.3,-0.58) rectangle (1.3,0.5);
\draw [dash pattern=on 3pt off 3pt] (-0.6,0) circle (1.34cm);
\draw [line width=0.5pt] (-0.6,0)-- (-0.17,0.26);
\fill [color=black] (-1,0) circle (1.5pt);
\draw[color=black] (-0.98,-0.12) node {$x_1$};
\fill [color=black] (-0.8,0) circle (1.5pt);
\draw[color=black] (-0.75,-0.12) node {$x_2$};
\fill [color=black] (-0.6,0) circle (1.5pt);
\draw[color=black] (-0.5,-0.12) node {$x_4$};
\fill [color=black] (0,0) circle (1.5pt);
\draw[color=black] (0.1,-0.12) node {$x_5$};
\fill [color=black] (1,0) circle (1.5pt);
\draw[color=black] (1.1,-0.12) node {$x_3$};
\fill [color=black] (0.8,0) circle (1.5pt);
\draw[color=black] (0.75,-0.12) node {$x_6$};
\draw[color=black] (-0.4,0.21) node {$\eta$};
\end{tikzpicture}}
\hspace{0.04\textwidth}
\subfloat[The graph $G_\eta(x)$ associated with the configuration $x$ and the distance $\eta$ on the left. \label{fig graph}]{\begin{tikzpicture}
\tikz{
	\node (a) [circle,draw] at (0.3,1) {1};
	\node (b) [circle,draw] at (1.4,1) {2};
	\node (c) [circle,draw] at (2.5,1) {3};
	\node (d) [circle,draw] at (3.6,1) {4};
	\node (e) [circle,draw] at (4.7,1) {5};
	\node (f) [circle,draw] at (5.8,1) {6};
	
	\graph{
		(a) -- (b);
		(a) --[bend left=60] (d);
		(b) --[bend left=45] (d);
		(c) --[bend right=60] (f)
	};
}
\end{tikzpicture}}
\caption{Example of a configuration of six points in $\R$. The partition $\I_\eta(x)$ defined by $\eta$ and $x = (x_1,\dots,x_6)$ is $\I_\eta(x) = \left\{\{1,2,4\},\{3,6\},\{5\}\right\} \in \pa_6$. \label{fig clusters}}
\end{figure}

Let us now define a partial order $\leq$ on the set $\pa_A$ of partitions of $A$.

\begin{dfn}[Partial order on partitions]
\label{def partial order pa p}
Let $A$ be a non-empty finite set and let $\I, \J \in \pa_A$. We denote $\J \leq \I$ if $\J$ is \emph{finer} than~$\I$, that is for all $J \in \J$, there exists $I \in \I$ such that $J \subset I$. We denote by $\J < \I$ the fact that $\J \leq \I$ and $\I \neq \J$.
\end{dfn}

One can check that $\leq$ is a partial order on $\pa_A$ such that $\I \mapsto \norm{\I}$ is decreasing. It admits a minimum equal to $\{\{a\} \mid a \in A\}$, and a maximum equal to $\{A\}$.

\begin{ntn}
\label{ntn Imin Imax}
Let $A\neq \emptyset$ be a finite set, we denote the minimum (resp.~maximum) of $(\pa_A,\leq)$ by $\I_{\min}(A) = \{\{a\} \mid a \in A\}$ (resp.~$\I_{\max}(A) =\{A\}$). If $A$ is of the form $\{1,\dots,k\}$, we use the simpler notation $\I_{\min}(k)$ for $\I_{\min}(A)$ (resp.~$\I_{\max}(k)$ for $\I_{\max}(A)$).
\end{ntn}

Let $A$ be a non-empty finite set and let $\I,\J \in \pa_A$. We have $\J \leq \I$ if and only if every $I \in \I$ is obtained as the disjoint union of elements of $\J$. Equivalently, for all $I \in \I$, the set $\{J \in \J \mid J \subset I\}$ is a partition of $I$. This justifies the introduction of the following notation, that will be used in Sections~\ref{subsec variance and covariance matrices} and~\ref{subsec denominator clustering}.

\begin{ntn}[Induced partition]
\label{ntn J I}
Let $A \neq \emptyset$ be a finite set and let $\I,\J \in \pa_A$ be such that $\J \leq \I$. For all $I \in \I$, we denote by $\J_I = \{J \in \J \mid J \subset I\} \in \pa_I$. Note that if $\J \leq \I$ in $\pa_A$, we have $\J = \bigsqcup_{I \in \I} \J_I$.
\end{ntn}

\begin{lem}
\label{lem partial order}
Let $A$ be a non-empty finite set, for all $x \in \R^A$ the map $\eta \mapsto \I_\eta(x)$ is non-decreasing from $[0,+\infty)$ to $\pa_A$.
\end{lem}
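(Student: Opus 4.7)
The plan is to unfold the definitions and reduce the statement to the elementary fact that adding edges to a graph can only merge connected components.

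First I would fix $x = (x_a)_{a \in A} \in \R^A$ and take two parameters $0 \leq \eta_1 \leq \eta_2$. The goal is to prove that $\I_{\eta_1}(x) \leq \I_{\eta_2}(x)$ in $(\pa_A,\leq)$, i.e.\ that $\I_{\eta_1}(x)$ is finer than $\I_{\eta_2}(x)$ in the sense of Definition~\ref{def partial order pa p}. By the construction of $\I_\eta(x)$ in Definition~\ref{def I eta}, it is enough to show that every connected component of $G_{\eta_1}(x)$ is contained in a connected component of $G_{\eta_2}(x)$.

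Next I would observe, directly from Definition~\ref{def G eta}, that $G_{\eta_1}(x)$ is a subgraph of $G_{\eta_2}(x)$: the two graphs share the same vertex set $A$, and any edge $\{a,b\}$ of $G_{\eta_1}(x)$ satisfies $a \neq b$ and $\norm{x_a - x_b} \leq \eta_1 \leq \eta_2$, hence is also an edge of $G_{\eta_2}(x)$.

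Finally, I would conclude with the elementary graph-theoretic fact that if $G \subset G'$ are two graphs on the same vertex set, then every connected component of $G$ is contained in a connected component of $G'$. Applied to $G_{\eta_1}(x) \subset G_{\eta_2}(x)$, this gives that for every $I \in \I_{\eta_1}(x)$ there exists $J \in \I_{\eta_2}(x)$ such that $I \subset J$, i.e.\ $\I_{\eta_1}(x) \leq \I_{\eta_2}(x)$, which is exactly the monotonicity we wanted. There is no real obstacle here: the whole argument is essentially bookkeeping between the three definitions, and no analytic input is required.
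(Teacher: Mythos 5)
Your proof is correct and matches the paper's argument: both observe that $G_{\eta_1}(x)$ is a subgraph of $G_{\eta_2}(x)$ when $\eta_1 \leq \eta_2$, and deduce that each connected component of the former is contained in a connected component of the latter, which is exactly the statement that $\I_{\eta_1}(x) \leq \I_{\eta_2}(x)$.
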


\begin{proof}
Let $x\in \R^A$ and let $0 \leq \eta \leq \eta'$. Let $a,b \in A$, if $a$ and $b$ belong to the same cluster of $\I_\eta(x)$, then they are in the same connected component of $G_\eta(x)$ by definition. Every edge of $G_\eta(x)$ is also an edge of $G_{\eta'}(x)$, by Definition~\ref{def G eta}. Hence $a$ and $b$ belong to the same cluster of $\I_{\eta'}(x)$. Thus $\I_\eta(x) \leq \I_{\eta'}(x)$.
\end{proof}

\begin{dfn}
\label{def R A I eta}
Let $A$ be a non-empty finite set, let $\eta \geq 0$ and let $\I \in \pa_A$. We define:
\begin{equation*}
\R^{A}_{\I,\eta} = \left\{ x \in \R^A \mvert \I_\eta(x) = \I \right\}.
\end{equation*}
\end{dfn}

\begin{rems}
\label{rem R A I eta}
The sets $\R^A_{\I,\eta}$ we just defined satisfy the following properties.
\begin{itemize}
\item For any finite set $A\neq \emptyset$ and any $\eta \geq 0$, we can partition $\R^A$ as follows: $\R^A = \bigsqcup_{\I \in \pa_A} \R^{A}_{\I,\eta}$.
\item Let $\eta \geq 0$, let $\I \in \pa_A$ and let $(x_a)_{a \in A} \in \R^{A}_{\I,\eta}$. For any cluster $I \in \I$ and any $i,j \in I$, we have: $\norm{x_i-x_j} \leq (\norm{I}-1)\eta \leq \norm{A}\eta$.
\end{itemize}
\end{rems}

\begin{ex}
\label{ex I eta}
Let $A \neq \emptyset$ be a finite set and let $x=(x_a)_{a \in A} \in \R^A$. If $\eta = 0$, then $i$ and $j$ are in the same cluster of $\I_0(x)$ if and only if $x_i=x_j$. That is, for all $\I \in \pa_A$, we have $\R^A_{\I,0} = \Delta_{A,\I}$ (see Definition~\ref{def diagonals}). In particular, the partition $\I$ appearing in Theorem~\ref{thm vanishing order} is $\I_0(y)$.
\end{ex}

\begin{lem}
\label{lem clusters are not interlaced}
Let $A$ be a non-empty finite set, let $\I \in \pa_A$ and let $I,J \in \I$ be such that $I \neq J$. Let $\eta \geq 0$, for any $x=(x_a)_{a \in A} \in \R^A_{\I,\eta}$, we have either $(\underline{x}_J)_{\min} > (\underline{x}_I)_{\max} + \eta$ or $(\underline{x}_J)_{\max} < (\underline{x}_I)_{\min} - \eta$ (see Notations~\ref{ntn product indexed by A} and~\ref{ntn x min x max}).
\end{lem}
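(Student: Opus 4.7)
The plan is to argue by contradiction and exploit the fact that the clusters $I$ and $J$ are the vertex sets of distinct connected components of the graph $G_\eta(x)$. The negation of the conclusion says that the intervals $[(\underline{x}_I)_{\min}, (\underline{x}_I)_{\max}]$ and $[(\underline{x}_J)_{\min}, (\underline{x}_J)_{\max}]$ are at distance at most $\eta$ in $\R$; equivalently, either they overlap, or they are disjoint and separated by a real-line gap of length $\leq \eta$. I would split the argument according to these two possibilities.

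The easy case is the one where the two intervals are disjoint but within distance $\eta$ of each other. Up to swapping $I$ and $J$, we may assume that $(\underline{x}_I)_{\max} < (\underline{x}_J)_{\min}$ and $(\underline{x}_J)_{\min} - (\underline{x}_I)_{\max} \leq \eta$. Picking $i^\ast \in I$ and $j^\ast \in J$ realizing these extrema yields $\norm{x_{i^\ast} - x_{j^\ast}} \leq \eta$, so by Definition~\ref{def G eta} there is an edge in $G_\eta(x)$ between $i^\ast \in I$ and $j^\ast \in J$; this contradicts the fact that $I$ and $J$ are distinct connected components of $G_\eta(x)$ (Definition~\ref{def I eta}).

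The more delicate case is that of overlap (interlacing): there exists $j \in J$ with $x_j \in [(\underline{x}_I)_{\min}, (\underline{x}_I)_{\max}]$ (or the symmetric statement with $I$ and $J$ interchanged, which is handled identically). Here I would exploit the internal connectivity of the cluster $I$: choosing $i_-, i_+ \in I$ with $x_{i_-} = (\underline{x}_I)_{\min}$ and $x_{i_+} = (\underline{x}_I)_{\max}$, there is a path $i_- = a_0, a_1, \dots, a_r = i_+$ in $I$ such that $\norm{x_{a_s} - x_{a_{s+1}}} \leq \eta$ for every $s$. As one walks along this path, the real values $x_{a_s}$ start at $x_{i_-} \leq x_j$ and end at $x_{i_+} \geq x_j$, so by a discrete intermediate value argument there exists an index $s$ with $x_j$ lying between $x_{a_s}$ and $x_{a_{s+1}}$. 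Then $\norm{x_{a_s} - x_j} \leq \norm{x_{a_s} - x_{a_{s+1}}} \leq \eta$, producing an edge between $a_s \in I$ and $j \in J$ in $G_\eta(x)$, which is again the desired contradiction.

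The main obstacle is the interlacing case, since unlike the disjoint case the contradiction is not visible directly from the extremal points; the crucial idea is to use the path connectivity of $I$ inside $G_\eta(x)$ together with the discrete intermediate value observation to produce an element of $I$ at distance at most $\eta$ from an element of $J$. Once that is carried out, the two cases together exhaust the negation of the conclusion, finishing the proof.
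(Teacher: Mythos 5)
Your proof is correct, and it rests on the same core geometric observation as the paper's: a connected cluster of $G_\eta(x)$ cannot leave a gap wider than $\eta$ in the interval it spans. The implementation differs in organization. The paper sorts the values $\{x_i : i\in I\}$, asserts (without an explicit argument, though it is easy) that consecutive sorted values differ by at most $\eta$ because $I$ is a connected component, and deduces that every $x_j$, $j\in J$, must lie strictly outside $[(\underline{x}_I)_{\min}-\eta,(\underline{x}_I)_{\max}+\eta]$; it then applies the symmetric statement with $I$ and $J$ swapped and finishes with a short sandwich argument. You instead argue directly by contradiction on the negation of the conclusion, splitting into a ``disjoint but within $\eta$'' case (immediate from the extremal points) and an ``overlap'' case, where you replace the sorting step by a path $i_-\to i_+$ inside $I$ and a discrete intermediate value argument to produce a vertex of $I$ within $\eta$ of $x_j$. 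The two arguments are essentially equivalent in content; yours has the small advantage that the $\eta$-density of the cluster is derived explicitly from path connectivity rather than asserted, while the paper's version avoids an explicit case analysis by establishing the stronger exclusion statement first and then symmetrizing.
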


\begin{proof}
We can write $I = \{i_1,\dots,i_{\norm{I}}\}$ in such a way that:
\begin{equation*}
(\underline{x}_I)_{\min}-\eta < x_{i_1} \leq x_{i_2} \leq \dots \leq x_{i_{\norm{I}}} < (\underline{x}_I)_{\max}-\eta.
\end{equation*}
Moreover, since $\I_\eta(x) = \I$, we have $x_{i_{k+1}} - x_{i_k} \leq \eta$ for all $k \in \{1,\dots,\norm{I}-1\}$. Let $j \in J$, if we had $x_j \in [(\underline{x}_I)_{\min}-\eta,(\underline{x}_I)_{\max}+\eta]$, there would exists $i \in I$ such that $\norm{x_i - x_j} \leq \eta$. This would contradict the fact that $i$ and $j$ are in different clusters of components of $x$ at scale $\eta$. Hence, for all $j \in J$, we have either $x_j > (\underline{x}_I)_{\max} + \eta$ or $x_j < (\underline{x}_I)_{\min} - \eta$. Symmetrically, for all $i \in I$, we have either $x_i > (\underline{x}_J)_{\max} + \eta$ or $x_i < (\underline{x}_J)_{\min} - \eta$.

If we had both $(\underline{x}_J)_{\min} < (\underline{x}_I)_{\min} - \eta$ and $(\underline{x}_J)_{\max} > (\underline{x}_I)_{\max} + \eta$, we would have, for all $i \in I$, $x_i \in [(\underline{x}_J)_{\min},(\underline{x}_J)_{\max}]$, which is absurd. Hence, we have either $(\underline{x}_J)_{\min} > (\underline{x}_I)_{\max} + \eta$ or $(\underline{x}_J)_{\max} < (\underline{x}_I)_{\min} - \eta$.
\end{proof}


\subsection{Kac--Rice densities revisited}
\label{subsec Kac--Rice densities revisited}

The goal of this section is to derive new expressions of the Kac--Rice density $\rho_k$ (cf.~Definition~\ref{def Kac-Rice densities}), in terms of divided differences studied in Section~\ref{sec divided differences}. This is done in Proposition~\ref{prop Kac-Rice revisited} below.

In all this section, we denote by $A$ a non-empty finite set and by $f$ a normalized stationary centered Gaussian process which is at least of class $\mathcal{C}^1$.

\begin{dfn}[Evaluation map associated with a partition]
\label{def evaluation map partition}
Let $x =(x_a)_{a \in A} \in \R^A$. Following the notations of Section~\ref{subsec partitions, products and diagonal inclusions}, for any $\I \in \pa_A$ we denote by $\ev_x^\I: \mathcal{C}^{\norm{A}-1}(\R) \to \R^{\norm{A}}$ the linear map defined by $\ev_x^\I: f \mapsto ([\ev]_{\underline{x}_I}(f))_{I \in \I}$, where $[\ev]_x$ is defined in Definition~\ref{def evaluation divided differences}.
\end{dfn}

\begin{rem}
\label{rem ordering of partitions}
Given $\I \in \pa_A$, we need to fix an ordering of $\I$ and an ordering of each $I \in \I$ for $\ev_x^\I$ to be well-defined. Here and throughout the paper, we implicitly assume that such orderings are fixed whenever necessary. The precise choice of these orderings will be of no consequence.
\end{rem}

\begin{exs}
\label{exs evaluation map partition}
Let us give some examples.
\begin{enumerate}
\item \label{subex evaluation map partition 1} If $\I = \{\{1,3\},\{2,4,5\}\} \in \pa_5$ and $x=(x_i)_{1 \leq i\leq 5}$, for all $f \in \mathcal{C}^4(\R)$ we have:
\begin{equation*}
\ev_x^\I(f) = \left([f]_1(x_1),[f]_2(x_1,x_3),[f]_1(x_2),[f]_2(x_2,x_4),[f]_3(x_2,x_4,x_5)\right).
\end{equation*}
\item \label{subex evaluation map partition 2} For all $x=(x_a)_{a \in A} \in \R^A$, we have $\ev_x^{\I_{\min}(A)}: f \mapsto (f(x_a))_{a \in A}$, where $\I_{\min}(A)$ is as in Notation~\ref{ntn Imin Imax}.
\item \label{subex evaluation map partition 3} If $\I = \I_{\max}(A)=\{A\}$, then for all $x\in \R^A$, we have $\ev_x^{\{A\}} = [\ev]_x$ after choosing some ordering of $A$. In particular, we have $M(x) \ev_x^{\{A\}} = \ev_x$ by Lemma~\ref{lem coefficients of Hermite interpolation polynomial}.
\item \label{subex evaluation map partition 4} More generally, let $\I \in \pa_A$. Let us choose an ordering of each $I \in \I$, and let us also choose an ordering of $\I$, say $\I = \{I_i \mid 1 \leq i \leq \norm{I}\}$. This yields an ordering of $A$ such that if $a \in I_i$ and $b \in I_j$ with $i <j$ then $a<b$, and whose restriction to any $I \in \I$ coincides with the ordering of $I$ we fixed. Using this ordering, we can identify $A$ with $\{1,\dots,\norm{A}\}$. Then, for all $x \in \R^A\setminus \Delta_A$, we have:
\begin{equation}
\label{eq ex evaluation map partition}
\begin{pmatrix}
M(\underline{x}_{I_1}) & & \\ & \ddots & \\ & & M(\underline{x}_{I_{\norm{\I}}})
\end{pmatrix} \ev_x^\I = \ev_x.
\end{equation}
Note that Equation~\eqref{eq ex evaluation map partition} holds independently of our choices or orderings of $\I$ and each $I \in \I$, as long as they are consistent from one term to the other.
\end{enumerate}
\end{exs}

Recall that the Kac--Rice densities were defined by Definition~\ref{def Kac-Rice densities}. One of the key ideas in this paper is that we can find alternative expressions of $\rho_k$ (see Equation~\eqref{eq def rho kx}). These alternative expressions are indexed by $\I \in \pa_k$ (see Definition~\ref{def Kac-Rice densities partition} below). Then, we are to choose the right $\I$, that is the right expression for $\rho_k$, depending on the point $x \in \R^k$ at which we want to evaluate~$\rho_k$. 

\begin{dfn}[Kac--Rice densities associated with a partition]
\label{def Kac-Rice densities partition}
Let $A$ be a non-empty finite set and let $f$ be a centered Gaussian process of class $\mathcal{C}^{\norm{A}}$. Let $\I \in \pa_A$, for all $x = (x_a)_{a \in A} \in \R^A$, we denote by
\begin{equation}
\label{eq def DIx}
D_\I(x) = \det\left(\var{\ev_x^\I(f)}\right).
\end{equation}
Moreover, if $\ev_x^\I(f)$ is non-degenerate, i.e.~if $D_\I(x) \neq 0$, we denote by:
\begin{equation}
\label{eq def NIx}
N_\I(x) = \espcond{\prod_{I \in \I} \prod_{i \in I} \norm{[f]_{\norm{I}+1}(\underline{x}_I,x_i)}}{\ev_x^\I(f)=0},
\end{equation}
the conditional expectation of $\prod_{I \in \I} \prod_{i \in I} \norm{[f]_{\norm{I}+1}(\underline{x}_I,x_i)}$ given that $\ev_x^\I(f)=0$. Finally we denote by:
\begin{equation}
\label{eq def rho Ix}
\rho_\I(x) = \left(\prod_{I \in \I} \prod_{\{(i,j) \in I^2 \mid i \neq j\}} \norm{x_i-x_j}^\frac{1}{2}\right) \frac{N_\I(x)}{(2\pi)^\frac{\norm{A}}{2}D_\I(x)^\frac{1}{2}}.
\end{equation}
\end{dfn}

\begin{rems}
\label{rem def Kac-Rice densities partition}
This definition requires some comments.
\begin{itemize}
\item Both $D_\I(x)$ and $N_\I(x)$ only depend on the joint distribution of the divided differences
\begin{equation}
\label{eq divided differences in DI and NI}
\left\{[f]_j(x_{a_1},\dots,x_{a_j}) \mvert 1 \leq j \leq \norm{A}+1 \ \text{and} \ a_1,\dots,a_j \in A\right\}.
\end{equation}
By Lemma~\ref{lem variance divided differences}, this distribution is a centered Gaussian. This remains true after conditioning on $\ev_x^\I(f)=0$, as soon as $\ev_x^\I(f)$ is non-degenerate. In particular, $D_\I(x)$, $N_\I(x)$ and $\rho_\I$ are well-defined.
\item By Lemma~\ref{lem variance divided differences}, the joint distribution of the divided differences~\eqref{eq divided differences in DI and NI} is invariant by diagonal translation. In particular, for any $x \in \R^p$ and any $t \in \R$ we have $D_\I(x+(t,\dots,t)) = D_\I(x)$, $N_\I(x+(t,\dots,t)) = N_\I(x)$ and $\rho_\I(x+(t,\dots,t)) = \rho_\I(x)$.
\item The definitions of $D_\I(x)$, $N_\I(x)$ and $\rho_\I(x)$ do not depend on the orderings we choose on $\I$ and on each $I \in \I$. This is not obvious, and will be proved in Lemmas~\ref{lem expression DIx} and~\ref{lem expression NIx} below.
\end{itemize}
\end{rems}

\begin{lem}[Continuity]
\label{lem DI and NI continuous}
If $f$ is of class $\mathcal{C}^{\norm{A}}$, then for all $\I \in \pa_A$, the maps $D_\I$, $N_\I$ and $\rho_\I$ are continuous on their domains of definition.
\end{lem}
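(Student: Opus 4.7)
The plan is to adapt the proof of Lemma~\ref{lem Dk and Nk continuous} to this more general setting, replacing the classical evaluation $\ev_x$ with the divided-difference evaluation $\ev_x^\I$. The central inputs are the continuity of the Gaussian field $x \mapsto [\ev]_x(f)$ from Lemma~\ref{lem variance divided differences}, together with the continuity of the double divided differences $[\kappa]_{(i,j)}$ established in Section~\ref{subsec double divided differences and correlation function}.

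First I would establish the continuity of $D_\I$ on the whole of $\R^A$. Since $f$ is of class $\mathcal{C}^{\norm{A}}$, its correlation function $\kappa$ is of class $\mathcal{C}^{2\norm{A}-2}$ at least, and Lemmas~\ref{lem divided differences kappa} and~\ref{lem double divided differences} then ensure that all the double divided differences $[\kappa]_{(i,j)}$ entering the covariance matrix of $\ev_x^\I(f)$ are continuous in $x$. Therefore $x \mapsto \var{\ev_x^\I(f)}$ is a continuous map into the space of symmetric matrices, and $D_\I = \det \var{\ev_x^\I(f)}$ is continuous on $\R^A$.

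Next, fix $x_0 \in \R^A$ with $D_\I(x_0) \neq 0$; by continuity of $D_\I$, this determinant stays bounded away from $0$ on some neighborhood $U$ of $x_0$. For $x \in U$, the conditional distribution of the vector $Y(x) = \bigl([f]_{\norm{I}+1}(\underline{x}_I,x_i)\bigr)_{I \in \I,\, i \in I}$ given that $\ev_x^\I(f)=0$ is a centered Gaussian vector whose variance matrix is given by the regression formula $\Lambda_\I(x) = \Sigma_{YY}(x) - \Sigma_{YX}(x)\,\Sigma_{XX}(x)^{-1}\,\Sigma_{XY}(x)$, where $\Sigma_{XX}(x) = \var{\ev_x^\I(f)}$ and the other blocks are again built from continuous double divided differences of $\kappa$. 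Since $\Sigma_{XX}(x)$ stays invertible on $U$, the matrix $\Lambda_\I(x)$ depends continuously on $x$. Applying the Gaussian lemma of Appendix~\ref{sec a Gaussian lemma}, which provides the continuity of $\Lambda \mapsto \esp{\prod_l \norm{Z_l}}$ for $(Z_l) \sim \mathcal{N}(0,\Lambda)$ (playing here the role of the map $\Pi_k$ used in Lemma~\ref{lem Dk and Nk continuous}), I would conclude that $N_\I$ is continuous on $U$, hence on the whole open set $\{D_\I \neq 0\}$. Continuity of $\rho_\I$ on its domain then follows at once from Equation~\eqref{eq def rho Ix}, as a product of continuous functions with a nonvanishing denominator. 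I do not expect any genuine obstacle beyond the one handled in Lemma~\ref{lem Dk and Nk continuous}; the only bookkeeping point is to check that the $\norm{A}$ components of $Y(x)$ correspond term by term to the $\norm{A}$ factors in Equation~\eqref{eq def NIx}, which is immediate from the definition of $\ev_x^\I$.
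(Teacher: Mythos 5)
Your proof is correct and follows essentially the same route as the paper: read off the continuity of the covariance matrix of $\bigl(\ev_x^\I(f),Y_\I(x)\bigr)$ from Lemma~\ref{lem variance divided differences} (equivalently, from the continuity of the double divided differences $[\kappa]_{(i,j)}$), then use the regression formula to get continuity of $\Lambda_\I$ on $\{D_\I\neq 0\}$, and finish by Corollary~\ref{cor Pi k} and the definition~\eqref{eq def rho Ix} of $\rho_\I$.
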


\begin{proof}
The proof is similar to that of Lemma~\ref{lem Dk and Nk continuous}. Let $\I \in \pa_A$, for all $x =(x_a)_{a \in A} \in \R^A$, we denote by $X_\I(x) = \ev_x^\I(x)$ and by $Y_\I(x) = \left([f]_{\norm{I}+1}(\underline{x}_I,x_i)\right)_{I \in \I, i \in I}$. By Lemma~\ref{lem variance divided differences}, $(X_\I(x),Y_\I(x))_{x \in \R^A}$ is a continuous centered Gaussian field with values in $\R^{2\norm{A}}$. We write the variance matrix of $(X_\I(x),Y_\I(x))$ by square blocks of size $\norm{A}$ as:
\begin{equation*}
\begin{pmatrix}
\Theta_\I(x) & \trans{\Xi_\I(x)} \\ \Xi_\I(x) & \Omega_\I(x)
\end{pmatrix},
\end{equation*}
where $\Theta_\I$, $\Xi_\I$ and $\Omega_\I$ are continuous maps on~$\R^A$. Then, $D_\I = \det(\Theta_\I)$ is continuous on $\R^A$.

If $x \in \R^A$ is such that $D_\I(x) \neq 0$, then $Y_\I(x)$ given that $X_\I(x) = 0$ is a well-defined centered Gaussian variable, whose variance matrix $\Lambda_\I(x) = \Omega_\I(x) - \Xi_\I(x)\Theta_\I(x)^{-1} \trans{\Xi_\I(x)}$ depends continuously on $x$. Then, $N_\I(x) = \Pi_{\norm{A}}(\Lambda_\I(x))$, where $\Pi_{\norm{A}}$ is defined by Definition~\ref{def Pi k}. By Corollary~\ref{cor Pi k}, $\Pi_{\norm{A}}$ is continuous. Hence $N_\I$ is continuous on $\{x \in \R^A \mid D_\I(x) \neq 0\}$, and so is $\rho_\I$.
\end{proof}

\begin{lem}
\label{lem expression DIx}
Let us assume that $f$ is $\mathcal{C}^{\norm{A}}$. Let $\I \in \pa_A$, for all $x =(x_a)_{a \in A} \in \R^A$ we have:
\begin{equation}
\label{eq lem expression DIx}
D_{\norm{A}}(x) = \left(\prod_{I \in \I} \ \prod_{\{(i,j) \in I^2 \mid i \neq j\}} \norm{x_i-x_j}\right) D_\I(x),
\end{equation}
where $D_{\norm{A}}$ is the symmetric function defined by Equation~\eqref{eq def Dkx}. In particular, $D_\I(x)$ is independent of the ordering on~$\I$ and of the ordering on each $I \in \I$ used to define $\ev_x^\I$.
\end{lem}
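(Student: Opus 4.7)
The plan is to exploit the block-diagonal matrix identity hinted at in Examples~\ref{exs evaluation map partition}.\ref{subex evaluation map partition 4}. Fix an ordering of $\I$ and of each $I\in\I$ so that $A$ is identified with $\{1,\dots,\norm{A}\}$ with every $I\in\I$ occupying consecutive indices. Then, for every $x\in\R^A\setminus\Delta_A$, Equation~\eqref{eq ex evaluation map partition} reads $\ev_x(f) = M_\I(x)\,\ev_x^\I(f)$, where $M_\I(x)$ is the block-diagonal matrix with blocks $M(\underline{x}_I)$, $I\in\I$. Taking the variance of both sides and then the determinant yields
\begin{equation*}
D_{\norm{A}}(x) = \det(M_\I(x))^2\, D_\I(x).
\end{equation*}

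Next, I would compute $\det(M_\I(x))^2$ using Lemma~\ref{lem coeff M x}. Each block $M(\underline{x}_I)$ is lower triangular, with diagonal entry at the index corresponding to $i\in I$ equal to $\prod_{\{k\in I\mid k<i,\, x_k\neq x_i\}}(x_i-x_k)$. When $x\notin\Delta_A$ all components of $\underline{x}_I$ are pairwise distinct, so this simplifies to $\prod_{\{k\in I\mid k<i\}}(x_i-x_k)$. Multiplying over $i\in I$ and squaring gives $(\det M(\underline{x}_I))^2 = \prod_{\{(i,j)\in I^2\mid i\neq j\}}\norm{x_i-x_j}$, and taking the product over $I\in\I$ yields exactly the factor on the right-hand side of Equation~\eqref{eq lem expression DIx}. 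This establishes the identity on $\R^A\setminus\Delta_A$.

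The remaining point is to extend the identity to all of $\R^A$ and to deduce ordering independence. Both sides of~\eqref{eq lem expression DIx} are continuous on $\R^A$: the left-hand side by Lemma~\ref{lem Dk and Nk continuous}, the factor $D_\I$ by Lemma~\ref{lem DI and NI continuous}, and the product of distances obviously. Since $\R^A\setminus\Delta_A$ is dense in $\R^A$, the identity propagates by continuity to every $x\in\R^A$. For the ordering independence of $D_\I(x)$, observe that changing the orderings of $\I$ or of any $I\in\I$ merely permutes the components of the Gaussian vector $\ev_x^\I(f)$; its variance matrix is therefore conjugated by a permutation matrix, and the determinant is unaffected.

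The one delicate point is the behaviour on $\Delta_A$: when some $\underline{x}_I$ lies in $\Delta_I$, the product of distances vanishes, while $D_\I(x)$ may remain positive and $D_{\norm{A}}(x)$ must then vanish to make the identity hold. The continuity extension is exactly what handles this gracefully, so no additional argument is needed; this is the only step worth verifying with some care, but it is not a genuine obstacle.
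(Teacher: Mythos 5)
Your proof of the main identity~\eqref{eq lem expression DIx} is correct and follows the same route as the paper: the block-diagonal relation $\ev_x(f) = M_\I(x)\,\ev_x^\I(f)$, taking $\det\var{}$, evaluating $\det M(\underline{x}_I)$ via Lemma~\ref{lem coeff M x}, and then a density/continuity argument to pass from $\R^A\setminus\Delta_A$ to all of~$\R^A$. That part is fine.

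The gap is in the ordering-independence argument. You claim that changing the ordering of $\I$ \emph{or of some $I\in\I$} ``merely permutes the components of the Gaussian vector $\ev_x^\I(f)$.'' This is false when you change the ordering within an $I$ with $\norm{I}\ge 2$. Take $I=\{a,b\}$: with ordering $a<b$ the block is $\left(f(x_a),\,[f]_2(x_a,x_b)\right)$, while with ordering $b<a$ it is $\left(f(x_b),\,[f]_2(x_a,x_b)\right)$, using the symmetry of divided differences (Lemma~\ref{lem divided differences are symmetric}). The second components agree but the first do not, so this is not a permutation; the transition map is the unipotent matrix $\bigl(\begin{smallmatrix} 1 & x_b-x_a \\ 0 & 1 \end{smallmatrix}\bigr)$. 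The determinant of the variance is still unchanged (change of Newton basis is always unipotent upper-triangular in the degree grading), but the ``conjugated by a permutation matrix'' justification does not hold. The paper sidesteps this entirely: once~\eqref{eq lem expression DIx} is established, it writes $D_\I(x) = \bigl(\prod_{I\in\I}\prod_{i\neq j}\norm{x_i-x_j}\bigr)^{-1}D_{\norm{A}}(x)$ on $\R^A\setminus\Delta_A$, observes that neither factor on the right depends on a choice of ordering (using the symmetry of $D_{\norm{A}}$ from Lemma~\ref{lem Kac-Rice densities symmetric}), and concludes by continuity. Either you should adopt that argument, or replace ``permutation'' by a careful identification of the transition matrix between two Newton bases and a verification that it has determinant $\pm 1$.
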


\begin{proof}
Let $\I \in \pa_A$. As in Example~\ref{exs evaluation map partition}.\ref{subex evaluation map partition 4}, we choose an ordering of each $I \in \I$ and an ordering of~$\I$, say $\I = \{I_i \mid 1 \leq i \leq \norm{I}\}$. This defines an ordering of $A$, that coincides with the one on each $I \in \I$ and such that if $i < j$ then the elements of $I_i$ are smaller than those of $I_j$. Let us identify $A$ with $\{1,\dots,\norm{A}\}$ using this ordering. For all $x =(x_a) \in \R^A \setminus \Delta_A$, we have:
\begin{equation}
\label{eq relation evaluations}
\begin{pmatrix} M(\underline{x}_{I_1}) & & \\ & \ddots & \\ & & M(\underline{x}_{I_{\norm{\I}}})
\end{pmatrix} \ev_x^\I(f) = \ev_x(f) = (f(x_1),\dots,f(x_{\norm{A}})).
\end{equation}
Taking the determinant of the variance of these random vectors, we obtain:
\begin{equation*}
D_{\norm{A}}(x) = \left(\prod_{j=1}^{\norm{\I}} \det \begin{pmatrix}
M(\underline{x}_{I_j})
\end{pmatrix}^2\right) D_\I(x) = \left(\prod_{I \in \I} \det \begin{pmatrix}
M(\underline{x}_I)
\end{pmatrix}^2\right) D_\I(x).
\end{equation*}
Let $I \in \I$ and let us assume that $I = \{i_1,\dots,i_{\norm{I}}\}$. Since $x \notin \Delta_A$, by Lemma~\ref{lem coeff M x} we have
\begin{equation*}
\det(M(\underline{x}_I))^2 = \prod_{j=1}^{\norm{I}} \prod_{l=1}^{j-1} (x_{i_j}-x_{i_l})^2 = \prod_{1 \leq l < j \leq \norm{I}} (x_{i_j}-x_{i_l})^2 = \prod_{\{(i,j) \in I^2 \mid i \neq j\}} \norm{x_i - x_j}.
\end{equation*}
This proves Equation~\eqref{eq lem expression DIx} for any $x \in \R^A \setminus \Delta_A$. Since $\R^A \setminus \Delta_A$ is dense in $\R^A$ and both sides of Equation~\eqref{eq lem expression DIx} are continuous functions (see Lemmas~\ref{lem Dk and Nk continuous} and~\ref{lem DI and NI continuous}), this relation holds for all~$x \in \R^A$.

By Lemma~\ref{lem Kac-Rice densities symmetric}, the function $D_{\norm{A}}$ is symmetric on $\R^{\norm{A}}$. Hence the left-hand side of Equation~\eqref{eq lem expression DIx} does not depend on the ordering of $A$ we used to identify $A$ and $\{1,\dots,\norm{A}\}$. Let $D_\I(x)$ be defined for all $x \in \R^A$ by Equation~\eqref{eq def DIx}, and let $\tilde{D}_\I(x)$ be defined similarly but with other choices of ordering on $\I$ or on some $I \in \I$. Using Equation~\eqref{eq lem expression DIx}, for all $x \in \R^A \setminus \Delta_A$, we have:
\begin{equation*}
D_\I(x) = \left(\prod_{I \in \I} \ \prod_{\{(i,j) \in I^2 \mid i \neq j\}} \norm{x_i-x_j}\right)^{-1}D_{\norm{A}}(x) = \tilde{D}_\I(x).
\end{equation*}
Indeed the middle term does not depend on the ordering of $\I$, nor on the orderings of each $I \in \I$. Then $D_\I = \tilde{D}_\I$, since these are continuous functions on $\R^A$ that coincide on a dense subset.
\end{proof}

\begin{cor}
\label{cor expression DIx}
Let us assume that $f \in \mathcal{C}^{\norm{A}}$. Let $\I,\J \in \pa_A$ be such that $\J \leq \I$, then for all $x= (x_a)_{a \in A}$ we have:
\begin{equation*}
D_\J(x) = \left(\prod_{I \in \I} \prod_{\{(J,J') \in \J_I^2 \mid J \neq J'\}} \prod_{(i,j) \in J \times J'} \norm{x_i - x_j}\right) D_\I(x),
\end{equation*}
where for all $I \in \I$, we denoted by $\J_I =\{J \in \J \mid J \subset I\} \in \pa_\I$, as in Notation~\ref{ntn J I}.
\end{cor}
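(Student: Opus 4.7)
The plan is to apply Lemma~\ref{lem expression DIx} twice, once for the partition $\I$ and once for the finer partition $\J$, and to compare the two resulting factorizations of the common quantity $D_{\norm{A}}(x)$. This reduces the corollary to a purely combinatorial identity between the two product factors.

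The combinatorial input is the following. Since $\J \leq \I$, for each $I \in \I$ the family $\J_I$ is a partition of $I$ (cf.~Notation~\ref{ntn J I}). Any ordered pair of distinct elements of $I$ therefore falls into exactly one of two classes: either both components lie in a common block $J \in \J_I$, or they lie in two different blocks $J, J' \in \J_I$. This gives the disjoint decomposition
\[
\{(i,j) \in I^2 \mid i \neq j\} \;=\; \bigsqcup_{J \in \J_I}\{(i,j) \in J^2 \mid i \neq j\} \;\sqcup\; \bigsqcup_{\{(J,J') \in \J_I^2 \mid J \neq J'\}} J \times J'.
\]
Taking the product of $\norm{x_i-x_j}$ over these pairs, then the product over $I \in \I$, and using that $\J = \bigsqcup_{I \in \I} \J_I$, one obtains
\[
\prod_{I \in \I}\prod_{\{(i,j)\in I^2\mid i\neq j\}}\!\!\norm{x_i-x_j} \;=\; \left(\prod_{J \in \J}\prod_{\{(i,j)\in J^2\mid i\neq j\}}\!\!\norm{x_i-x_j}\right)\!\!\left(\prod_{I \in \I}\prod_{\{(J,J')\in \J_I^2\mid J\neq J'\}}\prod_{(i,j)\in J\times J'}\norm{x_i-x_j}\right).
\]

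Applying Lemma~\ref{lem expression DIx} to $\I$ and to $\J$ yields
\[
\left(\prod_{I \in \I}\prod_{\{(i,j)\in I^2\mid i\neq j\}}\!\!\norm{x_i-x_j}\right) D_\I(x) \;=\; D_{\norm{A}}(x) \;=\; \left(\prod_{J \in \J}\prod_{\{(i,j)\in J^2\mid i\neq j\}}\!\!\norm{x_i-x_j}\right) D_\J(x).
\]
Plugging in the factorization above and canceling the common factor $\prod_{J \in \J}\prod_{\{(i,j)\in J^2\mid i\neq j\}}\norm{x_i-x_j}$, which is nonzero on $\R^A \setminus \Delta_A$, yields the claimed identity on $\R^A \setminus \Delta_A$.

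Finally, by Lemma~\ref{lem DI and NI continuous}, both $D_\I$ and $D_\J$ are continuous on $\R^A$, so both sides of the claimed identity are continuous functions of $x$. Since $\R^A \setminus \Delta_A$ is dense in $\R^A$, the identity extends to all of $\R^A$ by continuity. No real obstacle is anticipated: the corollary is a combinatorial bookkeeping argument on top of Lemma~\ref{lem expression DIx}, together with a routine density/continuity extension.
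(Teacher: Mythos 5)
Your proof is correct and takes essentially the same route as the paper: both apply Lemma~\ref{lem expression DIx} to $\I$ and to $\J$, compare the two factorizations of $D_{\norm{A}}(x)$, cancel the common nonzero factor on $\R^A \setminus \Delta_A$, and conclude by density and the continuity from Lemma~\ref{lem DI and NI continuous}. The only difference is that you make the combinatorial decomposition of $\{(i,j) \in I^2 \mid i \neq j\}$ into within-block and between-block pairs explicit, whereas the paper leaves that bookkeeping implicit.
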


\begin{proof}
By Lemma~\ref{lem expression DIx}, for all $x=(x_a)_{a \in A} \in \R^A$ we have:
\begin{align*}
\left(\prod_{I \in \I} \prod_{\{(i,j) \in I^2 \mid i \neq j\}} \norm{x_i-x_j}\right) D_\I(x) &=\left(\prod_{J \in \J} \prod_{\{(i,j) \in J^2 \mid i \neq j\}} \norm{x_i-x_j}\right) D_\J(x) \\
&= \left(\prod_{I \in \I} \prod_{J \in \J_I} \prod_{\{(i,j) \in J^2 \mid i \neq j\}} \norm{x_i-x_j}\right) D_\J(x).
\end{align*}
Thus, the result is true for all $x \in \R^A \setminus \Delta_A$. Since this set is dense in $\R^A$ and since $D_\I$ and $D_\J$ are continuous (see Lemma~\ref{lem DI and NI continuous}), this concludes the proof.
\end{proof}

\begin{lem}
\label{lem expression NIx}
Let us assume that $f$ is $\mathcal{C}^{\norm{A}}$. Let $\I \in \pa_A$, for all $x =(x_a)_{a \in A} \in \R^A$ such that $D_{\norm{A}}(x) \neq 0$, we have:
\begin{equation}
\label{eq lem expression NIx}
N_{\norm{A}}(x) = \left(\prod_{I \in \I} \ \prod_{\{(i,j) \in I^2 \mid i \neq j\}} \norm{x_i-x_j}\right) N_\I(x),
\end{equation}
where $N_{\norm{A}}$ is the symmetric function defined by Equation~\eqref{eq def Nkx}. Moreover, for all $x \in \R^A$ such that $D_\I(x) \neq 0$, $N_\I(x)$ is independent of the ordering on $\I$ and of the ordering on each $I \in \I$.
\end{lem}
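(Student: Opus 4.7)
The strategy parallels the proof of Lemma~\ref{lem expression DIx}: we establish a pointwise identity on the conditioning event, then take conditional expectations. The key tool is Example~\ref{ex divided differences}.\ref{subex DD derivative}, which asserts that for $(x_i)_{1 \leq i \leq p} \in \R^p \setminus \Delta_p$ and $f \in \mathcal{C}^p(\R)$ satisfying $[f]_j(x_1,\dots,x_j) = 0$ for all $j \in \{1,\dots,p\}$, one has
\begin{equation*}
f'(x_i) = [f]_{p+1}(x_1,\dots,x_p,x_i) \prod_{j \in \{1,\dots,p\} \setminus \{i\}}(x_i - x_j), \quad i \in \{1,\dots,p\}.
\end{equation*}
This identity converts $\norm{f'(x_a)}$ on the zero set of $f$ into a divided difference weighted by distances between points, which is precisely the structure needed to factor out the product of $\norm{x_i - x_j}$.

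To prove Equation~\eqref{eq lem expression NIx}, fix $x \in \R^A$ with $D_{\norm{A}}(x) \neq 0$; this forces $x \in \R^A \setminus \Delta_A$ by Lemma~\ref{lem expression DIx}. Choose orderings of $\I$ and of each $I \in \I$. By Equation~\eqref{eq relation evaluations} and the invertibility of the block-diagonal matrix $\mathrm{diag}(M(\underline{x}_{I_1}),\dots,M(\underline{x}_{I_{\norm{\I}}}))$ (each block is invertible by Lemma~\ref{lem coeff M x}), the events $\{\ev_x^\I(f) = 0\}$ and $\{\ev_x(f) = 0\} = \{f(x_a) = 0 \text{ for all } a \in A\}$ coincide. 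On this event, apply the identity above to each cluster $I \in \I$ separately: $\underline{x}_I$ has distinct components, and the divided differences $[f]_j(\underline{x}_{I,1},\dots,\underline{x}_{I,j})$ vanish for $j \in \{1,\dots,\norm{I}\}$ by assumption. Multiplying the resulting identities over $i \in I$ and then over $I \in \I$ yields
\begin{equation*}
\prod_{a \in A} \norm{f'(x_a)} = \left(\prod_{I \in \I} \prod_{\{(i,j) \in I^2 \mid i \neq j\}} \norm{x_i - x_j}\right) \prod_{I \in \I} \prod_{i \in I} \norm{[f]_{\norm{I}+1}(\underline{x}_I, x_i)}.
\end{equation*}
Taking conditional expectation given $\ev_x^\I(f) = 0$ and pulling out the deterministic prefactor yields Equation~\eqref{eq lem expression NIx}.

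For ordering independence on $\{D_\I(x) \neq 0\}$, the argument is direct from Definition~\ref{def Kac-Rice densities partition}. The conditioning event $\{\ev_x^\I(f) = 0\}$ equals $\{\ev_x(f) = 0\}$ for any $x \in \R^A$: each $M(\underline{x}_I)$ is invertible by Lemma~\ref{lem coeff M x} (its diagonal entries are non-vanishing) and relates $[\ev]_{\underline{x}_I}(f)$ to $\ev_{\underline{x}_I}(f)$ via Lemma~\ref{lem evaluation divided differences}; thus the conditioning event is intrinsic to $x$. The integrand $\prod_{I \in \I} \prod_{i \in I} \norm{[f]_{\norm{I}+1}(\underline{x}_I, x_i)}$ is likewise invariant under reordering of $\I$ or of any $I \in \I$: the outer products commute, and $[f]_{\norm{I}+1}$ is a symmetric function by Lemma~\ref{lem divided differences are symmetric}. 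Hence $N_\I(x)$ is intrinsic to $x$ whenever it is defined.

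The main technical point to verify carefully is the equality of events $\{\ev_x^\I(f) = 0\} = \{f(x_a) = 0 \text{ for all } a \in A\}$ on the domain $\{D_{\norm{A}}(x) \neq 0\}$; once this is in place, the rest is a cluster-by-cluster application of Example~\ref{ex divided differences}.\ref{subex DD derivative}, in direct analogy with the proof of Lemma~\ref{lem expression DIx}.
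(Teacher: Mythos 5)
Your derivation of Equation~\eqref{eq lem expression NIx} on the set $\{D_{\norm{A}}(x) \neq 0\}$ mirrors the paper's proof exactly: identify the conditioning events via the block-diagonal matrix from Equation~\eqref{eq relation evaluations}, apply Example~\ref{ex divided differences}.\ref{subex DD derivative} cluster by cluster, multiply, and take conditional expectations. That part is fine. (One small citation nitpick: $D_{\norm{A}}(x) \neq 0 \implies x \notin \Delta_A$ follows most directly from Definition~\ref{def Kac-Rice densities}, since two equal components produce two equal rows in the variance matrix; Lemma~\ref{lem expression DIx} applied with your fixed $\I$ only rules out coincidences \emph{within} clusters.)

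Your argument for ordering-independence is where you genuinely diverge from the paper, and it is worth discussing. The paper proves independence only on the dense subset $\{D_{\norm{A}}(x) \neq 0\}$ (where it follows from the symmetry of $N_{\norm{A}}$ via the formula), and then extends to $\{D_\I(x) \neq 0\}$ by continuity of $N_\I$ (Lemma~\ref{lem DI and NI continuous}). You instead argue directly on $\{D_\I(x) \neq 0\}$ that both the conditioning event and the integrand are intrinsic to $x$. This is a cleaner route when it works, but as written it has a compressed step. On $\{D_\I(x) \neq 0\}$ the point $x$ may still lie on the diagonal within clusters, so $\ev_{\underline{x}_I}(f)$ involves derivatives and is \emph{not} literally the vector $(f(x_a))_{a\in I}$. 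The claim ``the conditioning event is intrinsic to $x$'' then requires two further observations that you should spell out: first, the multiset $\{(x_a, c_a(\underline{x}_I))\}_{a \in I}$ from Definition~\ref{def evaluation map} depends only on the multiset $\{x_a\}_{a \in I}$, not on the ordering of $I$, so $\ev_{\underline{x}_I}(f)$ under two orderings are permutations of each other; and second, the conditional expectation $\espcond{Z}{X=0}$ for a jointly Gaussian $(Z,X)$ with $X$ non-degenerate is unchanged under replacing $X$ by $BX$ with $B$ invertible, so the chain $[\ev]_{\underline{x}_I}(f) = M(\underline{x}_I)^{-1}\ev_{\underline{x}_I}(f)$ under the two orderings conditions on the same Gaussian subspace. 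Neither step is hard, but the second in particular is the precise reason the conditioning is intrinsic, and it is not invoked in your proposal. Once these are supplied, your direct argument is complete and indeed avoids the density-and-continuity detour that the paper uses; the trade-off is that the paper's route reuses already-proved facts (symmetry of $N_{\norm{A}}$, continuity of $N_\I$) whereas yours requires these two fresh remarks about Gaussian conditioning and evaluation maps on the diagonal.
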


\begin{proof}
Let $\I \in \pa_A$, as in the proof of Lemma~\ref{lem expression DIx} above, we assume that $\I = \{I_i \mid 1 \leq i \leq \norm{\I}\}$ is ordered, as well as each $I \in \I$. This defines an ordering of $A$ that we use to identify $A$ and $\{1,\dots,\norm{A}\}$. By Lemma~\ref{lem Kac-Rice densities symmetric}, neither the condition $D_{\norm{A}}(x) \neq 0$ nor the left-hand side of Equation~\eqref{eq lem expression NIx} depend on a choice of ordering of $A$.

Let $x \in \R^A$, if $D_\I(x) = 0$, then $D_{\norm{A}}(x) = 0$, by Lemma~\ref{lem expression DIx}. Conversely, if $x$ is such that $D_\I(x) \neq 0$, by continuity of $D_\I$ (see Lemma~\ref{lem expression DIx}) there exists a neighborhood~$U$ of~$x$ on which $D_\I$ does not vanish. By Lemma~\ref{lem expression DIx}, for all $y \in U \setminus \Delta_A$, we have $D_{\norm{A}}(y) \neq 0$. Thus $\{x \in \R^A \mid D_{\norm{A}}(x) \neq 0\}$ is a dense subset of $\{x \in \R^A \mid D_\I(x) \neq 0\}$. In particular, both $N_{\norm{A}}$ and $N_\I$ are well-defined on $\{x \in \R^A \mid D_{\norm{A}}(x) \neq 0\} \subset \R^A \setminus \Delta_A$.

Let $x = (x_a)_{a \in A} \in \R^A$ be such that $D_{\norm{A}}(x) \neq 0$. Since $D_{\norm{A}}(x) \neq 0$ we have $x \notin \Delta_A$. Then, as in the proof of Lemma~\ref{lem expression DIx}, Equation~\eqref{eq relation evaluations} holds under the identification of $A$ and $\{1,\dots,\norm{A}\}$. By Lemma~\ref{lem coeff M x}, for all $j \in \{1, \dots,\norm{\I}\}$ the matrix $M(\underline{x}_{I_j})$ is invertible. Hence, it is equivalent to condition on $f(x_1) = \dots = f(x_{\norm{A}}) = 0$ and on $\ev_x^\I(f) = 0$. Thus,
\begin{equation*}
N_{\norm{A}}(x) = \espcond{\prod_{I \in \I} \prod_{i \in I} \norm{f'(x_i)}}{\ev_x^\I(f)=0}.
\end{equation*}
Let $I = \{i_1,\dots,i_{\norm{I}}\} \in \I$. The condition $\ev_x^\I(f)=0$ implies that $[\ev]_{\underline{x}_I}(f) =0$, that is $[f]_j(x_{i_1},\dots,x_{i_j}) = 0$ for all $j \in \{1,\dots, \norm{I}\}$. Under this condition, by Example~\ref{ex divided differences}.\ref{subex DD derivative}, we have:
\begin{equation*}
f'(x_i) = [f]_{\norm{I}+1}(x_{i_1},\dots,x_{i_{\norm{I}}},x_i) \prod_{\substack{1 \leq j \leq \norm{I}\\ i_j \neq i}} (x_i-x_{i_j}) = [f]_{\norm{I}+1}(\underline{x}_I,x_i) \prod_{j \in I \setminus \{i\}} (x_i-x_j),
\end{equation*}
for all $i \in I$. Hence,
\begin{equation*}
\prod_{i \in I} \norm{f'(x_i)} = \left(\prod_{i \in I} \norm{[f]_{\norm{I}+1}(\underline{x}_I,x_i)}\right) \left(\prod_{\{(i,j) \in I^2 \mid i \neq j\}}\norm{x_i-x_j}\right).
\end{equation*}
This proves that Equation~\eqref{eq lem expression NIx} holds for all $x \in \R^A$ such that $D_{\norm{A}}(x) \neq 0$.

Let $N_\I(x)$ be defined by Equation~\eqref{eq def NIx} and let $\tilde{N}_\I$ be defined similarly but with other choices of ordering on $\I$ or on some $I \in \I$. Recall that the function $N_{\norm{A}}$ is symmetric by Lemma~\ref{lem Kac-Rice densities symmetric}. Using Equation~\eqref{eq lem expression NIx}, for all $x \in \R^A$ such that $D_{\norm{A}}(x) \neq 0$ we have:
\begin{equation*}
N_\I(x) = \left(\prod_{I \in \I} \ \prod_{\{(i,j) \in I^2 \mid i \neq j\}} \norm{x_i-x_j}^{-1}\right) N_{\norm{A}}(x) = \tilde{N}_\I(x),
\end{equation*}
since the middle term does not depend on our choices of orderings on $\I$ and on each $I \in \I$. By Lemmas~\ref{lem DI and NI continuous} and~\ref{lem expression DIx}, both $N_\I$ and $\tilde{N}_\I$ are defined and continuous on $\{x \in \R^k \mid D_\I(x) \neq 0\}$. They coincide on the dense subset $\{x \in \R^k \mid D_{\norm{A}}(x) \neq 0\}$, hence everywhere.
\end{proof}

\begin{cor}
\label{cor expression NIx}
Let us assume that $f \in \mathcal{C}^{\norm{A}}$. Let $\I,\J \in \pa_A$ be such that $\J \leq \I$, then for all $x= (x_a)_{a \in A}$ such that $D_\J(x) \neq 0$, we have:
\begin{equation*}
N_\J(x) = \left(\prod_{I \in \I} \prod_{\{(J,J') \in \J_I^2 \mid J \neq J'\}} \prod_{(i,j) \in J \times J'} \norm{x_i - x_j}\right) N_\I(x),
\end{equation*}
where for all $I \in \I$, $\J_I =\{J \in \J \mid J \subset I\} \in \pa_I$, as in Notation~\ref{ntn J I}.
\end{cor}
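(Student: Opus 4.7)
The plan is to mimic the two-step argument used to establish Corollary~\ref{cor expression DIx}, but with Lemma~\ref{lem expression NIx} in place of Lemma~\ref{lem expression DIx}. First I work on the smaller set $U = \{x \in \R^A \mid D_{\norm{A}}(x) \neq 0\}$, where all of $N_{\norm{A}}$, $N_\I$ and $N_\J$ are defined simultaneously; then I extend the identity by continuity to the larger set $\{x \in \R^A \mid D_\J(x) \neq 0\}$.

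On $U$, applying Lemma~\ref{lem expression NIx} once with the partition $\I$ and once with $\J$ gives
\begin{equation*}
\left(\prod_{I \in \I} \prod_{\{(i,j) \in I^2 \mid i \neq j\}} \norm{x_i-x_j}\right) N_\I(x)
\;=\; N_{\norm{A}}(x)
\;=\; \left(\prod_{J \in \J} \prod_{\{(i,j) \in J^2 \mid i \neq j\}} \norm{x_i-x_j}\right) N_\J(x).
\end{equation*}
Since $\J \leq \I$, the decomposition $\J = \bigsqcup_{I \in \I} \J_I$ of Notation~\ref{ntn J I} allows to split, for each $I \in \I$, the pairs $(i,j) \in I^2$ with $i \neq j$ into those lying inside a common $J \in \J_I$ and those lying in distinct $J, J' \in \J_I$. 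This yields the algebraic identity
\begin{equation*}
\prod_{I \in \I} \prod_{\{(i,j) \in I^2 \mid i \neq j\}} \norm{x_i-x_j}
\;=\; \left(\prod_{J \in \J} \prod_{\{(i,j) \in J^2 \mid i \neq j\}} \norm{x_i-x_j}\right) \prod_{I \in \I} \prod_{\{(J,J') \in \J_I^2 \mid J \neq J'\}} \prod_{(i,j) \in J \times J'} \norm{x_i-x_j}.
\end{equation*}
Since every factor appearing in these products is nonzero on $U$, I can cancel the common factor on both sides of the previous identity and divide to obtain exactly the desired formula for all $x \in U$.

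It remains to extend this identity from $U$ to $V = \{x \in \R^A \mid D_\J(x) \neq 0\}$. First, if $x \in V$ then Corollary~\ref{cor expression DIx} (applied to the pair $\J \leq \I$) forces $D_\I(x) \neq 0$, so by Lemma~\ref{lem DI and NI continuous} both $N_\I$ and $N_\J$ are continuous on all of $V$. The product factor appearing in the statement is of course continuous on $\R^A$. It therefore suffices to check that $U$ is dense in $V$. For this, apply Lemma~\ref{lem expression DIx} to the partition $\J$: it gives $D_{\norm{A}}(x) = \bigl(\prod_{J \in \J} \prod_{(i,j) \in J^2, i\neq j} \norm{x_i-x_j}\bigr) D_\J(x)$, so any $x \in V$ can be approximated by points of $V$ for which all within-cluster differences $x_i - x_j$ (with $i,j$ in a common $J \in \J$) are nonzero, i.e.\ by points of $U$. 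Both sides of the desired identity being continuous on $V$ and coinciding on the dense subset $U$, they coincide on $V$, which concludes the proof.

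The argument is essentially bookkeeping and contains no real difficulty; the only point that requires minor care is the density step, and it is handled exactly as in the proof of Corollary~\ref{cor expression DIx}.
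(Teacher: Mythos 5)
Your proposal is correct and follows essentially the same route as the paper's proof: apply Lemma~\ref{lem expression NIx} twice (with $\I$ and with $\J$) on the set where $D_{\norm{A}}$ does not vanish, regroup the product using $\J = \bigsqcup_{I \in \I} \J_I$, and then extend by density and continuity (Lemma~\ref{lem DI and NI continuous}) to all of $\{D_\J \neq 0\}$. You merely spell out the product bookkeeping and the density step somewhat more explicitly than the paper does.
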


\begin{proof}
Let $x=(x_a)_{a \in A} \in \R^A$ be such that $D_\J(x) \neq 0$. By Corollary~\ref{cor expression DIx}, we have $D_\I(x) \neq 0$ and both $N_\J(x)$ and $N_\I(x)$ are well-defined. Then, by Lemma~\ref{lem expression NIx},
\begin{align*}
\left(\prod_{I \in \I} \prod_{\{(i,j) \in I^2 \mid i \neq j\}} \norm{x_i-x_j}\right) N_\I(x) &=\left(\prod_{J \in \J} \prod_{\{(i,j) \in J^2 \mid i \neq j\}} \norm{x_i-x_j}\right) N_\J(x) \\
&= \left(\prod_{I \in \I} \prod_{J \in \J_I} \prod_{\{(i,j) \in J^2 \mid i \neq j\}} \norm{x_i-x_j}\right) N_\J(x).
\end{align*}
Thus, the result is true for all $x \in \R^A \setminus \Delta_A$ such that $D_\J(x) \neq 0$. Since $\{ x \in \R^A \setminus \Delta_A \mid D_\J(x) \neq 0\}$ is dense in $\{x \in \R^A \mid D_\J(x) \neq 0\}$ and since $N_\I$ and $N_\J$ are continuous (see Lemma~\ref{lem DI and NI continuous}), this concludes the proof.
\end{proof}

Recall that, in Theorems~\ref{thm moments} and~\ref{thm clustering}, we consider a normalized stationary centered Gaussian process $f$ whose correlation function $\kappa$ tends to $0$ at infinity. In particular, it satisfies the hypotheses of Lemma~\ref{lem non-degeneracy}.

\begin{lem}[Positivity]
\label{lem partition such that DIx positive}
Let us assume that $f$ is of class $\mathcal{C}^{\norm{A}}$ and that its correlation function $\kappa$ is such that $\kappa(x) \xrightarrow[x \to +\infty]{}0$. Let $x \in \R^A$ and let us denote by $\I = \I_0(x)$. Then $D_\I(x) >0$ and $N_\I(x) >0$.
\end{lem}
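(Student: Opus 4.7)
The plan is to unpack what $\ev_x^\I(f)$ actually is when $\I = \I_0(x)$, then apply the non-degeneracy result of Lemma~\ref{lem non-degeneracy}. By definition of $\I_0(x)$, each $I \in \I$ corresponds to a single value $y_I \in \R$ such that $x_i = y_I$ for all $i \in I$, and the values $(y_I)_{I \in \I}$ are pairwise distinct. By Example~\ref{ex divided differences}.\ref{subex DD Taylor}, the divided difference $[f]_j(y_I,\dots,y_I)$ with $j$ copies equals $\frac{f^{(j-1)}(y_I)}{(j-1)!}$, so up to the nonzero diagonal scaling by $\frac{1}{i!}$ the Gaussian vector $\ev_x^\I(f)$ coincides (as a linear combination with invertible coefficient matrix) with
\[
V(x) = \left(f^{(i)}(y_I)\right)_{I \in \I,\ 0 \leq i < \norm{I}}.
\]
Thus $D_\I(x) \neq 0$ if and only if $V(x)$ is non-degenerate.

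For the first statement, since the $y_I$ are pairwise distinct the couples $\{(y_I, i) \mid I \in \I, 0 \leq i < \norm{I}\}$ are pairwise distinct, and Lemma~\ref{lem non-degeneracy} (applicable because $\kappa(x) \to 0$ at infinity and $f$ is $\mathcal{C}^{\norm{A}}$ hence in particular $\mathcal{C}^{\norm{A}-1}$) yields non-degeneracy of $V(x)$. Hence $D_\I(x) > 0$.

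For the second statement, I will first simplify $N_\I(x)$. For $I \in \I$ and $i \in I$, the tuple $(\underline{x}_I, x_i)$ consists of $\norm{I}+1$ copies of $y_I$, so by the same Taylor identity $[f]_{\norm{I}+1}(\underline{x}_I,x_i) = \frac{f^{(\norm{I})}(y_I)}{\norm{I}!}$. Thus
\[
N_\I(x) = \left(\prod_{I \in \I} \frac{1}{(\norm{I}!)^{\norm{I}}}\right) \espcond{\prod_{I \in \I} \norm{f^{(\norm{I})}(y_I)}^{\norm{I}}}{\ev_x^\I(f) = 0}.
\]
Now the full Gaussian vector $\left(f^{(i)}(y_I)\right)_{I \in \I,\ 0 \leq i \leq \norm{I}}$ also has pairwise distinct index pairs $(y_I, i)$, so Lemma~\ref{lem non-degeneracy} again gives that it is non-degenerate. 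Consequently, the conditional distribution of $\left(f^{(\norm{I})}(y_I)\right)_{I \in \I}$ given $V(x) = 0$ is a non-degenerate centered Gaussian vector; in particular, each marginal has positive conditional variance, so $f^{(\norm{I})}(y_I) \neq 0$ almost surely under the conditioning. The integrand in the conditional expectation is therefore almost surely a product of strictly positive quantities, giving $N_\I(x) > 0$.

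No real obstacle is expected here: the content is essentially bookkeeping once one recognizes that the divided differences at collapsed points reduce to Taylor coefficients, after which the result follows from Lemma~\ref{lem non-degeneracy} applied twice (once for $D_\I(x)$ and once for the enlarged vector controlling $N_\I(x)$).
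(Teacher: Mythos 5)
Your proof is correct and follows essentially the same route as the paper: identify $\ev_x^\I(f)$ with the Taylor-coefficient vector $(f^{(i)}(y_I)/i!)$ via Example~\ref{ex divided differences}.\ref{subex DD Taylor}, apply Lemma~\ref{lem non-degeneracy} to that vector to get $D_\I(x) > 0$, then apply Lemma~\ref{lem non-degeneracy} again to the enlarged vector including $(f^{(\norm{I})}(y_I))_{I \in \I}$ and pass through the conditional variance to get $N_\I(x) > 0$. The only cosmetic difference is that the paper invokes the Schur-complement identity to show the conditional law of $(f^{(\norm{I})}(y_I))_{I \in \I}$ has a positive density on $\R^\I$, whereas you argue via positivity of each marginal conditional variance — both are equivalent and valid.
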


\begin{proof}
Let $x\in \R^A$ and let $\I = \I_0(x) \in \pa_A$ be the partition defined by Definition~\ref{def I eta}. Since $x \in \R^A_{\I,0} = \Delta_{A,\I}$, there exists $y =(y_I)_{I \in \I} \in \R^\I \setminus \Delta_\I$ such that $x = \iota_\I(y)$ (see Definition~\ref{def diagonal inclusions} and Remark~\ref{rem diagonal inclusions}). Let $I \in \I$, we have $\underline{x}_I = (y_I,\dots,y_I) \in \R^I$. Then, by Definition~\ref{def evaluation map partition} and Example~\ref{ex divided differences}.\ref{subex DD Taylor}, we have $[\ev]_{\underline{x}_I}(f) = \left(\frac{f^{(i)}(y_I)}{i!}\right)_{0 \leq i < \norm{I}}$ and $\ev_x^\I(f) = \left(\frac{f^{(i)}(y_I)}{i!}\right)_{I \in \I, 0 \leq i < \norm{I}}$. Since $\kappa$ tends to $0$ at infinity, this Gaussian vector is non-degenerate by Lemma~\ref{lem non-degeneracy}, that is~$D_\I(x) >0$.

The previous expression of $\ev_x^\I(f)$ shows that the condition $\ev_x^\I(f)=0$ is equivalent to: $\forall I \in \I$, $\forall i \in \{0,\dots,\norm{I}-1\}$, $f^{(i)}(y_I)=0$. For all $i \in I$, by Example~\ref{ex divided differences}.\ref{subex DD Taylor} we have:
\begin{equation*}
[f]_{\norm{I}+1}(\underline{x}_I,x_i) = [f]_{\norm{I}+1}(y_I,\dots,y_I) = \frac{f^{(\norm{I})}(y_I)}{\norm{I}!}.
\end{equation*}
Hence,
\begin{equation}
\label{eq expression NIx}
N_\I(x) = \left(\prod_{I \in \I} \norm{I}!^{-\norm{I}}\right)\espcond{\prod_{I \in \I} \norm{f^{(\norm{I)}}(y_I)}^{\norm{I}}}{\forall I \in \I, \forall i \in \{0,\dots,\norm{I}-1\}, f^{(i)}(y_I)=0}.
\end{equation}
Let us denote by $U = (f^{(i)}(y_I))_{I \in \I, 0 \leq i < \norm{I}}$ and by $V = (f^{(\norm{I})}(y_I))_{I \in \I}$. Since $\kappa(x) \xrightarrow[x \to +\infty]{}0$, by Lemma~\ref{lem non-degeneracy} the centered Gaussian vector $(U,V)$ is non-degenerate. Let us denote by $\begin{pmatrix} A & \trans{B} \\ B & C \end{pmatrix}$ its block variance matrix. The variance matrix of $V$ given that $U=0$ is $C - B A^{-1} \trans{B}$, see \cite[Proposition~1.2]{AW2009}. Note that $C - B A^{-1} \trans{B}$ is the Schur complement of $A$. In particular,
\begin{equation*}
\det \begin{pmatrix} A & \trans{B} \\ B & C \end{pmatrix} = \det(A) \det(C - B A^{-1} \trans{B}) >0.
\end{equation*}
Thus, the distribution of $V$ given that $U=0$ is a non-degenerate centered Gaussian. Hence it admits a positive density with respect to the Lebesgue measure of $\R^\I$. Finally, by Equation~\eqref{eq expression NIx} we have $N_\I(x) >0$, which concludes the proof.
\end{proof}

\begin{cor}[Vanishing locus]
\label{cor vanishing locus DI}
Let us assume that $f$ is of class $\mathcal{C}^{\norm{A}}$ and $\kappa(x) \xrightarrow[x \to +\infty]{}0$. Let $\I \in \pa_A$, for all $x \in \R^A$, $D_\I(x) = 0$ if and only if there exist $i \in I \in \I$ and $j \in J \in \I$ such that $I \neq J$ and $x_i=x_j$. Moreover, if $D_\I(x) \neq 0$ we have $N_\I(x) >0$. In particular, we have $D_{\{A\}}(x) >0$ and $N_{\{A\}}(x) >0$ for all $x \in \R^A$.
\end{cor}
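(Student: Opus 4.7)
The plan is to deduce this corollary from the positivity result of Lemma~\ref{lem partition such that DIx positive} applied to the particular partition $\J := \I_0(x)$, combined with the factorization formulas of Corollaries~\ref{cor expression DIx} and~\ref{cor expression NIx}. The key observation is that the condition ``there exist $i \in I \in \I$ and $j \in J \in \I$ with $I \neq J$ and $x_i = x_j$'' is equivalent to $\J \not\leq \I$ in the partition order of Definition~\ref{def partial order pa p}. Indeed, since $i$ and $j$ lie in the same cluster of $\J = \I_0(x)$ precisely when $x_i = x_j$, the relation $\J \leq \I$ means exactly that every coincidence among the components of $x$ occurs within a single cluster of $\I$.

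For the implication ``existence of such $i,j$ $\implies D_\I(x) = 0$'', I would use the ordering-independence granted by Lemma~\ref{lem expression DIx}: the value $D_\I(x)$ does not depend on the orderings of $\I$ or of each $I \in \I$ used to define $\ev_x^\I$. Given $i \in I$ and $j \in J$ with $I \neq J$ and $x_i = x_j$, I would choose orderings such that $i$ is the first element of $I$ and $j$ is the first element of $J$. Then two components of $\ev_x^\I(f)$ are $[f]_1(x_i) = f(x_i)$ and $[f]_1(x_j) = f(x_j)$, which coincide as random variables since $x_i = x_j$. The variance matrix of $\ev_x^\I(f)$ therefore has two identical rows, so $D_\I(x) = 0$.

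For the converse, assume no such pair $(i,j)$ exists, so that $\J \leq \I$. Corollary~\ref{cor expression DIx} then yields
\begin{equation*}
D_\J(x) = \left(\prod_{I \in \I} \prod_{\{(J_1,J_2) \in \J_I^2 \mid J_1 \neq J_2\}} \prod_{(i,j) \in J_1 \times J_2} \norm{x_i - x_j}\right) D_\I(x),
\end{equation*}
where every factor $\norm{x_i - x_j}$ on the right-hand side is strictly positive, since $J_1$ and $J_2$ are distinct clusters of $\J = \I_0(x)$ and hence $x_i \neq x_j$. Because $D_\J(x) > 0$ by Lemma~\ref{lem partition such that DIx positive}, we conclude $D_\I(x) > 0$. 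Whenever $D_\I(x) \neq 0$, we are in this case, so $D_\J(x) \neq 0$ as well, and Corollary~\ref{cor expression NIx} gives the analogous factorization of $N_\J(x)$ as the same positive product times $N_\I(x)$. Since $N_\J(x) > 0$ by Lemma~\ref{lem partition such that DIx positive}, this forces $N_\I(x) > 0$.

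The final assertion is immediate: for $\I = \{A\}$ there is only one cluster, so no pair $i \in I, j \in J$ with $I \neq J$ can exist, and the excluded condition is vacuously false for every $x \in \R^A$. The whole proof is essentially bookkeeping once Lemma~\ref{lem partition such that DIx positive} is in hand; the only non-trivial step, which I expect to be the main obstacle, is exploiting the ordering-independence of $D_\I$ in the ``if'' direction, which replaces any general degeneracy discussion by the concrete observation that two entries of the random vector $\ev_x^\I(f)$ literally coincide.
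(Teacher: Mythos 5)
Your proof is correct, but it takes a genuinely different route from the paper's. The paper applies Corollary~\ref{cor expression DIx} with the fixed pair $\J := \I$, $\I_{\text{big}} := \{A\}$, obtaining the single identity
$D_\I(x) = D_{\{A\}}(x)\prod_{\{(I,J)\in\I^2\mid I\neq J\}}\prod_{(i,j)\in I\times J}\norm{x_i-x_j}$
valid for all $x$; it then proves $D_{\{A\}}(x)>0$ by contradiction (if it vanished, every $D_\I(x)$ would vanish, contradicting Lemma~\ref{lem partition such that DIx positive} for $\I = \I_0(x)$), and reads the vanishing locus off the product. You instead set $\J := \I_0(x)$, verify the equivalence of the stated condition with $\J\not\leq\I$, and apply Corollary~\ref{cor expression DIx} with the $x$-dependent pair $\J\leq\I$, so that Lemma~\ref{lem partition such that DIx positive} applies \emph{directly} to $D_\J(x)$ and $N_\J(x)$ with no contradiction argument; and for the ``$\Rightarrow$'' direction you give a direct degeneracy argument — two literally identical entries of $\ev_x^\I(f)$, justified by the ordering-independence from Lemma~\ref{lem expression DIx} — rather than reading off a zero factor in the product. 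Your version is slightly more streamlined and makes the mechanism of degeneracy more transparent (and, incidentally, your ``$\Rightarrow$'' argument does not even use the decay hypothesis on $\kappa$), at the cost of working with an $x$-dependent comparison partition; the paper's single fixed identity is a bit more uniform. Both rest on the same two ingredients, Corollary~\ref{cor expression DIx}/\ref{cor expression NIx} and Lemma~\ref{lem partition such that DIx positive}.
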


\begin{proof}
Let $\I \in \pa_A$, by Corollary~\ref{cor expression DIx}, for all $x \in \R^A$ we have:
\begin{equation}
\label{eq cor vanishing locus DI}
D_\I(x) = D_{\{A\}}(x) \prod_{\{(I,J) \in \I^2 \mid I \neq J\}} \prod_{(i,j) \in I \times J} \norm{x_i - x_j}.
\end{equation}
Let $x \in \R^A$, we already know that $D_{\{A\}}(x) \geq 0$. If $D_{\{A\}}(x)=0$, then by Equation~\eqref{eq cor vanishing locus DI} we would have $D_\I(x)=0$ for all $\I \in \pa_A$. Since we assumed that $\kappa(x) \xrightarrow[x \to +\infty]{}0$, this would contradict the result of Lemma~\ref{lem partition such that DIx positive}. Hence, for all $x \in \R^A$, we have $D_{\{A\}}(x) >0$. Then, Equation~\eqref{eq cor vanishing locus DI} shows that $D_\I(x) = 0$ if and only if there exist $i \in I \in \I$ and $j \in J \in \I$ such that $I \neq J$ and $x_i=x_j$.

Similarly, let $x \in \R^A$. Since $D_{\{A\}}(x) >0$, we know that $N_{\{A\}}(x)$ is well-defined and non-negative. For all $\I \in \pa_A$ such that $D_\I(x) \neq 0$, we have:
\begin{equation}
\label{eq cor vanishing locus NI}
N_\I(x) = N_{\{A\}}(x) \prod_{\{(I,J) \in \I^2 \mid I \neq J\}} \prod_{(i,j) \in I \times J} \norm{x_i - x_j},
\end{equation}
by Corollary~\ref{cor expression NIx}. If we had $N_{\{A\}}(x)=0$, we would have $N_\I(x)=0$ for all $\I \in \pa_A$ such that $D_\I(x) >0$, which would contradict Lemma~\ref{lem partition such that DIx positive}. Thus, $N_{\{A\}}(x) >0$ for all $x \in \R^A$. Finally, by Equations~\eqref{eq cor vanishing locus DI} and~\eqref{eq cor vanishing locus NI}, if $\I \in \pa_A$ is such that $D_\I(x) \neq 0$, then $N_\I(x) >0$.
\end{proof}

We can now state the main result of this section.

\begin{prop}[Equality of Kac--Rice densities]
\label{prop Kac-Rice revisited}
Let $f$ be a normalized stationary centered Gaussian process whose correlation function tends to $0$ at infinity. Let $A$ be a non-empty finite set and us assume that $f$ is of class~$\mathcal{C}^{\norm{A}}$. Then, $\rho_{\{A\}}$ is a well-defined continuous map from $\R^A$ to $\R$ such that for all $x \in \R^A$, for all $\I \in \pa_A$, if $D_\I(x) \neq 0$ then $\rho_{\{A\}}(x) = \rho_\I(x)$. Moreover, $\rho_{\{A\}}(x) = 0$ if and only if $x \in \Delta_A$.

In particular, if $f$ is of class $\mathcal{C}^k$ with $k \in \N^*$, then the Kac--Rice density $\rho_k$ (see Definition~\ref{def Kac-Rice densities}) can be uniquely extended into a continuous map from $\R^k$ to $\R$ whose vanishing locus is~$\Delta_k$. Moreover, for all $x \in \R^k$, for all $\I \in \pa_k$ such that $D_\I(x) \neq 0$, we have $\rho_k(x) = \rho_\I(x)$.
\end{prop}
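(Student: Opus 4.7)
The plan is to reduce the whole statement to the single special case $\I = \{A\}$, leveraging the factorization identities already established for $D_\I$ and $N_\I$. First, I would observe that under the hypotheses of the proposition, Corollary~\ref{cor vanishing locus DI} gives $D_{\{A\}}(x) > 0$ and $N_{\{A\}}(x) > 0$ for \emph{every} $x \in \R^A$, so $\rho_{\{A\}}$ is well-defined on the whole of $\R^A$; continuity is then immediate from Lemma~\ref{lem DI and NI continuous}. This takes care of the first clause of the statement.

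Next I would prove the equality $\rho_{\{A\}}(x) = \rho_\I(x)$ for any $\I \in \pa_A$ and any $x$ with $D_\I(x) \neq 0$. Applying Corollary~\ref{cor expression DIx} with $\J = \I$ and the ambient partition $\{A\}$ yields
\begin{equation*}
D_\I(x) = C(x)\, D_{\{A\}}(x), \qquad C(x) := \prod_{\{(I,J) \in \I^2 \mid I \neq J\}}\ \prod_{(i,j) \in I \times J} \norm{x_i - x_j},
\end{equation*}
and since $D_\I(x) \neq 0$, Corollary~\ref{cor expression NIx} similarly gives $N_\I(x) = C(x)\, N_{\{A\}}(x)$. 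Inserting this into the definition~\eqref{eq def rho Ix} of $\rho_\I(x)$, the ratio $N_\I/D_\I^{1/2}$ produces a factor $C(x)^{1/2}$ which combines with the prefactor $\prod_{I \in \I}\prod_{(i,j) \in I^2, i \neq j} \norm{x_i - x_j}^{1/2}$ to form $\prod_{(i,j) \in A^2, i \neq j} \norm{x_i - x_j}^{1/2}$. Indeed, squaring this last identity reduces it to the tautological partition of the set of ordered pairs of distinct elements of $A$ into those lying in a common block of $\I$ and those lying in two different blocks; each ordered pair is then counted exactly once on each side. The result is exactly $\rho_{\{A\}}(x)$.

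The characterization of the vanishing locus is then immediate from the explicit expression of $\rho_{\{A\}}$: the factor $N_{\{A\}}(x)/((2\pi)^{\norm{A}/2} D_{\{A\}}(x)^{1/2})$ is strictly positive on all of $\R^A$ by Corollary~\ref{cor vanishing locus DI}, so $\rho_{\{A\}}(x) = 0$ if and only if the prefactor $\prod_{(i,j) \in A^2, i \neq j} \norm{x_i - x_j}^{1/2}$ vanishes, i.e.\ if and only if $x \in \Delta_A$.

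Finally, for the $\mathcal{C}^k$ statement: on $\R^k \setminus \Delta_k$, Lemma~\ref{lem non-degeneracy} ensures $D_k(x) \neq 0$, so the classical Kac--Rice density $\rho_k$ of Definition~\ref{def Kac-Rice densities} is well-defined there and, noting that the prefactors in Definition~\ref{def Kac-Rice densities partition} are empty products for $\I = \I_{\min}(k)$ and that $D_{\I_{\min}(k)} = D_k$, $N_{\I_{\min}(k)} = N_k$ by Lemmas~\ref{lem expression DIx} and~\ref{lem expression NIx}, it coincides with $\rho_{\I_{\min}(k)}$. Applying the first part gives $\rho_k(x) = \rho_{\{k\}}(x)$ on $\R^k \setminus \Delta_k$; since this set is dense in $\R^k$ and $\rho_{\{k\}}$ is continuous on $\R^k$, the extension of $\rho_k$ to $\R^k$ is unique and equals $\rho_{\{k\}}$, with vanishing locus $\Delta_k$. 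The only real obstacle in this proof is the bookkeeping of the products over ordered pairs within and between blocks of $\I$ when matching $\rho_\I$ with $\rho_{\{A\}}$; once the correct accounting is in place, every clause follows directly from the structural lemmas of Section~\ref{subsec Kac--Rice densities revisited}.
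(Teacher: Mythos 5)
Your proposal is correct and follows essentially the same route as the paper: both rely on Corollary~\ref{cor vanishing locus DI}, Lemma~\ref{lem DI and NI continuous}, and the factorizations of Corollaries~\ref{cor expression DIx} and~\ref{cor expression NIx}. The only cosmetic difference is that you carry out the product bookkeeping explicitly wherever $D_\I(x)\neq 0$, whereas the paper proves $\rho_\I=\rho_{\{A\}}$ first on $\R^A\setminus\Delta_A$ and then extends by density and continuity --- but that density step is already built into the corollaries you invoke, so the two arguments coincide in substance.
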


\begin{proof}
Let $A \neq \emptyset$ be a finite set and let us assume that $f$ is $\mathcal{C}^{\norm{A}}$. By Corollary~\ref{cor vanishing locus DI}, the maps $N_{\{A\}}$ and $\rho_{\{A\}}$ are well-defined from $\R^A$ to~$\R$. By Lemma~\ref{lem DI and NI continuous} and Equation~\eqref{eq def rho Ix}, these maps are continuous on~$\R^A$ and $\rho_{\{A\}}$ vanishes along $\Delta_A$. In fact, by Corollary~\ref{cor vanishing locus DI}, we have $\rho_{\{A\}}(x)=0$ if and only if $x \in \Delta_A$.

Let $\I \in \pa_A$, by Corollary~\ref{cor vanishing locus DI}, for all $x \in \R^A \setminus \Delta_A$ we have $D_\I(x) \neq 0$. Then, by Corollaries~\ref{cor expression DIx} and~\ref{cor expression NIx}, for all $x \in \R^A \setminus \Delta_A$, $\rho_\I(x) = \rho_{\{A\}}(x)$. Since $\rho_\I$ and $\rho_{\{A\}}$ are continuous and $\R^A \setminus \Delta_A$ is dense in $\R^A$, we have $\rho_\I(x) = \rho_{\{A\}}(x)$ for any $x \in \R^A$ such that $\rho_\I(x)$ is well-defined.

Let $k \in \N^*$ and let us assume that $f$ is of class $\mathcal{C}^k$. The map $\rho_k$ defined by Equation~\eqref{eq def rho kx} is equal to the map $\rho_{\I_{\min}(k)}$ of Definition~\ref{def Kac-Rice densities partition}, where $\I_{\min}(k) = \{\{i\} \mid 1 \leq i \leq k\}$. Applying the first point of Proposition~\ref{prop Kac-Rice revisited} with $A = \{1,\dots,k\}$ and denoting by $\I_{\max}(k) = \{\{1,\dots,k\}\}$, we obtain:
\begin{equation*}
\rho_k(x) = \rho_{\I_{\min}(k)}(x) = \rho_{\I_{\max}(k)}(x),
\end{equation*}
for all $x \in \R^k \setminus \Delta_k$. Then $\rho_{\I_{\max}(k)}$ is the desired continuous extension of $\rho_k$ to $\R^k$. It is necessarily unique by density of $\R^k \setminus \Delta_k$ in $\R^k$.
\end{proof}

\begin{rem}
\label{rem Kac Rice density partitions}
Given any ordering of $A$, we can identify $A$ and $\{1,\dots,\norm{A}\}$. Then, assuming that~$\kappa$ tends to $0$ at infinity, for any $x \in \R^A \setminus \Delta_A$, we have $\rho_{\{A\}}(x) = \rho_{\I_{\min}(A)}(x) = \rho_{\norm{A}}(x)$. Then, by Lemma~\ref{lem Kac-Rice densities symmetric}, the map $\rho_{\{A\}}$ is symmetric on $\R^A \setminus \Delta_A$, hence on $\R^A$ by continuity. Moreover, for all $\I \in \pa_A$, the function $\rho_\I$ does not depend on the choices of ordering of $\I$ and of each $I \in \I$, by Lemmas~\ref{lem expression DIx} and~\ref{lem expression NIx}.
\end{rem}


\subsection{Proof of Theorem~\ref{thm vanishing order}: vanishing order of the \texorpdfstring{$k$}{}-point function}
\label{subsec proof of theorem vanishing order}

In this section we prove Theorem~\ref{thm vanishing order}. This result is a consequence of Lemmas~\ref{lem DI and NI continuous}, \ref{lem expression DIx} and~\ref{lem expression NIx}, that were proved in the previous section.

\begin{proof}[Proof of Theorem~\ref{thm vanishing order}]
Let $k \in \N^*$ and let $f$ be a normalized stationary centered Gaussian $\mathcal{C}^k$-process. Let $Z = f^{-1}(0)$ be the point process of the zeros of $f$. Let $y =(y_i)_{1 \leq i \leq k} \in \R^k$ and let $\I = \I_0(y) \in \pa_k$. Recalling Definition~\ref{def I eta}, this partition is the only one such that, for any $i,j \in \{1,\dots,k\}$, we have $(y_i=y_j) \iff(\exists I \in \I, \{i,j\} \subset I)$. As in Theorem~\ref{thm vanishing order} and Remark~\ref{rem diagonal inclusions and vanishing order}, we have $y \in \Delta_{\I,k}$ and there exists a unique $(y_I)_{I \in \I} \in \R^\I \setminus \Delta_\I$ such that $y = \iota_\I\left((y_I)_{I \in \I}\right)$. It is characterized by the fact that $y_I$ is the common value of the $(y_i)_{i \in \I}$, for all $I \in \I$.

Let us assume that $(f^{(i)}(y_I))_{I \in \I, 0 \leq i < \norm{I}}$ is non-degenerate. As already discussed in the proof of Lemma~\ref{lem partition such that DIx positive}, this is equivalent to the non-degeneracy of $\ev^\I_y(f)$, see Definition~\ref{def evaluation map} and Example~\ref{ex evaluation map}.\ref{subex DD Taylor}, hence $D_\I(y) >0$. By Lemma~\ref{lem DI and NI continuous}, there exists a neighborhood $U$ of $y$ in $\R^k$ such that $D_\I(x) >0$ for all $x \in U$. By Lemma~\ref{lem expression DIx}, for all $x \in U \setminus \Delta_k$ we have $D_k(x) >0$, so that the Kac--Rice density $\rho_k$ is well-defined on $U \setminus \Delta_k$ (cf.~Definition~\ref{def Kac-Rice densities}). Then, by Lemma~\ref{lem k point function}, the $k$-point function of the point process $Z$ is well-defined and equal to $\rho_k$ on $U \setminus \Delta_k$.

By Lemmas~\ref{lem expression DIx} and~\ref{lem expression NIx}, for any $x \in U \setminus \Delta_k$, we have:
\begin{equation*}
\rho_k(x) = \frac{N_k(x)}{(2\pi)^\frac{k}{2}D_k(x)^\frac{1}{2}} = \left(\prod_{I \in \I} \prod_{\{(i,j) \in I^2 \mid i \neq j\}} \norm{x_i-x_j}^\frac{1}{2}\right) \frac{N_\I(x)}{(2\pi)^\frac{p}{2}D_\I(x)^\frac{1}{2}}
\end{equation*}
By continuity of $D_\I$ and $N_\I$ on $U$ (cf.~Lemma~\ref{lem DI and NI continuous}), we have
\begin{equation*}
\left(\prod_{I \in \I} \prod_{\{(i,j) \in I^2 \mid i < j\}} \frac{1}{\norm{x_i-x_j}}\right) \rho_k(x) \xrightarrow[x \to y]{} \frac{N_\I(y)}{(2\pi)^\frac{p}{2}D_\I(y)^\frac{1}{2}},
\end{equation*}
and it is enough to check that the right-hand side of the previous equation equals $\ell(y)$ (cf.~Equation~\eqref{eq def ly}) to prove the first part of Theorem~\ref{thm vanishing order}.

Let $I \in \I$, for all $i \in I$, we have $y_i = y_I$. Hence, $\ev_y^\I(f) =\left(\frac{f^{(i)}(y_I)}{i!}\right)_{I \in \I, 0 \leq i < \norm{I}}$ and
\begin{equation}
\label{eq proof DIy}
D_\I(y) = \det \var{\ev_y^\I(f)} = \left(\prod_{I \in \I} \prod_{i=0}^{\norm{I}-1} \frac{1}{i!}\right)^2 \det \var{\left(f^{(i)}(y_I)\right)_{I \in \I, 0 \leq i < \norm{I}}}.
\end{equation}
Combining Equations~\eqref{eq proof DIy} and~\eqref{eq expression NIx}, we have $(2\pi)^{-\frac{p}{2}}N_\I(y) D_\I(y)^{-\frac{1}{2}} = \ell(y)$, as expected.

Let us now assume that $(f^{(i)}(y_I))_{I \in \I, 0 \leq i \leq \norm{I}}$ is non-degenerate. Proceeding as in the proof of Lemma~\ref{lem partition such that DIx positive}, this condition ensures that $N_\I(y) >0$. Hence, $\ell(y) >0$, which concludes the proof.
\end{proof}


\subsection{Variance and covariance matrices}
\label{subsec variance and covariance matrices}

In this section, we study the distribution of the random vectors appearing in the definitions of the functions $D_\I$, $N_\I$ and $\rho_I$ (see Definition~\ref{def Kac-Rice densities partition}). We introduce notations for their variance and covariance matrices that will be used in the proof of Theorem~\ref{thm clustering}. Then, we derive some estimates for the coefficients of these matrices. In all this section, we denote by $A$ a non-empty finite set and by $f$ a $\mathcal{C}^{\norm{A}}$ Gaussian process which is assumed to be stationary centered and normalized. Moreover, we denote by $\kappa$ the correlation function of $f$.

Let $\I \in \pa_A$ and $x \in \R^A$, using the same notations as in the proof of Lemma~\ref{lem DI and NI continuous} above, we denote by $X_\I(x)$ and $Y_\I(x) \in \R^{\norm{A}}$ the following centered Gaussian vectors:
\begin{align}
\label{eq def XI}
X_\I(x) &= \ev_x^\I(f),\\
\label{eq def YI}
Y_\I(x) &= \left([f]_{\norm{I}+1}(\underline{x}_I,x_i)\right)_{I \in \I, i \in I},
\end{align}
We also denote by:
\begin{equation}
\label{eq def variance I}
\begin{pmatrix}
\Theta_\I(x) & \trans{\Xi_\I(x)} \\ \Xi_\I(x) & \Omega_\I(x),
\end{pmatrix}
\end{equation}
the variance matrix of $(X_\I(x),Y_\I(x))$, by blocks of size $\norm{A}$. Finally, if $D_\I(x) \neq 0$, i.e.~if $\Theta_\I(x)$ is invertible, we denote by $\Lambda_\I(x)$ the variance of $Y_\I(x)$ given that $X_\I(x) =0$. By~\cite[Proposition~1.2]{AW2009}, we have:
\begin{equation}
\label{eq def Lambda I}
\Lambda_\I(x) = \Omega_\I(x) - \Xi_\I(x) \Theta_\I(x)^{-1} \trans{\Xi_\I(x)}.
\end{equation}
Note that $X_\I$, $Y_\I$, $\Theta_\I$, $\Xi_I$, $\Omega_\I$ and $\Lambda_\I$ depend on how we order $\I$ and each $I \in \I$, but recall that $D_\I$, $N_\I$ and $\rho_\I$ do not (cf.~Lemmas~\ref{lem expression DIx} and~\ref{lem expression NIx}).

We have $\Theta_\I(x) = \var{X_\I(x)} = \var{\ev_x^\I(f)}$. By Definition~\ref{def evaluation map partition}, we can write $\Theta_\I(x)$ as a block matrix indexed by the elements of $\I$: $\Theta_\I(x) = \left(\Theta_{IJ}(\underline{x}_I,\underline{x}_J)\right)_{I,J \in \I}$, where for any $I,J \in \I$,
\begin{equation*}
\Theta_{IJ}(\underline{x}_I,\underline{x}_J) = \esp{[\ev]_{\underline{x}_I}(f) \trans{[\ev]_{\underline{x}_J}(f)}} = \begin{pmatrix}
\esp{\rule{0em}{2ex}[f]_k(x_{i_1},\dots,x_{i_k}) [f]_l(x_{j_1},\dots,x_{j_l})}
\end{pmatrix}_{\substack{1 \leq k \leq \norm{I}\\ 1 \leq l \leq \norm{J}}}.
\end{equation*}
In this last equation, we denoted by $i_1,\dots,i_{\norm{I}}$ (resp.~$j_1,\dots,j_{\norm{J}}$) the elements of $I$ (resp.~$J$). By Lemma~\ref{lem variance divided differences}, we finally obtain that:
\begin{equation}
\label{eq def Theta IJ}
\Theta_{IJ}(\underline{x}_I,\underline{x}_J) = \begin{pmatrix}
[\kappa]_{(k,l)}(x_{i_1},\dots,x_{i_k},x_{j_1},\dots,x_{j_l})
\end{pmatrix}_{\substack{1 \leq k \leq \norm{I}\\ 1 \leq l \leq \norm{J}}},
\end{equation}
where $[\kappa]_{(k,l)}$ is the double divided differences introduced in Definition~\ref{def double divided differences}. Similarly, we can decompose $\Xi_\I(x) = \left(\Xi_{IJ}(\underline{x}_I,\underline{x}_J)\right)_{I,J \in \I}$, $\Omega_\I(x) = \left(\Omega_{IJ}(\underline{x}_I,\underline{x}_J)\right)_{I,J \in \I}$ and $\Lambda_\I(x) = \left(\Lambda_{IJ}(x)\right)_{I,J \in \I}$ by blocks, where for all $I$ and $J \in \I$:
\begin{align}
\label{eq def XI IJ}
\Xi_{IJ}(\underline{x}_I,\underline{x}_J) &= \begin{pmatrix}
[\kappa]_{(\norm{I}+1,l)}(\underline{x}_I,x_{i_k},x_{j_1},\dots,x_{j_l})
\end{pmatrix}_{\substack{1 \leq k \leq \norm{I}\\ 1 \leq l \leq \norm{J}}},\\
\label{eq def Omega IJ}
\Omega_{IJ}(\underline{x}_I,\underline{x}_J) &= \begin{pmatrix}
[\kappa]_{(\norm{I}+1,\norm{J}+1)}(\underline{x}_I,x_{i_k},\underline{x}_J,x_{j_l})
\end{pmatrix}_{\substack{1 \leq k \leq \norm{I}\\ 1 \leq l \leq \norm{J}}},\\
\intertext{and, if $D_\I(x) \neq 0$,}
\label{eq def Lambda IJ}
\Lambda_{IJ}(x) &= \Omega_{IJ}(x) - \left(\Xi_{IK}(\underline{x}_I,\underline{x}_K)\right)_{K \in \I}\Theta_\I(x)^{-1} \left(\trans{\Xi_{JL}(\underline{x}_J,\underline{x}_L)}\right)_{L \in \I}.
\end{align}
Note that, unlike $\Theta_{IJ}$, $\Xi_{IJ}$ and $\Omega_{IJ}$, the function $\Lambda_{IJ}$ depends a priori on all the components of $x$. Note also that the diagonal blocks are such that, for any $I \in \I$, $\Theta_{II}(\underline{x}_I,\underline{x}_I) = \Theta_{\{I\}}(\underline{x}_I)$, $\Xi_{II}(\underline{x}_I,\underline{x}_I) = \Xi_{\{I\}}(\underline{x}_I)$ and $\Omega_{II}(\underline{x}_I,\underline{x}_I) = \Omega_{\{I\}}(\underline{x}_I)$.

\begin{ntn}[Sup-norm]
\label{ntn norm sup matrix}
Let $U=(U_{ij})$ be a matrix, we denote by $\Norm{U}_\infty = \max_{i,j} \norm{U_{ij}}$ its sup-norm.
\end{ntn}

Recall that we defined the norms $\Norm{\cdot}_{k,\eta}$ and $\Norm{\cdot}_k$, for $k \in \N$ and $\eta \geq 0$, in Notation~\ref{ntn norm kappa}.

\begin{lem}
\label{lem boundedness Theta Xi Omega I}
For all $\I \in \pa_A$, for all $x \in \R^A$ we have $\Norm{\Theta_\I(x)}_\infty \leq \Norm{\kappa}_{2\norm{A}}$, $\Norm{\Xi_\I(x)}_\infty \leq \Norm{\kappa}_{2\norm{A}}$ and $\Norm{\Omega_\I(x)}_\infty \leq \Norm{\kappa}_{2\norm{A}}$.
\end{lem}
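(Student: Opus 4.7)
The strategy is direct: every entry of the three matrices $\Theta_\I(x)$, $\Xi_\I(x)$ and $\Omega_\I(x)$ is displayed explicitly as a double divided difference of the correlation function $\kappa$ via Equations~\eqref{eq def Theta IJ}, \eqref{eq def XI IJ} and~\eqref{eq def Omega IJ}. It will therefore suffice to invoke the pointwise bound of Lemma~\ref{lem double divided differences Rolle} on each coefficient and to verify that the order of differentiation of $\kappa$ that appears is always at most $2\norm{A}$.

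More concretely, I will treat the three matrices block by block. For a block $\Theta_{IJ}(\underline{x}_I,\underline{x}_J)$ with $I,J \in \I$, Equation~\eqref{eq def Theta IJ} shows that the generic entry is $[\kappa]_{(k,l)}(\ldots)$ with $1 \leq k \leq \norm{I}$ and $1 \leq l \leq \norm{J}$. By Lemma~\ref{lem double divided differences Rolle}, this is bounded in absolute value by some $\norm{\kappa^{(k+l-2)}(\xi)}$, hence by $\Norm{\kappa}_{k+l-2}$. Since $k+l-2 \leq \norm{I}+\norm{J}-2 \leq 2\norm{A}$, this is $\leq \Norm{\kappa}_{2\norm{A}}$. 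For a block $\Xi_{IJ}(\underline{x}_I,\underline{x}_J)$, Equation~\eqref{eq def XI IJ} gives entries of the form $[\kappa]_{(\norm{I}+1,l)}(\ldots)$ with $1 \leq l \leq \norm{J}$, and the order of derivation that appears is $(\norm{I}+1)+l-2 = \norm{I}+l-1 \leq \norm{I}+\norm{J}-1 \leq 2\norm{A}$, so the same argument applies. Finally, for a block $\Omega_{IJ}(\underline{x}_I,\underline{x}_J)$, the entries read $[\kappa]_{(\norm{I}+1,\norm{J}+1)}(\ldots)$ by Equation~\eqref{eq def Omega IJ}, so the relevant order is $\norm{I}+\norm{J} \leq 2\norm{A}$ (with equality in the extreme case $\I = \{A\}$), again bounded by Lemma~\ref{lem double divided differences Rolle}.

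Taking the maximum over all entries of all blocks of each of the three matrices yields the three asserted bounds simultaneously. Note that $\Norm{\kappa}_{2\norm{A}}$ is indeed finite: since $f$ is assumed $\mathcal{C}^{\norm{A}}$, its correlation function is $\mathcal{C}^{2\norm{A}}$, and all its derivatives up to order $2\norm{A}$ are bounded by Cauchy--Schwarz as explained in Section~\ref{subsec stationary Gaussian processes and correlation functions}. There is no serious obstacle in this proof; it is a short bookkeeping argument that packages together the formulas of Section~\ref{subsec variance and covariance matrices} and the Rolle-type estimate of Lemma~\ref{lem double divided differences Rolle}.
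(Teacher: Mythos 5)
Your proof is correct and follows exactly the same approach as the paper: read the entries of each matrix block from the explicit formulas~\eqref{eq def Theta IJ}, \eqref{eq def XI IJ}, \eqref{eq def Omega IJ}, bound each entry via the Rolle-type estimate of Lemma~\ref{lem double divided differences Rolle}, and check that the order of differentiation involved never exceeds $2\norm{A}$. The only difference is cosmetic — the paper spells out the $\Theta$ case and says the other two are similar, while you write out all three — so there is nothing substantive to add.
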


\begin{proof}
Let us prove this result for $\Theta_\I(x)$. It is enough to prove that for all $I,J \in \I$, we have $\Norm{\Theta_{IJ}(\underline{x}_I,\underline{x}_J)}_\infty \leq \Norm{\kappa}_{2\norm{A}}$. Let $I,J \in \I$, let $k \in \{1,\dots,\norm{I}\}$ and let $l \in \{1,\dots,\norm{J}\}$. By Lemma~\ref{lem double divided differences Rolle},
\begin{equation*}
\norm{[\kappa]_{(k,l)}(x_{i_1},\dots,x_{i_k},x_{j_1},\dots,x_{j_l})} \leq \Norm{\kappa}_{k+l-2} \leq \Norm{\kappa}_{2\norm{A}}.
\end{equation*}
Thus, by Equation~\eqref{eq def Theta IJ}, we have $\Norm{\Theta_{IJ}(\underline{x}_I,\underline{x}_J)}_\infty \leq \Norm{\kappa}_{2\norm{A}}$. The proof is similar for $\Xi_\I(x)$ and $\Omega_\I(x)$, using Equations~\eqref{eq def XI IJ} and~\eqref{eq def Omega IJ}.
\end{proof}

\begin{lem}
\label{lem boundedness Lambda I}
For all $\I \in \pa_A$, for all $x \in \R^A$ such that $D_\I(x) \neq 0$, we have $\Norm{\Lambda_\I(x)}_\infty \leq \Norm{\kappa}_{2\norm{A}}$.
\end{lem}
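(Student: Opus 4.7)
The approach relies on the probabilistic interpretation of $\Lambda_\I(x)$ as a conditional variance matrix. By definition (see Equation~\eqref{eq def Lambda I}), when $D_\I(x) \neq 0$, $\Lambda_\I(x)$ is the variance matrix of $Y_\I(x)$ conditional on $X_\I(x) = 0$. The classical formula
\begin{equation*}
\Lambda_\I(x) = \Omega_\I(x) - \Xi_\I(x) \Theta_\I(x)^{-1} \trans{\Xi_\I(x)}
\end{equation*}
exhibits $\Lambda_\I(x)$ as the difference between $\Omega_\I(x) = \var{Y_\I(x)}$ and a positive semi-definite matrix. Hence $0 \leq \Lambda_\I(x) \leq \Omega_\I(x)$ in the Loewner order, which in particular implies the inequality between diagonal coefficients.

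The first step is to control the diagonal of $\Omega_\I(x)$. Its diagonal entries are of the form $[\kappa]_{(\norm{I}+1,\norm{I}+1)}(\underline{x}_I,x_i,\underline{x}_I,x_i) = \esp{[f]_{\norm{I}+1}(\underline{x}_I,x_i)^2}$ for some $I \in \I$ and $i \in I$. Either one invokes directly Lemma~\ref{lem double divided differences Rolle} (which bounds this quantity by $\Norm{\kappa}_{2\norm{I}} \leq \Norm{\kappa}_{2\norm{A}}$) or one simply reuses the result of Lemma~\ref{lem boundedness Theta Xi Omega I}. Either way, each diagonal entry of $\Omega_\I(x)$, and therefore of $\Lambda_\I(x)$, is bounded by $\Norm{\kappa}_{2\norm{A}}$.

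The second step is to upgrade this diagonal bound to a bound on the sup-norm. Since $\Lambda_\I(x)$ is a positive semi-definite (variance) matrix, Cauchy--Schwarz's Inequality applied in the underlying Hilbert space of Gaussian variables yields
\begin{equation*}
\norm{(\Lambda_\I(x))_{ij}} \leq \sqrt{(\Lambda_\I(x))_{ii}\,(\Lambda_\I(x))_{jj}} \leq \Norm{\kappa}_{2\norm{A}}
\end{equation*}
for all indices $i,j$, which is exactly the claim. There is no real obstacle here: the only subtle point is recognizing that one should not attempt to estimate the three terms in~\eqref{eq def Lambda I} separately (since $\Theta_\I(x)^{-1}$ has no uniform bound as $x$ approaches the degeneracy locus) but rather exploit the conditional-variance interpretation, which automatically compensates for any blow-up in $\Theta_\I(x)^{-1}$ by matching cancellations in $\Xi_\I(x) \Theta_\I(x)^{-1} \trans{\Xi_\I(x)}$.
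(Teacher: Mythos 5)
Your proof is correct and uses essentially the same argument as the paper: exploit that $\Xi_\I(x)\Theta_\I(x)^{-1}\trans{\Xi_\I(x)}$ is positive semi-definite to bound the diagonal of $\Lambda_\I(x)$ by that of $\Omega_\I(x)$, invoke Lemma~\ref{lem boundedness Theta Xi Omega I} for $\Omega_\I(x)$, and then use Cauchy--Schwarz (for a covariance matrix) to pass from the diagonal to the full sup-norm. The paper applies the Cauchy--Schwarz reduction at the start rather than at the end, but the content is identical.
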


\begin{proof}
Since $\Lambda_\I(x)$ is a variance matrix, by the Cauchy--Schwarz Inequality it is enough to prove that its diagonal coefficients are bounded by $\Norm{\kappa}_{2\norm{A}}$.

Since $\Theta_\I(x)$ is a variance matrix of determinant $D_\I(x)$, if $D_\I(x) \neq 0$ then $\Theta_\I(x)$ is symmetric and positive definite. Then $\Theta_\I(x)^{-1}$ is also symmetric and positive definite, so that the diagonal coefficients of $\Xi_\I(x) \Theta_\I(x)^{-1} \trans{\Xi_\I(x)}$ are non-negative. Indeed these diagonal coefficients are of the form $Z \Theta_\I(x)^{-1} \trans{Z}$, where $Z$ is one of the rows of $\Xi_\I(x)$.

Thus, by Equation~\eqref{eq def Lambda I}, the diagonal coefficients of $\Lambda_\I(x)$ are non-negative and bounded from above by the corresponding diagonal coefficients of $\Omega_\I(x)$. Finally, $\Norm{\Lambda_\I(x)}_\infty \leq \Norm{\Omega_\I(x)}_\infty$ and the conclusion follows from Lemma~\ref{lem boundedness Theta Xi Omega I}.
\end{proof}

\begin{cor}
\label{cor boundedness NI}
There exists $C>0$ such that, for all $\I \in \pa_A$, for all $x \in \R^A$ such that $D_\I(x) \neq 0$, we have $\norm{N_\I(x)} \leq C$.
\end{cor}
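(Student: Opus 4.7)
The plan is to express $N_\I(x)$ in terms of the continuous map $\Pi_{\norm{A}}$ from Definition~\ref{def Pi k} (which sends a positive semi-definite matrix $\Lambda$ of size $\norm{A}$ to $\esp{\prod_{j=1}^{\norm{A}} \norm{Z_j}}$ for $(Z_1,\dots,Z_{\norm{A}}) \sim \mathcal{N}(0,\Lambda)$), and then use the uniform bound on the conditional variance matrix $\Lambda_\I(x)$ provided by Lemma~\ref{lem boundedness Lambda I}.

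More precisely, fix $\I \in \pa_A$ and let $x \in \R^A$ with $D_\I(x) \neq 0$. By Equations~\eqref{eq def XI}--\eqref{eq def YI} and the definition of $N_\I(x)$, the conditional distribution of $Y_\I(x)$ given $X_\I(x)=0$ is the centered Gaussian with variance matrix $\Lambda_\I(x)$. Since $Y_\I(x)$ has $\norm{A}$ components indexed exactly by $\{(I,i) \mid I \in \I, i \in I\}$, we have
\begin{equation*}
N_\I(x) = \espcond{\prod_{I \in \I} \prod_{i \in I} \norm{[f]_{\norm{I}+1}(\underline{x}_I,x_i)}}{X_\I(x)=0} = \Pi_{\norm{A}}(\Lambda_\I(x)).
\end{equation*}

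By Lemma~\ref{lem boundedness Lambda I}, $\Norm{\Lambda_\I(x)}_\infty \leq \Norm{\kappa}_{2\norm{A}}$, which is finite (since $f$ is assumed $\mathcal{C}^{\norm{A}}$, all derivatives of $\kappa$ up to order $2\norm{A}$ are bounded, as discussed in Section~\ref{subsec stationary Gaussian processes and correlation functions}). The set
\begin{equation*}
K = \left\{ \Lambda \mvert \Lambda \text{ is a symmetric positive semi-definite matrix of size } \norm{A} \text{ with } \Norm{\Lambda}_\infty \leq \Norm{\kappa}_{2\norm{A}}\right\}
\end{equation*}
is closed and bounded, hence compact. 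By Corollary~\ref{cor Pi k}, $\Pi_{\norm{A}}$ is continuous on the space of positive semi-definite matrices of size $\norm{A}$, so it is bounded on $K$ by some constant $C_{\norm{A}}$ depending only on $\Norm{\kappa}_{2\norm{A}}$ and $\norm{A}$.

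Thus $N_\I(x) = \Pi_{\norm{A}}(\Lambda_\I(x)) \leq C_{\norm{A}}$ for every $\I \in \pa_A$ and every $x \in \R^A$ with $D_\I(x) \neq 0$. Setting $C = C_{\norm{A}}$ concludes the proof; note that no maximum over $\I \in \pa_A$ is needed since the bound is already uniform in $\I$, but one could equally take the maximum over the finite set $\pa_A$. There is no real obstacle here: the content of the statement was already carried by Lemma~\ref{lem boundedness Lambda I}, and this corollary is essentially a packaging of that lemma with the continuity of the Gaussian moment functional $\Pi_{\norm{A}}$.
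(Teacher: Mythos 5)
Your proof is correct and follows the same route as the paper: write $N_\I(x) = \Pi_{\norm{A}}(\Lambda_\I(x))$, use Lemma~\ref{lem boundedness Lambda I} to confine $\Lambda_\I(x)$ to a compact set, and invoke the continuity of $\Pi_{\norm{A}}$ from Corollary~\ref{cor Pi k}. Your observation that the bound is already uniform in $\I$ (so that the final ``max over the finite set $\pa_A$'' the paper invokes is unnecessary) is a minor but accurate simplification.
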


\begin{proof}
We use the notations of Appendix~\ref{sec a Gaussian lemma}, in particular Definition~\ref{def Pi k}. Let $\I \in \pa_A$, for all $x \in \R^A$ such that $D_\I(x) \neq 0$, we have $N_\I(x) = \Pi_{\norm{A}}(\Lambda_\I(x))$. By Lemma~\ref{lem boundedness Lambda I}, the map $\Lambda_\I$ takes values in the compact ball of center $0$ and radius $\Norm{\kappa}_{2\norm{A}}$, in the space of symmetric matrices endowed with~$\Norm{\cdot}_\infty$. By Corollary~\ref{cor Pi k}, the map $\Pi_{\norm{A}}$ is continuous on this ball, hence bounded. Thus $N_\I$ is bounded on $\{x \in \R^A \mid D_\I(x) \neq 0\}$. The conclusion follows from the finiteness of $\pa_A$.
\end{proof}

\begin{lem}
\label{lem variance estimates large eta}
Let $\I,\J \in \pa_A$ be such that $\J \leq \I$ and let us denote $\I=\{I_1,\dots,I_{\norm{\I}}\}$. For all $i \in \{1,\dots,\norm{\I}\}$, we denote by $\J_i = \J_{I_i}$ (see Notation~\ref{ntn J I}) for simplicity. Then, for all $\eta \geq 0$, for all $x \in \R^A_{\I,\eta}$ we have:
\begin{align*}
&\Norm{\Theta_\J(x) - \begin{pmatrix}
\Theta_{\J_1}(\underline{x}_{I_1}) & 0 & \cdots & 0 \\
0 & \Theta_{\J_2}(\underline{x}_{I_2}) & \ddots & \vdots\\
\vdots & \ddots & \ddots & 0\\
0 & \cdots & 0 & \Theta_{\J_{\norm{\I}}}(\underline{x}_{I_{\norm{\I}}})
\end{pmatrix} }_\infty \leq \Norm{\kappa}_{\norm{A},\eta},\\
&\Norm{\Xi_\J(x) - \begin{pmatrix}
\Xi_{\J_1}(\underline{x}_{I_1}) & 0 & \cdots & 0 \\
0 & \Xi_{\J_2}(\underline{x}_{I_2}) & \ddots & \vdots\\
\vdots & \ddots & \ddots & 0\\
0 & \cdots & 0 & \Xi_{\J_{\norm{\I}}}(\underline{x}_{I_{\norm{\I}}})
\end{pmatrix} }_\infty \leq \Norm{\kappa}_{\norm{A},\eta},\\
\intertext{and}
&\Norm{\Omega_\J(x) - \begin{pmatrix}
\Omega_{\J_1}(\underline{x}_{I_1}) & 0 & \cdots & 0 \\
0 & \Omega_{\J_2}(\underline{x}_{I_2}) & \ddots & \vdots\\
\vdots & \ddots & \ddots & 0\\
0 & \cdots & 0 & \Omega_{\J_{\norm{\I}}}(\underline{x}_{I_{\norm{\I}}})
\end{pmatrix} }_\infty \leq \Norm{\kappa}_{\norm{A},\eta}.
\end{align*}
\end{lem}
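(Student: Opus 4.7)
The plan is to exploit the block structure induced by the coarser partition $\I$ on the block matrices indexed by $\J$. Since $\J \leq \I$, we have $\J = \bigsqcup_{i=1}^{\|\I\|} \J_i$ (where $\J_i = \J_{I_i}$), so each of the matrices $\Theta_\J(x)$, $\Xi_\J(x)$, $\Omega_\J(x)$, viewed as block matrices indexed by $\J \times \J$ via Equations~\eqref{eq def Theta IJ}--\eqref{eq def Omega IJ}, can be regrouped into $\|\I\| \times \|\I\|$ super-blocks: the super-block $(i,j)$ consists of those blocks indexed by $(J, J') \in \J_i \times \J_j$. Comparing directly with the definitions, the diagonal super-block $(i,i)$ is nothing but $\Theta_{\J_i}(\underline{x}_{I_i})$ (resp.~$\Xi_{\J_i}$, $\Omega_{\J_i}$), since in that super-block all components involved belong to $\underline{x}_{I_i}$.

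It therefore suffices to bound the entries of the off-diagonal super-blocks $(i,j)$ with $i \neq j$ by $\Norm{\kappa}_{\|A\|,\eta}$. Fix $i \neq j$, $J \in \J_i$ and $J' \in \J_j$. Since $I_i$ and $I_j$ are distinct clusters of $x$ at scale $\eta$, Lemma~\ref{lem clusters are not interlaced} yields either $(\underline{x}_{I_j})_{\min} > (\underline{x}_{I_i})_{\max} + \eta$ or $(\underline{x}_{I_j})_{\max} < (\underline{x}_{I_i})_{\min} - \eta$. Because $J \subset I_i$ and $J' \subset I_j$, the same inequalities hold after replacing $I_i$ by $J$ and $I_j$ by $J'$; hence any number $\xi$ in the interval $[(\underline{x}_{J'})_{\min} - (\underline{x}_J)_{\max},\, (\underline{x}_{J'})_{\max} - (\underline{x}_J)_{\min}]$ satisfies $|\xi| \geq \eta$.

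Now apply Lemma~\ref{lem double divided differences Rolle} entry by entry. The entries of $\Theta_{JJ'}(\underline{x}_J,\underline{x}_{J'})$ are of the form $[\kappa]_{(k,l)}$ with $k \leq \|J\|$, $l \leq \|J'\|$, hence bounded by $\sup\{|\kappa^{(k+l-2)}(\xi)| : |\xi| \geq \eta\} \leq \Norm{\kappa}_{\|A\|,\eta}$ since $k+l-2 \leq \|J\| + \|J'\| - 2 \leq \|A\| - 2$. The entries of $\Xi_{JJ'}$ involve $[\kappa]_{(\|J\|+1,l)}$ with $l \leq \|J'\|$, so of order at most $\|A\| - 1$; those of $\Omega_{JJ'}$ involve $[\kappa]_{(\|J\|+1,\|J'\|+1)}$, so of order at most $\|A\|$. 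In all three cases the bound $\Norm{\kappa}_{\|A\|,\eta}$ applies.

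Taking the supremum over all entries of the off-diagonal super-blocks yields the three stated inequalities. The argument is essentially a careful bookkeeping of indices: the only non-trivial ingredients are the separation bound from Lemma~\ref{lem clusters are not interlaced}, which transfers from the coarser clusters $I_i, I_j$ to the finer subsets $J, J'$, and the Rolle-type bound of Lemma~\ref{lem double divided differences Rolle}, which converts the separation between the arguments into a bound involving $\Norm{\kappa}_{\|A\|,\eta}$. No further estimate is needed, and no step presents a real obstacle.
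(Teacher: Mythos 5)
Your proof is correct and takes essentially the same route as the paper's: the same regrouping of the $\J$-indexed blocks into $\|\I\| \times \|\I\|$ super-blocks, the same identification of the diagonal super-blocks with $\Theta_{\J_i}(\underline{x}_{I_i})$ (resp.\ $\Xi$, $\Omega$), and the same use of Lemma~\ref{lem clusters are not interlaced} together with Lemma~\ref{lem double divided differences Rolle} to bound the off-diagonal super-blocks, with the same careful count showing the derivative orders stay within $\|A\|$. The only cosmetic difference is the order of operations: you transfer the separation bound from $I_i, I_j$ to $J \subset I_i$, $J' \subset I_j$ before invoking Lemma~\ref{lem double divided differences Rolle}, whereas the paper applies the Rolle-type bound first and then enlarges the resulting interval using $(\underline{x}_J)_{\min} \geq (\underline{x}_{I_i})_{\min}$ and $(\underline{x}_J)_{\max} \leq (\underline{x}_{I_i})_{\max}$; these are equivalent.
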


The statement of Lemma~\ref{lem variance estimates large eta} may seem a little obscure, so let us start by commenting upon it. In the following, we only consider $\Theta_\J(x)$, but the cases of $\Xi_\J(x)$ and $\Omega_\J(x)$ are similar.

If $\J = \I$, then $\J_i = \{I\in \I \mid I \subset I_i\} =\{I_i\}$ for all $i \in \{1,\dots,\norm{\I}\}$. In this case, using the block decomposition of $\Theta_\I(x)$, we have $\Theta_\I(x) = \left(\Theta_{I_i I_j}(\underline{x}_{I_i},\underline{x}_{I_j})\right)_{1 \leq i,j \leq \norm{\I}}$ and, by definition, $\Theta_{\J_i}(\underline{x}_{I_i}) = \Theta_{\{I_i\}}(\underline{x}_{I_i}) = \Theta_{I_i I_i}(\underline{x}_{I_i},\underline{x}_{I_i})$. Then, Lemma~\ref{lem variance estimates large eta} simply states that the sup-norms of the off-diagonal blocks $\Theta_{I_i I_j}(\underline{x}_{I_i},\underline{x}_{I_j})$ with $i \neq j$ are bounded by $\Norm{\kappa}_{\norm{A},\eta}$ on $\R^A_{\I,\eta}$. This can be deduced from Lemmas~\ref{lem double divided differences Rolle} and \ref{lem clusters are not interlaced}.

It turns out that this result remains true if we refine $\I$ by considering some $\J \leq \I$. In this case, for each $i \in \{1,\dots,\norm{\I}\}$ we need to replace $\{I_i\}$ by another partition of $I_i$, namely $\J_i$. Then, $\Theta_\J(x)$ can be written as a block matrix whose diagonal blocks are the $\Theta_{\J_i}(\underline{x}_{I_i})$ with $i \in \{1,\dots,\norm{\I}\}$. The proof of Lemma~\ref{lem variance estimates large eta} in this general case is again a matter of bounding the sup-norm of the off-diagonal blocks in this decomposition of $\Theta_\J(x)$.

\begin{proof}[Proof of Lemma~\ref{lem variance estimates large eta}]
We only give a formal proof of the first inequality. The proofs of the other two inequalities are similar.

Let $\J \leq \I$, using the notations of Equation~\eqref{eq def Theta IJ}, we have $\Theta_\J(x) = \left(\Theta_{IJ}(\underline{x}_I,\underline{x}_J)\right)_{I,J \in \J}$ for all $x \in \R^A$, where we fixed some ordering of $\J$. Note that, in the statement of Lemma~\ref{lem variance estimates large eta}, we implicitly assume that $\J$ is ordered in such a way that if $I \in \J_i$ and $J \in \J_j$ with $i <j$, then $I$ comes before $J$.

Let us regroup the blocks of $\Theta_\J(x)$ according to $\I$. For any $k$ and $l \in \{1,\dots,\norm{\I}\}$ we denote by $\Theta_{kl}(x) = \left(\Theta_{IJ}(\underline{x}_I,\underline{x}_J)\right)_{I \in \J_k, J \in \J_l}$. Then $\Theta_\J(x) = \left(\Theta_{kl}(x)\right)_{1 \leq k,l \leq \norm{\I}}$. Furthermore, for any $k \in \{1,\dots,\norm{\I}\}$, we have:
\begin{equation*}
\Theta_{kk}(x) = \left(\Theta_{IJ}(\underline{x}_I,\underline{x}_J)\right)_{I,J \in \J_k} = \Theta_{\J_k}(\underline{x}_{I_k}).
\end{equation*}
Hence, it is enough to prove that $\Norm{\Theta_{kl}(x)}_\infty \leq \Norm{\kappa}_{\norm{A},\eta}$, for any $k$ and $l \in \{1,\dots,\norm{\I}\}$ such that $k \neq l$ and any $x \in \R^A_{\I,\eta}$.

Let $k, l \in \{1,\dots,\norm{\I}\}$ be distinct, let $\eta \geq 0$ and let $x \in \R^A_{\I,\eta}$. Let $I = \{i_1,\dots,i_{\norm{I}}\} \in \J_k$ and $J = \{j_1,\dots,j_{\norm{J}}\} \in \J_l$, since $I$ and $J$ are disjoint subsets of $A$ we have $\norm{I}+\norm{J} \leq \norm{A}$. Then, by Equation~\eqref{eq def Theta IJ}, we have:
\begin{equation*}
\Theta_{IJ}(\underline{x}_I,\underline{x}_J) = \begin{pmatrix}
[\kappa]_{(p,q)}(x_{i_1},\dots,x_{i_p},x_{j_1},\dots,x_{j_q})
\end{pmatrix}_{\substack{1 \leq p \leq \norm{I}\\ 1 \leq q \leq \norm{J}}}.
\end{equation*}
By Lemma~\ref{lem double divided differences Rolle}, for all $p \in \{1,\dots,\norm{I}\}$ and all $q \in \{1,\dots,\norm{J}\}$, we have:
\begin{equation*}
\norm{[\kappa]_{(p,q)}(x_{i_1},\dots,x_{i_p},x_{j_1},\dots,x_{j_q})} \leq \max_{0 \leq m \leq \norm{A}} \ \sup_{(\underline{x}_J)_{\min} - (\underline{x}_I)_{\max}\leq \xi \leq (\underline{x}_J)_{\max} - (\underline{x}_I)_{\min}} \norm{\kappa^{(m)}(\xi)},
\end{equation*}
where we used the facts that $p+q-2 \leq \norm{I} + \norm{J} \leq \norm{A}$ and:
\begin{align*}
(\underline{x}_I)_{\min} &\leq \min \{x_{i_1},\dots,x_{i_p}\}, & (\underline{x}_I)_{\max} &\geq \max \{x_{i_1},\dots,x_{i_p}\},\\
(\underline{x}_J)_{\min} &\leq \min \{x_{j_1},\dots,x_{j_q}\}, & (\underline{x}_J)_{\max} &\geq \max \{x_{j_1},\dots,x_{j_q}\}.
\end{align*}
Since $I \in \J_k$, we have $I \subset I_k$ so that $(\underline{x}_I)_{\min} \geq (\underline{x}_{I_k})_{\min}$ and $(\underline{x}_I)_{\max} \leq (\underline{x}_{I_k})_{\max}$. Similarly, we have $(\underline{x}_J)_{\min} \geq (\underline{x}_{I_l})_{\min}$ and $(\underline{x}_J)_{\max} \leq (\underline{x}_{I_l})_{\max}$. Hence,
\begin{equation*}
(\underline{x}_{I_l})_{\min} - (\underline{x}_{I_k})_{\max} \leq (\underline{x}_J)_{\min} - (\underline{x}_I)_{\max} \leq (\underline{x}_J)_{\max} - (\underline{x}_I)_{\min} \leq (\underline{x}_{I_l})_{\max} - (\underline{x}_{I_k})_{\min}.
\end{equation*}
Since $x \in \R^A_{\I,\eta}$, by Lemma~\ref{lem clusters are not interlaced}, either $(\underline{x}_{I_l})_{\min} - (\underline{x}_{I_k})_{\max} \geq \eta$ or $(\underline{x}_{I_l})_{\max} - (\underline{x}_{I_k})_{\min} \leq -\eta$. In the first case, we have $[(\underline{x}_J)_{\min} - (\underline{x}_I)_{\max},(\underline{x}_J)_{\max} - (\underline{x}_I)_{\min}] \subset [\eta,+\infty)$, while in the second we have $[(\underline{x}_J)_{\min} - (\underline{x}_I)_{\max},(\underline{x}_J)_{\max} - (\underline{x}_I)_{\min}] \subset (-\infty,-\eta]$. In both cases, using the parity of $\kappa$ and its derivatives, we get:
\begin{equation*}
\norm{[\kappa]_{(p,q)}(x_{i_1},\dots,x_{i_p},x_{j_1},\dots,x_{j_q})} \leq \Norm{\kappa}_{\norm{A},\eta},
\end{equation*}
for all $p \in \{1,\dots,\norm{I}\}$ and all $q \in \{1,\dots,\norm{J}\}$. Thus, $\Norm{\Theta_{IJ}(\underline{x}_I,\underline{x}_J)}_\infty \leq \Norm{\kappa}_{\norm{A},\eta}$ for all $I \in \J_k$ and $J \in \J_l$. Finally, $\Norm{\Theta_{kj}(x)}_\infty \leq \Norm{\kappa}_{\norm{A},\eta}$, which concludes the proof.
\end{proof}

\begin{cor}
\label{cor splitting DI}
There exists $C >0$ such that, for all $\I,\J \in \pa_A$ such that $\J \leq \I$, for all $\eta \geq 0$, for all $x \in \R^A_{\I,\eta}$ we have:
\begin{equation*}
\norm{D_\J(x) - \prod_{I \in \I} D_{\J_I}(\underline{x}_I)} \leq C \Norm{\kappa}^2_{\norm{A},\eta}.
\end{equation*}
\end{cor}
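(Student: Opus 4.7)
The strategy is to write $\Theta_\J(x)$ as a block-diagonal matrix plus an off-diagonal perturbation controlled by Lemma~\ref{lem variance estimates large eta}, and then show that the linear term in the Taylor expansion of the determinant cancels for structural reasons, leaving a quadratic error.

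First, order $\J$ so that, for each $I \in \I$, the elements of $\J_I$ appear consecutively. Then decompose $\Theta_\J(x) = B(x) + E(x)$, where
\begin{equation*}
B(x) = \begin{pmatrix} \Theta_{\J_{I_1}}(\underline{x}_{I_1}) & & \\ & \ddots & \\ & & \Theta_{\J_{I_{\norm{\I}}}}(\underline{x}_{I_{\norm{\I}}}) \end{pmatrix},
\end{equation*}
and $E(x)$ is the off-diagonal remainder, whose $\I$-indexed diagonal blocks are zero. By construction $\det B(x) = \prod_{I \in \I} D_{\J_I}(\underline{x}_I)$, and Lemma~\ref{lem variance estimates large eta} gives $\Norm{E(x)}_\infty \leq \Norm{\kappa}_{\norm{A},\eta}$ for every $x \in \R^A_{\I,\eta}$.

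Next I will Taylor-expand the polynomial $t \mapsto \det(B(x) + t E(x))$ at $t=0$ and evaluate at $t=1$. By Jacobi's formula the first-order term is $\operatorname{tr}(\operatorname{adj}(B(x))\, E(x))$. The key observation, which I expect to be the crux of the argument, is that $\operatorname{adj}(B(x))$ inherits the block-diagonal structure of $B(x)$: if $i$ and $j$ belong to two distinct diagonal blocks of $B(x)$, then in the matrix obtained by removing row $j$ and column $i$, the rows indexed by the block containing $j$ have support in strictly fewer columns than their number, forcing a vanishing minor. Since $E(x)$ has all its diagonal blocks equal to zero, the product $\operatorname{adj}(B(x))\,E(x)$ has vanishing diagonal, and its trace is zero. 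This is precisely what upgrades the naive bound of order $\Norm{\kappa}_{\norm{A},\eta}$ to one of order $\Norm{\kappa}_{\norm{A},\eta}^2$.

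Finally, the higher-order terms in the expansion are polynomials in the entries of $E(x)$ of degree at least two, with coefficients polynomial in the entries of $B(x)$. Lemma~\ref{lem boundedness Theta Xi Omega I} bounds $\Norm{B(x)}_\infty$ uniformly by a multiple of $\Norm{\kappa}_{2\norm{A}}$, and $\Norm{E(x)}_\infty \leq \Norm{\kappa}_{2\norm{A}}$ as well, so each such term is dominated by a universal constant times $\Norm{E(x)}_\infty^2 \leq \Norm{\kappa}_{\norm{A},\eta}^2$. Summing the finitely many contributions and taking a maximum over the finitely many pairs $(\I,\J)$ with $\J \leq \I$ produces a single constant $C$ depending only on $\norm{A}$ and $\Norm{\kappa}_{2\norm{A}}$, yielding the stated estimate.
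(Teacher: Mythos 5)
Your proof is correct and in substance matches the paper's argument, which appeals directly to Lemma~\ref{lem variance estimates large eta} and Lemma~\ref{lem boundedness Theta Xi Omega I} and asserts the quadratic estimate
$D_\J(x) = \prod_{I\in\I} D_{\J_I}(\underline{x}_I) + O\!\left(\Norm{\kappa}^2_{\norm{A},\eta}\right)$
without spelling out why the linear contribution of the off-diagonal perturbation vanishes; you have filled in precisely that detail. Your route via Jacobi's formula and the block structure of the adjugate is the same idea as the direct Leibniz-expansion argument (a permutation using one off-block entry must use at least two), so it is not a genuinely different approach.

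There is, however, a small slip in your verification that $\operatorname{adj}(B(x))$ is block-diagonal. With the standard convention, the $(i,j)$ entry of the adjugate is the signed minor obtained by deleting row $j$ and column $i$. Writing $P$ for the block of $i$ and $Q$ for the block of $j$, deleting row $j$ removes a row from $Q$, so $Q$ retains $\norm{Q}-1$ rows supported in all $\norm{Q}$ of its columns --- those rows have support in strictly \emph{more} columns than their number, which is no obstruction. The vanishing actually comes from $P$: all $\norm{P}$ of its rows survive, but only $\norm{P}-1$ of its columns remain after deleting column $i$, so those rows have support in strictly fewer columns than their number, forcing the minor to vanish. So you should refer to the block containing $i$, not the block containing $j$. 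With that correction, your conclusion (adjugate block-diagonal, hence $\operatorname{tr}(\operatorname{adj}(B(x))E(x))=0$), the quadratic bound on the higher-order terms, and the uniformity over the finitely many pairs $(\I,\J)$ all go through.
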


\begin{proof}
Let $\I, \J \in \pa_A$ be such that $\J \leq \I$, and let us denote by $I_1,\dots,I_{\norm{\I}}$ the elements of $\I$. As in Lemma~\ref{lem variance estimates large eta}, let us denote $\J_i = \J_{I_i}$ for all $i \in \{1,\dots,\norm{\I}\}$.

Let $\eta \geq 0$. By Lemma~\ref{lem variance estimates large eta}, for all $x \in \R^A_{\I,\eta}$ we have:
\begin{equation*}
D_\J(x) = \det \Theta_\J(x) = \prod_{i=1}^{\norm{\I}} \det \Theta_{\J_i}(\underline{x}_{I_i}) + O\!\left(\Norm{\kappa}^2_{\norm{A},\eta}\right) = \prod_{I \in \I} D_{\J_I}(\underline{x}_I) + O\!\left(\Norm{\kappa}^2_{\norm{A},\eta}\right).
\end{equation*}
Moreover, by Lemma~\ref{lem boundedness Theta Xi Omega I}, the coefficients of $\Theta_\J(x)$ are bounded by $\Norm{\kappa}_{2\norm{A}}$. Hence, the constant implied in the error term $O\!\left(\Norm{\kappa}^2_{\norm{A},\eta}\right)$ depends only on $\kappa$ and $\norm{A}$.
\end{proof}


\subsection{Denominator clustering}
\label{subsec denominator clustering}

The purpose of this section is to study the clustering properties of the denominators $D_\I$ of the Kac--Rice densities $\rho_\I$ (recall Definition~\ref{def Kac-Rice densities partition}), where $\I$ is a partition of some finite set $A$. In all this section, we consider a finite set $A \neq \emptyset$ and some $\mathcal{C}^{\norm{A}}$ Gaussian process $f$ whose correlation function is denoted by $\kappa$. Recall that $\kappa$ is of class $\mathcal{C}^{2\norm{A}}$ with bounded derivatives of any order up to $2\norm{A}$, and that its norm $\Norm{\kappa}_{\norm{A},\eta}$ is defined by Notation~\ref{ntn norm kappa} for any $\eta \geq 0$. We assume that $f$ is stationary centered and normalized, and that $\kappa$ tends to $0$ at infinity.

\begin{lem}
\label{lem lower bound DI base case}
Let $\eta \geq 0$, there exists $\epsilon_{\{A\},\eta} >0$ such that $\forall x \in \R^A_{\{A\},\eta}$, $D_{\{A\}}(x) \geq \epsilon_{\{A\},\eta}$.
\end{lem}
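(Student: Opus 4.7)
The plan is to use translation invariance of $D_{\{A\}}$ together with the clustering constraint to reduce the statement to the continuity and positivity of $D_{\{A\}}$ on a compact set.

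First, recall from Corollary~\ref{cor vanishing locus DI} that under our assumption that $\kappa$ tends to $0$ at infinity, the function $D_{\{A\}}$ is strictly positive on the whole of $\R^A$. Moreover, by Lemma~\ref{lem DI and NI continuous}, $D_{\{A\}}$ is continuous on $\R^A$, and by the translation invariance stated in Remarks~\ref{rem def Kac-Rice densities partition}, for all $x \in \R^A$ and all $t \in \R$ we have $D_{\{A\}}(x+(t,\dots,t)) = D_{\{A\}}(x)$.

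Next, I would fix once and for all an element $a_0 \in A$ and introduce the compact set
\begin{equation*}
K_\eta = \left\{x =(x_a)_{a \in A}\in \R^A \mvert x_{a_0} = 0 \text{ and } \forall a \in A, \ \norm{x_a} \leq \norm{A}\eta\right\}.
\end{equation*}
Since $D_{\{A\}}$ is continuous and positive on the compact set $K_\eta$, it attains a minimum $\epsilon_{\{A\},\eta} > 0$ on $K_\eta$.

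Finally, I would show that every $x \in \R^A_{\{A\},\eta}$ can be translated into $K_\eta$. Indeed, if $x \in \R^A_{\{A\},\eta}$, then the graph $G_\eta(x)$ is connected, so by the second point of Remarks~\ref{rem R A I eta}, $\norm{x_a - x_b} \leq \norm{A}\eta$ for all $a,b \in A$. Setting $t = -x_{a_0}$, the translated point $y = x + (t,\dots,t)$ satisfies $y_{a_0} = 0$ and $\norm{y_a} = \norm{x_a - x_{a_0}} \leq \norm{A}\eta$, hence $y \in K_\eta$. Using translation invariance, $D_{\{A\}}(x) = D_{\{A\}}(y) \geq \epsilon_{\{A\},\eta}$, which is the desired bound. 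There is no serious obstacle here; the only thing to be slightly careful about is ensuring that the clustering condition $\I_\eta(x) = \{A\}$ really does force all components of $x$ to lie within $\norm{A}\eta$ of each other, but this is immediate from the diameter bound on connected components of $G_\eta(x)$.
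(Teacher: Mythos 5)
Your proof is correct and takes essentially the same approach as the paper: both reduce to a compact fundamental domain for the diagonal translation action (you fix $x_{a_0}=0$, the paper fixes the first coordinate to $0$), invoke the diameter bound from Remarks~\ref{rem R A I eta}, and then use the continuity of $D_{\{A\}}$ from Lemma~\ref{lem DI and NI continuous} together with its positivity from Corollary~\ref{cor vanishing locus DI} to extract a positive minimum on that compact set.
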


\begin{proof}
By Lemma~\ref{lem expression DIx}, the definition of $D_{\{A\}}$ does not depend on how we ordered the elements of $A$. That is $D_{\{A\}}$ is a symmetric function on $\R^A$. Without loss of generality, let us order the elements of $A$, say $A = \{a_i \mid 1 \leq i \leq \norm{A}\}$.

Let $\eta \geq 0$ and let $x=(x_a)_{a \in A}$. As explained in Remarks~\ref{rem def Kac-Rice densities partition}, we have:
\begin{equation*}
D_{\{A\}}(x) = D_{\{A\}}(x_{a_1}, \dots, x_{a_{\norm{A}}}) = D_{\{A\}}(0,x_{a_2}-x_{a_1}, \dots,x_{a_{\norm{A}}} -x_{a_1}).
\end{equation*}
Moreover, by Definition~\ref{def R A I eta}, for all $i \in \{2,\dots,\norm{A}\}$ we have $\norm{x_{a_i} - x_{a_1}} \leq \norm{A} \eta$ (see also Remarks~\ref{rem R A I eta}). By Lemma~\ref{lem DI and NI continuous} and Corollary~\ref{cor vanishing locus DI}, the function $D_{\{A\}}$ is continuous and positive on the compact set $\{0\} \times [-\norm{A}\eta,\norm{A}\eta]^{\norm{A}-1}$. Hence, there exists $\epsilon_{\{A\},\eta} >0$ such that $D_{\{A\}}(x) \geq \epsilon_{\{A\},\eta}$ for all $x \in \R^A_{\{A\},\eta}$.
\end{proof}

\begin{lem}[Uniform lower bound]
\label{lem lower bound DI induction step}
Let us assume that $\Norm{\kappa}_{\norm{A},\eta} \xrightarrow[\eta \to +\infty]{} 0$. Then, for all $\eta > 0$, there exists $\epsilon_\eta>0$ such that $\forall \I \in \pa_A$, $\forall x \in \R^A_{\I,\eta}$, $D_\I(x) \geq \epsilon_\eta$.
\end{lem}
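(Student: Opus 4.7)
The plan is to prove this by strong induction on $n = \norm{A}$. The base case $n = 1$ is immediate: $\pa_A$ contains only the trivial partition $\{A\}$, and $D_{\{A\}}(x) = \var{f(x_a)} = 1$ by normalization of $f$. So fix $n \geq 2$, assume the result for all sets of cardinality strictly less than $n$, and let $\eta > 0$. Since $\pa_A$ is finite, it suffices to exhibit a positive lower bound for $D_\I$ on $\R^A_{\I,\eta}$ for each $\I \in \pa_A$ individually and take the minimum.

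The partition $\I = \{A\}$ is covered directly by Lemma~\ref{lem lower bound DI base case}. Otherwise, write $\I = \{I_1, \dots, I_m\}$ with $m \geq 2$, ordering the clusters from left to right on $\R$ using Lemma~\ref{lem clusters are not interlaced}, and define the inter-cluster gaps $g_k(x) = (\underline{x}_{I_{k+1}})_{\min} - (\underline{x}_{I_k})_{\max} > \eta$ for $1 \leq k \leq m-1$. I will split $\R^A_{\I,\eta}$ according to whether $\max_k g_k(x)$ is bounded by some threshold $L > \eta$ (to be chosen) or exceeds it.

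On the region where $\max_k g_k(x) \leq L$: modulo the diagonal translation action (which $D_\I$ respects by Remarks~\ref{rem def Kac-Rice densities partition}), such configurations live in a bounded subset of $\R^A$ (each within-cluster span is at most $n\eta$, each gap at most $L$). Its closure in the translation quotient is compact. On this closure $D_\I$ is continuous by Lemma~\ref{lem DI and NI continuous}; moreover, for any limit point $x_0$ the gaps still satisfy $g_k(x_0) \geq \eta$, so any $i,j$ in distinct parts of $\I$ have $\norm{x_{0,i} - x_{0,j}} \geq \eta > 0$, and Corollary~\ref{cor vanishing locus DI} gives $D_\I(x_0) > 0$. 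Hence $D_\I$ attains a positive minimum on this region. On the complementary region $\max_k g_k(x) > L$: set $S = \{k : g_k(x) > L\} \neq \emptyset$ and split $\{1,\dots,m\}$ into $s+1 := \norm{S}+1$ maximal blocks of consecutive indices between the elements of $S$. This yields a coarsening $\I' = \{A_1, \dots, A_{s+1}\}$ of $\I$, with each $A_j$ a union of consecutive clusters $I_k$ and $\norm{A_j} < n$ since $s \geq 1$. At scale $L$, gaps outside $S$ (which are $\leq L$) connect adjacent $I_k$'s within the same $A_j$, while gaps in $S$ (which are $> L$) separate distinct $A_j$'s, so $\I_L(x) = \I'$, i.e.\ $x \in \R^A_{\I',L}$. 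Corollary~\ref{cor splitting DI} applied to the coarsening $\I'$, fine partition $\J = \I$, at scale $L$, gives
\begin{equation*}
\left\lvert D_\I(x) - \prod_{j=1}^{s+1} D_{\I_j}(\underline{x}_{A_j}) \right\rvert \leq C \Norm{\kappa}_{n,L}^2,
\end{equation*}
where $\I_j = \{I_k : I_k \subset A_j\}$. By the inductive hypothesis applied to each smaller set $A_j$ and the partition $\I_j$, using $\underline{x}_{A_j} \in \R^{A_j}_{\I_j,\eta}$, we get $D_{\I_j}(\underline{x}_{A_j}) \geq \epsilon_{A_j,\I_j,\eta} > 0$. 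Since $\Norm{\kappa}_{n,L} \to 0$, we may choose $L$ large enough that $C\Norm{\kappa}_{n,L}^2$ is at most half of the minimum of $\prod_j \epsilon_{A_j,\I_j,\eta}$ over the finitely many ways of splitting $I_1,\dots,I_m$ into consecutive blocks; then $D_\I(x) \geq \tfrac{1}{2} \prod_j \epsilon_{A_j,\I_j,\eta} > 0$. Taking the minimum over both regions and over all $\I \in \pa_A$ yields the uniform $\epsilon_\eta$.

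The main obstacle is the non-compactness of $\R^A_{\I,\eta}$ modulo translations, since distinct clusters can recede arbitrarily far from each other. The decay hypothesis $\Norm{\kappa}_{n,L}\to 0$ combined with Corollary~\ref{cor splitting DI} and the induction is precisely what tames these non-compact directions, reducing them to products of lower-dimensional cases; the bounded-gap regime is then handled by a compactness argument, using Corollary~\ref{cor vanishing locus DI} to ensure positivity of $D_\I$ at every boundary configuration.
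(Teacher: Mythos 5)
Your proof is correct, and it takes a genuinely different route than the paper's. The paper argues by \emph{backward induction on the partition lattice} $(\pa_A,\leq)$ within a fixed $A$: the base case is $\I_{\max}(A)=\{A\}$ (Lemma~\ref{lem lower bound DI base case}), and for a general $\J$ the paper fixes an auxiliary scale $\tau\geq\eta$, decomposes $\R^A_{\J,\eta}$ according to $\I_\tau(x)$, handles the fully-$\tau$-separated piece via Corollary~\ref{cor splitting DI} together with Lemma~\ref{lem lower bound DI base case} applied to each cluster, and handles the pieces where $\I_\tau(x)=\I>\J$ via the multiplicative identity of Corollary~\ref{cor expression DIx} combined with the backward-induction hypothesis at scale $\tau$. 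You instead do a \emph{strong induction on $\norm{A}$}, and for each $\I$ you split $\R^A_{\I,\eta}$ by whether all inter-cluster gaps stay below a threshold $L$ or at least one exceeds it; the bounded-gap region is handled by a direct compactness argument (a multi-cluster generalization of Lemma~\ref{lem lower bound DI base case}) backed by the vanishing-locus characterization of Corollary~\ref{cor vanishing locus DI}, while the large-gap region is coarsened and fed to Corollary~\ref{cor splitting DI} plus the induction on strictly smaller sets. Your route avoids Corollary~\ref{cor expression DIx} entirely and keeps the induction conceptually simpler; the price is the slightly more delicate compactness step modulo diagonal translations (which you should make precise by choosing an explicit slice, say fixing the leftmost coordinate to $0$, and summing over the finitely many possible spatial orderings of the clusters). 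Both arguments are sound and exploit the same two ingredients --- continuity plus positivity of $D_{\{\cdot\}}$ on compact diagonal-free sets, and the approximate factorization from Corollary~\ref{cor splitting DI} at large scale.
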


\begin{proof}
Let us prove that for all $\I \in \pa_A$ the following statement is true:
\begin{equation}
\label{eq statement *}
\text{for all} \ \eta >0, \ \text{there exists} \ \epsilon_{\I,\eta} >0 \ \text{such that} \ \forall x \in \R^A_{\I,\eta},\, D_\I(x) \geq \epsilon_{\I,\eta}.
\end{equation}
Since $\pa_A$ is finite, the conclusion then follows by setting $\epsilon_\eta = \min \{\epsilon_{\I,\eta} \mid \I \in \pa_A\}$ for all $\eta >0$.

Recall that we defined a partial order $\leq$ on $\pa_A$ in Definition~\ref{def partial order pa p}, and that $\I_{\max}(A) = \{A\}$ is the maximum of $(\pa_A,\leq)$. We will prove that~\eqref{eq statement *} holds for any $\I \in \pa_A$ by a backward induction on $\I \in \pa_A$. If $\I = \I_{\max}(A) = \{A\}$, then~\eqref{eq statement *} holds by Lemma~\ref{lem lower bound DI base case}, so we already took care of the base case.

Let $\J \in \pa_A \setminus \{\I_{\max}(A)\}$ and let us assume that~\eqref{eq statement *} holds for any $\I \in \pa_A$ such that $\J < \I$. Let $\eta >0$ and let $\tau \geq \eta$. If $x \in \R^A_{\J,\eta}$, then by Lemma~\ref{lem partial order} we have $\I_\tau(x) \geq \I_\eta(x) = \J$. Hence,
\begin{equation}
\label{eq splitting RAI eta}
\R^A_{\J,\eta} = \R^A_{\J,\eta} \cap \left(\bigsqcup_{\I \in \pa_A} \R^A_{\I,\tau}\right) = \bigsqcup_{\I \in \pa_A} \left(\R^A_{\J,\eta} \cap \R^A_{\I,\tau}\right) = \bigsqcup_{\I \geq \J} \left(\R^A_{\J,\eta} \cap \R^A_{\I,\tau}\right).
\end{equation}

Let $x \in \R^A_{\J,\eta} \cap \R^A_{\J,\tau}$, that is the components of $x$ form clusters that are encoded by $\J$, the points of a given cluster are at distance of order $\eta$ from one another, and two distinct clusters are further apart than $\tau$. Since $x \in \R^A_{\J,\tau}$, applying Corollary~\ref{cor splitting DI} with $\I = \J$, we get:
\begin{equation}
\label{eq splitting DI}
D_\J(x) = \prod_{J \in \J} D_{\{J\}}(\underline{x}_{J}) + O\!\left(\Norm{\kappa}^2_{\norm{A},\tau}\right).
\end{equation}
Let $J \in \J$, applying Lemma~\ref{lem lower bound DI base case} with $A= J$, there exists $\epsilon_{\{J\},\eta} >0$ such that $D_{\{J\}}$ is bounded from below by $\epsilon_{\{J\},\eta}$ on $\R^J_{\{J\},\eta}$. Since $x \in \R^A_{\J,\eta}$, we have $\underline{x}_J \in \R^J_{\{J\},\eta}$ and $D_{\{J\}}(\underline{x}_J) \geq \epsilon_{\{J\},\eta}$. Now, let us assume $\tau \geq \eta$ to be large enough for the error term in Equation~\eqref{eq splitting DI} to be bounded by $\frac{1}{2}\prod_{J\in \J} \epsilon_{\{J\},\eta}$. This is possible because $\Norm{\kappa}_{\norm{A},\tau}$ tends to $0$ as $\tau \to +\infty$. Then, for all $x \in \R^A_{\J,\eta} \cap \R^A_{\J,\tau}$, we have $D_\J(x) \geq \frac{1}{2}\prod_{J\in \J} \epsilon_{\{J\},\eta} >0$.

Let $\I \in \pa_A$ be such that $\J < \I$. By Corollary~\ref{cor expression DIx}, for all $x \in \R^A$ we have:
\begin{equation*}
D_\J(x) = \left(\prod_{I \in \I} \prod_{\{(J,J') \in \J_I^2 \mid J \neq J'\}} \prod_{(i,j) \in J \times J'} \norm{x_i-x_j} \right) D_\I(x).
\end{equation*}
Let $x \in \R^A_{\J,\eta} \cap \R^A_{\I,\tau}$, where $\tau \geq \eta$ is the one we chose previously. Using the induction hypothesis~\eqref{eq statement *} for $\I$, there exits $\epsilon_{\I,\tau}>0$ such that $D_\I$ is bounded from below by $\epsilon_{\I,\tau}$ on $\R^A_{\I,\tau}$. In particular, $D_\I(x) \geq \epsilon_{\I,\tau}$. Moreover, let $J$ and $J' \in \J$ be such that $J \neq J'$, then for all $i \in J$ and $j \in J'$ we have $\norm{x_i-x_j} > \eta$. Hence, $D_\J(x) \geq \eta^{\alpha(\I,\J)} \epsilon_{\I,\tau} >0$, where
\begin{equation*}
\alpha(\I,\J) = \sum_{I \in \I} \sum_{\{(J,J') \in \J_I^2 \mid J \neq J'\}} \norm{J} \norm{J'} = \sum_{I \in \I} \left(\left(\sum_{J \in \J_I} \norm{J}\right)^2 - \sum_{J \in \J_I} \norm{J}^2\right) = \sum_{I \in \I} \norm{I}^2 - \sum_{J \in \J} \norm{J}^2 \geq 0.
\end{equation*}

We set
\begin{equation*}
\epsilon_{\J,\eta} = \min \left\{\frac{1}{2} \prod_{J \in \J}\epsilon_{\{J\},\eta}\right\} \cup \left\{\eta^{\alpha(\I,\J)} \epsilon_{\I,\tau} \mid \I > \J\right\} >0.
\end{equation*}
Then, by Equation~\eqref{eq splitting RAI eta}, for all $x \in \R^A_{\J,\eta}$ we have $D_\J(x) \geq \epsilon_{\J,\eta}$. Thus~\eqref{eq statement *} holds for $\J$, which concludes the induction step and the proof.
\end{proof}

\begin{lem}[Denominator clustering]
\label{lem clustering DI}
Let us assume that $\Norm{\kappa}_{\norm{A},\eta} \xrightarrow[\eta \to +\infty]{} 0$. Let $\eta \geq 1$, let $\I, \J \in \pa_A$ be such that $\J \leq \I$ and let $x \in \R^A_{\J,1} \cap \R^A_{\I,\eta}$, we have:
\begin{equation*}
\prod_{I \in \I} D_{\J_I}(\underline{x}_I) = D_\J(x) \left(1 + O\!\left(\Norm{\kappa}^2_{\norm{A},\eta}\right)\right),
\end{equation*}
where $\J_I$ is defined as in Notation~\ref{ntn J I} for all $I \in \I$. Moreover, the constant implied in the error term does not depend on $\eta$, $\I,\J$ nor $x \in \R^A_{\J,1} \cap \R^A_{\I,\eta}$.
\end{lem}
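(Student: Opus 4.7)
The statement is a straightforward consequence of two results already established: the additive clustering estimate from Corollary~\ref{cor splitting DI} and the uniform positivity estimate from Lemma~\ref{lem lower bound DI induction step}. The plan is to combine them by rewriting the desired multiplicative estimate as a ratio.

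First, I would apply Corollary~\ref{cor splitting DI} to the pair $(\I,\J)$ with $\J \leq \I$ and the point $x \in \R^A_{\I,\eta}$. This yields a universal constant $C_1 > 0$ (depending only on $\kappa$ and $\norm{A}$, but not on $\eta$, $\I$, $\J$ or $x$) such that
\begin{equation*}
\left|D_\J(x) - \prod_{I \in \I} D_{\J_I}(\underline{x}_I)\right| \leq C_1 \Norm{\kappa}^2_{\norm{A},\eta}.
\end{equation*}
This provides the numerator of the desired relative error.

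Next, I would use the other hypothesis $x \in \R^A_{\J,1}$ to bound $D_\J(x)$ from below. Applying Lemma~\ref{lem lower bound DI induction step} with the scale parameter $\eta = 1$ (note that the hypothesis $\Norm{\kappa}_{\norm{A},\eta} \to 0$ of that lemma is exactly what we assume here), we obtain a universal constant $\epsilon_1 > 0$, depending only on $\kappa$ and $\norm{A}$ but not on $\J$, such that $D_\J(x) \geq \epsilon_1$ for every $\J \in \pa_A$ and every $x \in \R^A_{\J,1}$. In particular, $D_\J(x)$ is bounded away from zero for the points we consider, uniformly in $\J$ and $x$.

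Combining these two observations, for any $x \in \R^A_{\J,1} \cap \R^A_{\I,\eta}$ we can divide the additive estimate by $D_\J(x) \geq \epsilon_1$ to obtain
\begin{equation*}
\left|\frac{\prod_{I \in \I} D_{\J_I}(\underline{x}_I)}{D_\J(x)} - 1\right| \leq \frac{C_1}{\epsilon_1} \Norm{\kappa}^2_{\norm{A},\eta},
\end{equation*}
which yields the claimed asymptotic, with a constant $C_1/\epsilon_1$ depending only on $\kappa$ and $\norm{A}$, hence independent of $\eta \geq 1$, of $\I,\J \in \pa_A$, and of $x$ in the relevant set. Since both inputs are already in hand, there is no real obstacle here; the only subtlety is in having correctly set up the preliminary estimates with \emph{uniform} constants, which has been done in the statements of Corollary~\ref{cor splitting DI} and Lemma~\ref{lem lower bound DI induction step}.
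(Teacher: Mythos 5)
Your proof is correct and matches the paper's argument exactly: both use Corollary~\ref{cor splitting DI} for the additive bound and Lemma~\ref{lem lower bound DI induction step} at scale $1$ for the uniform lower bound $D_\J(x) \geq \epsilon_1$, then divide.
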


\begin{proof}
Let $\eta \geq 1$, let $\I,\J \in \pa_A$ be such that $\J \leq \I$ and let $x \in \R^A_{\J,1} \cap \R^A_{\I,\eta}$. Since $x \in \R^A_{\J,1}$ we have $D_\J(x) \geq \epsilon_1$, where $\epsilon_1 >0$ is given by Lemma~\ref{lem lower bound DI induction step}. By Corollary~\ref{cor splitting DI}, since $x \in \R^A_{\I,\eta}$, we have:
\begin{equation*}
\norm{\frac{\prod_{I \in \I} D_{\J_I}(\underline{x}_I)}{D_\J(x)}-1} \leq \frac{C \Norm{\kappa}^2_{\norm{A},\eta}}{D_\J(x)} \leq \frac{C \Norm{\kappa}^2_{\norm{A},\eta}}{\epsilon_1},
\end{equation*}
where $C >0$ is independent of $\I$, $\J$, $\eta$ and $x$. This yields the result.
\end{proof}

\begin{rem}
\label{rem clustering DI}
In Lemma~\ref{lem clustering DI}, we deal with $x \in \R^A_{\J,1} \cap \R^A_{\I,\eta}$ where $\eta \geq 1$ and $\J \leq \I$. It means that components of $x$ whose indices lie in the same cluster of $\J$ are at distance less than $1$, while components whose indices lie in different clusters of $\I$ are further away than $\eta$.

One could be under the impression that the hypothesis that $\J \leq \I$ is restrictive. In fact it is not since, if $x \in \R^A_{\I,\eta}$ with $\eta \geq 1$, then by Lemma~\ref{lem partial order} we have $\I_1(x) \leq \I_\eta(x) =\I$. In particular, for any $\eta \geq 1$ and any $\I \in \pa_A$ we have:
\begin{equation*}
\R^A_{\I,\eta} = \R^A_{\I,\eta} \cap \bigsqcup_{\J \in \pa_A} \R^A_{\J,1} = \bigsqcup_{\J \in \pa_A} \left(\R^A_{\J,1} \cap \R^A_{\I,\eta}\right) = \bigsqcup_{\J \leq \I} \left(\R^A_{\J,1} \cap \R^A_{\I,\eta}\right).
\end{equation*}
\end{rem}


\subsection{Numerator clustering}
\label{subsec numerator clustering}

Let us now consider the clustering properties of the numerators $N_\I$ of the Kac--Rice densities introduced in Definition~\ref{def Kac-Rice densities partition}. In this section, we aim to prove a result similar to Lemma~\ref{lem clustering DI} for these functions. This is achieved in Lemma~\ref{lem clustering NI mult} below. Once again, in all this section $A$ is a non-empty finite set, and $f$ is a normalized stationary centered $\mathcal{C}^{\norm{A}}$ Gaussian process, whose correlation function $\kappa$ tends to $0$ at infinity.

First, we study the variance matrix of the Gaussian vector appearing in the definition of $N_\I$. Given $\I \in \pa_A$ and $x \in \R^A$ such that $D_\I(x) \neq 0$, recall that $Y_\I(x)$ given that $X_\I(x) = 0$ is a well-defined centered Gaussian vector in $\R^{\norm{A}}$ of variance matrix $\Lambda_\I(x)$ (see Equations~\eqref{eq def XI},~\eqref{eq def YI} and~\eqref{eq def Lambda I}).

\begin{lem}
\label{lem estimate Lambda I}
Let us assume that $\Norm{\kappa}_{\norm{A},\eta} \xrightarrow[\eta \to +\infty]{} 0$. Let $\I =\{I_1,\dots,I_{\norm{\I}}\} \in \pa_A$ and let $\J \in \pa_A$ be such that $\J \leq \I$. For all $i \in \{1,\dots,\norm{\I}\}$, we denote by $\J_i = \J_{I_i}$ (see Notation~\ref{ntn J I}) for simplicity. Then, for all $\eta \geq 1$, for all $x \in \R^A_{\J,1} \cap \R^A_{\I,\eta}$ we have:
\begin{equation*}
\Lambda_\J(x) = \begin{pmatrix}
\Lambda_{\J_1}(\underline{x}_{I_1}) & 0 & \cdots & 0 \\
0 & \Lambda_{\J_2}(\underline{x}_{I_2}) & \ddots & \vdots\\
\vdots & \ddots & \ddots & 0\\
0 & \cdots & 0 & \Lambda_{\J_{\norm{\I}}}(\underline{x}_{I_{\norm{\I}}})
\end{pmatrix} + O\!\left(\Norm{\kappa}_{\norm{A},\eta}\right),
\end{equation*}
where the error term does not depend on $\eta$, $\I$, $\J$ nor $x \in \R^A_{\J,1} \cap \R^A_{\I,\eta}$.
\end{lem}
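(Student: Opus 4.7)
The strategy is to start from the definition $\Lambda_\J(x) = \Omega_\J(x) - \Xi_\J(x) \Theta_\J(x)^{-1} \trans{\Xi_\J(x)}$ (Equation~\eqref{eq def Lambda I}) and transport the block-diagonal approximations of $\Theta_\J(x)$, $\Xi_\J(x)$ and $\Omega_\J(x)$ from Lemma~\ref{lem variance estimates large eta} through the matrix inversion and the two matrix products. Everything reduces to a standard perturbation argument once we have uniform bounds on the various factors.

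First I would verify the key local observation: if $x \in \R^A_{\J,1}$ and $i \in \{1,\dots,\norm{\I}\}$, then $\underline{x}_{I_i} \in \R^{I_i}_{\J_i,1}$. Indeed, two indices $a,b \in I_i$ lie in the same cluster of $\J$ if and only if they are joined by a path in $G_1(x)$; since $\J \leq \I$ any such path stays inside $I_i$, so the connected components of $G_1(\underline{x}_{I_i})$ coincide with the elements of $\J_i$. Applying Lemma~\ref{lem lower bound DI induction step} then gives $D_{\J_i}(\underline{x}_{I_i}) \geq \epsilon_1 > 0$, uniformly in $i$, $\I$, $\J$ and $x$. Combined with the bound $\Norm{\Theta_{\J_i}(\underline{x}_{I_i})}_\infty \leq \Norm{\kappa}_{2\norm{A}}$ from Lemma~\ref{lem boundedness Theta Xi Omega I}, this yields a uniform upper bound on each $\Theta_{\J_i}(\underline{x}_{I_i})^{-1}$ (the eigenvalues are sandwiched between $\epsilon_1/(\norm{A}\Norm{\kappa}_{2\norm{A}})^{\norm{A}-1}$ and $\norm{A}\Norm{\kappa}_{2\norm{A}}$, by symmetry and the determinant/trace inequalities). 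The same argument, applied directly to $x \in \R^A_{\J,1}$ via Lemma~\ref{lem lower bound DI induction step} and Lemma~\ref{lem boundedness Theta Xi Omega I}, yields a uniform bound on $\Theta_\J(x)^{-1}$.

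Write $\Theta_\J(x) = T_0 + E_\Theta$, $\Xi_\J(x) = X_0 + E_\Xi$, $\Omega_\J(x) = W_0 + E_\Omega$ where $T_0$, $X_0$, $W_0$ denote the block-diagonal matrices with diagonal blocks $\Theta_{\J_i}(\underline{x}_{I_i})$, $\Xi_{\J_i}(\underline{x}_{I_i})$, $\Omega_{\J_i}(\underline{x}_{I_i})$ respectively. By Lemma~\ref{lem variance estimates large eta}, $\Norm{E_\Theta}_\infty$, $\Norm{E_\Xi}_\infty$ and $\Norm{E_\Omega}_\infty$ are all $O(\Norm{\kappa}_{\norm{A},\eta})$. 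The identity $\Theta_\J(x)^{-1} - T_0^{-1} = -\Theta_\J(x)^{-1} E_\Theta T_0^{-1}$, combined with the uniform bounds on $\Theta_\J(x)^{-1}$ and $T_0^{-1}$, gives $\Theta_\J(x)^{-1} = T_0^{-1} + O(\Norm{\kappa}_{\norm{A},\eta})$. Expanding
\begin{equation*}
\Xi_\J(x) \Theta_\J(x)^{-1} \trans{\Xi_\J(x)} = (X_0 + E_\Xi)\bigl(T_0^{-1} + O(\Norm{\kappa}_{\norm{A},\eta})\bigr)\trans{(X_0 + E_\Xi)}
\end{equation*}
and using that $X_0$ and $T_0^{-1}$ have bounded sup-norm while the cross terms all contain at least one factor of size $O(\Norm{\kappa}_{\norm{A},\eta})$, we obtain $\Xi_\J(x) \Theta_\J(x)^{-1} \trans{\Xi_\J(x)} = X_0 T_0^{-1} \trans{X_0} + O(\Norm{\kappa}_{\norm{A},\eta})$. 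Since $X_0 T_0^{-1} \trans{X_0}$ is the block-diagonal matrix with diagonal blocks $\Xi_{\J_i}\Theta_{\J_i}^{-1}\trans{\Xi_{\J_i}}$, subtracting from $\Omega_\J(x) = W_0 + O(\Norm{\kappa}_{\norm{A},\eta})$ yields the claimed block-diagonal expression with diagonal blocks $\Lambda_{\J_i}(\underline{x}_{I_i})$.

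The main obstacle is bookkeeping the constants so that the error term really is uniform in $\eta$, $\I$, $\J$ and $x$. The uniformity in $x \in \R^A_{\J,1} \cap \R^A_{\I,\eta}$ and in $\eta$ comes from the uniform bounds on $\Theta_\J(x)^{-1}$ and $T_0^{-1}$ discussed above, together with Lemma~\ref{lem variance estimates large eta}. Uniformity in $\I$ and $\J$ is then automatic from the finiteness of $\pa_A$. No analytic obstruction remains once the uniform invertibility of $\Theta_\J(x)$ and of the diagonal blocks $\Theta_{\J_i}(\underline{x}_{I_i})$ is in hand.
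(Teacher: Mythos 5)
Your proof is correct and follows essentially the same route as the paper: express $\Lambda_\J(x)$ via Equation~\eqref{eq def Lambda I}, transfer the block-diagonal approximations of $\Theta_\J$, $\Xi_\J$, $\Omega_\J$ from Lemma~\ref{lem variance estimates large eta} through the inverse and the products, and control everything with the uniform lower bound on $D_\J(x)$ from Lemma~\ref{lem lower bound DI induction step} and the upper bounds from Lemma~\ref{lem boundedness Theta Xi Omega I}. The only cosmetic deviations are that you bound each diagonal block inverse $\Theta_{\J_i}(\underline{x}_{I_i})^{-1}$ directly (via the determinant/eigenvalue sandwich and your explicit check that $\underline{x}_{I_i} \in \R^{I_i}_{\J_i,1}$), whereas the paper deduces the inverse estimate from the factored form $\Theta_\J(x)(\Id + O(\Norm{\kappa}_{\norm{A},\eta}))$ using only the bound on $\Theta_\J(x)^{-1}$; both amount to the same perturbation argument.
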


\begin{proof}
First, let us consider $\Theta_\J$ (see Equation~\eqref{eq def variance I}). By Lemma~\ref{lem boundedness Theta Xi Omega I}, for any $x \in \R^A$, the symmetric matrix $\Theta_\J(x)$ belongs to the compact ball $\mathcal{B}$ of center $0$ and radius $\Norm{\kappa}_{2\norm{A}}$, for the sup-norm. For all $x \in \R^A_{\J,1}$, we have $\det \left(\Theta_\J(x)\right) = D_\J(x) \geq \epsilon_1$, where $\epsilon_1 >0$ is given by Lemma~\ref{lem lower bound DI induction step}. Hence, $\Theta_\J(x)$ belongs to $\mathcal{B} \cap \det^{-1}([\epsilon_1,+\infty))$, which is a compact set of invertible matrices. By continuity of the inverse on this compact set, there exists $C >0$, depending only on $\Norm{\kappa}_{2\norm{A}}$ and~$\epsilon_1$, such that for all $x \in \R^A_{\J,1}$, $\Norm{\Theta_\J(x)^{-1}}_\infty \leq C$.

Let $\eta \geq 1$ and let $x \in \R^A_{\J,1} \cap \R^A_{\I,\eta}$. Since $\J \leq \I$, by Lemma~\ref{lem variance estimates large eta}, we have:
\begin{align*}
\begin{pmatrix}
\Theta_{\J_1}(\underline{x}_{I_1}) & 0 & \cdots & 0 \\
0 & \Theta_{\J_2}(\underline{x}_{I_2}) & \ddots & \vdots\\
\vdots & \ddots & \ddots & 0\\
0 & \cdots & 0 & \Theta_{\J_{\norm{\I}}}(\underline{x}_{I_{\norm{\I}}})
\end{pmatrix} &= \Theta_\J(x) + O\!\left(\Norm{\kappa}_{\norm{A},\eta}\right)\\
&= \Theta_\J(x) \left(\Id_{\norm{A}} + O\!\left(\Norm{\kappa}_{\norm{A},\eta}\right)\right),
\end{align*}
where $\Id_{\norm{A}}$ stands for the identity matrix of size $\norm{A}$. Note that we used the fact that $\Theta_\J(x)^{-1}$ is bounded to get the second equality, and that the error terms are independent of $\I$, $\J$, $\eta$ and $x$. Using once again the boundedness of $\Theta_\J(x)^{-1}$, we obtain after taking the inverse:
\begin{align*}
\begin{pmatrix}
\Theta_{\J_1}(\underline{x}_{I_1})^{-1} & 0 & \cdots & 0 \\
0 & \Theta_{\J_2}(\underline{x}_{I_2})^{-1} & \ddots & \vdots\\
\vdots & \ddots & \ddots & 0\\
0 & \cdots & 0 & \Theta_{\J_{\norm{\I}}}(\underline{x}_{I_{\norm{\I}}})^{-1}
\end{pmatrix} &= \Theta_\J(x)^{-1} \left(\Id_{\norm{A}} + O\!\left(\Norm{\kappa}_{\norm{A},\eta}\right)\right)\\
&= \Theta_\J(x)^{-1} + O\!\left(\Norm{\kappa}_{\norm{A},\eta}\right),
\end{align*}
where the error terms are uniform in $\eta$, $\I$, $\J$ and $x$.

In order to conclude the proof, we start from the definition of $\Lambda_\J(x)$ (see Equation~\eqref{eq def Lambda I}) and use the previous estimates for $\Theta_\J(x)^{-1}$. We also use the estimates of Lemma~\ref{lem variance estimates large eta} for $\Xi_\J(x)$ and $\Omega_\J(x)$, as well as the uniform boundedness of $\Xi_\J$ (see Lemma~\ref{lem boundedness Theta Xi Omega I}) and $\Theta_\J(x)^{-1}$. We obtain:
\begin{equation*}
\Lambda_\J(x) = \begin{pmatrix}
\Lambda_{\J_1}(\underline{x}_{I_1}) & 0 & \cdots & 0 \\
0 & \Lambda_{\J_2}(\underline{x}_{I_2}) & \ddots & \vdots\\
\vdots & \ddots & \ddots & 0\\
0 & \cdots & 0 & \Lambda_{\J_{\norm{\I}}}(\underline{x}_{I_{\norm{\I}}})
\end{pmatrix} + O\!\left(\Norm{\kappa}_{\norm{A},\eta}\right),
\end{equation*}
where the error term does not depend on $\eta \geq 1$, nor on $\I$ and $\J \in \pa_A$ such that $\J \leq \I$, nor on $x \in \R^A_{\J,1} \cap \R^A_{\I,\eta}$.
\end{proof}

Recall that $N_\I(x) = \Pi_{\norm{A}}(\Lambda_\I(x))$, where $\Pi_{\norm{A}}$ is the function defined by Definition~\ref{def Pi k}. The estimate of Lemma~\ref{lem estimate Lambda I} allows to derive the following additive estimate.

\begin{lem}
\label{lem clustering NI add}
Let us assume that $\Norm{\kappa}_{\norm{A},\eta} \xrightarrow[\eta \to +\infty]{} 0$. Let $\eta \geq 1$, let $\I, \J \in \pa_A$ be such that $\J \leq \I$ and let $x \in \R^A_{\J,1} \cap \R^A_{\I,\eta}$, we have:
\begin{equation*}
N_\J(x) = \prod_{I \in \I} N_{\J_I}(\underline{x}_I) + O\!\left(\left(\Norm{\kappa}_{\norm{A},\eta}\right)^\frac{1}{2}\right),
\end{equation*}
where the constant implied in the error term does not depend on $\eta$, $\I$, $\J$ nor $x \in \R^A_{\J,1} \cap \R^A_{\I,\eta}$.
\end{lem}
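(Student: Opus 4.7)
The plan is to deduce the estimate on $N_\J(x)$ from the corresponding estimate on the conditional variance matrix $\Lambda_\J(x)$ provided by Lemma~\ref{lem estimate Lambda I}, using the fact that $N_\J(x) = \Pi_{\norm{A}}(\Lambda_\J(x))$ where $\Pi_{\norm{A}}$ is the map introduced in Definition~\ref{def Pi k}. More precisely, writing $\I = \{I_1,\dots,I_{\norm{\I}}\}$ and $\J_i = \J_{I_i}$ as in Lemma~\ref{lem estimate Lambda I}, and denoting by $\tilde{\Lambda}(x)$ the block-diagonal matrix of size $\norm{A}$ whose diagonal blocks are the $\Lambda_{\J_i}(\underline{x}_{I_i})$, Lemma~\ref{lem estimate Lambda I} asserts that
\begin{equation*}
\Lambda_\J(x) = \tilde{\Lambda}(x) + O\!\left(\Norm{\kappa}_{\norm{A},\eta}\right),
\end{equation*}
uniformly in $\I$, $\J \leq \I$ and $x \in \R^A_{\J,1} \cap \R^A_{\I,\eta}$.

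The first step is to identify $\Pi_{\norm{A}}(\tilde{\Lambda}(x))$ with the product $\prod_{I \in \I} N_{\J_I}(\underline{x}_I)$. Since $\tilde{\Lambda}(x)$ is block-diagonal, a centered Gaussian vector $Y$ of variance $\tilde{\Lambda}(x)$ splits as the independent concatenation of centered Gaussian vectors $Y^{(i)}$ of variance $\Lambda_{\J_i}(\underline{x}_{I_i})$ for $i = 1,\dots,\norm{\I}$. By independence, $\mathbb{E}\bigl[\prod_{j} |Y_j|\bigr]$ factorizes as $\prod_i \mathbb{E}\bigl[\prod_j |Y^{(i)}_j|\bigr]$, which gives $\Pi_{\norm{A}}(\tilde{\Lambda}(x)) = \prod_{i} \Pi_{\norm{I_i}}(\Lambda_{\J_i}(\underline{x}_{I_i})) = \prod_{I \in \I} N_{\J_I}(\underline{x}_I)$.

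The second and crucial step is to compare $\Pi_{\norm{A}}(\Lambda_\J(x))$ with $\Pi_{\norm{A}}(\tilde{\Lambda}(x))$. The naive Lipschitz estimate for $\Pi_{\norm{A}}$ does not hold near the boundary of the positive semi-definite cone, which is why we cannot hope for an error better than $O(\Norm{\kappa}_{\norm{A},\eta}^{1/2})$. However, since $\Pi_{\norm{A}}(\Lambda)$ is the expectation of $\prod_j |(\sqrt{\Lambda}\xi)_j|$ with $\xi$ standard Gaussian in $\R^{\norm{A}}$ and since $\Lambda \mapsto \sqrt{\Lambda}$ is $\frac{1}{2}$-Hölder continuous on positive semi-definite matrices, the map $\Pi_{\norm{A}}$ is $\frac{1}{2}$-Hölder continuous on any bounded set of such matrices (this is the content of the Gaussian lemma proved in Appendix~\ref{sec a Gaussian lemma}). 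Since both $\Lambda_\J(x)$ and $\tilde{\Lambda}(x)$ stay in a bounded set of symmetric positive semi-definite matrices by Lemma~\ref{lem boundedness Lambda I}, we obtain
\begin{equation*}
\Pi_{\norm{A}}(\Lambda_\J(x)) - \Pi_{\norm{A}}(\tilde{\Lambda}(x)) = O\!\left(\Norm{\Lambda_\J(x) - \tilde{\Lambda}(x)}_\infty^{1/2}\right) = O\!\left(\left(\Norm{\kappa}_{\norm{A},\eta}\right)^{1/2}\right),
\end{equation*}
with a constant independent of the data, which is the desired estimate.

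The main obstacle is the second step: proving (or invoking) the uniform $\frac{1}{2}$-Hölder continuity of $\Pi_{\norm{A}}$ on the relevant domain. The $\frac{1}{2}$-exponent is sharp and reflects the fact that $\Lambda_\J(x)$ is allowed to degenerate as $x$ varies (for instance when some components of $x$ inside a cluster of $\J$ collide). The bound from Lemma~\ref{lem boundedness Lambda I} ensures we stay in a compact region where the Hölder constant is uniform, and one only needs to argue that the Hölder estimate for $\Pi_{\norm{A}}$ does not require non-degeneracy of the variance. Once this Gaussian lemma is in hand, the rest of the argument is straightforward and the uniformity in $\I$, $\J$ and $x$ follows from the uniformity in Lemmas~\ref{lem boundedness Lambda I} and~\ref{lem estimate Lambda I}.
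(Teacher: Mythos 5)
Your proposal is correct and follows essentially the same route as the paper's proof: it compares $\Lambda_\J(x)$ with the block-diagonal matrix $\tilde{\Lambda}(x)$ via Lemma~\ref{lem estimate Lambda I}, factorizes $\Pi_{\norm{A}}(\tilde{\Lambda}(x))$ by independence, and passes from the matrix estimate to the numerator estimate via the $\frac{1}{2}$-Hölder continuity of $\Pi_{\norm{A}}$ from Corollary~\ref{cor Pi k}, using Lemma~\ref{lem boundedness Lambda I} to ensure a uniform Hölder constant. The only divergence is cosmetic: you motivate the Hölder estimate through the $\frac{1}{2}$-Hölder regularity of the matrix square root, whereas the appendix proves Lemma~\ref{lem Pi k} by a direct coupling and telescoping argument, but since you invoke the appendix result rather than reprove it, this changes nothing.
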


\begin{proof}
Let $\eta \geq 1$, let $\I,\J \in \pa_A$ be such that $\J \leq \I$ and let $x \in \R^A_{\J,1} \cap \R^A_{\I,\eta}$. Since $x \in \R^A_{\J,1}$, we have $D_{\J}(x) \geq \epsilon_1 >0$ (see Lemma~\ref{lem lower bound DI induction step}), so that $N_\J(x)$ is well-defined. Similarly, for any $I \in \I$, we have $\underline{x}_I \in \R^I_{\J_I,1}$, so that $N_{\J_I}(\underline{x}_I)$ is also well-defined.

Let us denote by $I_1,\dots,I_{\norm{\I}}$ the elements of $\I$. For all $i \in \{1,\dots,\norm{\I}\}$, we set $\J_i = \J_{I_i}$, and we denote by:
\begin{equation*}
\tilde{\Lambda}_\J(x) = \begin{pmatrix}
\Lambda_{\J_1}(\underline{x}_{I_1}) & 0 & \cdots & 0 \\
0 & \Lambda_{\J_2}(\underline{x}_{I_2}) & \ddots & \vdots\\
\vdots & \ddots & \ddots & 0\\
0 & \cdots & 0 & \Lambda_{\J_{\norm{\I}}}(\underline{x}_{I_{\norm{\I}}})
\end{pmatrix}.
\end{equation*}
By Definitions~\ref{def Kac-Rice densities partition} and~\ref{def Pi k} and the definition of $\Lambda_\J(x)$ (see the beginning of Section~\ref{subsec variance and covariance matrices}), we have $N_\J(x) = \Pi_{\norm{A}}\left(\Lambda_\J(x)\right)$. Let $(Z_i)_{1 \leq i \leq \norm{A}} \sim \mathcal{N}(0,\tilde{\Lambda}_\J(x))$ in $\R^{\norm{A}}$, we have in the same way:
\begin{equation*}
\Pi_{\norm{A}}(\tilde{\Lambda}_\J(x)) = \prod_{i=1}^{\norm{\I}} \esp{\prod_{j=1}^{\norm{I_i}}\norm{Z_{\norm{I_1}+\norm{I_2}+ \dots + \norm{I_{i-1}}+j}}} = \prod_{i=1}^{\norm{\I}} \Pi_{\norm{I_i}}(\Lambda_{\J_i}(\underline{x}_{I_i})) = \prod_{I \in \I} N_{\J_I}(\underline{x}_I).
\end{equation*}

By Lemma~\ref{lem estimate Lambda I}, we have $\Norm{\Lambda_\J(x) - \tilde{\Lambda}_\J(x)}_\infty = O\!\left(\Norm{\kappa}_{\norm{A},\eta}\right)$. Moreover, by Lemma~\ref{lem boundedness Lambda I}, we have $\Norm{\Lambda_\J(x)}_\infty = O\!\left(\Norm{\kappa}_{2\norm{A}}\right)$, so that both $\Lambda_\J(x)$ and $\tilde{\Lambda}_\J(x)$ lie in a ball of center $0$ and radius $O\!\left(\Norm{\kappa}_{2\norm{A}}\right)$ in the space of symmetric matrices. Note that the constant implied in these estimates is independent of $\eta$, $\I$, $\J$ and $x$. By Corollary~\ref{cor Pi k}, the map $\Pi_{\norm{A}}$ is $\frac{1}{2}$-Hölder on compact sets. Hence, there exists $C>0$, depending only on $\norm{A}$ and $\kappa$, such that:
\begin{align*}
\norm{N_\J(x) - \prod_{I \in \I} N_{\J_I}(\underline{x}_I)} &= \norm{\Pi_{\norm{A}}(\Lambda_\J(x) - \Pi_{\norm{A}}(\tilde{\Lambda}_\J(x))}\\
&\leq C \Norm{\Lambda_\J(x) - \tilde{\Lambda}_\J(x)}^\frac{1}{2}_\infty\\
&= O\!\left(\left(\Norm{\kappa}_{\norm{A},\eta}\right)^\frac{1}{2}\right),
\end{align*}
where the constant implied in the error term does not depend on $\eta$, $\I$, $\J$ nor $x$.
\end{proof}

\begin{cor}
\label{cor clustering NI add}
Let us assume that $\Norm{\kappa}_{\norm{A},\eta} \xrightarrow[\eta \to +\infty]{} 0$. Let $\eta \geq 1$ and let $\I \in \pa_A$, for all $x \in \R^A_{\I,\eta}$, we have:
\begin{equation*}
N_\I(x) = \prod_{I \in \I} N_{\{I\}}(\underline{x}_I) + O\!\left(\left(\Norm{\kappa}_{\norm{A},\eta}\right)^\frac{1}{2}\right),
\end{equation*}
where the constant implied in the error term does not depend on $\eta$, $\I$ nor $x \in \R^A_{\I,\eta}$.
\end{cor}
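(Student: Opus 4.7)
The plan is to deduce this corollary from Lemma~\ref{lem clustering NI add} by finding a finer partition $\J \leq \I$ such that $x \in \R^A_{\J,1}$, then using Corollary~\ref{cor expression NIx} to translate the estimate for $N_\J(x)$ into an estimate for $N_\I(x)$. The key observation is that the relevant multiplicative factor will be bounded below by $1$, so division by it will only improve the error bound.

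Fix $\eta \geq 1$, $\I \in \pa_A$, and $x \in \R^A_{\I,\eta}$. By Lemma~\ref{lem partial order}, the partition $\J := \I_1(x)$ satisfies $\J \leq \I_\eta(x) = \I$, and by construction $x \in \R^A_{\J,1}$. Applying Lemma~\ref{lem clustering NI add} to this $\J$ gives
\[
N_\J(x) = \prod_{I \in \I} N_{\J_I}(\underline{x}_I) + O\!\left(\left(\Norm{\kappa}_{\norm{A},\eta}\right)^{1/2}\right),
\]
with an error constant that is uniform in all the relevant parameters.

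Next, I would apply Corollary~\ref{cor expression NIx} in two ways. First, applied to the pair $\J \leq \I$ in $\pa_A$, it yields $N_\J(x) = \alpha(\J,x)\, N_\I(x)$, where
\[
\alpha(\J,x) = \prod_{I \in \I}\ \prod_{\{(J,J') \in \J_I^2 \mid J \neq J'\}}\ \prod_{(i,j) \in J \times J'} \norm{x_i - x_j}.
\]
Second, applied for each $I \in \I$ separately to the pair $\J_I \leq \{I\}$ in $\pa_I$ and then multiplied over $I \in \I$, it gives $\prod_{I \in \I} N_{\J_I}(\underline{x}_I) = \alpha(\J,x) \prod_{I \in \I} N_{\{I\}}(\underline{x}_I)$, with exactly the same factor $\alpha(\J,x)$. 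Substituting both identities into the estimate above,
\[
\alpha(\J,x) \left[N_\I(x) - \prod_{I \in \I} N_{\{I\}}(\underline{x}_I)\right] = O\!\left(\left(\Norm{\kappa}_{\norm{A},\eta}\right)^{1/2}\right).
\]

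To conclude, it suffices to check that $\alpha(\J,x) \geq 1$. For each $I \in \I$ and each pair of distinct $J, J' \in \J_I$, any $i \in J$ and $j \in J'$ lie in different clusters of $\J = \I_1(x)$, so by Definition~\ref{def I eta} we have $\norm{x_i - x_j} > 1$. Thus every factor entering $\alpha(\J,x)$ exceeds~$1$, and dividing through gives the desired conclusion with a constant no larger than the one supplied by Lemma~\ref{lem clustering NI add}, hence independent of $\eta$, $\I$, and $x$. I do not foresee any real obstacle here: the argument is essentially a bookkeeping reduction, and the only substantive point is the elementary lower bound $\alpha(\J,x) \geq 1$ afforded by the choice $\J = \I_1(x)$.
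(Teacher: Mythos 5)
Your proof is correct and follows the same route as the paper: set $\J = \I_1(x)$ so that $\J \leq \I$ and $x \in \R^A_{\J,1}$, invoke Lemma~\ref{lem clustering NI add}, translate both $N_\J(x)$ and $\prod_I N_{\J_I}(\underline{x}_I)$ to $N_\I(x)$ and $\prod_I N_{\{I\}}(\underline{x}_I)$ via Corollary~\ref{cor expression NIx} with a common prefactor $\alpha(\J,x)$, and conclude by noting $\alpha(\J,x) \geq 1$ since distinct clusters of $\I_1(x)$ are at distance greater than~$1$. The paper's proof compresses the two applications of Corollary~\ref{cor expression NIx} into a single line, but the substance is identical.
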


\begin{proof}
Let $\eta \geq 1$ and let $\I \in \pa_A$. Let $x \in \R^A_{\I,\eta}$ and let us denote by $\J = \I_1(x)$. By Lemma~\ref{lem partial order}, we have $\J \leq \I$. Applying Lemma~\ref{lem clustering NI add} and Corollary~\ref{cor expression NIx}, we get:
\begin{equation*}
\left(\prod_{I \in \I}\prod_{\{(J,J') \in \J_I^2 \mid J \neq J'\}} \prod_{(i,j) \in J \times J'} \norm{x_i - x_j}\right) \norm{N_\I(x) - \prod_{I\in \I} N_{\{I\}}(\underline{x}_I)} = O\!\left(\left(\Norm{\kappa}_{\norm{A},\eta}\right)^\frac{1}{2}\right),
\end{equation*}
where the constant implied in the error term does not depend on $\eta$, $\I$, $\J$ nor $x$. Since $x \in \R^A_{\J,1}$, if $i \in J \in \J$ and $j \in J' \in \J$ with $J \neq J'$ then $\norm{x_i-x_j} \geq 1$. Hence,
\begin{equation*}
\left(\prod_{I \in \I}\prod_{\{(J,J') \in \J_I^2 \mid J \neq J'\}} \prod_{(i,j) \in J \times J'} \norm{x_i - x_j}\right) \geq 1.
\end{equation*}
This yields the result.
\end{proof}

In the remainder of this section, we show that the additive estimate of Corollary~\ref{cor clustering NI add} yields a multiplicative estimate similar to the one derived in Lemma~\ref{lem clustering DI}. The key step is to prove that $N_\I(x)$ is bounded from below by a positive constant of the relevant domain.

\begin{lem}
\label{lem lower bound NI base case}
Let $\eta\geq 0$, there exists $\epsilon'_{\{A\},\eta} >0$ such that $\forall x \in \R^A_{\{A\},\eta}$, $N_{\{A\}}(x) \geq \epsilon'_{\{A\},\eta}$.
\end{lem}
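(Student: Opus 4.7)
The plan is to mimic exactly the strategy used for Lemma~\ref{lem lower bound DI base case}, replacing $D_{\{A\}}$ by $N_{\{A\}}$: reduce to a compact subset of $\R^A$ via translation invariance, then invoke continuity and strict positivity together with compactness.

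First, I would establish that $N_{\{A\}}$ is well-defined, continuous and strictly positive on the whole of $\R^A$. By Corollary~\ref{cor vanishing locus DI}, under our standing assumptions on $f$ (it is a $\mathcal{C}^{\norm{A}}$ normalized stationary centered Gaussian process whose correlation function tends to $0$ at infinity), we have $D_{\{A\}}(x) > 0$ and $N_{\{A\}}(x) > 0$ for every $x \in \R^A$. Moreover $N_{\{A\}}$ is continuous on $\{x \in \R^A \mid D_{\{A\}}(x) \neq 0\} = \R^A$ by Lemma~\ref{lem DI and NI continuous}. Finally, by Remarks~\ref{rem def Kac-Rice densities partition}, the function $N_{\{A\}}$ is invariant under the diagonal action of $\R$ on $\R^A$ by translation, a consequence of Lemma~\ref{lem variance divided differences} and of the fact that $N_{\{A\}}$ only depends on the joint distribution of the divided differences of $f$ at the points $x_a$.

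Now I would fix any ordering $A = \{a_1,\dots,a_{\norm{A}}\}$; by Lemma~\ref{lem expression NIx}, $N_{\{A\}}$ does not depend on this choice. For $x = (x_a)_{a \in A} \in \R^A_{\{A\},\eta}$, translation invariance gives
\begin{equation*}
N_{\{A\}}(x) = N_{\{A\}}(0,x_{a_2}-x_{a_1},\dots,x_{a_{\norm{A}}}-x_{a_1}).
\end{equation*}
Since $x \in \R^A_{\{A\},\eta}$, Remarks~\ref{rem R A I eta} show that $\norm{x_{a_i}-x_{a_1}} \leq \norm{A}\eta$ for every $i$, so the translated point lies in the compact set $K_\eta = \{0\} \times [-\norm{A}\eta,\norm{A}\eta]^{\norm{A}-1} \subset \R^A$. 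As $N_{\{A\}}$ is continuous and strictly positive on $K_\eta$, it attains a positive minimum there; setting $\epsilon'_{\{A\},\eta}$ equal to this minimum yields the desired uniform lower bound.

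There is no serious obstacle: the argument is essentially the same compactness argument as in Lemma~\ref{lem lower bound DI base case}, and the two nontrivial ingredients (positivity of $N_{\{A\}}$ everywhere and continuity on all of $\R^A$) have already been established in Corollary~\ref{cor vanishing locus DI} and Lemma~\ref{lem DI and NI continuous}. The only point requiring a touch of care is to invoke translation invariance through Lemma~\ref{lem variance divided differences} rather than stationarity of $f$ directly, since $N_{\{A\}}$ is defined via the divided differences of $f$ and not via the vector $(f(x_a))_{a\in A}$.
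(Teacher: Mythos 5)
Your proposal is correct and follows essentially the same argument as the paper: establish continuity and strict positivity of $N_{\{A\}}$ on $\R^A$ (via Lemma~\ref{lem DI and NI continuous} and Corollary~\ref{cor vanishing locus DI}), use translation invariance to reduce to the compact set $\{0\}\times[-\norm{A}\eta,\norm{A}\eta]^{\norm{A}-1}$, and conclude by compactness. Your closing remark that the invariance should be traced through the divided differences (Lemma~\ref{lem variance divided differences}) rather than applied naively to $(f(x_a))_{a\in A}$ is a fair point of care that the paper glosses over by simply citing stationarity.
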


\begin{proof}
The proof is similar to that of Lemma~\ref{lem lower bound DI base case}. Note that $N_{\{A\}}$ is a well-defined continuous positive function on $\R^A$, by Lemma~\ref{lem DI and NI continuous} and Corollary~\ref{cor vanishing locus DI}.

Let $\eta \geq 0$. Using the stationarity of $f$ (see Remarks~\ref{rem def Kac-Rice densities partition}), it is enough to prove that there exists $\epsilon'_{\{A\},\eta}>0$ such that $N_{\{A\}}(x) \geq \epsilon'_{\{A\},\eta}$ for all $x \in \{0\} \times [-\norm{A}\eta,\norm{A}\eta]^{\norm{A}-1}$. This is true, by compactness of this set.
\end{proof}

\begin{lem}[Uniform lower bound]
\label{lem lower bound NI induction step}
Let us assume that $\Norm{\kappa}_{\norm{A},\eta} \xrightarrow[\eta \to +\infty]{} 0$. Then, for all $\eta \geq 1$, there exists $\epsilon'_\eta>0$ such that $\forall \I \in \pa_A$, $\forall x \in \R^A_{\I,\eta}$, $N_\I(x) \geq \epsilon'_\eta$.
\end{lem}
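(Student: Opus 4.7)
The plan is to mimic the backward induction on $\J \in (\pa_A,\leq)$ used in the proof of Lemma~\ref{lem lower bound DI induction step}, but replacing the denominator $D_\J$ by the numerator $N_\J$. Precisely, I would prove by decreasing induction on $\J \in \pa_A$ the following analog of statement~\eqref{eq statement *}:
\begin{equation*}
\forall \eta \geq 1,\ \exists \epsilon'_{\J,\eta} >0,\ \forall x \in \R^A_{\J,\eta},\ N_\J(x) \geq \epsilon'_{\J,\eta}.
\end{equation*}
Since $\pa_A$ is finite, taking $\epsilon'_\eta = \min\{\epsilon'_{\J,\eta} \mid \J \in \pa_A\}$ yields the lemma. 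The base case $\J = \I_{\max}(A) = \{A\}$ is exactly Lemma~\ref{lem lower bound NI base case}.

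For the induction step, fix $\J \in \pa_A \setminus \{\I_{\max}(A)\}$ and assume the statement holds for every $\I > \J$. Fix $\eta \geq 1$. For any $\tau \geq \eta$, Lemma~\ref{lem partial order} yields the decomposition $\R^A_{\J,\eta} = \bigsqcup_{\I \geq \J}(\R^A_{\J,\eta} \cap \R^A_{\I,\tau})$, so it suffices to bound $N_\J$ from below on each piece. Note also that, since $\eta \geq 1$, Lemma~\ref{lem lower bound DI induction step} ensures $D_\J(x) >0$ on $\R^A_{\J,\eta}$, so that $N_\J(x)$ is well-defined throughout.

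On a piece with $\I > \J$, I would apply Corollary~\ref{cor expression NIx} to write
\begin{equation*}
N_\J(x) = \left(\prod_{I \in \I} \prod_{\{(J,J') \in \J_I^2 \mid J \neq J'\}}\prod_{(i,j) \in J \times J'} \norm{x_i-x_j}\right) N_\I(x).
\end{equation*}
Each factor $\norm{x_i-x_j}$ involves indices $i,j$ lying in distinct clusters of $\J$ (hence distinct connected components of $G_\eta(x)$), so $\norm{x_i-x_j} > \eta \geq 1$; the whole product is therefore at least $1$. Combined with the induction hypothesis $N_\I(x) \geq \epsilon'_{\I,\tau}$ on $\R^A_{\I,\tau}$, this gives $N_\J(x) \geq \epsilon'_{\I,\tau}$ on the piece.

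The delicate piece is $\R^A_{\J,\eta} \cap \R^A_{\J,\tau}$, where the induction hypothesis does not help. Here I would invoke Corollary~\ref{cor clustering NI add} applied at scale $\tau$ to write
\begin{equation*}
N_\J(x) = \prod_{J \in \J} N_{\{J\}}(\underline{x}_J) + O\!\left(\left(\Norm{\kappa}_{\norm{A},\tau}\right)^\frac{1}{2}\right),
\end{equation*}
with an error term independent of $\J$ and $x$. For each $J \in \J$, restricting the connected-component structure of $G_\eta(x)$ to $J$ gives $\underline{x}_J \in \R^J_{\{J\},\eta}$, so Lemma~\ref{lem lower bound NI base case} applied to $J$ in place of $A$ yields $N_{\{J\}}(\underline{x}_J) \geq \epsilon'_{\{J\},\eta} >0$. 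Hence the product is at least $c_\eta := \prod_{J \in \J} \epsilon'_{\{J\},\eta} >0$, a quantity depending only on $\eta$. Exploiting the decay hypothesis $\Norm{\kappa}_{\norm{A},\tau} \to 0$ as $\tau \to +\infty$, I would fix $\tau \geq \eta$ once and for all large enough that the error term is bounded by $c_\eta/2$; this gives $N_\J(x) \geq c_\eta/2$ on the piece. Setting $\epsilon'_{\J,\eta} = \min\!\left(c_\eta/2,\, \min_{\I > \J} \epsilon'_{\I,\tau}\right) > 0$ closes the induction.

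The main obstacle is the $\I = \J$ piece: unlike for $D_\J$ (Corollary~\ref{cor splitting DI}, which gave a quadratic error), the only clustering estimate available for $N_\J$ is the square-root estimate of Corollary~\ref{cor clustering NI add}. The proof therefore hinges on choosing $\tau$ sufficiently large to force this slower decay to beat the deterministic lower bound $c_\eta$; this uses the decay hypothesis $\Norm{\kappa}_{\norm{A},\tau} \to 0$ in an essential way.
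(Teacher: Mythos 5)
Your proof is correct and fills in, faithfully and in full, the induction step that the paper only sketches by reference to the analogous argument for $D_\J$; you use the same two ingredients (Corollary~\ref{cor clustering NI add} for the $\I=\J$ piece and Corollary~\ref{cor expression NIx} plus the inductive lower bound $\epsilon'_{\I,\tau}$ for the $\I>\J$ pieces) in the same way. One cosmetic point: your constant $c_\eta$ should really carry a subscript $\J$, since the product $\prod_{J\in\J}\epsilon'_{\{J\},\eta}$ depends on the partition being treated in the induction step; this does not affect the argument because $\pa_A$ is finite.
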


\begin{proof}
The proof is similar to that of Lemma~\ref{lem lower bound DI induction step}. We prove by a backward induction on $(\pa_A,\leq )$ that for any $\I \in \pa_A$ the following statement is true:
\begin{equation}
\label{eq statement **}
\text{for all} \ \eta \geq 1, \ \text{there exists} \ \epsilon'_{\I,\eta} >0 \ \text{such that} \ \forall x \in \R^A_{\I,\eta},\, N_\I(x) \geq \epsilon'_{\I,\eta}.
\end{equation}
Then we set $\epsilon'_\eta = \min \left\{ \epsilon'_{\I,\eta} \mvert \I \in \pa_A \right\}$, which is the positive lower bound we are looking for.

The base case of the induction is for $\I = \I_{\max}(A) = \{A\}$. It is given by Lemma~\ref{lem lower bound NI base case}.

The induction step is similar to the induction step in the proof of Lemma~\ref{lem lower bound DI induction step}. In Lemma~\ref{lem lower bound DI induction step}, the two key elements are the additive estimate of Equation~\eqref{eq splitting DI} and the relation given by Corollary~\ref{cor expression DIx}. Here, the analogous results are the additive estimate of Corollary~\ref{cor clustering NI add} and the relation given by Corollary~\ref{cor expression NIx}.
\end{proof}

\begin{lem}[Numerator clustering]
\label{lem clustering NI mult}
Let us assume that $\Norm{\kappa}_{\norm{A},\eta} \xrightarrow[\eta \to +\infty]{} 0$. Let $\eta \geq 1$, let $\I, \J \in \pa_A$ be such that $\J \leq \I$ and let $x \in \R^A_{\J,1} \cap \R^A_{\I,\eta}$, we have:
\begin{equation*}
\prod_{I \in \I} N_{\J_I}(\underline{x}_I) = N_\J(x) \left(1 + O\!\left(\left(\Norm{\kappa}_{\norm{A},\eta}\right)^\frac{1}{2}\right)\right),
\end{equation*}
where the constant implied in the error term does not depend on $\eta$, $\I,\J$ nor $x \in \R^A_{\J,1} \cap \R^A_{\I,\eta}$.
\end{lem}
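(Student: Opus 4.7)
The plan is to mimic the proof of Lemma~\ref{lem clustering DI}: convert the additive estimate of Lemma~\ref{lem clustering NI add} into a multiplicative one using the uniform positive lower bound on $N_\J$ supplied by Lemma~\ref{lem lower bound NI induction step}. All of the substantial work has already been done — the clustering in the $Y_\I$ block of the variance matrix (Lemma~\ref{lem estimate Lambda I}), the $\frac{1}{2}$-Hölder regularity of $\Pi_{\norm{A}}$ (Corollary~\ref{cor Pi k}) and, crucially, the non-degeneracy arguments behind the lower bound. So what remains is essentially a one-line division, together with a check that the constants stay uniform in $\eta$, $\I$, $\J$ and~$x$.

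More precisely, I would fix $\eta \geq 1$, $\I,\J \in \pa_A$ with $\J \leq \I$ and $x \in \R^A_{\J,1} \cap \R^A_{\I,\eta}$. Since $x \in \R^A_{\J,1}$, Lemma~\ref{lem lower bound NI induction step} applied to $\J$ (with scale parameter~$1$) furnishes a constant $\epsilon'_1 > 0$, independent of $\J$ and of $x$, such that $N_\J(x) \geq \epsilon'_1$. In particular $D_\J(x) \neq 0$, so $N_\J(x)$ is well defined, and for every $I \in \I$ the point $\underline{x}_I$ lies in $\R^I_{\J_I,1}$, whence $N_{\J_I}(\underline{x}_I)$ is also well defined by the same lemma applied on the finite set $I$.

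Lemma~\ref{lem clustering NI add} then gives a constant $C > 0$, independent of $\eta$, $\I$, $\J$ and $x$, such that
\begin{equation*}
\left|\, N_\J(x) - \prod_{I \in \I} N_{\J_I}(\underline{x}_I)\,\right| \leq C \bigl(\Norm{\kappa}_{\norm{A},\eta}\bigr)^{\frac{1}{2}}.
\end{equation*}
Dividing by $N_\J(x) \geq \epsilon'_1$ yields
\begin{equation*}
\left|\, \frac{\prod_{I \in \I} N_{\J_I}(\underline{x}_I)}{N_\J(x)} - 1\,\right| \leq \frac{C}{\epsilon'_1} \bigl(\Norm{\kappa}_{\norm{A},\eta}\bigr)^{\frac{1}{2}},
\end{equation*}
which rearranges into the stated identity with an error term $O\bigl((\Norm{\kappa}_{\norm{A},\eta})^{1/2}\bigr)$.

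There is no real obstacle here, only bookkeeping: one must verify that neither $\epsilon'_1$ nor the constant in Lemma~\ref{lem clustering NI add} depends on the specific partitions $\I, \J$ or on the point~$x$ in the considered domain. Both points are already built into the statements of those auxiliary results (the former via the minimization over the finite set $\pa_A$ in the definition of $\epsilon'_1$, the latter stated explicitly with a uniform constant), so the uniformity of the error term in the final statement is immediate. The real content — namely the fact that $N_\J(x)$ stays bounded away from zero on $\R^A_{\J,1}$, which relies on the non-degeneracy of all the relevant conditional Gaussian vectors and on the clustering of $\Lambda_\J$ — has been absorbed into the preceding lemmas, and the present statement is the clean multiplicative packaging of all of that.
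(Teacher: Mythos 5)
Your proposal is correct and follows exactly the same route as the paper: apply the additive estimate of Lemma~\ref{lem clustering NI add} and divide by the uniform lower bound $\epsilon'_1>0$ on $N_\J$ over $\R^A_{\J,1}$ furnished by Lemma~\ref{lem lower bound NI induction step}. The uniformity of the constants is indeed already built into those two auxiliary lemmas, so nothing further is needed.
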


\begin{proof}
Since, $x \in \R^A_{\J,1} \cap \R^A_{\I,\eta}$, we have:
\begin{equation*}
\prod_{I \in \I} N_{\J_I}(\underline{x}_I) = N_\J(x) + O\!\left(\left(\Norm{\kappa}_{\norm{A},\eta}\right)^\frac{1}{2}\right)
\end{equation*}
by Lemma~\ref{lem clustering NI add}. The result follows from the fact that $N_\J(x) \geq \epsilon'_1$, where $\epsilon'_1>0$ is given by Lemma~\ref{lem lower bound NI induction step}.
\end{proof}


\subsection{Proof of Theorem~\ref{thm clustering}: clustering for \texorpdfstring{$k$}{}-point functions}
\label{subsec proof of theorem clustering}

In this section we prove Theorem~\ref{thm clustering}. This result will be deduced from Lemma~\ref{lem bound rho I} and Proposition~\ref{prop clustering rho I}, which will also be useful in the proof of Theorem~\ref{thm moments} in Section~\ref{sec proof theorem moments}.

As usual, in all this section, $A$ denotes a non-empty finite set and $f$ is a normalized centered stationary Gaussian process of class $\mathcal{C}^{\norm{A}}$ whose correlation function is denoted by $\kappa$. We assume in the following that $\Norm{\kappa}_{\norm{A},\eta} \xrightarrow[\eta \to +\infty]{} 0$. In particular, $\kappa$ tends to $0$ at infinity, so that the conclusion of Lemma~\ref{lem non-degeneracy} holds true. Under these assumptions, the Kac--Rice density $\rho_{\{A\}}$ is well-defined on~$\R^A$. Moreover, it coincides with $\rho_{\norm{A}}$ on $\R^A \setminus \Delta_A$ (see Proposition~\ref{prop Kac-Rice revisited}), which is also the $\norm{A}$-point function of the point process $Z= f^{-1}(0)$ (see Lemma~\ref{lem k point function}).

\begin{lem}[Boundedness]
\label{lem bound rho I}
If $\Norm{\kappa}_{\norm{A},\eta} \xrightarrow[\eta \to +\infty]{} 0$ then there exists $C>0$ such that, for all $x =(x_a)_{a \in A} \in \R^A$,
\begin{equation*}
\rho_{\{A\}}(x) \leq C \left(\prod_{a \neq b} \min(\norm{x_a-x_b},1)\right)^\frac{1}{2}.
\end{equation*}
In particular, $\rho_{\{A\}}$ is bounded on $\R^A$.
\end{lem}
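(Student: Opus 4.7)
The plan is to evaluate $\rho_{\{A\}}$ via the alternative expression $\rho_\I$ for a well-chosen partition, namely the cluster partition $\I = \I_1(x)$ at scale $1$ introduced in Section~6.1. By construction $x \in \R^A_{\I,1}$, so Lemma~6.7 (applied with $\J = \I$) yields a positive lower bound $D_\I(x) \geq \epsilon_1$ independent of $x$, which in particular ensures that $\rho_\I(x)$ is defined and equals $\rho_{\{A\}}(x)$ by Proposition~6.3. Moreover, Corollary~6.6 provides a uniform upper bound $N_\I(x) \leq C_1$. Plugging these into Equation~\eqref{eq def rho Ix} gives
\begin{equation*}
\rho_{\{A\}}(x) = \rho_\I(x) \leq C_2 \prod_{I \in \I} \prod_{\{(i,j) \in I^2 \mid i \neq j\}} \norm{x_i - x_j}^{1/2},
\end{equation*}
for some $C_2 >0$ depending only on $\norm{A}$ and $\kappa$.

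The remaining task is purely combinatorial: to compare the product over intra-cluster pairs with the product over all pairs $a \neq b$ of $\min(\norm{x_a - x_b},1)$. I will use two elementary observations about the partition $\I_1(x)$. First, for any $i,j$ lying in the same cluster $I \in \I$ one has $\norm{x_i - x_j} \leq (\norm{I}-1) \leq \norm{A}$ (by the second point of Remarks~6.6), hence $\norm{x_i - x_j} \leq \norm{A}\min(\norm{x_i-x_j},1)$, regardless of whether the minimum is $\norm{x_i-x_j}$ or $1$. Second, for any $a \in I$, $b \in J$ with $I \neq J$, the very definition of $\I_1(x)$ forces $\norm{x_a - x_b} > 1$, so $\min(\norm{x_a - x_b},1) = 1$.

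Combining these two facts, the product over intra-cluster pairs can be rewritten as the full product over all pairs $a \neq b$ up to a universal constant:
\begin{equation*}
\prod_{I \in \I} \prod_{\{(i,j) \in I^2 \mid i \neq j\}} \norm{x_i - x_j}^{1/2} \leq \norm{A}^{\norm{A}^2/2} \prod_{\{(a,b) \in A^2 \mid a \neq b\}} \min(\norm{x_a - x_b},1)^{1/2},
\end{equation*}
since the inter-cluster factors on the right-hand side contribute nothing. This yields the desired inequality with $C = C_2 \norm{A}^{\norm{A}^2/2}$. The boundedness of $\rho_{\{A\}}$ is then immediate since each factor $\min(\norm{x_a-x_b},1)^{1/2}$ is $\leq 1$.

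I do not anticipate any serious obstacle: all the non-trivial analytic content (the positivity of $D_\I$, the uniform bound on $N_\I$, the continuity properties) has been packaged into Corollary~6.6 and Lemma~6.7, so the argument is essentially a bookkeeping exercise. The only point requiring a little care is to notice that $\I_1(x)$ is exactly the right partition to use, because it simultaneously satisfies $x \in \R^A_{\I,1}$ (needed to apply the lower bound on $D_\I$) and keeps each intra-cluster distance bounded by $\norm{A}$ (needed for the comparison with $\min(\norm{x_a-x_b},1)$).
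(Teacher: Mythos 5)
Your proof is correct and follows essentially the same route as the paper's: evaluate $\rho_{\{A\}}$ at $x$ via $\rho_{\I_1(x)}$, use the uniform lower bound on $D_{\I_1(x)}$ and the uniform upper bound on $N_{\I_1(x)}$ to reduce to the product over intra-cluster pairs, then compare that product to $\prod_{a\neq b}\min(\norm{x_a-x_b},1)$ via the two elementary observations about clusters at scale $1$. The only cosmetic difference is the constant in the final comparison (the paper records $\norm{A}^{\norm{A}^2}$ where $\norm{A}^{\norm{A}^2/2}$ suffices, but both are valid bounds).
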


\begin{proof}
Let $x =(x_a)_{a \in A} \in \R^A$ and let us denote by $\I = \I_1(x)$ for simplicity. We have $x \in \R^A_{\I,1}$, hence $D_\I(x) \geq \epsilon_1$, where $\epsilon_1 >0$ is given by Lemma~\ref{lem lower bound DI induction step}. In particular, by Proposition~\ref{prop Kac-Rice revisited}, we have $\rho_{\{A\}}(x) = \rho_\I(x)$. Then, by Equation~\eqref{eq def rho Ix},
\begin{equation*}
\rho_{\{A\}}(x) = \left(\prod_{I \in \I} \prod_{\{(i,j) \in I^2 \mid i \neq j\}} \norm{x_i-x_j}^\frac{1}{2}\right) \frac{N_\I(x)}{(2\pi)^\frac{\norm{A}}{2} D_\I(x)^\frac{1}{2}}.
\end{equation*}
By Corollary~\ref{cor boundedness NI}, there exists $C'>0$ independent of $\I$ and $x$ such that $N_\I(x) \leq C'$. Hence,
\begin{equation*}
\rho_{\{A\}}(x) \leq \frac{C'}{(2\pi)^\frac{\norm{A}}{2} (\epsilon_1)^\frac{1}{2}} \left(\prod_{I \in \I} \prod_{\{(i,j) \in I^2 \mid i \neq j\}} \norm{x_i-x_j}^\frac{1}{2}\right).
\end{equation*}

Let $a$ and $b \in A$. If $a$ and $b$ belong to the same cluster of $\I$, then $\norm{x_a-x_b} \leq \norm{A}$ (see Remark~\ref{rem R A I eta}) and $\norm{x_a-x_b} \leq \norm{A} \min(\norm{x_a-x_b},1)$. If $a$ and $b$ belong to different cluster of $\I$, then $\norm{x_a-x_b} \geq 1$ by definition, so that $\min(\norm{x_a-x_b},1) = 1$. Thus,
\begin{align*}
\prod_{I \in \I} \prod_{\{(i,j) \in I^2 \mid i \neq j\}} \norm{x_i-x_j}^\frac{1}{2} &\leq \norm{A}^{\norm{A}^2} \left(\prod_{I \in \I} \prod_{\{(i,j) \in I^2 \mid i \neq j\}} \min(\norm{x_i-x_j},1)\right)^\frac{1}{2}\\
&\leq \norm{A}^{\norm{A}^2} \left(\prod_{\{(i,j) \in A^2 \mid i \neq j\}} \min(\norm{x_i-x_j},1)\right)^\frac{1}{2}.
\end{align*}
This proves the result with $C = \norm{A}^{\norm{A}^2}C'(2\pi)^{-\frac{\norm{A}}{2}}(\epsilon_1)^{-\frac{1}{2}}$.
\end{proof}

\begin{rem}
\label{rem bound rho I}
Note that this bound is the best we can hope for near the diagonal, because of Theorem~\ref{thm vanishing order}.
\end{rem}

\begin{prop}[Clustering]
\label{prop clustering rho I}
Let $A$ be a non-empty finite set. Let $f$ be a normalized centered stationary Gaussian process of class $\mathcal{C}^{\norm{A}}$, whose correlation function $\kappa$ satisfies $\Norm{\kappa}_{\norm{A},\eta} \xrightarrow[\eta \to +\infty]{} 0$. Then, for all $\eta \geq 1$, for all $\I \in \pa_A$, for all $x \in \R^A_{\I, \eta}$, we have:
\begin{equation*}
\prod_{I \in \I} \rho_{\{I\}}(\underline{x}_I) = \rho_{\{A\}}(x) \left(1+O\!\left(\Norm{\kappa}_{\norm{A},\eta}\right)^\frac{1}{2}\right),
\end{equation*}
where the constant involved in the error term does not depend on $\eta$, $\I$ nor $x$.
\end{prop}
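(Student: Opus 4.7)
The plan is to reduce the claim to the clustering estimates already established for the denominators $D_\J$ and the numerators $N_\J$, namely Lemmas~\ref{lem clustering DI} and~\ref{lem clustering NI mult}. The subtlety is that $\rho_{\{A\}}$ is defined as the continuous extension from Proposition~\ref{prop Kac-Rice revisited}, so we cannot directly use Equation~\eqref{eq def rho Ix} with $\J = \I_{\min}(A)$ because $D_{\I_{\min}(A)}(x)$ may vanish. We circumvent this by passing to an intermediate partition adapted to the scale $1$.

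Fix $\eta \geq 1$, $\I \in \pa_A$, and $x \in \R^A_{\I,\eta}$. Set $\J = \I_1(x) \in \pa_A$, so that $\J \leq \I$ by Lemma~\ref{lem partial order} (see also Remark~\ref{rem clustering DI}). By Lemma~\ref{lem lower bound DI induction step} applied to $A$ with $\eta = 1$, we have $D_\J(x) \geq \epsilon_1 > 0$, and similarly for each $I \in \I$ we have $\underline{x}_I \in \R^I_{\J_I,1}$, hence $D_{\J_I}(\underline{x}_I) \geq \epsilon_1'$ where $\epsilon_1'$ comes from Lemma~\ref{lem lower bound DI induction step} applied to $I$. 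In particular, by Proposition~\ref{prop Kac-Rice revisited}, we have $\rho_{\{A\}}(x) = \rho_\J(x)$ and $\rho_{\{I\}}(\underline{x}_I) = \rho_{\J_I}(\underline{x}_I)$ for all $I \in \I$.

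Next I would expand the ratio $\prod_{I \in \I} \rho_{\J_I}(\underline{x}_I) / \rho_\J(x)$ using the formula~\eqref{eq def rho Ix}. The key bookkeeping observation is that $\J = \bigsqcup_{I \in \I} \J_I$ (Notation~\ref{ntn J I}), so that the factors $\prod \norm{x_i - x_j}^{1/2}$ coming from $\J$ and from the $\J_I$ match exactly; likewise $\sum_{I \in \I}\norm{I} = \norm{A}$, so the powers of $2\pi$ cancel. What remains is
\begin{equation*}
\frac{\prod_{I \in \I} \rho_{\{I\}}(\underline{x}_I)}{\rho_{\{A\}}(x)} = \frac{\prod_{I \in \I} N_{\J_I}(\underline{x}_I)}{N_\J(x)} \cdot \left(\frac{D_\J(x)}{\prod_{I \in \I} D_{\J_I}(\underline{x}_I)}\right)^{\!\frac{1}{2}}.
\end{equation*}

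The final step is to invoke Lemma~\ref{lem clustering DI}, which gives $\prod_{I \in \I} D_{\J_I}(\underline{x}_I) = D_\J(x)\bigl(1 + O(\Norm{\kappa}^2_{\norm{A},\eta})\bigr)$, and Lemma~\ref{lem clustering NI mult}, which gives $\prod_{I \in \I} N_{\J_I}(\underline{x}_I) = N_\J(x)\bigl(1 + O(\Norm{\kappa}^{1/2}_{\norm{A},\eta})\bigr)$, both with constants independent of $\eta$, $\I$, $\J$, and $x$. Multiplying these estimates yields the required $1 + O(\Norm{\kappa}^{1/2}_{\norm{A},\eta})$. No real obstacle remains here; the main conceptual point is the passage $\J = \I_1(x)$, which allows us to work only in the regime where $D_\J$ and $N_\J$ are uniformly bounded away from $0$, so that multiplicative error control is legitimate. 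Finally, Theorem~\ref{thm clustering} itself is then an immediate consequence: the upper bound is Lemma~\ref{lem bound rho I}, and the clustering equality follows from Proposition~\ref{prop clustering rho I} together with the identification $\rho_{\{A\}} = \rho_{\norm{A}}$ on $\R^A \setminus \Delta_A$ from Proposition~\ref{prop Kac-Rice revisited} and Lemma~\ref{lem k point function}.
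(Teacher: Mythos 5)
Your proof is correct and follows essentially the same route as the paper's: fix $\J = \I_1(x)$ so that the relevant $D$ and $N$ quantities are uniformly bounded below, rewrite $\rho_{\{A\}} = \rho_\J$ and $\rho_{\{I\}} = \rho_{\J_I}$ via Proposition~\ref{prop Kac-Rice revisited}, match the $\norm{x_i-x_j}$ and $(2\pi)$ factors using $\J = \bigsqcup_{I\in\I}\J_I$, and conclude by Lemmas~\ref{lem clustering DI} and~\ref{lem clustering NI mult}. The only cosmetic difference is that you invoke the quantitative lower bound of Lemma~\ref{lem lower bound DI induction step} for well-definedness where the paper uses the positivity statement of Corollary~\ref{cor vanishing locus DI}; both work.
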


\begin{proof}
First, note that $\rho_{\{A\}}$ (resp.~$\rho_{\{I\}}$) is well-defined on $\R^A$ (resp.~$\R^I$), see Proposition~\ref{prop Kac-Rice revisited}. Note also that, for any $\J \in \pa_A$, we have $\rho_{\{A\}} = \rho_\J$ whenever $\rho_\J$ is well-defined. In this proof, we use the previous fact, and we choose $\J \in \pa_A$ depending on the point $x \in \R^A_{\I,\eta}$ in order to use the nicest expression of $\rho_{\{A\}}$ we can find.

Let $\eta \geq 1$, let $\I \in \pa_A$ and let $x =(x_a)_{a \in A} \in \R^A_{\I,\eta}$. Let us denote by $\J = \I_1(x)$, so that $\J \leq \I$ and $x \in \R^A_{\J,1} \cap \R^A_{\I,\eta}$. By Corollary~\ref{cor vanishing locus DI}, since $x \in \R^A_{\J,1}$, we have $D_\J(x) >0$ so that $\rho_\J(x)$ is well-defined. Similarly, for any $I \in \I$, we have $\underline{x}_\I \in \R^I_{\J_I,1}$ so that $\rho_{\J_I}(\underline{x}_I)$ is well-defined.

By Proposition~\ref{prop Kac-Rice revisited} and Equation~\eqref{eq def rho Ix}, we have:
\begin{align*}
\prod_{I \in \I} \rho_{\{I\}}(\underline{x}_I) &= \prod_{I \in \I} \rho_{\J_I}(\underline{x}_I)\\
&= \prod_{I \in \I} \left(\left(\prod_{J \in \J_I} \prod_{\{(i,j) \in J^2 \mid i \neq j\}} \norm{x_i-x_j}^\frac{1}{2}\right)\frac{N_{\J_I}(\underline{x}_I)}{(2\pi)^\frac{\norm{I}}{2} D_{\J_I}(\underline{x}_I)^\frac{1}{2}}\right)\\
&= \left(\prod_{J \in \J} \prod_{\{(i,j) \in J^2 \mid i\neq j\}} \norm{x_i - x_j}^\frac{1}{2}\right) \frac{\prod_{I \in \I} N_{\J_I}(\underline{x}_I)}{(2\pi)^\frac{\norm{A}}{2} \left(\prod_{I \in \I} D_{\J_I}(\underline{x}_I)\right)^\frac{1}{2}}.
\end{align*}
Since $x \in \R^A_{\J,1}\cap \R^A_{\I,\eta}$, by Lemmas~\ref{lem clustering DI} and~\ref{lem clustering NI mult}, we obtain:
\begin{equation*}
\prod_{I \in \I} \rho_{\{I\}}(\underline{x}_I) = \left(\prod_{J \in \J} \prod_{\{(i,j) \in J^2 \mid i\neq j\}} \norm{x_i - x_j}^\frac{1}{2}\right) \frac{N_\J(x)}{(2\pi)^\frac{\norm{A}}{2} D_\J(x)^\frac{1}{2}} \left(1 +O\!\left(\left(\Norm{\kappa}_{\norm{A},\eta}\right)^\frac{1}{2}\right)\right).
\end{equation*}
The conclusion follows from $\rho_{\{A\}}(x) = \rho_{\J}(x)$ and the definition of $\rho_{\J}$, see Equation~\eqref{eq def rho Ix} and Proposition~\ref{prop Kac-Rice revisited}.
\end{proof}

In Proposition~\ref{prop clustering rho I}, we only consider points $x \in \R^A_{\I,\eta}$. In fact, an estimate of the same kind remains valid if we replace $\R^A_{\I,\eta}$ with $\bigsqcup_{\J \leq \I} \R^A_{\J,\eta}$. Equivalently, we only need $\I_\eta(x) \leq \I$ instead of $\I_\eta(x)=\I$. That is, we need the components of $x$ whose indices lie in different clusters of $\I$ to be far from one another, but we do not ask anything regarding components whose indices lie in the same cluster of $\I$. The precise statement is the following.

\begin{cor}
\label{cor clustering rho I}
In the setting of Proposition~\ref{prop clustering rho I}, for all $\eta \geq 1$, for all $\I \in \pa_A$, for all $x \in \R^A$, such that $\I_\eta(x) \leq \I$, we have:
\begin{equation*}
\prod_{I \in \I} \rho_{\{I\}}(\underline{x}_I) = \rho_{\{A\}}(x) \left(1+O\!\left(\Norm{\kappa}_{\norm{A},\eta}\right)^\frac{1}{2}\right),
\end{equation*}
where the constant involved in the error term does not depend on $\eta$, $\I$ nor $x$.
\end{cor}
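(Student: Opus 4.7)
The plan is to reduce the corollary to Proposition~\ref{prop clustering rho I} by introducing the partition $\J = \I_\eta(x)$ and applying the proposition both globally on $A$ and locally on each cluster $I \in \I$. Since the hypothesis $\I_\eta(x) \leq \I$ gives $\J \leq \I$ and $x \in \R^A_{\J,\eta}$, we are directly in the setting of the proposition for the partition $\J$.

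First I would establish a purely combinatorial identity: for each $I \in \I$, one has $\I_\eta(\underline{x}_I) = \J_I$ (in the sense of Notation~\ref{ntn J I}). Indeed, the graph $G_\eta(\underline{x}_I)$ is the induced subgraph of $G_\eta(x)$ on the vertex set $I$; since the hypothesis $\I_\eta(x) \leq \I$ forbids any edge of $G_\eta(x)$ between distinct blocks of $\I$, every connected component of $G_\eta(x)$ is contained in some block of $\I$, and those components contained in $I$ are precisely the elements of $\J_I$. In particular $\underline{x}_I \in \R^I_{\J_I,\eta}$.

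Next I would apply Proposition~\ref{prop clustering rho I} twice. With partition $\J$ of $A$ and point $x$, it yields
\[
\prod_{J \in \J} \rho_{\{J\}}(\underline{x}_J) = \rho_{\{A\}}(x) \left(1 + O\!\left(\Norm{\kappa}_{\norm{A},\eta}^{1/2}\right)\right).
\]
Applied, for each $I \in \I$, to the finite set $I$ with partition $\J_I$ and point $\underline{x}_I$ (whose hypotheses are inherited since $f \in \mathcal{C}^{\norm{A}} \subset \mathcal{C}^{\norm{I}}$ and $\Norm{\kappa}_{\norm{I},\eta} \leq \Norm{\kappa}_{\norm{A},\eta} \to 0$), it yields
\[
\prod_{J \in \J_I} \rho_{\{J\}}(\underline{x}_J) = \rho_{\{I\}}(\underline{x}_I) \left(1 + O\!\left(\Norm{\kappa}_{\norm{I},\eta}^{1/2}\right)\right),
\]
with constants uniform in $I$, $x$, and $\eta$. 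Taking the product over $I \in \I$ and invoking the disjoint union $\J = \bigsqcup_{I \in \I} \J_I$, these local estimates combine into
\[
\prod_{J \in \J} \rho_{\{J\}}(\underline{x}_J) = \left(\prod_{I \in \I} \rho_{\{I\}}(\underline{x}_I)\right) \prod_{I \in \I}\!\left(1 + O\!\left(\Norm{\kappa}_{\norm{I},\eta}^{1/2}\right)\right).
\]

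Equating the two expressions for $\prod_{J \in \J} \rho_{\{J\}}(\underline{x}_J)$ and isolating $\prod_{I \in \I} \rho_{\{I\}}(\underline{x}_I)$ gives the claim. The only arithmetic check is that the product of error factors can itself be written as $1 + O(\Norm{\kappa}_{\norm{A},\eta}^{1/2})$ with a uniform constant: expanding it produces at most $2^{\norm{A}}$ monomials, each bounded both by a power of $C \Norm{\kappa}_{\norm{A},\eta}^{1/2}$ and by the universal constant $C \Norm{\kappa}_{2\norm{A}}^{1/2}$, hence by $O(\Norm{\kappa}_{\norm{A},\eta}^{1/2})$. I expect no substantive obstacle beyond this bookkeeping; the degenerate situation $x \in \Delta_A$, which would threaten dividing out $\rho_{\{A\}}(x)$, is harmless because the hypothesis $\I_\eta(x) \leq \I$ then forces a coincidence $x_a = x_b$ within some single cluster $I \in \I$, so $\underline{x}_I \in \Delta_I$ and both sides of the target identity vanish by Proposition~\ref{prop Kac-Rice revisited}.
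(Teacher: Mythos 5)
Your proof is correct and follows essentially the same route as the paper: set $\J = \I_\eta(x)$, apply Proposition~\ref{prop clustering rho I} once globally to $(\J,x)$ on $A$ and once locally to $(\J_I,\underline{x}_I)$ on each block $I \in \I$, and combine using $\J = \bigsqcup_{I \in \I} \J_I$. You spell out a few points the paper leaves implicit — the combinatorial identity $\I_\eta(\underline{x}_I) = \J_I$, the bookkeeping on the product of $(1+O(\cdot))$ factors, and the degenerate case $x \in \Delta_A$ — but these are refinements of the same argument, not a different one.
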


\begin{proof}
Let $\eta \geq 1$, let $\I \in \pa_A$ and let $x \in \R^A$ such that $\I_\eta(x) \leq \I$. Let us denote by $\J = \I_\eta(x)$ for simplicity. Since $x \in \R^A_{\J,\eta}$, by Proposition~\ref{prop clustering rho I} we have:
\begin{align*}
\rho_{\{A\}}(x) &= \left(\prod_{J \in \J} \rho_{\{J\}}(\underline{x}_J)\right) \left(1 + O\!\left(\left(\Norm{\kappa}_{\norm{A},\eta}\right)^\frac{1}{2}\right)\right)\\
&= \left(\prod_{I \in \I} \prod_{J \in \J_I} \rho_{\{J\}}(\underline{x}_J)\right) \left(1 + O\!\left(\left(\Norm{\kappa}_{\norm{A},\eta}\right)^\frac{1}{2}\right)\right).
\end{align*}
Let $I \in \I$, we have $\underline{x}_I \in \R^I_{\J_I,\eta}$. Using Proposition~\ref{prop clustering rho I} once again, we have:
\begin{equation*}
\prod_{J \in \J_I} \rho_{\{J\}}(\underline{x}_J) = \rho_{\{I\}}(\underline{x}_I) \left(1 + O\!\left(\left(\Norm{\kappa}_{\norm{A},\eta}\right)^\frac{1}{2}\right)\right).
\end{equation*}
This yields the result. The uniformity of the error term follows from the finiteness of $A$.
\end{proof}

We can now prove Theorem~\ref{thm clustering}, which is just a special case of Lemma~\ref{lem bound rho I} and Corollary~\ref{cor clustering rho I}.

\begin{proof}[Proof of Theorem~\ref{thm clustering}]
Let $k \in \N^*$ and let $f$ be a $\mathcal{C}^k$ Gaussian process which is normalized centered and stationary. We assume that its correlation function $\kappa$ satisfies $\Norm{\kappa}_{k,\eta} \xrightarrow[\eta \to +\infty]{}0$. By Lemmas~\ref{lem non-degeneracy} and~\ref{lem k point function}, for all $l \in \{1,\dots,k\}$, the $l$-point function of the point process $Z=f^{-1}(0)$ is the Kac--Rice density $\rho_l$ defined in Definition~\ref{def Kac-Rice densities}. This function is well-defined on $\R^l \setminus \Delta_l$ and admits a unique continuous extension to $\R^l$, which vanishes on $\Delta_l$, by Proposition~\ref{prop Kac-Rice revisited}.

By Proposition~\ref{prop Kac-Rice revisited}, the continuous extension of $\rho_k$ to $\R^k$ is the function $\rho_{\I_{\max}(k)}$. Then, by Lemma~\ref{lem bound rho I}, for all $x=(x_i)_{1 \leq i \leq k} \in \R^k \setminus \Delta_k$ we have:
\begin{equation*}
\rho_k(x) = \rho_{\I_{\max}(k)}(x) \leq C \prod_{1 \leq i < j \leq k} \max\left(\norm{x_i-x_j},1\right)
\end{equation*}
for some positive constant $C$.

Let $\eta \geq 1$, let $\I \in \pa_k$ and let $x=(x_i)_{1 \leq i \leq k} \in \R^k \setminus \Delta_k$. The condition:
\begin{equation*}
\forall I, J \in \I \ \text{such that} \ I \neq J, \ \forall i \in I, \ \forall j \in J, \ \norm{x_i -x _j} > \eta,
\end{equation*}
appearing in Theorem~\ref{thm clustering} is equivalent to $\I_\eta(x) \leq \I$. Let us assume that $x$ satisfies this condition. Then, by Corollary~\ref{cor clustering rho I} applied with $A = \{1,\dots,k\}$, we have:
\begin{equation*}
\prod_{I \in \I} \rho_{\{I\}}(\underline{x}_I) = \rho_{\I_{\max}(k)}(x) \left(1+O\!\left(\Norm{\kappa}_{k,\eta}\right)^\frac{1}{2}\right).
\end{equation*}
Finally, we have the equality $\rho_k(x) = \rho_{\I_{\max}(k)}(x)$ and similarly, $\rho_{\{I\}}(\underline{x}_I) = \rho_{\norm{I}}(\underline{x}_I)$ for all $I \in \I$ since $\underline{x}_I \in \R^I \setminus \Delta_I$. Hence the result.
\end{proof}


\section{Proof of Theorem~\ref{thm moments}: central moments asymptotics}
\label{sec proof theorem moments}

This section deals with the proof of Theorem~\ref{thm moments}. The proof follows the lines of that of~\cite[Theorem~1.12]{AL2019}. We still give the proof in full, since we believe that the setting of the present article makes it accessible to a wider audience than~\cite{AL2019}. In all this section, we consider a Gaussian process $f$ which is at least of class $\mathcal{C}^1$, stationary, centered and normalized. We denote by $\kappa$ is correlation function, which is assumed to tend to $0$ at infinity. In particular, the process $f$ satisfies the conclusion of Lemma~\ref{lem non-degeneracy}.

In Section~\ref{subsec an integral expression of the central moments}, we derive an integral expression of the central moments we are interested in. In order to understand this integral, we split $\R^p$ as $\bigsqcup_{\I \in \pa_p} \R^p_{\I,\eta}$, for some $\eta >0$, and study the contribution of each~$\R^p_{\I,\eta}$ to the integral. In Section~\ref{subsec an upper bound on the contribution of each piece}, we give an upper bound for the contribution of each of these sets. Then, in Sections~\ref{subsec contribution of the partitions with an isolated point} and~\ref{subsec contribution of the partitions into pairs}, we study the contributions of the pieces of the form $\R^p_{\I,\eta}$, where respectively $\I$ contains a singleton and $\I$ is a partition into pairs. We conclude the proof of Theorem~\ref{thm moments} in Section~\ref{subsec conclusion of the proof}.


\subsection{An integral expression of the central moments}
\label{subsec an integral expression of the central moments}

Let $R >0$, recall that $\nu_R$ denotes the counting measure of $Z_R =\{x \in \R \mid f(Rx)=0\}$. Let $p \geq 2$ be an integer and let $\phi_1,\dots,\phi_p$ be test-functions in the sense of Definition~\ref{def test-function}. In this section, we derive an integral expression of the quantity $m_p(\nu_R)(\phi_1,\dots,\phi_p)$ defined by Definition~\ref{def mp nu R}. This requires to introduce the following definition.

\begin{dfn}[Subsets adapted to a partition]
\label{def subsets adapted to I}
Let $A$ be a finite set and let $\I \in \pa_A$, we denote by $S_A(\I)$ the set of subsets of $A$ \emph{adapted} to $\I$, that is:
\begin{equation*}
\mathcal{S}_A(\I) = \left\{B \subset A \mvert \forall I \in \I, \text{ if } \card(I) \geq 2, \text{ then } I \subset B \right\}.
\end{equation*}
Equivalently, $B \in \mathcal{S}_A(\I)$ if and only if $\I \leq \{B\} \sqcup \I_{\min}(A \setminus B)$ where $\I_{\min}(A \setminus B) = \{\{b\} \mid b \notin B\}$ and $\leq$ is as in Definition~\ref{def partial order pa p}. If $A$ is of the form $\{1,\dots,p\}$, we simply denote by $S_p(\I)=S_A(\I)$.
\end{dfn}

Let $A$ be a finite set and let $\I \in \pa_A$. Let $B \in \mathcal{S}_A(\I)$, we have $\I \leq \{B\} \sqcup \I_{\min}(A \setminus B)$, so that $\I_B = \{ I \in \I \mid I \subset B \}$ is a well-defined element of $\pa_B$, as in Notation~\ref{ntn J I}. In fact, we have:
\begin{equation}
\label{eq subsets adapted to I}
\I = \I_B \sqcup \I_{\min}(A \setminus B).
\end{equation}

\begin{lem}
\label{lem subsets adapted to I}
Let $A$ be any finite set, then the map $(B,\I) \mapsto (B,\I_B)$ defines a bijection from $\{(B,\I) \mid \I\in \pa_A, B \in \mathcal{S}_A(\I) \}$ to $\{ (B,\J) \mid B \subset A, \J \in \pa_B \}$.
\end{lem}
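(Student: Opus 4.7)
The plan is to establish the bijection by exhibiting an explicit inverse map and then verifying that the two compositions give identities. The key ingredient is already hidden in Equation~\eqref{eq subsets adapted to I}, which states that for any $\I \in \pa_A$ and any $B \in \mathcal{S}_A(\I)$, we have the decomposition $\I = \I_B \sqcup \I_{\min}(A \setminus B)$. This decomposition is the heart of the argument: it says that the ``structured part'' of $\I$ (the blocks of size $\geq 2$) all live inside $B$, while outside $B$ the partition is forced to consist of singletons.

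The explicit inverse I would construct is the map $\Psi : (B,\J) \longmapsto (B,\J \sqcup \I_{\min}(A \setminus B))$ defined on pairs $(B,\J)$ with $B \subset A$ and $\J \in \pa_B$. First I would check that $\Psi$ takes values in the correct set: the set $\J \sqcup \I_{\min}(A \setminus B)$ is clearly a partition of $A$, call it $\I$, and moreover $\I \leq \{B\} \sqcup \I_{\min}(A \setminus B)$ because every element of $\J$ is contained in $B$. Hence $B \in \mathcal{S}_A(\I)$ by Definition~\ref{def subsets adapted to I}, and $\Psi(B,\J)$ lies in the left-hand set of the claimed bijection.

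Next I would verify the two compositions. Starting from $(B,\I)$ with $B \in \mathcal{S}_A(\I)$ and applying first $(B,\I) \mapsto (B,\I_B)$, then $\Psi$, we obtain $(B,\I_B \sqcup \I_{\min}(A\setminus B))$, which equals $(B,\I)$ by Equation~\eqref{eq subsets adapted to I}. Conversely, starting from $(B,\J)$ with $\J \in \pa_B$, applying $\Psi$ gives $(B, \I)$ with $\I = \J \sqcup \I_{\min}(A\setminus B)$; since the elements of $\J$ are subsets of $B$ while the singletons $\{a\}$ for $a \notin B$ are not contained in $B$, the partition $\I_B = \{I \in \I \mid I \subset B\}$ coincides with $\J$, so the second composition is also the identity.

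There is no real obstacle: this is a purely bookkeeping verification, and the only subtle point is keeping straight that ``$\I_B$'' is defined only once one knows $B$ is adapted to $\I$ (so that $\I$ restricted to $B$ really is a partition of $B$), which is exactly what the condition $B \in \mathcal{S}_A(\I)$ guarantees via Equation~\eqref{eq subsets adapted to I}.
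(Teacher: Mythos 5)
Your proof is correct and follows exactly the same strategy as the paper: constructing the explicit inverse $(B,\J) \mapsto (B,\J \sqcup \I_{\min}(A\setminus B))$ and checking both compositions via Equation~\eqref{eq subsets adapted to I}. The paper's proof is terser, but the argument is identical.
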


\begin{proof}
This map is well-defined. By Equation~\eqref{eq subsets adapted to I}, the map $(B,\J) \mapsto (B, \J \sqcup \I_{\min}(A \setminus B))$ from $\{ (B,\J) \mid B \subset A, \J \in \pa_B \}$ to $\{(B,\I) \mid \I\in \pa_A, B \in \mathcal{S}_A(\I) \}$ is the inverse of $(B,\I) \mapsto (B,\I_B)$.
\end{proof}

For any non-empty finite set $A$, let us denote by $\dx \underline{x}_A$ the Lebesgue measure on $\R^A$. The following lemma gives the integral expression of $m_p(\nu_R)(\phi_1,\dots,\phi_p)$ we are looking for. This integral expression is a sum of integrals over $\R^\I$, indexed by $\mathcal{I}\in \pa_p$. For each of these terms, the integrand function is itself a sum of functions that are indexed by partitions of some partitions induced by $\I$. Hence we need to consider partitions of partitions, which is a bit cumbersome.

\begin{lem}[Integral expression of the central moments]
\label{lem integral expression}
Let $p \geq 2$ and let us assume that $f$ is of class $\mathcal{C}^p$. Let $\phi_1,\dots,\phi_p$ be test-functions in the sense of Definition~\ref{def test-function}. For all $R>0$, we have:
\begin{equation*}
m_p(\nu_R)(\phi_1,\dots,\phi_p)= \sum_{\I \in \pa_p} \int_{\R^\I} \iota_\I^*\phi_R(\underline{x}_\I) F_\I(\underline{x}_\I) \dx\underline{x}_\I,
\end{equation*}
where, for any finite set $A \neq \emptyset$, any $\I \in \pa_A$ and any $\underline{x}_\I = (x_I)_{I \in \I} \in \R^\I$,
\begin{equation}
\label{eq def F I}
F_\I(\underline{x}_\I) = \sum_{B \in \mathcal{S}_A(\I)} \left(\frac{-1}{\pi}\right)^{\norm{A} - \norm{B}} \rho_{\{\I_B\}}(\underline{x}_{\I_B}).
\end{equation}
Here, we use the convention that $\rho_{\{\emptyset\}}$ is constant equal to $1$. Note that $\I_B = \emptyset$ if and only if $B =\emptyset$.
\end{lem}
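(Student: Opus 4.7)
The natural strategy is to expand the product defining the central moment, apply the Kac--Rice formula to rewrite each non-central moment of the $\prsc{\nu_R}{\phi_i}$ as an integral, and then reorganize the resulting double sum using the bijection of Lemma~\ref{lem subsets adapted to I}. First I would use the multinomial expansion
\[
\prod_{i=1}^p \big(\prsc{\nu_R}{\phi_i} - \esp{\prsc{\nu_R}{\phi_i}}\big) = \sum_{B \subset \{1,\dots,p\}} (-1)^{p-\norm{B}} \prod_{i \in B} \prsc{\nu_R}{\phi_i} \prod_{i \notin B} \esp{\prsc{\nu_R}{\phi_i}},
\]
and take expectations. For $i \notin B$, Proposition~\ref{prop expectation} gives $\esp{\prsc{\nu_R}{\phi_i}} = \tfrac{1}{\pi}\int_{\R} \phi_i(x/R)\,\dx x$, producing the factor $\left(-\tfrac{1}{\pi}\right)^{p-\norm{B}}$ together with an integral over $\R^{B^c}$, where $B^c = \{1,\dots,p\} \setminus B$.

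For the remaining term $\esp{\prod_{i \in B} \prsc{\nu_R}{\phi_i}}$, I rewrite it as $\esp{\prsc{\nu^B}{(\phi_B)_R}}$ with $\phi_B = \boxtimes_{i \in B}\phi_i$, then apply the decomposition $\nu^B = \sum_{\J \in \pa_B} (\iota_\J)_*\nu^{[\J]}$ of Lemma~\ref{lem decomposition nu ks} and invoke the Kac--Rice formula (Proposition~\ref{prop Kac-Rice formula}) for each $\J \in \pa_B$. This yields
\[
\esp{\prod_{i \in B} \prsc{\nu_R}{\phi_i}} = \sum_{\J \in \pa_B} \int_{\R^\J} \iota_\J^*(\phi_B)_R(\underline{x}_\J)\, \rho_{\{\J\}}(\underline{x}_\J)\, \dx\underline{x}_\J,
\]
after identifying $\rho_{\norm{\J}}$ with its continuous extension $\rho_{\{\J\}}$ from Proposition~\ref{prop Kac-Rice revisited}. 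To justify Kac--Rice, I need $\iota_\J^*(\phi_B)_R\,\rho_{\{\J\}}$ to be Lebesgue-integrable: the density is bounded by Lemma~\ref{lem bound rho I} (applicable since $\Norm{\kappa}_{p,\eta} \to 0$ under the hypothesis of Theorem~\ref{thm moments}), while $\iota_\J^*(\phi_B)_R$ is the tensor product, indexed by $J \in \J$, of the one-variable functions $x_J \mapsto \prod_{i \in J}\phi_i(x_J/R)$, each integrable and bounded.

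Combining the two ingredients produces a double sum indexed by pairs $(B,\J)$ with $B \subset \{1,\dots,p\}$ and $\J \in \pa_B$. I then invoke Lemma~\ref{lem subsets adapted to I}, which reindexes this as a sum over pairs $(B,\I)$ with $\I \in \pa_p$ and $B \in \mathcal{S}_p(\I)$, via $\I = \I_B \sqcup \I_{\min}(B^c)$. Under this reindexing, $\R^\J \times \R^{B^c}$ is canonically identified with $\R^\I$ (since the singleton part $\I_{\min}(B^c)$ has exactly $\norm{B^c}$ elements), the product $\iota_\J^*(\phi_B)_R(\underline{x}_{\I_B}) \cdot (\phi_{B^c})_R(\underline{x}_{B^c})$ equals $\iota_\I^*\phi_R(\underline{x}_\I)$, and for each fixed $\I \in \pa_p$ the inner sum over $B \in \mathcal{S}_p(\I)$ of $\left(-\tfrac{1}{\pi}\right)^{p-\norm{B}}\rho_{\{\I_B\}}(\underline{x}_{\I_B})$ is precisely $F_\I(\underline{x}_\I)$ from Equation~\eqref{eq def F I}. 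The main obstacle is the combinatorial bookkeeping underlying the reindexing, together with the verification that $\iota_\I^*\phi_R$ factorises correctly along the pair $(\I_B, \I_{\min}(B^c))$; once these identifications are set up via the bijection of Lemma~\ref{lem subsets adapted to I}, the formula follows.
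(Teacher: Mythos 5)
Your proof is correct and follows essentially the same route as the paper: multinomial expansion of the centered product, the decomposition $\nu^B = \sum_{\J \in \pa_B}(\iota_\J)_*\nu^{[\J]}$, the Kac--Rice formula together with the identification $\rho_{\norm{\J}} = \rho_{\{\J\}}$, and the reindexing via the bijection $(B,\J) \leftrightarrow (\I,B)$ of Lemma~\ref{lem subsets adapted to I} followed by Fubini. Your slightly more explicit justification of integrability (via boundedness of $\rho_{\{\J\}}$ from Lemma~\ref{lem bound rho I}) makes precise a point the paper leaves to Remark~\ref{rem Kac-Rice formula}, but the argument is the same.
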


\begin{proof}
The proof follows the lines of~\cite[Lemma~3.1]{AL2019}. Recall that $m_p(\nu_R)$ is defined by Definition~\ref{def mp nu R}. We develop this product using Notations~\ref{ntn product indexed by A}, we get:
\begin{align*}
m_p(\nu_R)(\phi_1,\dots,\phi_p) &= \sum_{B \subset \{1,\dots,p\}} (-1)^{p-\norm{B}} \esp{\prod_{i \in B} \prsc{\nu_R}{\phi_i}} \prod_{i \notin B}\esp{\prsc{\nu_R}{\phi_i}}\\
&= \sum_{B \subset \{1,\dots,p\}} (-1)^{p-\norm{B}} \esp{\prsc{\nu^B}{(\phi_B)_R}} \prod_{i \notin B}\esp{\prsc{\nu_R}{\phi_i}}\\
&= \sum_{B \subset \{1,\dots,p\}} \sum_{\I \in \pa_B} (-1)^{p-\norm{B}} \esp{\prsc{\nu^{[\I]}}{\iota_\I^*((\phi_B)_R)}} \prod_{i \notin B}\esp{\prsc{\nu_R}{\phi_i}},
\end{align*}
where the last equality comes from Lemma~\ref{lem decomposition nu ks}.

Let $B \subset \{1,\dots,p\}$ and $\I \in \pa_B$. Note first that $\iota_\I^*((\phi_B)_R) = \phi_B\left(\frac{\iota_\I(\cdot)}{R}\right) = (\iota_\I^* \phi_B)_R$. Then, we can identify $\R^\I$ with $\R^{\norm{\I}}$ by ordering $\I$. Since the $(\phi_i)_{1 \leq i\leq p}$ are integrable on $\R$ and essentially bounded, $\iota_\I^* \phi_B$ is integrable on $\R^\I$. By Propositions~\ref{prop Kac-Rice formula} and~\ref{prop Kac-Rice revisited}, we obtain:
\begin{align*}
\esp{\prsc{\nu^{[\I]}}{\iota_\I^*((\phi_B)_R)}} &= \esp{\prsc{\nu^{[\norm{\I}]}}{(\iota_\I^*\phi_B)_R)}}\\
&= \int_{\R^{\norm{\I}}} \iota_\I^*\phi_B\!\left(\frac{x_1}{R},\dots,\frac{x_{\norm{\I}}}{R}\right) \rho_{\norm{\I}}(x_1,\dots,x_{\norm{\I}}) \dx x_1\dots \dx x_{\norm{\I}}\\
&= \int_{\R^\I} \iota_\I^*\phi_B\!\left(\frac{\underline{x}_\I}{R}\right) \rho_{\{\I\}}(\underline{x}_\I) \dx \underline{x}_\I,
\end{align*}
where $\rho_{\{\I\}}:\R^\I \to \R$ is defined by Equation~\eqref{eq def rho Ix}. As in Section~\ref{subsec proof of proposition expectation}, for any $i \notin B$, we have $\esp{\prsc{\nu_R}{\phi_i}} =\displaystyle \frac{1}{\pi} \int_\R \phi_i\!\left(\frac{x}{R}\right)\dx x$. Hence,
\begin{equation*}
m_p(\nu_R)(\phi_1,\dots,\phi_p) = \sum_{B \subset \{1,\dots,p\}} \sum_{\I \in \pa_B} \left(\frac{-1}{\pi}\right)^{p-\norm{B}} \int_{\R^\I} \iota_\I^*\phi_B\!\left(\frac{\underline{x}_\I}{R}\right) \rho_{\{\I\}}(\underline{x}_\I) \dx \underline{x}_\I \prod_{i \notin B} \int_{\R} \phi_i\!\left(\frac{x}{R}\right) \dx x.
\end{equation*}
By Lemma~\ref{lem subsets adapted to I}, we can exchange the two sums and obtain the following:
\begin{equation*}
\sum_{\I \in \pa_p} \sum_{B \in \mathcal{S}_p(\I)} \left(\frac{-1}{\pi}\right)^{p-\norm{B}} \left(\int_{\R^{\I_B}} \iota_{\I_B}^*\phi_B\!\left(\frac{\underline{x}_{\I_B}}{R}\!\right) \rho_{\{\I_B\}}(\underline{x}_{\I_B}) \dx \underline{x}_{\I_B}\right)\prod_{i \notin B} \int_\R \phi_i\!\left(\frac{x}{R}\right)\dx x.
\end{equation*}
We conclude the proof by applying Fubini's Theorem, which yields:
\begin{equation*}
m_p(\nu_R)(\phi_1,\dots,\phi_p) =\sum_{\I \in \pa_p} \int_{\R^\I} \iota_\I^*\phi_R(\underline{x}_\I) \left(\sum_{B \in \mathcal{S}_p(\I)}\left(\frac{-1}{\pi}\right)^{p-\norm{B}} \rho_{\{\I_B\}}(\underline{x}_{\I_B})\right) \dx \underline{x}_\I. \qedhere
\end{equation*}
\end{proof}

\begin{ex}
\label{ex covariances}
Let $A =\{a,b\}$, then $\pa_A$ contains only two elements: $\I_{\min}(A) = \{\{a\},\{b\}\}$ and $\I_{\max}(A)=\{A\}$. In the first case, $\mathcal{S}_A(\I_{\min}(A)) = \{\emptyset, \{a\},\{b\}, A\}$, and one can check that, $F_{\{\{a\},\{b\}\}} : (x,y) \mapsto \rho_{\{\I_{\min}(A)\}}(x,y) - \frac{1}{\pi^2}$. In fact, for all $(x,y) \in \R^2 \setminus \Delta_2$, we have:
\begin{equation*}
F_{\{\{a\},\{b\}\}}(x,y) =\rho_2(x,y)-\frac{1}{\pi^2} = F(y-x),
\end{equation*}
where $F$ is as in Definition~\ref{def F}. In the second case, $\mathcal{S}_A(\{A\})=\{A\}$ and $F_{\{A\}} = \frac{1}{\pi}$. Then, if $\phi_a$ and $\phi_b$ are integrable and $\phi_b$ is bounded and continuous almost everywhere, we have:
\begin{equation*}
\int_{\R^{\{A\}}}(\phi_a)_R(x)(\phi_b)_R(x)F_{\{A\}}(x)\dx x = \frac{R}{\pi} \int_\R \phi_a(x)\phi_b(x) \dx x.
\end{equation*}
Moreover, the computations of Section~\ref{subsec asymptotics of the covariances} show that:
\begin{align*}
\int_{\R^{\{\{a\},\{b\}\}}} (\phi_a)_R(x)(\phi_b)_R(y) F_{\{\{a\},\{b\}\}}(x,y)\dx x \dx y &= \int_{\R^2} (\phi_a)_R(x)(\phi_b)_R(y) F(y-x)\dx x \dx y\\
&= R\left(\sigma^2-\frac{1}{\pi}\right)\int_{\R^2} \phi_a(x)\phi_b(x) \dx x +o(R),
\end{align*}
where $\sigma^2$ is defined by Equation~\eqref{eq def sigma}.
\end{ex}

\begin{lem}[Boundedness]
\label{lem FI bounded}
Let $A$ be a finite set. Let us assume that $f$ is of class $\mathcal{C}^{\norm{A}}$ and that $\Norm{\kappa}_{\norm{A},\eta} \xrightarrow[\eta \to +\infty]{}0$. Then, for all $\I \in \pa_A$, the function $F_\I$ is bounded on $\R^\I$.
\end{lem}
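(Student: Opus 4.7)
The proof should be very short: Lemma~\ref{lem FI bounded} follows almost immediately from Lemma~\ref{lem bound rho I} applied to each summand defining $F_\I$. The plan is simply to verify that the hypotheses of Lemma~\ref{lem bound rho I} transfer to the smaller partitions $\I_B$ appearing in Equation~\eqref{eq def F I}.

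First, I would unpack the definition. By Equation~\eqref{eq def F I}, the function $F_\I$ is a finite sum, indexed by $B \in \mathcal{S}_A(\I)$, of terms of the form $\left(-1/\pi\right)^{\norm{A}-\norm{B}} \rho_{\{\I_B\}}(\underline{x}_{\I_B})$. Since $\mathcal{S}_A(\I)$ is finite, it suffices to bound each summand. For $B = \emptyset$ the summand is the constant $(-1/\pi)^{\norm{A}}$, by the convention $\rho_{\{\emptyset\}} \equiv 1$. For $B \neq \emptyset$, the map $\underline{x}_\I \mapsto \rho_{\{\I_B\}}(\underline{x}_{\I_B})$ is just the precomposition of $\rho_{\{\I_B\}}:\R^{\I_B} \to \R$ with the canonical projection $\R^\I \to \R^{\I_B}$, so it is bounded on $\R^\I$ as soon as $\rho_{\{\I_B\}}$ is bounded on $\R^{\I_B}$.

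To conclude, I would apply Lemma~\ref{lem bound rho I} with the ambient finite set taken to be $\I_B$ itself (which has cardinality $\norm{\I_B} \leq \norm{B} \leq \norm{A}$). The regularity hypothesis on $f$ is satisfied since $f$ is $\mathcal{C}^{\norm{A}}$, hence a fortiori $\mathcal{C}^{\norm{\I_B}}$. The decay hypothesis $\Norm{\kappa}_{\norm{\I_B},\eta} \xrightarrow[\eta \to +\infty]{} 0$ follows from our assumption $\Norm{\kappa}_{\norm{A},\eta} \xrightarrow[\eta \to +\infty]{} 0$ together with the observation, direct from Notation~\ref{ntn norm kappa}, that $\Norm{\kappa}_{k,\eta}$ is non-decreasing in $k$. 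Lemma~\ref{lem bound rho I} then gives the boundedness of $\rho_{\{\I_B\}}$, and hence of $F_\I$.

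There is no genuine obstacle in this argument; the only point worth flagging is the monotonicity of $\Norm{\kappa}_{k,\eta}$ in $k$, which justifies transferring the global decay hypothesis on $\kappa$ from scale $\norm{A}$ to scale $\norm{\I_B}$.
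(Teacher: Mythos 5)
Your proof is correct and takes essentially the same approach as the paper: bound each summand of $F_\I$ by applying Lemma~\ref{lem bound rho I} to the ambient set $\I_B$, using $\norm{\I_B}\leq\norm{B}\leq\norm{A}$ together with the monotonicity of $k\mapsto\Norm{\kappa}_{k,\eta}$ to transfer the hypotheses. Your version is slightly more explicit (spelling out the $B=\emptyset$ case and the projection step), but the argument is the same.
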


\begin{proof}
Let $B \in \mathcal{S}_A(\I)$. Since $\I_B \in \pa_B$, we have $\norm{\I_B} \leq \norm{B} \leq \norm{A}$. Hence $\Norm{\kappa}_{\norm{\I_B},\eta} \xrightarrow[\eta \to +\infty]{}0$, and $\rho_{\{\I_B\}}$ is bounded by Lemma~\ref{lem bound rho I}. The conclusion follows from the expression of $F_\I$ given by Equation~\eqref{eq def F I}.
\end{proof}

In order to compute the central moments of $\nu_R$, we have to estimate the integral of $F_\I$ over $\R^\I$ for any partition $\I \in \pa_p$. This is done by writing $\R^\I$ as $\bigsqcup_{\J \in \pa_\I} \R^\I_{\J,\eta}$ for some well-chosen $\eta >0$ and proving estimates for the contribution of each $\R^\I_{\J,\eta}$. In the following sections, we will derive estimates for $F_\I$ on $\R^\I_{\J,\eta}$, depending on the combinatorial properties of the partitions $\I$ and $\J$.

Let us conclude this section by choosing the scale parameter $\eta$. In the following, we will work, not with a fixed $\eta >0$, but with a scale parameter depending on $R$, given by the following lemma.

\begin{lem}[Scale parameter]
\label{lem scale parameter}
Let $p \geq 2$ be an integer. Let us assume that $f$ is of class $\mathcal{C}^p$ and that $\Norm{\kappa}_{p,\eta} = o(\eta^{-4p})$ as $\eta \to +\infty$. Then, there exists a function $\eta:(0,+\infty) \to (0,+\infty)$ such that, as $R \to +\infty$ we have: $\eta(R) \to +\infty$, $\eta(R) = o(R^\frac{1}{4})$ and $\Norm{\kappa}_{p,\eta(R)} = o(R^{-p})$.
\end{lem}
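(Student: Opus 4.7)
The plan is to construct $\eta(R)$ explicitly and then verify the three required conditions. By hypothesis, the function $\gamma(\eta) := \eta^{4p}\Norm{\kappa}_{p,\eta}$ tends to $0$ as $\eta \to +\infty$. Since $\gamma$ need not be monotone, I first replace it by its non-increasing envelope $\tilde\gamma(\eta) := \sup_{\eta' \geq \eta} \gamma(\eta')$. This is well-defined for $\eta$ large enough (the hypothesis implies $\gamma$ is bounded on some interval $[M,+\infty)$), non-increasing, tends to $0$ at infinity, and satisfies $\Norm{\kappa}_{p,\eta} \leq \tilde\gamma(\eta)\,\eta^{-4p}$ for every $\eta \geq M$.

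I then set, for $R$ large,
\[
\eta(R) = R^{1/4} \cdot \max\!\left(\tilde\gamma(R^{1/8})^{1/(8p)},\, R^{-1/8}\right).
\]
The first argument of the max is calibrated so that $\Norm{\kappa}_{p,\eta(R)}$ decays fast enough, while the safety term $R^{-1/8}$ prevents $\eta(R)$ from collapsing too fast if $\tilde\gamma$ happens to decay very quickly.

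The three required properties follow from direct verification. Since the max is at least $R^{-1/8}$, we have $\eta(R) \geq R^{1/8} \to +\infty$. Both arguments of the max tend to $0$, so $\eta(R) = o(R^{1/4})$. For the last property, the monotonicity of $\tilde\gamma$ combined with $\eta(R) \geq R^{1/8}$ gives $\tilde\gamma(\eta(R)) \leq \tilde\gamma(R^{1/8})$, whereas writing $\eta(R) = R^{1/4} u(R)$ with $u(R) \geq \tilde\gamma(R^{1/8})^{1/(8p)}$ yields $\eta(R)^{4p} \geq R^p\, \tilde\gamma(R^{1/8})^{1/2}$. Combining,
\[
R^p \Norm{\kappa}_{p,\eta(R)} \leq R^p\, \tilde\gamma(\eta(R))\,\eta(R)^{-4p} \leq \tilde\gamma(R^{1/8})^{1/2} \xrightarrow[R \to +\infty]{} 0.
\]

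No single step is substantial; the main obstacle, if any, is just the calibration. The decay of $R^p \Norm{\kappa}_{p,\eta(R)}$ is controlled entirely by the first term inside the max, while the safety term $R^{-1/8}$ is needed only to rescue the condition $\eta(R) \to +\infty$ in the pathological case in which $\tilde\gamma$ decays so fast (e.g.\ super-polynomially) that $R^{1/4}\tilde\gamma(R^{1/8})^{1/(8p)}$ itself fails to tend to infinity.
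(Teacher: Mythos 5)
Your construction and verification coincide with the paper's: your $\tilde\gamma(R^{1/8})$ is exactly the paper's $\sup\{\epsilon(\tau) \mid \tau \geq R^{1/8}\}$, your $\max$ is the paper's $\alpha(R)$, and $\eta(R)=R^{1/4}\alpha(R)$ is the same formula, verified in the same way. The proposal is correct and essentially identical to the paper's proof.
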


\begin{proof}
Let $\epsilon:\tau \mapsto \tau^{4p}\Norm{\kappa}_{p,\tau}$ from $[0,+\infty)$ to itself. We have $\epsilon(\tau) \to 0$ as $\tau \to +\infty$ since $\Norm{\kappa}_{p,\tau} = o(\tau^{-4p})$. Note that this implies that $\epsilon$ is bounded on $[0,+\infty)$. For all $R >0$ we denote by $\alpha(R) = \max\left(R^{-\frac{1}{8}},\left(\sup \{\epsilon(\tau)\mid \tau \geq R^\frac{1}{8}\}\right)^\frac{1}{8p}\right)$. Since $\epsilon(\tau) \xrightarrow[\tau \to +\infty]{}0$ at infinity, we have $\alpha(R) \xrightarrow[R \to +\infty]{}0$. Let us set $\eta(R) = R^\frac{1}{4}\alpha(R)$ for all $R>0$, and let us check that $\eta$ satisfies the desired conditions.

Since $\alpha$ goes to $0$ at infinity, we have $\eta(R) = o(R^\frac{1}{4})$ as $R \to +\infty$. Besides, for all $R>0$ we have $\alpha(R) \geq R^{-\frac{1}{8}}$, hence $\eta(R) \geq R^\frac{1}{8}$ and $\eta(R) \xrightarrow[R \to +\infty]{}+\infty$. Then, let $R >0$, we have:
\begin{equation*}
\Norm{\kappa}_{p,\eta(R)} = \frac{\epsilon(\eta(R))}{\eta(R)^{4p}} = \frac{1}{R^p} \frac{\epsilon(\eta(R))}{\alpha(R)^{4p}}.
\end{equation*}
Since $\eta(R) \geq R^\frac{1}{8}$, we have $\epsilon(\eta(R)) \leq \sup \{\epsilon(\tau) \mid \tau \geq R^\frac{1}{8}\} \leq \alpha(R)^{8p}$. Finally,
\begin{equation*}
\Norm{\kappa}_{p,\eta(R)} \leq \frac{\alpha(R)^{4p}}{R^p} = o(R^{-p}).\qedhere
\end{equation*}
\end{proof}


\subsection{An upper bound on the contribution of each piece}
\label{subsec an upper bound on the contribution of each piece}

In this section, we give upper bounds for the contribution of each addend in the expression of $m_p(\nu_R)(\phi_1,\dots,\phi_p)$ derived in Lemma~\ref{lem integral expression}. We bound the contribution of the integral of the term indexed by $\I \in \pa_p$ over $\R^\I_{\J,\eta(R)}$ in terms of the combinatorial properties of $\I$ and $\J$. See Lemma~\ref{lem upper bound} below for a precise statement.

\begin{rem}
\label{rem take it easy}
Note that $\J$ is partition of $\I$, which is itself a partition of $\{1,\dots,p\}$. This is the main reason why our formalism is so heavy. A good starting point is to understand what happens for the term indexed by $\I=\I_{\min}(p) = \{\{i\} \mid 1 \leq i \leq p\}$ in Lemma~\ref{lem integral expression}. In this case, all the important ideas of the proof appear, but we can simplify the formalism a bit since $\I \simeq \{1,\dots,p\}$ canonically.
\end{rem}

\begin{lem}
\label{lem partitions into pairs}
Let $A$ be a non-empty finite set and let us assume that $f$ is a $\mathcal{C}^{\norm{A}}$-process such that $\Norm{\kappa}_{\norm{A},\eta} = o(\eta^{-4\norm{A}})$ at infinity. Let $\eta:[0,+\infty) \to [0,+\infty)$ be a function satisfying the conditions of Lemma~\ref{lem scale parameter} with $p=\norm{A}$.

Let $\I \in \pa_A$ and let $S \subset A$ be of even cardinality and such that $\I_S = \{\{s\} \mid s \in S\} \subset \I$. Let $\J' \in \pp_{\I_S}$ and $\J'' \in \pa_{\I \setminus \I_S}$, we denote by $\J = \J' \sqcup \J'' \in \pa_\I$. Then, the following holds uniformly for all $\underline{x}_\I \in \R^\I_{\J,\eta(R)}$:
\begin{equation}
\label{eq partitions into pairs mult}
F_\I(\underline{x}_\I) = F_{\I \setminus \I_S}(\underline{x}_{\I \setminus \I_S}) \prod_{J \in \J'} F_J(\underline{x}_J) +o\!\left(R^{-\frac{\norm{A}}{2}}\right).
\end{equation}
\end{lem}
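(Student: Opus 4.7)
The plan is to apply the clustering property of Corollary~\ref{cor clustering rho I} to each term $\rho_{\{\I_B\}}(\underline{x}_{\I_B})$ appearing in the defining expression~\eqref{eq def F I} of $F_\I$, and then reorganize the resulting sum into the claimed product.

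\medskip

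\noindent\textbf{Step 1 (clustering of each term).} Fix $\underline{x}_\I \in \R^\I_{\J,\eta(R)}$ and $B \in \mathcal{S}_A(\I)$. Since $\I_B \subset \I$, the subtuple $\underline{x}_{\I_B}$ inherits the cluster structure $\{J \cap \I_B \mid J \in \J\}$ (ignoring empty intersections) at scale $\eta(R)$, in the sense that the hypothesis of Corollary~\ref{cor clustering rho I} is satisfied. Using $\Norm{\kappa}_{\norm{\I_B},\eta(R)} \leq \Norm{\kappa}_{\norm{A},\eta(R)} = o(R^{-\norm{A}})$ together with the uniform boundedness of the $\rho_{\{J \cap \I_B\}}$ given by Lemma~\ref{lem bound rho I}, the multiplicative error in Corollary~\ref{cor clustering rho I} converts to an additive error, yielding
\begin{equation*}
\rho_{\{\I_B\}}(\underline{x}_{\I_B}) = \prod_{J \in \J} \rho_{\{J \cap \I_B\}}(\underline{x}_{J \cap \I_B}) + o(R^{-\norm{A}/2}),
\end{equation*}
uniformly in $B$ and in $\underline{x}_\I \in \R^\I_{\J,\eta(R)}$, where we use the convention $\rho_{\{\emptyset\}} = 1$.

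\medskip

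\noindent\textbf{Step 2 (factorizing the sum).} Substituting into Equation~\eqref{eq def F I} and using that the sum $\sum_{B \in \mathcal{S}_A(\I)}$ is finite, we get
\begin{equation*}
F_\I(\underline{x}_\I) = \sum_{B \in \mathcal{S}_A(\I)} \left(\frac{-1}{\pi}\right)^{\norm{A}-\norm{B}} \prod_{J \in \J} \rho_{\{J \cap \I_B\}}(\underline{x}_{J \cap \I_B}) + o(R^{-\norm{A}/2}).
\end{equation*}
For each $J \in \J$, set $A_J = \bigsqcup_{I \in J} I \subset A$, so that $A = \bigsqcup_{J \in \J} A_J$. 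The map $B \mapsto (B \cap A_J)_{J \in \J}$ defines a bijection $\mathcal{S}_A(\I) \simeq \prod_{J \in \J} \mathcal{S}_{A_J}(J)$, because the condition ``$B$ contains every non-singleton $I \in \I$'' is equivalent to ``for every $J \in \J$, $B \cap A_J$ contains every non-singleton $I \in J$''. Moreover $\norm{A}-\norm{B} = \sum_J (\norm{A_J}-\norm{B \cap A_J})$ and $J \cap \I_B = J_{B \cap A_J}$. The sum therefore factorizes:
\begin{equation*}
\sum_{B \in \mathcal{S}_A(\I)} \left(\frac{-1}{\pi}\right)^{\norm{A}-\norm{B}} \prod_{J \in \J} \rho_{\{J \cap \I_B\}}(\underline{x}_{J \cap \I_B}) = \prod_{J \in \J} F_J(\underline{x}_J),
\end{equation*}
where each factor is recognized from Equation~\eqref{eq def F I} applied to the ambient set $A_J$ and the partition $J \in \pa_{A_J}$.

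\medskip

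\noindent\textbf{Step 3 (identifying the $\J''$ block).} Splitting $\J = \J' \sqcup \J''$ gives
\begin{equation*}
F_\I(\underline{x}_\I) = \Bigl(\prod_{J' \in \J'} F_{J'}(\underline{x}_{J'})\Bigr)\Bigl(\prod_{J'' \in \J''} F_{J''}(\underline{x}_{J''})\Bigr) + o(R^{-\norm{A}/2}).
\end{equation*}
Applying the same Steps~1--2 to the partition $\I \setminus \I_S \in \pa_{A \setminus S}$ with cluster structure $\J'' \in \pa_{\I \setminus \I_S}$ on the restricted tuple $\underline{x}_{\I \setminus \I_S} \in \R^{\I \setminus \I_S}_{\J'',\eta(R)}$, we obtain $F_{\I \setminus \I_S}(\underline{x}_{\I \setminus \I_S}) = \prod_{J'' \in \J''} F_{J''}(\underline{x}_{J''}) + o(R^{-\norm{A}/2})$. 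Boundedness of $\prod_{J' \in \J'} F_{J'}$ (Lemma~\ref{lem FI bounded}) lets us substitute this estimate into the previous display to obtain~\eqref{eq partitions into pairs mult}.

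\medskip

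The analytic content is entirely concentrated in Corollary~\ref{cor clustering rho I}; the main obstacle is bookkeeping, namely setting up the bijection $\mathcal{S}_A(\I) \simeq \prod_{J \in \J} \mathcal{S}_{A_J}(J)$ cleanly and checking that the factorization of $(-1/\pi)^{\norm{A}-\norm{B}}$ and of the clustered product of $\rho$'s matches the definition of each $F_J$.
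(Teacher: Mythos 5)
Your proof is correct, but it takes a genuinely different route from the paper's. The paper fixes a single pair $J=\{\{s\},\{t\}\}\in\J'$, splits $\mathcal{S}_A(\I)$ into the four-element blocks $\{B,B\sqcup\{s\},B\sqcup\{t\},B\sqcup\{s,t\}\}$, applies Corollary~\ref{cor clustering rho I} to each of the four terms, recognizes the combination as $\left(\frac{-1}{\pi}\right)^{\norm{A}-\norm{B}-2}\rho_{\{\I_B\}}F_J$ up to $o(R^{-\norm{A}/2})$, and then inducts on $\norm{\J'}$; the partition $\J''$ is never touched. You instead apply Corollary~\ref{cor clustering rho I} to \emph{every} term $\rho_{\{\I_B\}}$ at once, factorizing it as $\prod_{J\in\J}\rho_{\{J\cap\I_B\}}$, and then establish the product formula $\mathcal{S}_A(\I)\simeq\prod_{J\in\J}\mathcal{S}_{A_J}(J)$ to split the whole sum into $\prod_{J\in\J}F_J$. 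This is a stronger and cleaner intermediate statement (full factorization over $\J$), and it avoids induction; the price is that you then need a ``recomposition'' step to turn $\prod_{J''\in\J''}F_{J''}$ back into $F_{\I\setminus\I_S}$, which you do by running Steps~1--2 again with ambient set $A\setminus S$. Both proofs lean on the same analytic input (Corollary~\ref{cor clustering rho I} plus the boundedness from Lemma~\ref{lem bound rho I}), and both control the error by $\Norm{\kappa}_{\norm{A},\eta(R)}^{1/2}=o(R^{-\norm{A}/2})$; your version is arguably more transparent combinatorially once the bijection on $\mathcal{S}_A(\I)$ is in place. One tiny imprecision: in Step~3 you write $\underline{x}_{\I\setminus\I_S}\in\R^{\I\setminus\I_S}_{\J'',\eta(R)}$, but restricting a tuple from $\R^\I_{\J,\eta(R)}$ only guarantees $\I_{\eta(R)}(\underline{x}_{\I\setminus\I_S})\leq\J''$, not equality. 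This does not affect the argument, because your Steps~1--2 (like Corollary~\ref{cor clustering rho I}) only require the inequality, but the displayed membership as stated is not quite right.
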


\begin{proof}
If $S$ is empty, then $\I_S = \emptyset$ and $\J' = \emptyset$ by convention. Hence, the result holds in this case. Let us now assume that $S$ is not empty. Then $\I_S$ is non-empty and contains an even number of elements, so that $\pp_{\I_S}$ is non-empty.

Let $\J' \in \pp_{\I_S}$, let $\J'' \in \pa_{\I \setminus \I_S}$ and let $\J = \J' \sqcup \J''$. Let $J \in \J'$, then there exists $s$ and $t \in S$ such that $s \neq t$ and $J=\{\{s\},\{t\}\}$. Let us denote by $A'= A \setminus \{s,t\}$, we have $\I \leq \{A',\{s\},\{t\}\}$ so that $\I = \I_{A'} \sqcup\{\{s\},\{t\}\}$. Recall that $F_\I$ is defined as a sum indexed by $B \in \mathcal{S}_A(\I)$, see Equation~\eqref{eq def F I}. Since $\{s\}$ and $\{t\} \in \I$, we have:
\begin{equation*}
\mathcal{S}_A(\I) = \bigsqcup_{B \in \mathcal{S}_{A'}(\I_{A'})} \{B, B\sqcup \{s\}, B\sqcup \{t\}, B\sqcup \{s,t\}\}.
\end{equation*}
Let $B \in S_{A'}(\I_{A'})$, we regroup the four terms corresponding to $B$, $B \sqcup \{s\}$, $B \sqcup \{t\}$ and $B \sqcup \{s,t\}$ in the sum defining $F_\I$ (see Lemma~\ref{lem integral expression}). For all $\underline{x}_\I \in \R^\I$, we obtain:
\begin{equation}
\label{eq difference four terms}
\left(\!\frac{-1}{\pi}\!\right)^{\!\norm{A}-\norm{B}}\!\left(\pi^2\rho_{\{\I_{B \sqcup \{s,t\}}\}}(\underline{x}_{\I_{B \sqcup \{s,t\}}}) - \pi\rho_{\{\I_{B \sqcup \{s\}}\}}(\underline{x}_{\I_{B \sqcup \{s\}}})- \pi \rho_{\{\I_{B \sqcup \{t\}}\}}(\underline{x}_{\I_{B \sqcup \{t\}}}) + \rho_{\{\I_B\}}(\underline{x}_{\I_B})\!\right).
\end{equation}
Note that $\I_{B \sqcup\{s,t\}} = \I_B \sqcup J$. Since $J = \{\{s\},\{t\}\} \in \J$, if $\underline{x}_\I \in \R^\I_{\J,\eta(R)}$, then for any $I \in \I \setminus J$ we have $\norm{x_I-x_{\{s\}}} \geq \eta(R)$ and $\norm{x_I-x_{\{t\}}} \geq \eta(R)$. In particular, the following holds:
\begin{equation*}
\I_{\eta(R)}\left(\underline{x}_{\I_B \sqcup J}\right) = \J_{\I_B \sqcup J} \leq \{\I_B,J\}.
\end{equation*}
Since we chose $\eta$ so that $\Norm{\kappa}_{\norm{A},\eta(R)} = o(R^{-\norm{A}})$, applying Corollary~\ref{cor clustering rho I} we get:
\begin{equation*}
\rho_{\{\I_{B \sqcup \{s,t\}}\}}(\underline{x}_{\I_{B \sqcup \{s,t\}}}) = \rho_{\{\I_B \sqcup J\}}(\underline{x}_{\I_B \sqcup J}) = \rho_{\{\I_B\}}(\underline{x}_{\I_B}) \rho_{\{J\}}(\underline{x}_J)\left(1 +o\!\left(R^{-\frac{\norm{A}}{2}}\right)\right).
\end{equation*}
We proceed similarly with the three other terms in Equation~\eqref{eq difference four terms}. Bearing in mind that $\rho_{\{J\}}$ and $\rho_{\{\I_B\}}$ are bounded (see Lemma~\ref{lem bound rho I}), we obtain that~\eqref{eq difference four terms} equals:
\begin{equation*}
\left(\frac{-1}{\pi}\right)^{\norm{A}-\norm{B}-2} \rho_{\{\I_B\}}(\underline{x}_{\I_B}) F_J(\underline{x}_J)+o\!\left(R^{-\frac{\norm{A}}{2}}\right).
\end{equation*}
Summing these terms over $B \subset A'$, we get that, for all $\underline{x}_\I \in \R^\I_{\J,\eta(R)}$:
\begin{equation}
\label{eq inductive step}
F_\I(\underline{x}_\I) = F_{\I \setminus J}(\underline{x}_{\I \setminus J})F_J(\underline{x}_J) +o\!\left(R^{-\frac{\norm{A}}{2}}\right).
\end{equation}
We can repeat the argument for $F_{\I \setminus J}(\underline{x}_{\I \setminus J})$, with $\J'$ replaced by $\J'\setminus \{J\}$. More formally, we prove by induction on the cardinality of $\J'$ that Equation~\eqref{eq partitions into pairs mult} holds uniformly on $\R^\I_{\J,\eta(R)}$. The result is true if $\J'=\emptyset$, and the inductive step is given by Equation~\eqref{eq inductive step}.
\end{proof}

\begin{lem}
\label{lem upper bound}
In the same setting as Lemma~\ref{lem partitions into pairs}, let $(\phi_a)_{a \in A}$ be Lebesgue-integrable and essentially bounded functions. Then, as $R \to +\infty$, we have:
\begin{equation*}
\int_{\R^\I_{\J,\eta(R)}} \iota_\I^*\phi_R(\underline{x}_\I) F_\I(\underline{x}_\I) \dx \underline{x}_\I = O\left(R^{\norm{\J}} \eta(R)^{\norm{\I}-2\norm{\J'}-\norm{\J''}}\right),
\end{equation*}
where $\eta:[0,+\infty) \to [0,+\infty)$ is a function satisfying the conditions of Lemma~\ref{lem scale parameter}.
\end{lem}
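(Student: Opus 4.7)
The strategy is to combine the factorization of $F_\I$ provided by Lemma~\ref{lem partitions into pairs} with a naive enlargement of the domain of integration. Write $\psi_I = \prod_{a \in I} \phi_a$ for $I \in \I$; each $\psi_I$ belongs to $L^1(\R) \cap L^\infty(\R)$ and one has $\iota_\I^*\phi_R(\underline{x}_\I) = \prod_{I \in \I} \psi_I(x_I/R)$. For each $J \in \J$, let $V_J = \R^J_{\{J\},\eta(R)}$, the set of $\underline{x}_J \in \R^J$ whose coordinates form a single cluster at scale $\eta(R)$; unpacking Definition~\ref{def I eta}, one checks that $\R^\I_{\J,\eta(R)} \subset \prod_{J \in \J} V_J$. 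Taking absolute values in the integrand legitimizes enlarging the domain to this product.

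By Lemma~\ref{lem partitions into pairs}, uniformly on $\R^\I_{\J,\eta(R)}$ we have $|F_\I| \leq |F_{\I \setminus \I_S}| \prod_{J \in \J'} |F_J| + o(R^{-\norm{A}/2})$, splitting the integral into a main and an error part. In the main part, the variables $\underline{x}_{\I_S}$ (inputs of $\prod_{J \in \J'} F_J$) and $\underline{x}_{\I \setminus \I_S}$ (input of $F_{\I \setminus \I_S}$) appear in disjoint factors, so the integral over $\prod_{J \in \J} V_J$ factorizes between $\J'$- and $\J''$-clusters. For each pair $J = \{\{s\},\{t\}\} \in \J'$, Example~\ref{ex covariances} identifies $F_J(\underline{x}_J) = F(x_{\{t\}} - x_{\{s\}})$ with $F \in L^1(\R)$ by Lemma~\ref{lem integrability F}; enlarging $V_J$ to $\R^2$ and using the change of variables $(u,v) = (x_{\{s\}}/R,\ x_{\{t\}} - x_{\{s\}})$ together with $\psi_{\{s\}} \in L^1$ and $\psi_{\{t\}} \in L^\infty$ yields a contribution $O(R)$. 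For each cluster $J \in \J''$, one bounds $|F_{\I \setminus \I_S}|$ uniformly using Lemma~\ref{lem FI bounded}, fixes a representative $I_0 \in J$, integrates $\psi_{I_0}$ in $L^1$ against $\dx x_{I_0}$, and bounds the remaining $\psi_I$'s in $L^\infty$ on the box of radius $(\norm{J}-1)\eta(R)$ around $x_{I_0}$; this yields $O(R \eta(R)^{\norm{J}-1})$. Multiplying and using $\sum_{J \in \J''}(\norm{J}-1) = \norm{\I \setminus \I_S} - \norm{\J''} = \norm{\I} - 2\norm{\J'} - \norm{\J''}$ (since $\norm{\I_S} = 2\norm{\J'}$), the main part is $O(R^{\norm{\J}} \eta(R)^{\norm{\I}-2\norm{\J'}-\norm{\J''}})$.

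For the error part, a cruder version of the same estimate (not using integrability of $F$) gives $\int_{\R^\I_{\J,\eta(R)}} |\iota_\I^*\phi_R| \dx \underline{x}_\I = O(R^{\norm{\J}} \eta(R)^{\norm{\I}-\norm{\J}})$, so the error contributes $o(R^{-\norm{A}/2}) \cdot O(R^{\norm{\J}} \eta(R)^{\norm{\I}-\norm{\J}})$. To absorb this into the target bound one must check that $R^{-\norm{A}/2}\eta(R)^{\norm{\J'}}$ remains bounded; since $\eta(R) = o(R^{1/4})$ and $\norm{\J'} \leq \norm{\I_S}/2 \leq \norm{A}/2$, this ratio is in fact $o(R^{-3\norm{A}/8})$. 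The only non-trivial ingredient is the integrability of $F$ from Lemma~\ref{lem integrability F}, which produces the crucial extra $\eta(R)^{-\norm{\J'}}$ saving over the naive cluster bound; the main obstacle is purely bookkeeping, namely tracking how the factorization of Lemma~\ref{lem partitions into pairs} cleanly separates the variables associated with $\I_S$ from those associated with $\I \setminus \I_S$.
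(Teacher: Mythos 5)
Your proof is correct and follows essentially the same route as the paper: apply Lemma~\ref{lem partitions into pairs} to split off the error term, enlarge $\R^\I_{\J,\eta(R)}$ to a product of cluster sets, use $L^1$-integrability of $F$ (Lemma~\ref{lem integrability F}) for the $\J'$-factors to gain $O(R)$ rather than $O(R\eta(R))$, bound $|F_{\I\setminus\I_S}|$ via Lemma~\ref{lem FI bounded}, and handle each $J\in\J''$ by integrating one $\psi_{I_0}$ in $L^1$ and confining the remaining variables to boxes of size $O(\eta(R))$. The only departure is in absorbing the error term: the paper enlarges $V_J$ to $\R^2$ for $J\in\J'$ (giving $O(R^2)$ per pair, then uses $\norm{\J'}\le\norm{A}/2$ alone), while you keep the slab constraint $V_J$ (giving $O(R\eta(R))$ per pair, then additionally invoke $\eta(R)=o(R^{1/4})$); both correctly show the error term is dominated.
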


\begin{proof}
Let $\underline{x}_I = (x_I)_{I \in \I} \in \R^\I_{\J,\eta(R)}$, using the estimate of Lemma~\ref{lem partitions into pairs}, we have:
\begin{equation*}
\iota_\I^*\phi_R(\underline{x}_\I) F_\I(\underline{x}_\I) = \iota_\I^*\phi_R(\underline{x}_\I)F_{\I \setminus \I_S}(\underline{x}_{\I \setminus \I_S}) \prod_{J \in \J'}F_J(\underline{x}_J) +o\!\left(R^{-\frac{\norm{A}}{2}}\right)\iota_\I^*\phi_R(\underline{x}_\I).
\end{equation*}
Then, since $\I = \left(\I \setminus \I_S\right) \sqcup \bigsqcup_{J \in \J'} J$, by Fubini's Theorem,
\begin{multline}
\label{eq upper bound first ineq}
\norm{\int_{\R^\I_{\J,\eta(R)}} \iota_\I^*\phi_R(\underline{x}_\I)F_{\I \setminus \I_S}(\underline{x}_{\I \setminus \I_S}) \prod_{J \in \J'}F_J(\underline{x}_J) \dx \underline{x}_\I}\\
\leq \int_{\R^\I_{\J,\eta(R)}} \norm{\iota_\I^*\phi_R(\underline{x}_\I)F_{\I \setminus \I_S}(\underline{x}_{\I \setminus \I_S}) \prod_{J \in \J'}F_J(\underline{x}_J)} \dx \underline{x}_\I \\
\leq \int_{\R^{\I \setminus \I_S}_{\J'',\eta(R)}} \norm{F_{\I \setminus \I_S}(\underline{x}_{\I \setminus \I_S})}\prod_{I \notin \I_S}\prod_{i \in I}\norm{\phi_i\left(\frac{x_I}{R}\right)} \dx \underline{x}_{\I \setminus \I_S} \\
\times \prod_{J \in \J'} \int_{\R^J}\norm{F_J(\underline{x}_J)}\prod_{I \in J} \prod_{i \in I}\norm{\phi_i\left(\frac{x_I}{R}\right)}\dx \underline{x}_J,
\end{multline}
where we used the fact that $\R^\I_{\J,\eta(R)} \subset \R^{\I \setminus \I_S}_{\J'',\eta(R)} \times \R^{\I_S} = \R^{\I \setminus \I_S}_{\J'',\eta(R)} \times \prod_{J \in \J'} \R^J$. The same kind of computation shows that:
\begin{equation}
\label{eq bonus equation}
\norm{\int_{\R^\I_{\J,\eta(R)}} \iota_\I^*\phi_R(\underline{x}_\I)\dx \underline{x}_\I} \leq \int_{\R^{\I \setminus \I_S}_{\J'',\eta(R)}} \prod_{I \notin \I_S}\prod_{i \in I}\norm{\phi_i\left(\frac{x_I}{R}\right)} \dx \underline{x}_{\I \setminus \I_S}
\times \prod_{J \in \J'} \int_{\R^J}\prod_{I \in J} \prod_{i \in I}\norm{\phi_i\left(\frac{x_I}{R}\right)}\dx \underline{x}_J.
\end{equation}

Let $J=\{\{a\},\{b\}\} \in \J'$. Since $\phi_a$ and $\phi_b$ are integrable on $\R$, then $\phi_a \boxtimes \phi_b$ is integrable on~$\R^2$. Then, as explained in Example~\ref{ex covariances}, we have:
\begin{equation}
\label{eq upper bound J part}
\int_{\R^J}\norm{F_J(\underline{x}_J)}\prod_{I \in J} \prod_{i \in I}\norm{\phi_i\left(\frac{x_I}{R}\right)}\dx \underline{x}_J =\int_{\R^2} \norm{\phi_a\left(\frac{x}{R}\right)\phi_b\left(\frac{y}{R}\right)}\norm{F(y-x)} \dx x \dx y,
\end{equation}
where $F$ is defined by Definition~\ref{def F}. Then, recall that $\phi_a$ and $\phi_b$ are essentially bounded. Denoting by $\Norm{\phi_b}_\infty$ the essential supremum of $\phi_b$, we have:
\begin{align*}
\int_{\R^2} \norm{\phi_a\left(\frac{x}{R}\right)\phi_b\left(\frac{y}{R}\right)}\norm{F(y-x)} \dx x \dx y &\leq \Norm{\phi_b}_\infty \int_{\R^2} \norm{\phi_a\left(\frac{x}{R}\right)}\norm{F(z)} \dx x \dx z\\
&\leq R \Norm{\phi_b}_\infty \left(\int_\R \norm{\phi_a(x)} \dx x\right) \left(\int_\R \norm{F(z)}\dx z\right).
\end{align*}
Since $\phi_a$ and $F$ are integrable (see Lemma~\ref{lem integrability F}), the previous term is $O(R)$. Hence, by Equations~\eqref{eq upper bound first ineq} and~\eqref{eq upper bound J part}, we have:
\begin{equation*}
\norm{\int_{\R^\I_{\J,\eta(R)}} \iota_\I^*\phi_R(\underline{x}_\I)F_{\I \setminus \I_S}(\underline{x}_{\I \setminus \I_S}) \prod_{J \in \J'}F_J(\underline{x}_J) \dx \underline{x}_\I} \leq O(R^{\norm{\J'}})\int_{\R^{\I \setminus \I_S}_{\J'',\eta(R)}} \prod_{I \notin \I_S}\prod_{i \in I}\norm{\phi_i\left(\frac{x_I}{R}\right)} \dx \underline{x}_{\I \setminus \I_S},
\end{equation*}
where we also used the boundedness of $F_{\I \setminus \I_S}$ on $\R^{\I \setminus \I_S}$ (see Lemma~\ref{lem FI bounded}). On the other hand, we have:
\begin{equation*}
\int_{\R^J}\prod_{i \in I}\norm{\phi_i\left(\frac{x_I}{R}\right)}\dx \underline{x}_J =R^2\int_{\R^2} \norm{\phi_a(x)\phi_b(y)} \dx x \dx y = O(R^2),
\end{equation*}
so that, by Equation~\eqref{eq bonus equation},
\begin{equation*}
o\!\left(R^{-\frac{\norm{A}}{2}}\right)\norm{\int_{\R^\I_{\J,\eta(R)}} \iota_\I^*\phi_R(\underline{x}_\I)\dx \underline{x}_\I} = o\left(R^{2\norm{\J'}-\frac{\norm{A}}{2}}\right)\int_{\R^{\I \setminus \I_S}_{\J'',\eta(R)}} \prod_{I \notin \I_S}\prod_{i \in I}\norm{\phi_i\left(\frac{x_I}{R}\right)} \dx \underline{x}_{\I \setminus \I_S}.
\end{equation*}
Since $\J' \in \pp_{\I_S}$ and $\I \in \pa_A$, we have $\norm{\J'} = \frac{\norm{\I_S}}{2} \leq \frac{\norm{\I}}{2} \leq \frac{\norm{A}}{2}$. Hence, in order to conclude the proof, it is enough to prove that:
\begin{equation}
\label{eq upper bound goal I'}
\int_{\R^{\I \setminus \I_S}_{\J'',\eta(R)}} \prod_{I \notin \I_S}\prod_{i \in I}\norm{\phi_i\left(\frac{x_I}{R}\right)} \dx \underline{x}_{\I \setminus \I_S} = O\left(R^{\norm{\J''}}\eta(R)^{\norm{\I}-2\norm{\J'}-\norm{\J''}}\right).
\end{equation}

Note that $\R^{\I \setminus \I_S}_{\J'',\eta(R)} \subset \prod_{J \in \J''} \R^J_{\{J\},\eta(R)}$. Hence, we have:
\begin{equation}
\label{eq1 in upper bound}
\int_{\R^{\I \setminus \I_S}_{\J'',\eta(R)}} \prod_{I \notin \I_S}\prod_{i \in I}\norm{\phi_i\left(\frac{x_I}{R}\right)} \dx \underline{x}_{\I \setminus \I_S} \leq \prod_{J \in \J''} \int_{\R^J_{\{J\},\eta(R)}} \prod_{I \in J}\prod_{i \in I}\norm{\phi_i\left(\frac{x_I}{R}\right)} \dx \underline{x}_J
\end{equation}
Let $J \in \J''$ and let $B = \bigsqcup_{I \in J} I$. Recall that, for all $a \in B$, the function $\phi_a$ is essentially bounded and let us denote by $\Norm{\phi_a}_\infty$ its essential supremum. Let us choose a preferred element $b \in B$ and let $I_0 \in J$ be defined by $b \in I_0$. Then, we have:
\begin{equation}
\label{eq2 in upper bound}
\int_{\R^J_{\{J\},\eta(R)}} \prod_{I \in J}\prod_{i \in I}\norm{\phi_i\left(\frac{x_I}{R}\right)} \dx \underline{x}_J \leq \left(\prod_{a \in B \setminus \{b\}} \Norm{\phi_a}_\infty\right) \int_{\R^J_{\{J\},\eta(R)}} \norm{\phi_b\left(\frac{x_{I_0}}{R}\right)} \dx \underline{x}_J.
\end{equation}
Let $\underline{x}_J \in \R^J_{\{J\},\eta(R)}$, for all $I \in J \setminus \{I_0\}$, we have $\norm{x_I-x_{I_0}} \leq \norm{J} \eta(R)$ (cf.~Remark~\ref{rem R A I eta}). Thus,
\begin{equation}
\label{eq3 in upper bound}
\int_{\R^J_{\{J\},\eta(R)}} \norm{\phi_b\left(\frac{x_{I_0}}{R}\right)} \dx \underline{x}_J \leq \left(\norm{J}\eta(R)\right)^{\norm{J}-1} \int_\R \norm{\phi_b\left(\frac{x}{R}\right)} \dx x = R\left(\norm{J}\eta(R)\right)^{\norm{J}-1}\int_\R \norm{\phi_b(x)} \dx x,
\end{equation}
and this term is $O\!\left(R\eta(R)^{\norm{J}-1}\right)$ since $\phi_b$ is integrable. Finally, by Equations~\eqref{eq1 in upper bound}, \eqref{eq2 in upper bound} and~\eqref{eq3 in upper bound}, we obtain:
\begin{equation*}
\int_{\R^{\I \setminus \I_S}_{\J'',\eta(R)}} \prod_{I \notin \I_S}\prod_{i \in I}\norm{\phi_i\left(\frac{x_I}{R}\right)} \dx \underline{x}_{\I \setminus \I_S} = \prod_{J \in \J''}O\!\left(R\eta(R)^{\norm{J}-1}\right) = O\!\left(R^{\norm{\J''}} \eta(R)^{\sum_{J \in \J''} \norm{J}-\norm{\J''}}\right).
\end{equation*}
Since $\J'' \in \pa_{\I \setminus \I_S}$, we have $\sum_{J \in \J''} \norm{J} = \norm{\I \setminus \I_S} = \norm{\I} - \norm{\I_S}$. Finally, since $\J' \in \pp_{\I_S}$, we have $\norm{\I_S} = 2\norm{\J'}$. Thus, $\sum_{J \in \J''} \norm{J}-\norm{\J''} = \norm{\I}-2\norm{\J'}-\norm{\J''}$, and we just proved Equation~\eqref{eq upper bound goal I'}, which yields the result.
\end{proof}


\subsection{Contribution of the partitions with an isolated point}
\label{subsec contribution of the partitions with an isolated point}

The result of Lemma~\ref{lem upper bound} may lead to think that the main contribution in the integral over $\R^\I$ appearing in Lemma~\ref{lem integral expression} comes from the subsets of the form $\R^\I_{\J,\eta(R)}$, where $\norm{\J}$ is large. In this section, we prove that the function $F_\I$ is uniformly small over $\R^\I_{\J,\eta(R)}$ if $\J$ contains a singleton. In particular, the contribution of $\R^\I_{\J,\eta(R)}$ to the integral over $\R^\I$ appearing in Lemma~\ref{lem integral expression} is small if $\norm{\J} > \frac{\norm{\I}}{2}$.

\begin{lem}
\label{lem partitions with a singleton}
Let $A$ be a finite set of cardinality at least $2$, and let us assume that $f$ is a $\mathcal{C}^{\norm{A}}$-process such that $\Norm{\kappa}_{\norm{A},\eta} = o(\eta^{-4\norm{A}})$ at infinity. Let $\eta:[0,+\infty) \to [0,+\infty)$ be a function satisfying the conditions of Lemma~\ref{lem scale parameter} with $p=\norm{A}$.

Let $\I \in \pa_A$, we assume that there exists $i \in A$ such that $\{i\} \in \I$. Then, for all $\J \in \pa_\I$ such that $\left\{\{i\}\right\} \in \J$, we have $F_\I(\underline{x}_\I) = o\!\left(R^{-\frac{\norm{A}}{2}}\right)$ as $R \to +\infty$, uniformly in $\underline{x}_\I \in \R^\I_{\J,\eta(R)}$.
\end{lem}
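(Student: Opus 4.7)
The plan is to regroup the terms in the sum~\eqref{eq def F I} defining $F_\I$ into pairs whose contributions cancel up to a small error, by exploiting the clustering estimate of Corollary~\ref{cor clustering rho I} together with the separation of the singleton $\{i\}$ from the other clusters encoded by $\{\{i\}\} \in \J$.

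First, I would set $A' = A \setminus \{i\}$ and $\I' = \I \setminus \{\{i\}\} \in \pa_{A'}$. Since $\{i\}$ is a singleton of $\I$, the condition defining $\mathcal{S}_A(\I)$ places no restriction on whether $i$ belongs to $B$; this yields the natural bijection $\mathcal{S}_A(\I) = \mathcal{S}_{A'}(\I') \sqcup \{B' \sqcup \{i\} \mid B' \in \mathcal{S}_{A'}(\I')\}$, together with the identities $\I_{B'} = \I'_{B'}$ and $\I_{B' \sqcup \{i\}} = \I'_{B'} \sqcup \{\{i\}\}$. Grouping the two terms of~\eqref{eq def F I} indexed by $B'$ and $B' \sqcup \{i\}$, and using $(-1/\pi)^{\norm{A}-\norm{B'}-1} = -\pi\,(-1/\pi)^{\norm{A}-\norm{B'}}$, rewrites $F_\I(\underline{x}_\I)$ as a finite sum over $B' \in \mathcal{S}_{A'}(\I')$ of brackets of the form $\rho_{\{\I'_{B'}\}}(\underline{x}_{\I'_{B'}}) - \pi\, \rho_{\{\I'_{B'} \sqcup \{\{i\}\}\}}(\underline{x}_{\I'_{B'} \sqcup \{\{i\}\}})$, each multiplied by a bounded combinatorial factor.

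The central step is to estimate each bracket by applying Corollary~\ref{cor clustering rho I} to the ambient set $\I'_{B'} \sqcup \{\{i\}\}$, equipped with the partition $\{\I'_{B'}, \{\{\{i\}\}\}\}$ that separates the extra singleton (the edge case $\I'_{B'} = \emptyset$ is trivial since the convention $\rho_{\{\emptyset\}} = 1$ makes the bracket exactly zero). The hypothesis $\underline{x}_\I \in \R^\I_{\J,\eta(R)}$ combined with $\{\{i\}\} \in \J$ ensures that $x_{\{i\}}$ is at distance greater than $\eta(R)$ from every $x_I$ with $I \in \I \setminus \{\{i\}\}$, which is precisely the separation required by the corollary. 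Since $\rho_{\{\{\{i\}\}\}}(x_{\{i\}}) = \rho_1(x_{\{i\}}) = 1/\pi$ and $\rho_{\{\I'_{B'}\}}$ is uniformly bounded (by Lemma~\ref{lem bound rho I}), the clustering estimate implies that each bracket is $O(\Norm{\kappa}_{\norm{A},\eta(R)}^{1/2})$ uniformly in $\underline{x}_\I \in \R^\I_{\J,\eta(R)}$. The satisfying algebraic feature is that the factor $-\pi$ produced by regrouping the alternating signs must exactly compensate the factor $1/\pi = \rho_{\{\{\{i\}\}\}}(x_{\{i\}})$ appearing in the clustering estimate: this is the key to the cancellation.

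By the choice of $\eta$ (Lemma~\ref{lem scale parameter}), $\Norm{\kappa}_{\norm{A},\eta(R)}^{1/2} = o(R^{-\norm{A}/2})$. Summing the finitely many (at most $2^{\norm{A}}$) brackets then yields $F_\I(\underline{x}_\I) = o(R^{-\norm{A}/2})$ uniformly on $\R^\I_{\J,\eta(R)}$, as claimed. No step is genuinely difficult once the pairing is set up; the main points requiring care are identifying the exact $-\pi$ versus $1/\pi$ cancellation described above, and ensuring that the implicit constant in Corollary~\ref{cor clustering rho I} is uniform in $B'$, which is precisely the content of the uniformity assertion in that corollary.
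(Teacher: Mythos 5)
Your proposal is correct and follows essentially the same route as the paper's own proof: split $\mathcal{S}_A(\I)$ into pairs $B$, $B\sqcup\{i\}$, pair off the two corresponding terms, and kill each bracket by applying Corollary~\ref{cor clustering rho I} with the partition that isolates $\{\{i\}\}$, using that $\rho_1 \equiv 1/\pi$ together with the boundedness from Lemma~\ref{lem bound rho I} and the choice of scale parameter from Lemma~\ref{lem scale parameter}. The only cosmetic difference is which power of $-1/\pi$ you factor out of the pair, which is immaterial.
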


\begin{proof}
Recall that we defined $\mathcal{S}_A(\I)$ in Definition~\ref{def subsets adapted to I}. Let $A' = A \setminus \{i\}$. Since $\{i\} \in \I$, we have $\I = \I_{A'} \sqcup \{\{i\}\}$, and:
\begin{equation*}
\mathcal{S}_A(\I) = \bigsqcup_{B \in \mathcal{S}_{A'}(\I_{A'})} \{B, B\sqcup \{i\}\}.
\end{equation*}
In other terms, we can split $\mathcal{S}_A(\I)$ into the subsets that contain $i$ and those that do not, and $B \mapsto B \sqcup \{i\}$ is a bijection from $\mathcal{S}_{A'}(\I_{A'}) = \{B \in \mathcal{S}_A(\I) \mid i \notin B\}$ to $\{B \in \mathcal{S}_A(\I) \mid i \in B\}$.

Let $\underline{x}_\I=(x_I)_{I \in \I} \in \R^\I$ and let $B \in \mathcal{S}_A(\I)$ be such that $i \notin B$. Regrouping the terms corresponding to $B$ and $B \sqcup\{i\}$ in the sum defining $F_\I(\underline{x}_\I)$ (see Equation~\eqref{eq def F I}), we obtain:
\begin{equation}
\label{eq difference two terms}
\left(\frac{-1}{\pi}\right)^{\norm{A} - \norm{B}-1}\left(\rho_{\{\I_{B\sqcup\{i\}}\}}(\underline{x}_{\I_{B\sqcup\{i\}}}) - \frac{1}{\pi}\rho_{\{\I_B\}}(\underline{x}_{\I_B})\right).
\end{equation}
Note that we have $\I = \I_B \sqcup \{\{a\} \mid a \in A \setminus B\}$ and $\I_{B\sqcup \{i\}} = \I_B \sqcup \{\{i\}\}$. Let us now assume that $\underline{x}_\I \in \R^\I_{\J,\eta(R)}$. Since $\{\{i\}\} \in \J$, for all $I \in \I \setminus \{\{i\}\}$ we have $\norm{x_I-x_{\{i\}}} \geq \eta(R)$, and $\I_{\eta(R)}\left(\underline{x}_{\I_{B \sqcup \{i\}}}\right) = \J_{\I_{B \sqcup \{i\}}} \leq \{\I_B,\{i\}\}$. Then, by Lemma~\ref{lem bound rho I} and Corollary~\ref{cor clustering rho I}, the right-hand side of Equation~\eqref{eq difference two terms} is $o\!\left(R^{-\frac{\norm{A}}{2}}\right)$, uniformly over $\R^\I_{\J,\eta(R)}$. Here, we also used the fact that $\rho_1$ is constant equal to $\frac{1}{\pi}$ (see Example~\ref{ex Kac-Rice densities}) and that $\Norm{\kappa}_{\norm{A},\eta(R)} = o(R^{-\norm{A}})$, since $\eta$ satisfies the conclusion of Lemma~\ref{lem scale parameter}.
\end{proof}

\begin{cor}
\label{cor partitions with a singleton}
In the same setting as Lemma~\ref{lem partitions with a singleton}, let $(\phi_a)_{a \in A}$ be Lebesgue-integrable and essentially bounded functions. Then, as $R \to +\infty$, we have:
\begin{equation*}
\int_{\R^\I_{\J,\eta(R)}} \iota_\I^*\phi_R(\underline{x}_\I) F_\I(\underline{x}_\I) \dx \underline{x}_\I = o\!\left(R^\frac{\norm{A}}{2}\right),
\end{equation*}
where $\eta:[0,+\infty) \to [0,+\infty)$ is a function satisfying the conditions of Lemma~\ref{lem scale parameter}.
\end{cor}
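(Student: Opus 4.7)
The plan is to exploit the uniform smallness of $F_\I$ on $\R^\I_{\J,\eta(R)}$ given by Lemma~\ref{lem partitions with a singleton} and reduce the problem to a crude volume estimate on the test-function factor. First, since $F_\I(\underline{x}_\I) = o(R^{-\norm{A}/2})$ uniformly in $\underline{x}_\I \in \R^\I_{\J,\eta(R)}$, I would factor this out and write
\begin{equation*}
\left\lvert \int_{\R^\I_{\J,\eta(R)}} \iota_\I^*\phi_R(\underline{x}_\I) F_\I(\underline{x}_\I) \dx \underline{x}_\I \right\rvert \leq o\!\left(R^{-\frac{\norm{A}}{2}}\right) \int_{\R^\I_{\J,\eta(R)}} \lvert \iota_\I^*\phi_R(\underline{x}_\I) \rvert \dx \underline{x}_\I,
\end{equation*}
so that only the integral of $\lvert \iota_\I^*\phi_R \rvert$ remains to be controlled.

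Next, I would enlarge the domain of integration from $\R^\I_{\J,\eta(R)}$ to all of $\R^\I$ and apply Fubini's theorem. Using the factorization $\iota_\I^*\phi_R(\underline{x}_\I) = \prod_{I \in \I} \prod_{i \in I} \phi_i(x_I/R)$ (recall Notations~\ref{ntn product indexed by A}), we get
\begin{equation*}
\int_{\R^\I} \lvert \iota_\I^*\phi_R(\underline{x}_\I) \rvert \dx \underline{x}_\I = \prod_{I \in \I} \int_{\R} \prod_{i \in I} \left\lvert \phi_i\!\left(\tfrac{x}{R}\right)\right\rvert \dx x.
\end{equation*}
For each $I \in \I$, picking any preferred element $i_0 \in I$ and bounding the other factors by their essential suprema $\Norm{\phi_i}_\infty$ yields
\begin{equation*}
\int_{\R} \prod_{i \in I} \left\lvert \phi_i\!\left(\tfrac{x}{R}\right)\right\rvert \dx x \leq \left(\prod_{i \in I \setminus \{i_0\}} \Norm{\phi_i}_\infty\right) \int_{\R} \left\lvert \phi_{i_0}\!\left(\tfrac{x}{R}\right)\right\rvert \dx x = O(R),
\end{equation*}
by a change of variable and the integrability of $\phi_{i_0}$. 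Taking the product over $I \in \I$ gives $\int_{\R^\I} \lvert \iota_\I^*\phi_R \rvert \dx \underline{x}_\I = O(R^{\norm{\I}})$.

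Combining the two estimates yields a bound of $o(R^{\norm{\I}-\norm{A}/2})$. Since $\I$ is a partition of $A$, we have $\norm{\I} \leq \norm{A}$, hence $\norm{\I} - \norm{A}/2 \leq \norm{A}/2$, and we obtain the claimed $o(R^{\norm{A}/2})$. There is no serious obstacle here: the corollary is essentially a packaging of the pointwise estimate of Lemma~\ref{lem partitions with a singleton} together with the observation that $\prod_{i \in I} \lvert \phi_i \rvert$ is integrable because the $\phi_i$ are simultaneously integrable and essentially bounded.
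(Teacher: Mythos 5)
Your proof is correct and follows essentially the same route as the paper's: apply the uniform $o(R^{-\norm{A}/2})$ bound from Lemma~\ref{lem partitions with a singleton}, enlarge the domain to $\R^\I$, and use the integrability of $\iota_\I^*\phi$ together with a change of variables to obtain the factor $O(R^{\norm{\I}})$, concluding with $\norm{\I} \leq \norm{A}$. You are simply more explicit than the paper about why $\iota_\I^*\phi$ is integrable (factoring via Fubini and bounding all but one factor per cluster by its essential supremum), which the paper handles with the remark "our hypotheses ensure that $\iota_\I^*\phi$ is integrable on $\R^\I$."
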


\begin{proof}
Our hypotheses ensure that $\iota_\I^*\phi$ is integrable on $\R^\I$. Then, by Lemma~\ref{lem partitions with a singleton}, we have:
\begin{equation*}
\int_{\R^\I_{\J,\eta(R)}} \iota_\I^*\phi_R(\underline{x}_\I) F_\I(\underline{x}_\I) \dx \underline{x}_\I = o\!\left(R^{-\frac{\norm{A}}{2}}\right) \int_{\R^\I} \norm{\iota_\I^*\phi\left(\frac{\underline{x}_\I}{R}\right)} \dx \underline{x}_\I = o\!\left(R^{\norm{\I}-\frac{\norm{A}}{2}}\right).
\end{equation*}
Since $\I \in \pa_A$, we have $\norm{\I} \leq \norm{A}$, which concludes the proof.
\end{proof}

\begin{ex}
\label{ex covariances 2}
Let us consider the simple case where $A = \{a,b\}$ and $\I = \I_{\min}(A) = \{\{a\},\{b\}\}$. Applying Corollary~\ref{cor partitions with a singleton} with $\J =\I_{\min}(\I) = \left\{\rule{0em}{2ex}\{\{a\}\},\{\{b\}\}\right\}$, we have:
\begin{equation*}
\int_{\R^\I_{\J,\eta(R)}} \iota_\I^*\phi_R(\underline{x}_\I) F_\I(\underline{x}_\I) \dx \underline{x}_\I = \int_{\{(x,y) \in \R^2 \mid \norm{x-y} > \eta(R)\}} \phi_a\left(\frac{x}{R}\right)\phi_b\left(\frac{y}{R}\right) F_\I(x,y) \dx x \dx y = o(R).
\end{equation*}
Besides, by Example~\ref{ex covariances}, we know that:
\begin{equation*}
\int_{\R^2} \phi_a\left(\frac{x}{R}\right)\phi_b\left(\frac{y}{R}\right) F_\I(x,y) \dx x \dx y = R\left(\sigma^2-\frac{1}{\pi}\right)\int_{\R^2} \phi_a(x)\phi_b(x) \dx x +o(R),
\end{equation*}
where $\sigma^2$ is the constant appearing in Proposition~\ref{prop variance}. Since $\pa_\I = \{\J,\{\I\}\}$, this shows that:
\begin{align*}
\int_{\R^\I_{\{\I\},\eta(R)}} \iota_\I^*\phi_R(\underline{x}_\I) F_\I(\underline{x}_\I) \dx \underline{x}_\I &= \int_{\{(x,y) \in \R^2 \mid \norm{x-y} \leq \eta(R)\}} \phi_a\left(\frac{x}{R}\right)\phi_b\left(\frac{y}{R}\right) F_\I(x,y) \dx x \dx y\\
&= R\left(\sigma^2-\frac{1}{\pi}\right)\int_{\R^2} \phi_a(x)\phi_b(x) \dx x +o(R).
\end{align*}
\end{ex}


\subsection{Contribution of the partitions into pairs}
\label{subsec contribution of the partitions into pairs}

The goal of this section is to study the behavior of the function $F_\I$, introduced in Lemma~\ref{lem integral expression}, on some particular pieces of the decomposition $\R^\I = \bigsqcup_{\J \in \pa_\I} \R^\I_{\J,\eta(R)}$. More precisely, we will consider pieces indexed by partitions into pairs.

\begin{dfn}
\label{def Cp}
Let $A$ be a non-empty finite set. Let $\I \in \pa_A$, we denote by $\I'=\{ I \in \I \mid \norm{I} \geq 2\}$ and by $\I'' = \{I \in \I \mid \norm{I}=1\}$. We denote by $\mathfrak{C}_A$ the set of couples $(\I,\J)$ such that $\I \in \pa_A$, $\J \in \pa_\I$ and the following two conditions are satisfied:
\begin{enumerate}
\item \label{cond 1} for all $I \in \I'$, we have $\norm{I} = 2$;
\item \label{cond 2} there exists $\J' \in \pp_{\I''}$ such that $\J = \J' \sqcup \J''$, where $\J'' = \I_{\min}(\I')= \{\{I\} \mid I \in \I'\}$.
\end{enumerate}
As usual, if $A = \{1,\dots,p\}$, we denote by $\mathfrak{C}_p=\mathfrak{C}_A$.
\end{dfn}

\begin{rem}
\label{rem conditions 1 and 2}
If $(\I,\J) \in \mathfrak{C}_A$, then $\I''$ admits a partition into pairs, hence $\norm{\I''}$ is even. Moreover, $\norm{A} = 2\norm{\I'}+\norm{\I''}$ is also even.
\end{rem}

\begin{lem}
\label{lem partitions into pairs 2}
Let $A$ be a non-empty finite set and let us assume that $f$ is a $\mathcal{C}^{\norm{A}}$-process such that $\Norm{\kappa}_{\norm{A},\eta} = o(\eta^{-4\norm{A}})$ at infinity. Let $\eta:[0,+\infty) \to [0,+\infty)$ be a function satisfying the conditions of Lemma~\ref{lem scale parameter} with $p=\norm{A}$.

Let $(\I,\J) \in \mathfrak{C}_A$ and let $\I'$, $\I''$, $\J'$ and $\J''$ be as in Definition~\ref{def Cp}. Then the following holds uniformly for all $\underline{x}_\I \in \R^\I_{\J,\eta(R)}$:
\begin{equation*}
F_\I(\underline{x}_\I) = \left(\frac{1}{\pi}\right)^{\norm{\I'}} \prod_{J \in \J'}F_J(\underline{x}_J) +o\!\left(R^{-\frac{\norm{A}}{2}}\right).
\end{equation*}
\end{lem}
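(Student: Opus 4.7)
The plan is to combine Lemma~\ref{lem partitions into pairs}, which peels off the pair-factors $F_J$ for $J \in \J'$, with Corollary~\ref{cor clustering rho I}, which decouples the remaining Kac--Rice density across the widely-separated clusters of $\I'$. The key combinatorial observation is that, since every element of $\I'$ has cardinality~$2$, the sum defining $F_{\I'}$ collapses to a single term equal to $\rho_{\{\I'\}}$; the clustering estimate then replaces this density by a product of values of $\rho_1 = 1/\pi$.

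First I would set $S = \bigsqcup_{I \in \I''} I \subset A$, so that $\I_S = \I''$ and $\I \setminus \I_S = \I'$. Condition~\ref{cond 2} of Definition~\ref{def Cp} makes $\norm{S}$ even and provides $\J' \in \pp_{\I_S}$ together with $\J'' = \I_{\min}(\I') \in \pa_{\I \setminus \I_S}$, so Lemma~\ref{lem partitions into pairs} applies and gives, uniformly on $\R^\I_{\J,\eta(R)}$,
\begin{equation*}
F_\I(\underline{x}_\I) = F_{\I'}(\underline{x}_{\I'}) \prod_{J \in \J'} F_J(\underline{x}_J) + o\!\left(R^{-\frac{\norm{A}}{2}}\right).
\end{equation*}
Writing $A' = \bigsqcup_{I \in \I'} I$, since every $I \in \I'$ has cardinality $\geq 2$, Definition~\ref{def subsets adapted to I} forces any $B \in \mathcal{S}_{A'}(\I')$ to contain each $I \in \I'$; since $A' = \bigsqcup_{I \in \I'} I$, this yields $\mathcal{S}_{A'}(\I') = \{A'\}$, and Equation~\eqref{eq def F I} collapses to $F_{\I'}(\underline{x}_{\I'}) = \rho_{\{\I'\}}(\underline{x}_{\I'})$.

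Next, for $\underline{x}_\I \in \R^\I_{\J,\eta(R)}$, since $\J$ restricts on $\I'$ to $\J'' = \I_{\min}(\I')$, any two distinct $I, I' \in \I'$ satisfy $\norm{x_I - x_{I'}} > \eta(R)$; equivalently $\underline{x}_{\I'} \in \R^{\I'}_{\I_{\min}(\I'),\eta(R)}$. Corollary~\ref{cor clustering rho I} applied with ambient set $\I'$ and partition $\I_{\min}(\I')$ then yields
\begin{equation*}
\prod_{I \in \I'} \rho_{\{\{I\}\}}(x_I) = \rho_{\{\I'\}}(\underline{x}_{\I'}) \left(1 + O\!\left(\Norm{\kappa}_{\norm{\I'},\eta(R)}^{\frac{1}{2}}\right)\right),
\end{equation*}
uniformly in such $\underline{x}_{\I'}$. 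Since $\norm{\I'} \leq \norm{A}$ and $\Norm{\kappa}_{\norm{A},\eta(R)} = o(R^{-\norm{A}})$ by Lemma~\ref{lem scale parameter}, the error on the right is $o(R^{-\norm{A}/2})$. Each factor on the left equals $\rho_1(x_I) = 1/\pi$ by Example~\ref{ex Kac-Rice densities}, so the left-hand side equals $\pi^{-\norm{\I'}}$, and therefore $\rho_{\{\I'\}}(\underline{x}_{\I'}) = \pi^{-\norm{\I'}} + o(R^{-\norm{A}/2})$.

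Substituting this back into the first display, and using Lemma~\ref{lem FI bounded} to bound $\prod_{J \in \J'} F_J(\underline{x}_J)$ uniformly — so that the $o(R^{-\norm{A}/2})$ error from $\rho_{\{\I'\}}$ survives multiplication — delivers the desired conclusion. The main, and rather modest, obstacle is the bookkeeping around nested partitions (partitions of the partition $\I$); the analytic content reduces to checking that the single scale $\eta(R)$ provided by Lemma~\ref{lem scale parameter} simultaneously controls both error terms, which is immediate from $\Norm{\kappa}_{\norm{A},\eta(R)} = o(R^{-\norm{A}})$.
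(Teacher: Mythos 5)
Your proof is correct and follows essentially the same route as the paper's: first peel off the $\J'$-pair factors via Lemma~\ref{lem partitions into pairs}, then identify $F_{\I'}=\rho_{\{\I'\}}$ from $\mathcal{S}_{A\setminus S}(\I')=\{A\setminus S\}$, then decouple this density into a product of $\rho_1=1/\pi$ via the clustering result and absorb the error using the uniform bound on $\prod_{J\in\J'}F_J$. The only cosmetic difference is that you invoke Corollary~\ref{cor clustering rho I} where the paper cites Proposition~\ref{prop clustering rho I}; since $\underline{x}_{\I'}\in\R^{\I'}_{\I_{\min}(\I'),\eta(R)}$ exactly, the two are interchangeable here.
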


\begin{proof}
Let us denote by $S = \bigsqcup_{I \in \I''} I$. We have $\norm{S} = \norm{\I''}$, and by Remark~\ref{rem conditions 1 and 2} this cardinality is even. By Lemma~\ref{lem partitions into pairs}, we have the following uniform estimate: for all $\underline{x}_\I \in \R^\I_{\J,\eta(R)}$,
\begin{equation*}
F_\I(\underline{x}_\I) = F_{\I'}(\underline{x}_{\I'}) \prod_{J \in \J'}F_J(\underline{x}_J) +o\!\left(R^{-\frac{\norm{A}}{2}}\right).
\end{equation*}

Note that we have $\I' \in \pa_{A \setminus S}$. Recalling Definition~\ref{def subsets adapted to I}, we have $\mathcal{S}_{A \setminus S}(\I') = \{A \setminus S\}$ because of Condition~\ref{cond 1} in Definition~\ref{def Cp}. Hence we obtain $F_{\I'} = \rho_{\{\I'\}}$ by Equation~\eqref{eq def F I}. By Condition~\ref{cond 2} in Definition~\ref{def Cp} we have $\J_{\I'} = \J'' = \I_{\min}(\I')$. Then, for any $\underline{x}_\I \in \R^\I_{\J,\eta(R)}$ we have $\underline{x}_{\I'} \in \R^{\I'}_{\J'',\eta(R)}$, and by Proposition~\ref{prop clustering rho I}, we obtain:
\begin{equation*}
F_{\I'}(\underline{x}_{\I'}) = \rho_{\{\I'\}}(\underline{x}_{\I'}) = \left(\prod_{J \in \J''} \rho_{\{J\}}(\underline{x}_J)\right)\left(1+O\!\left(\Norm{\kappa}_{\norm{A},\eta(R)}\right)^\frac{1}{2}\right),
\end{equation*}
uniformly for all $\underline{x}_\I \in \R^\I_{\J,\eta(R)}$. Our hypotheses on $\kappa$ and $\eta$ ensure that $\Norm{\kappa}_{\norm{A},\eta(R)} = o\!\left(R^{-\frac{\norm{A}}{2}}\right)$. Moreover, for all $J \in \J''$ we have $\norm{J}=1$, so that $\rho_{\{J\}} = \rho_1$ is constant equal to $\frac{1}{\pi}$, see Example~\ref{ex Kac-Rice densities}. The conclusion follows from the boundedness of the functions $F_J$ with $J \in \J'$ (see Lemma~\ref{lem FI bounded}) and from $\norm{\J''} = \norm{\I'}$.
\end{proof}


\subsection{Conclusion of the proof}
\label{subsec conclusion of the proof}

In this section, we finally conclude the proof of Theorem~\ref{thm moments}. In all this section, we fix an integer $p \geq 2$. We consider a normalized centered stationary Gaussian process $f$ of class $\mathcal{C}^p$. The correlation function $\kappa$ of $f$ is such that $\Norm{\kappa}_{p,\eta} = o(\eta^{-4p})$ as $\eta \to +\infty$. We fix a function $\eta:[0,+\infty) \to [0,+\infty)$ such that as $R \to +\infty$ we have: $\eta(R) \to +\infty$, $\eta(R) = o(R^\frac{1}{4})$, and $\Norm{\kappa}_{p,\eta(R)} = o(R^{-p})$. The existence of such a function was proved in Lemma~\ref{lem scale parameter}. Finally, we consider test-functions $\phi_1,\dots,\phi_p$ is the sense of Definition~\ref{def test-function}.

\begin{lem}[Error terms]
\label{lem order of the error terms}
Let $\I \in \pa_p$ and $\J \in \pa_\I$ be such that $(\I,\J) \notin \mathfrak{C}_p$, where $\mathfrak{C}_p$ is defined by Definition~\ref{def Cp}. Then, as $R \to +\infty$, we have:
\begin{equation}
\label{eq order of the error terms}
\int_{\R^\I_{\J,\eta(R)}} \iota_\I^*\phi_R(\underline{x}_\I) F_\I(\underline{x}_\I)\dx \underline{x}_\I = o(R^\frac{p}{2}).
\end{equation}
\end{lem}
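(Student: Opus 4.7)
The approach is to split into two cases depending on whether $\J$ contains an isolated singleton of the form $\{\{i\}\}$ with $\{i\} \in \I$. In the easy case where such an isolated singleton exists, Corollary~\ref{cor partitions with a singleton} (applied with $A = \{1,\dots,p\}$) gives~\eqref{eq order of the error terms} directly, since $\iota_\I^*\phi$ is integrable on $\R^\I$. The bulk of the work concerns the complementary case, to which I now turn.

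Assume no $\{\{i\}\} \in \J$ satisfies $\{i\} \in \I$. Let $S \subset \{1,\dots,p\}$ be the (automatically even-sized) set of indices $s$ such that $\{s\} \in \I$ and $\{s\}$ is paired in $\J$ with another singleton of $\I$. Setting $\J' = \{J \in \J \mid J \subset \I_S\}$ and $\J'' = \J \setminus \J'$, one has $\J' \in \pp_{\I_S}$ and $\J'' \in \pa_{\I \setminus \I_S}$, so Lemma~\ref{lem upper bound} yields the bound $O\!\left(R^{\norm{\J}}\eta(R)^e\right)$ with
\[
e \;=\; \norm{\I \setminus \I_S} - \norm{\J''} \;=\; \sum_{J \in \J''}(\norm{J}-1).
\]
Using $\eta(R) = o(R^{1/4})$, this is $o(R^{\norm{\J}+e/4})$ when $e \geq 1$ and $O(R^{\norm{\J}})$ when $e = 0$. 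A direct computation from $p = 2\norm{\J'} + \sum_{J \in \J''} m(J)$, where $m(J) := \sum_{I \in J}\norm{I}$, gives
\[
\frac{p}{2} - \norm{\J} - \frac{e}{4} \;=\; \frac{1}{4}\sum_{J \in \J''}\bigl(2m(J) - \norm{J} - 3\bigr),
\]
so the goal reduces to showing this quantity is non-negative, and strictly positive whenever $e = 0$.

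The main combinatorial step is to verify the non-negativity term by term and to identify the equality configurations. For a singleton $J = \{I\} \in \J''$, the present case forces $\norm{I} \geq 2$ and the summand equals $2\norm{I} - 4 \geq 0$, with equality iff $\norm{I} = 2$ (call this type~a). For $\norm{J} \geq 2$, letting $c$ and $d$ denote the numbers of members $I \in J$ with $\norm{I} = 1$ and $\norm{I} \geq 2$ respectively, and using that a pair of singletons of $\I$ would lie in $\J'$ and not $\J''$, one obtains $2m(J) - \norm{J} - 3 \geq c + 3d - 3 \geq 0$, with equality iff $(c,d) = (3,0)$, i.e.~$J$ is a triple of singletons of $\I$ (type~b). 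If the total sum is strictly positive, then $\norm{\J} + e/4 < p/2$ and the bound is $o(R^{p/2})$. If the sum vanishes and no type~b element appears, then every $J \in \J''$ is of type~a; this forces $\I_S = \I''$ and every $I \in \I' = \I \setminus \I_S$ to have $\norm{I} = 2$, placing $(\I,\J)$ in $\mathfrak{C}_p$, contradicting the hypothesis. Thus at least one type~b triple must occur, giving $e \geq 2$, and the bound is $o(R^{\norm{\J}+e/4}) = o(R^{p/2})$. The main obstacle is precisely this combinatorial dichotomy: the configuration that saturates the exponent bound yet lies outside $\mathfrak{C}_p$ must contain a triple of singletons, whose excess contribution to $e$ is what the strict decay $\eta(R)/R^{1/4} \to 0$ absorbs.
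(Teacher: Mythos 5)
Your proof is correct and follows essentially the same two-case decomposition as the paper (dispatching the case of an isolated singleton $\{\{i\}\}$ with $\{i\} \in \I$ via Corollary~\ref{cor partitions with a singleton}, then applying Lemma~\ref{lem upper bound} in the remaining case with the same choice of $S$, $\J'$, $\J''$). Where you diverge is in how the exponent bound is established: the paper derives the two global inequalities $2\norm{\I'}+\norm{\I''}\leq p$ and $\norm{\J''}\leq\norm{\I'}+\tfrac{1}{3}(\norm{\I''}-\norm{\I_S})$ and chases through when equality holds, whereas you rewrite the deficit as
\[
\frac{p}{2}-\norm{\J}-\frac{e}{4}=\frac{1}{4}\sum_{J\in\J''}\bigl(2m(J)-\norm{J}-3\bigr),
\]
a sum of manifestly non-negative integers, and classify the saturating terms (type~a: $J=\{I\}$ with $\norm{I}=2$; type~b: $J$ a triple of $\I$-singletons). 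This term-by-term formulation makes the identification of the equality configurations with $\mathfrak{C}_p$ cleaner and more transparent, and it isolates precisely why a type~b triple (which forces $e\geq 2$) is the obstruction that the factor $\alpha(R)^e\to 0$ must absorb. The edge case $\J''=\emptyset$ (all of $\I$ being paired singletons in $\J$) is covered vacuously in your argument, as it should be: that configuration also lies in $\mathfrak{C}_p$.
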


\begin{proof}
As in Definition~\ref{def Cp}, let us denote by $\I' = \{I \in \I \mid \norm{I} \geq 2\}$ and by $\I'' = \{I \in \I \mid \norm{I} =1\}$. Since $\I = \I' \sqcup \I'' \in \pa_p$, we have:
\begin{equation}
\label{eq order error term 1}
2\norm{\I'}+\norm{\I''} \leq p
\end{equation}
and equality holds if and only if $\norm{I}=2$ for all $I \in \I'$, that is if and only if Condition~\ref{cond 1} of Definition~\ref{def Cp} is satisfied.

Let $J \in \J$ be such that $\norm{J}=1$. There exists $I \in \I$ such that $J = \{I\}$. If $I \in \I''$, that is if $I$ is a singleton, then we are in the situation studied in Section~\ref{subsec contribution of the partitions with an isolated point}. In this case, Equation~\eqref{eq order of the error terms} holds by Corollary~\ref{cor partitions with a singleton}.

In the following, we assume that we are not in the previous situation. That is, for all $J \in \J$ such that $\norm{J}=1$, we have $J \subset \I'$. Let us prove that Equation~\eqref{eq order of the error terms} holds in this case. We denote by $\J' = \{J \in \J \mid J \subset \I'', \norm{J}=2\}$ and by $\J'' = \J \setminus \J'$. Finally, let us denote by $S= \bigsqcup_{J \in \J'} \bigsqcup_{I \in J} I \subset \{1,\dots,p\}$. By definition of $S$ we have $\I_S = \{\{s\} \mid s \in S\} \subset \I'' \subset \I$ and $\J' \in \pp_{\I_S}$. We also have $\J'' \in \pa_{\I \setminus \I_S}$ and $\J = \J' \sqcup \J''$. By Lemma~\ref{lem upper bound}, the left-hand side of Equation~\eqref{eq order of the error terms} equals $O\!\left(R^{\norm{\J}}\eta(R)^{\norm{\I}-2\norm{\J'}-\norm{\J''}}\right)$. Since $\eta(R) =o(R^\frac{1}{4})$, there exists a function $\alpha$ such that $\alpha(R) \xrightarrow[R \to +\infty]{} 0$ and $\eta(R)= \alpha(R)R^\frac{1}{4}$. Then, left-hand side of~\eqref{eq order of the error terms} is:
\begin{equation}
\label{eq big O}
O\!\left(R^{\norm{\J}+\frac{\norm{\I}-2\norm{\J'}-\norm{\J''}}{4}}\alpha(R)^{\norm{\I}-2\norm{\J'}-\norm{\J''}}\right).
\end{equation}

Let $J \in \J''$. Since $\J''$ is a partition of $\I \setminus \I_S = \I' \sqcup (\I'' \setminus \I_S)$, if $J \cap \I' = \emptyset$ then $J \subset \I'' \setminus \I_S$. In this case, we assumed that $\norm{J} \neq 1$, and moreover $\norm{J} \neq 2$ by definition of $\J'$ and $\J''$. Thus, either there exists $I \in \I' \cap J$, or $J \subset \I'' \setminus \I_S$ and $\norm{J} \geq 3$. This proves that:
\begin{equation*}
\norm{\J''} \leq \norm{\I'}+\frac{1}{3}(\norm{\I''}-\norm{\I_S}).
\end{equation*}
Since $\norm{\I_S}=2\norm{\J'}$, we have $2\norm{\J'}+3\norm{\J''} \leq 3\norm{\I'} + \norm{\I''}$. Hence,
\begin{equation*}
\norm{\J}+\frac{\norm{\I}-2\norm{\J'}-\norm{\J''}}{4} = \frac{1}{4}\left(2\norm{\J'}+3\norm{\J''}+\norm{\I'}+\norm{\I''}\right) \leq \frac{1}{2}(2\norm{\I'}+\norm{\I''}).
\end{equation*}
Then, by Equation~\eqref{eq order error term 1}, we have:
\begin{equation}
\label{eq order error term 2}
\norm{\J}+\frac{\norm{\I}-2\norm{\J'}-\norm{\J''}}{4} \leq \frac{p}{2}.
\end{equation}
If this inequality is strict, then \eqref{eq big O} equals a $o(R^\frac{p}{2})$, which proves that Equation~\eqref{eq order of the error terms} holds.

If equality holds in Equation~\eqref{eq order error term 2}, then it must hold in Equation~\eqref{eq order error term 1}, which implies that $(\I,\J)$ satisfies Condition~\ref{cond 1} in Definition~\ref{def Cp}. In this case, \eqref{eq big O} is a $O\!\left(R^\frac{p}{2}\alpha(R)^{\norm{\I}-2\norm{\J'}-\norm{\J''}}\right)$ and, since $\alpha(R) = o(1)$, it is enough to check that $2\norm{\J'}+\norm{\J''} < \norm{\I}$. Since $\J = \J' \sqcup \J'' \in \pa_\I$ and $\norm{J}=2$ for all $J \in \J'$, we have:
\begin{equation}
\label{eq order error term 3}
2\norm{\J'} + \norm{\J''} \leq \norm{\I},
\end{equation}
and equality holds if and only if $\norm{J}=1$ for all $J \in \J''$. If equality held in Equation~\eqref{eq order error term 3} then, under our assumptions, we would have $\norm{J}=1$ and $J \subset \I'$ for all $J \in \J''$, hence $\J'' \subset \I_{\min}(\I')$. Since $\J' \in \pa_{\I_S}$ and $\I_S \subset \I'' = \I \setminus \I'$, the only possibility for this to happen is that $\J'' = \I_{\min}(\I')$ and $\I_S = \I''$. Thus, if equality held in Equation~\eqref{eq order error term 3} then $(\I,\J)$ would satisfy Condition~\ref{cond 2} of Definition~\ref{def Cp}. Since we already assumed that $(\I,\J)$ satisfies Condition~\ref{cond 1}, this would imply $(\I,\J) \in \mathfrak{C}_p$, which is a contradiction. Finally, the inequality is strict in Equation~\eqref{eq order error term 3}. Hence, \eqref{eq big O} is a $o(R^\frac{p}{2})$, which concludes the proof.
\end{proof}

\begin{lem}[Leading terms]
\label{lem leading term}
Let $\I \in \pa_p$ and $\J \in \pa_\I$ be such that $(\I,\J) \in \mathfrak{C}_p$. Let $\I',\I'',\J'$ and $\J''$ be as in Definition~\ref{def Cp}. Then, as $R \to +\infty$, we have:
\begin{multline*}
\int_{\underline{x}_\I \in \R^\I_{\J,\eta(R)}} \iota_\I^*\phi_R(\underline{x}_\I) F_\I(\underline{x}_\I) \dx \underline{x}_\I= \\ \left(\prod_{\{i,j\} \in \I'} \frac{R}{\pi} \int_\R \phi_i(x)\phi_j(x) \dx x\right)\left(\prod_{J \in \J'} \int_{\R^J} \phi_J\left(\frac{\underline{x}_J}{R}\right) F_J(\underline{x}_J) \dx \underline{x}_J \right)+ o\!\left(R^\frac{p}{2}\right).
\end{multline*}
\end{lem}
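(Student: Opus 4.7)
The plan is to combine Lemma~\ref{lem partitions into pairs 2} with a Fubini-type factorization on $\R^\I$, controlling the tail outside $\R^\I_{\J,\eta(R)}$ separately. First, Lemma~\ref{lem partitions into pairs 2} gives, uniformly on $\R^\I_{\J,\eta(R)}$,
\begin{equation*}
F_\I(\underline{x}_\I) = \pi^{-\norm{\I'}}\prod_{J \in \J'} F_J(\underline{x}_J) + o(R^{-p/2}).
\end{equation*}
Integrated against $\norm{\iota_\I^*\phi_R}$, whose total $L^1$ norm on $\R^\I$ is $\prod_{I \in \I} R\int_\R \prod_{i \in I}\norm{\phi_i} = O(R^{\norm{\I}})$, this remainder contributes $o(R^{\norm{\I}-p/2}) = o(R^{p/2})$, since $\norm{\I} = \norm{\I'} + 2\norm{\J'} \leq 2\norm{\I'} + 2\norm{\J'} = p$ by Conditions~\ref{cond 1}--\ref{cond 2} of Definition~\ref{def Cp}.

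Next I would enlarge the domain of integration from $\R^\I_{\J,\eta(R)}$ to all of $\R^\I$ and factorize. The resulting integrand $\iota_\I^*\phi_R \cdot \prod_{J \in \J'} F_J$ splits as a product over $\I' \sqcup \J'$: each $I = \{i,j\} \in \I'$ contributes a factor $(\phi_i\phi_j)(x_I/R)$ depending only on $x_I$, while each $J = \{\{a\},\{b\}\} \in \J'$ contributes $\phi_a(x_{\{a\}}/R)\phi_b(x_{\{b\}}/R) F_J(x_{\{a\}},x_{\{b\}})$ depending only on $\underline{x}_J$. By integrability of the $\phi_i$ and of $F$ (Lemma~\ref{lem integrability F}), this product is absolutely integrable on $\R^\I$, and Fubini's theorem yields
\begin{equation*}
\int_{\R^\I} \iota_\I^*\phi_R\prod_{J \in \J'} F_J \dx \underline{x}_\I = \prod_{I=\{i,j\} \in \I'} R\int_\R \phi_i\phi_j \dx x \cdot \prod_{J \in \J'} \int_{\R^J} \phi_J(\underline{x}_J/R) F_J(\underline{x}_J) \dx \underline{x}_J,
\end{equation*}
which, after multiplication by $\pi^{-\norm{\I'}}$, is exactly the leading term of the claim.

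It remains to bound the tail $T := \int_{\R^\I \setminus \R^\I_{\J,\eta(R)}} \norm{\iota_\I^*\phi_R} \prod_{J \in \J'} \norm{F_J} \dx \underline{x}_\I$ by $o(R^{p/2})$. A point outside $\R^\I_{\J,\eta(R)}$ has $\eta(R)$-partition distinct from $\J$, which forces one of: (a) some pair $J_0 = \{\{a\},\{b\}\} \in \J'$ satisfies $\norm{x_{\{a\}}-x_{\{b\}}} > \eta(R)$; or (b) two components indexing distinct $\J$-blocks lie within distance $\norm{\I}\eta(R)$ of each other (via a connecting chain of length at most $\norm{\I}$ in the $\eta(R)$-graph). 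By Fubini and the union bound over the finitely many choices, case (a) forces the $J_0$-factor to equal $R\int_\R \phi_a(u)\int_{\norm{w}>\eta(R)} \phi_b(u+w/R) F(w) \dx w \dx u = o(R)$ (using $\int_{\norm{w}>\eta(R)}\norm{F} \to 0$ from Lemma~\ref{lem integrability F}), while the remaining $\norm{\I'} + \norm{\J'} - 1 = p/2 - 1$ block factors each contribute $O(R)$, totaling $o(R^{p/2})$; case (b) confines two of the block variables to a band of width $O(\eta(R)) = o(R^{1/4})$, replacing one factor of $R$ by $O(\eta(R))$ in the corresponding block integrals and yielding a total $O(\eta(R) R^{p/2-1}) = o(R^{p/2})$.

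The main obstacle is precisely this tail estimate: enumerating the ways $\underline{x}_\I$ can fall outside $\R^\I_{\J,\eta(R)}$, handling the various cross-block couplings produced by long chains in case (b), and verifying that each of the resulting regions contributes $o(R^{p/2})$ uniformly in the (finitely many) combinatorial choices.
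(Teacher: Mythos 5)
Your proposal is correct, and it takes a genuinely different route from the paper for the second half of the argument. Both start identically: apply Lemma~\ref{lem partitions into pairs 2} on $\R^\I_{\J,\eta(R)}$ and absorb the multiplicative remainder into $o(R^{p/2})$, using $\norm{\I} \leq p$. After that, the paper does \emph{not} enlarge to all of $\R^\I$. It enlarges only to the product set $\bigl(\prod_{I \in \I'} \R\bigr)\times\prod_{J \in \J'}\R^J_{\{J\},\eta(R)}$, observes that this sits inside $\bigsqcup_{\K \geq \J}\R^\I_{\K,\eta(R)}$, and disposes of the pieces with $\K > \J$ by invoking the already-proved Lemma~\ref{lem order of the error terms} (since $(\I,\K)\notin\mathfrak{C}_p$). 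It then replaces each $\int_{\R^J_{\{J\},\eta(R)}}$ by $\int_{\R^J}$ via Corollary~\ref{cor partitions with a singleton}. You instead enlarge directly to $\R^\I$, factorize by Fubini, and estimate the tail $T$ by hand. Your case (a) (a split pair $J_0\in\J'$) is essentially the same computation as Corollary~\ref{cor partitions with a singleton}; your case (b) (two components from distinct $\J$-blocks colliding at scale $\eta(R)$) reproduces the essence of Lemma~\ref{lem upper bound} for the relevant $\K$, costing one factor of $R$ for a factor of $\eta(R)=o(R^{1/4})$. What the paper's route buys is economy: the $\K>\J$ contributions are handled by a general-purpose lemma that was already needed elsewhere, so no new estimates are required. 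What your route buys is self-containment and transparency: the reader sees directly which degeneracies of the configuration kill a power of $R$, without chasing through the machinery of Lemma~\ref{lem upper bound}. One small remark: in case (b) you can in fact extract an adjacent pair $I,I'$ in the $\eta(R)$-graph lying in distinct $\J$-blocks (take the first edge of a path in the offending cluster that crosses a block boundary), so the constraint is $\norm{x_I-x_{I'}}\leq\eta(R)$ rather than $\norm{\I}\eta(R)$; this tightens the constants but does not change the conclusion. The enumeration over finitely many pairs and the union bound you invoke are indeed all that is needed, so the hesitation you voice at the end is unwarranted — your sketch fills the gap.
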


\begin{proof}
For any point $\underline{x}_\I \in \R^\I$, we have:
\begin{equation*}
\iota_\I^*\phi_R(\underline{x}_\I) = \prod_{I \in \I} \prod_{i \in I} \phi_i\left(\frac{x_I}{R}\right) = \left(\prod_{\{i,j\}=I \in \I'} \phi_i\left(\frac{x_I}{R}\right)\phi_j\left(\frac{x_I}{R}\right)\right) \left(\prod_{\{i\} \in \I''} \phi_i\left(\frac{x_{\{i\}}}{R}\right)\right).
\end{equation*}
Note that, for all $J = \{\{i\},\{j\}\} \in \J'$ we have $\phi_J = \phi_{\{i\}} \boxtimes \phi_{\{j\}} = \phi_i \boxtimes \phi_j$ (see Notation~\ref{ntn product indexed by A}). Hence, by Lemma~\ref{lem partitions into pairs 2}, for all $\underline{x}_I \in \R^\I_{\J,\eta(R)}$, we have that $\iota_\I^*\phi_R(\underline{x}_\I) F_\I(\underline{x}_\I)$ equals:
\begin{equation}
\label{eq leading term}
\left(\prod_{\{i,j\}=I \in \I'}\frac{1}{\pi} \phi_i\left(\frac{x_I}{R}\right)\phi_j\left(\frac{x_I}{R}\right)\right) \left(\prod_{J \in \J'} \phi_J\left(\frac{\underline{x}_J}{R}\right)F_J(\underline{x}_J)\right)+\iota_\I^*\phi_R(\underline{x}_\I) o\!\left(R^{-\frac{p}{2}}\right),
\end{equation}
and we want to compute the integral of~\eqref{eq leading term} over $\R^\I_{\J,\eta(R)}$. Our assumptions on the the test-functions $\phi_1,\dots,\phi_p$ ensure that $\iota_\I^*\phi$ is integrable on $\R^\I$. Hence, since $\norm{\I} \leq p$, the contribution to the integral of the error term in Equation~\eqref{eq leading term} is:
\begin{equation*}
\int_{\R^\I_{\J,\eta(R)}}\iota_\I^*\phi_R(\underline{x}_\I) o\!\left(R^{-\frac{p}{2}}\right) \dx \underline{x}_\I = o\!\left(R^{-\frac{p}{2}}\right) \int_{\R^\I_{\J,\eta(R)}}\norm{\iota_\I^*\phi_R(\underline{x}_\I)}\dx \underline{x}_\I = o\!\left(R^{\norm{\I}-\frac{p}{2}}\right) = o\!\left(R^\frac{p}{2}\right).
\end{equation*}

Let us check that:
\begin{equation}
\label{eq subsets leading term}
\R^\I_{\J,\eta(R)} \subset \left(\prod_{I \in \I'} \R\right)\times \left(\prod_{J \in \J'} \R^J_{\{J\},\eta(R)}\right) \subset \bigsqcup_{\K \geq \J} \R^\I_{\K,\eta(R)}.
\end{equation}
In order to prove the first inclusion, let $\underline{x}_\I \in \R^\I_{\J,\eta(R)}$. Since $\J$ satisfies Condition~\ref{cond 2} of Definition~\ref{def Cp}, the associated graph $G_{\eta(R)}(\underline{x}_\I)$ (see Definition~\ref{def G eta}) is formed of $\norm{\I'}$ isolated vertices $\{I \mid I \in \I'\}$ and $\norm{\J'}$ pairs of vertices of the form $\{I,I'\} \in \J'$ with an edge between $I$ and $I'$. Hence, for all $J = \{I,I'\} \in \J'$, we have $\norm{x_I-x_{I'}}\leq \eta(R)$, and $\underline{x}_J \in \R^{J}_{\{J\},\eta(R)}$. Let us now prove the second inclusion in Equation~\eqref{eq subsets leading term}. Let $\underline{x}_\I=(x_I)_{I \in \I} \in \R^\I$ and let us assume that $\underline{x}_\I$ belongs to the middle set in Equation~\eqref{eq subsets leading term}. By Definition~\ref{def G eta}, for all $J=\{I,I'\} \in \J'$, the graph $G_{\eta(R)}(\underline{x}_\I)$ associated with $\underline{x}_\I$ has an edge between $I$ and $I'$. Hence the associated partition $\K = \I_{\eta(R)}(\underline{x}_\I)$ (see Definition~\ref{def I eta}) is such that, for all $J \in \J'$ there exists $K \in \K$ such that $J \subset K$. Since $\J$ satisfies Condition~\ref{cond 2} in Definition~\ref{def Cp}, this is enough to ensure that $\K \geq \J$. Note that, in general, both inclusions in Equation~\eqref{eq subsets leading term} are strict.

Let $\K \in \pa_\I$ be such that $\K > \J$. This ensures that $(\I,\K)\notin \mathfrak{C}_p$.Then, by Lemma~\ref{lem order of the error terms}, the integral of the leading term in Equation~\eqref{eq leading term} over $\left(\left(\prod_{I \in \I'} \R\right)\times \left(\prod_{J \in \J''} \R^J_{\{J\},\eta(R)}\right)\right) \cap \R^\I_{\K,\eta(R)}$ equals $o(R^\frac{p}{2})$. This proves that:
\begin{multline}
\label{eq integral over J}
\int_{\underline{x}_\I \in \R^\I_{\J,\eta(R)}} \iota_\I^*\phi_R(\underline{x}_\I) F_\I(\underline{x}_\I)\dx \underline{x}_\I=\\
\left(\prod_{\{i,j\} \in \I'} \frac{R}{\pi} \int_\R \phi_i(x)\phi_j(x) \dx x\right) \left(\prod_{J \in \J'} \int_{\R^J_{\{J\},\eta(R)}} \phi_J\left(\frac{\underline{x}_J}{R}\right)F_J(\underline{x}_J) \dx \underline{x}_J \right)+ o\!\left(R^\frac{p}{2}\right).
\end{multline}

In order to conclude the proof, we need to replace the integral over $\R^{J}_{\{J\},\eta(R)}$ by an integral over $\R^J$ in Equation~\eqref{eq integral over J}, for all $J \in \J'$. Let $J = \{I,I'\} \in \J'$, we have:
\begin{align*}
\int_{\R^J_{\{J\},\eta(R)}} \phi_J\left(\frac{\underline{x}_J}{R}\right)F_J(\underline{x}_J) \dx \underline{x}_J &= \int_{\R^J} \phi_J\left(\frac{\underline{x}_J}{R}\right)F_J(\underline{x}_J) \dx \underline{x}_J - \int_{\R^J_{\{\{I\},\{I'\}\},\eta(R)}} \phi_J\left(\frac{\underline{x}_J}{R}\right)F_J(\underline{x}_J) \dx \underline{x}_J\\
&= \int_{\R^J} \phi_J\left(\frac{\underline{x}_J}{R}\right)F_J(\underline{x}_J) \dx \underline{x}_J +o(R),
\end{align*}
by Corollary~\ref{cor partitions with a singleton}. Moreover, as in Example~\ref{ex covariances 2}, we have:
\begin{equation*}
\int_{\R^J} \phi_J\left(\frac{\underline{x}_J}{R}\right)F_J(\underline{x}_J) \dx \underline{x}_J = O(R).
\end{equation*}
Since $\norm{\I'} + \norm{\J'} = \norm{\I'}+\frac{1}{2}\norm{\I''} = \frac{p}{2}$, this yields the claimed estimate.
\end{proof}

\begin{proof}[Proof of Theorem~\ref{thm moments}]
By Lemma~\ref{lem integral expression} and the fact that $\R^\I = \bigsqcup_{\J \in \pa_\I} \R^\I_{\J,\eta(R)}$ for all $\I \in \pa_p$, we have:
\begin{equation*}
m_p(\nu_R)(\phi_1,\dots,\phi_p) = \sum_{\I \in \pa_p} \sum_{\J \in \pa_\I} \int_{\underline{x}_\I\in \R^\I_{\J,\eta(R)}} \left(\iota_\I^*\phi_R\right)(\underline{x}_\I) F_\I(\underline{x}_\I) \dx \underline{x}_\I.
\end{equation*}
By Lemma~\ref{lem order of the error terms}, up to an error term of the form $o(R^\frac{p}{2})$ we need only consider the terms in this double sum indexed by $(\I,\J) \in \mathfrak{C}_p$, where $\mathfrak{C}_p$ is defined by Definition~\ref{def Cp}. The expression of these terms is given by Lemma~\ref{lem leading term}. Thus, we have:
\begin{multline*}
m_p(\nu_R)(\phi_1,\dots,\phi_p) = \\
\sum_{(\I,\J) \in \mathfrak{C}_p} \left(\prod_{\{i,j\} \in \I'} \frac{R}{\pi} \int_\R \phi_i(x)\phi_j(x) \dx x\right)\left(\prod_{J \in \J'} \int_{\R^J} \phi_J\left(\frac{\underline{x}_J}{R}\right) F_J(\underline{x}_J) \dx \underline{x}_J \right) + o(R^\frac{p}{2}),
\end{multline*}
where we used the same notations as in Definition~\ref{def Cp}. Recall that $\I' = \{ I \in \I \mid \norm{I}=2\}$ and $\I'' = \{ I \in \I \mid \norm{I}=1\}$, so that $\I = \I' \sqcup \I''$ thanks to Condition~\ref{cond 1} in Definition~\ref{def Cp}. Recall also that $\J''=\{\{I\} \mid I \in \I'\}$ and $\J = \J' \sqcup \J''$ for some $\J' \in \pp_{\I''}$, thanks to Condition~\ref{cond 2} in Definition~\ref{def Cp}.

We just wrote $m_p(\nu_R)(\phi_1,\dots,\phi_p)$ as a sum of terms indexed by $\mathfrak{C}_p$.
In the following, we define a bijection $\Phi: \mathfrak{C}_p \to \{(\Pi,\mathcal{S}) \mid \Pi \in \pp_p, \mathcal{S} \subset \Pi\}$, which will allow us to rewrite $m_p(\nu_R)(\phi_1,\dots,\phi_p)$ as sum over $\{(\Pi,\mathcal{S}) \mid \Pi \in \pp_p, \mathcal{S} \subset \Pi\}$, by a change of variable. 
\begin{itemize}
\item Let $(\I,\J) \in \mathfrak{C}_p$ and let us denote by $S = \{i \mid \{i\} \in \I''\}$ and by $\mathcal{S} = \{\{i,j\} \mid \{\{i\},\{j\}\} \in \J'\}$. Since $\J' \in \pp_{\I''}$, we have $\mathcal{S} \in \pp_S$. Since $\I'$ is a partition of $\bigsqcup_{I \in \I'} I = \{1,\dots,p\} \setminus S$ into pairs, we define a partition into pairs $\Pi \in \pp_p$ by $\Pi = \I' \sqcup \mathcal{S}$. We obtain a couple $(\Pi,\mathcal{S})$ where $\Pi \in \pp_p$ and $\mathcal{S} \subset \Pi$. Let us denote this couple by $\Phi(\I,\J)$.
\item Conversely, let $\Pi \in \pp_p$ and let $\mathcal{S} \subset \Pi$. We set $S = \bigsqcup_{I \in \mathcal{S}} I$ and $\I' = \Pi \setminus \mathcal{S}$, so that $\I'$ is a partition into pairs of $\{1,\dots,p\} \setminus S$. Let us denote by $\I'' = \{\{i\} \mid i \in S\}$ and by $\J' = \{\{\{i\},\{j\}\} \mid \{i,j\} \in \mathcal{S} \}$, so that $\J' \in \pp_{\I''}$. Finally, let $\I = \I' \sqcup \I'' \in \pa_p$, and let $\J'' = \{ \{I\} \mid I \in \I'\}$ so that $\J = \J'\sqcup \J'' \in \pa_\I$. We just defined a couple $(\I,\J) \in \mathfrak{C}_p$ that we denote by $\Psi(\Pi,\mathcal{S})$.
\end{itemize}

By construction, $\Phi$ is a bijection from $\mathfrak{C}_p$ to $\{(\Pi,\mathcal{S}) \mid \Pi \in \pp_p, \mathcal{S} \subset \Pi\}$ such that $\Psi = \Phi^{-1}$. Moreover, for all $(\I,\J) \in \mathfrak{C}_p$, denoting by $(\Pi,\mathcal{S}) = \Phi(\I,\J)$, we have:
\begin{multline*}
\left(\prod_{\{i,j\} \in \I'} \frac{R}{\pi} \int_\R \phi_i(x)\phi_j(x) \dx x\right)\left(\prod_{J \in \J'} \int_{\R^J} \phi_J\left(\frac{\underline{x}_J}{R}\right) F_J(\underline{x}_J) \dx \underline{x}_J \right) =\\
\left(\prod_{\{i,j\} \in \Pi \setminus \mathcal{S}} \frac{R}{\pi} \int_\R \phi_i(x)\phi_j(x) \dx x\right)\left(\prod_{\{i,j\} \in \mathcal{S}} \int_{\R^2} \phi_i\left(\frac{x}{R}\right)\phi_j\left(\frac{y}{R}\right) F(y-x) \dx x \dx y \right),
\end{multline*}
where we used Example~\ref{ex covariances}. Hence,
\begin{multline}
\label{eq final mp}
m_p(\nu_R)(\phi_1,\dots,\phi_p) =\\
\sum_{\Pi \in \pp_p} \sum_{\mathcal{S} \subset \Pi} \left(\prod_{\{i,j\} \in \Pi \setminus \mathcal{S}} \frac{R}{\pi} \int_\R \phi_i(x)\phi_j(x) \dx x\right)\left(\prod_{\{i,j\} \in \mathcal{S}} \int_{\R^2} \phi_i\left(\frac{x}{R}\right)\phi_j\left(\frac{y}{R}\right) F(y-x) \dx x \dx y \right)\\+ o(R^\frac{p}{2}).
\end{multline}
The leading term on the right-hand side of Equation~\eqref{eq final mp} can be rewritten as:
\begin{equation*}
\sum_{\Pi \in \pp_p} \prod_{\{i,j\} \in \Pi} \left(\int_{\R^2} \phi_i\left(\frac{x}{R}\right)\phi_j\left(\frac{y}{R}\right) F(y-x) \dx x \dx y + \frac{R}{\pi} \int_\R \phi_i(x)\phi_j(x) \dx x\right).
\end{equation*}
Using the expression of $m_2(\nu_R)(\phi_i,\phi_j)$ derived in Lemma~\ref{lem formula m2}, this last term equals
\begin{equation*}
\sum_{\Pi \in \pp_p} \prod_{\{i,j\} \in \Pi} m_2(\nu_R)(\phi_i,\phi_j) = \sum_{\left\{ \{a_i,b_i\} \mvert 1 \leq i \leq \frac{p}{2}\right\} \in \pp_p} \prod_{i=1}^\frac{p}{2} m_2(\nu_R)(\phi_{a_i},\phi_{b_i}),
\end{equation*}
which proves the first part of Theorem~\ref{thm moments}.

Let $\phi:\R \to \R$ be a test-function in the sense of Definition~\ref{def test-function}. Using what we just proved with $\phi_i=\phi$ for all $i \in \{1,\dots,p\}$, we have:
\begin{equation*}
m_p(\prsc{\nu_R}{\phi}) = m_p(\nu_R)(\phi,\dots,\phi) = \card(\pp_p) \var{\prsc{\nu_R}{\phi}}^\frac{p}{2} + o(R^\frac{p}{2}).
\end{equation*}
Since $\card(\pp_p)=\mu_p$ (see Notation~\ref{ntn mu p}), this proves the expression of $m_p(\prsc{\nu_R}{\phi})$ stated in Theorem~\ref{thm moments}.
\end{proof}


\section{Limit theorems}
\label{sec limit theorems}

The purpose of this section is to deduce the functional limit Theorems~\ref{thm LLN} and~\ref{thm CLT} from Theorem~\ref{thm moments}. We prove the Law of Large Numbers (Theorem~\ref{thm LLN}) in Section~\ref{subsec proof thm LLN} and the Central Limit Theorem~\ref{thm CLT} in Section~\ref{subsec proof thm CLT}.


\subsection{Proof of Theorem~\ref{thm LLN}: Law of Large Numbers}
\label{subsec proof thm LLN}

This section is concerned with the proof of Theorem~\ref{thm LLN}. In the first part of the proof, we derive a Law of Large Numbers for the linear statistics $\prsc{\nu_R}{\phi}$, for a fixed test-function $\phi$. We use the moments estimates of Theorem~\ref{thm moments} and Markov's Inequality to obtain a quantitative convergence in probability. Then, we deduce the almost sure convergence from the Borel--Cantelli Lemma. In the second part of the proof, we prove a functional Law of Large Number for the random measures $\nu_R$. This uses the first part of Theorem~\ref{thm LLN}, together with the separability of the space $\mathcal{C}^0_c(\R)$ of continuous functions with compact support equipped with the sup-norm $\Norm{\cdot}_\infty$.

\begin{proof}[Proof of Theorem~\ref{thm LLN}]
Let $p \in \N^*$ and let $f$ be a normalized stationary centered Gaussian process of class~$\mathcal{C}^{2p}$. Let $\kappa$ denote the correlation function of $f$. We assumed that $\Norm{\kappa}_{2p,\eta} = o(\eta^{-8p})$ as $\eta \to +\infty$, where $\Norm{\cdot}_{2p,\eta}$ is defined as in Notation~\ref{ntn norm kappa}. Let $(R_n)_{n \in \N}$ be a sequence of positive numbers such that $\sum_{n \in \N} R_n^{-p}<+\infty$.

Let $\phi:\R \to \R$ be a test-function as in Definition~\ref{def test-function},we have:
\begin{equation*}
\esp{\sum_{n \in \N} \left(\frac{1}{R_n}\prsc{\nu_{R_n}}{\phi}-\frac{1}{\pi}\int_\R \phi(x) \dx x\right)^{2p}} = \sum_{n \in \N} \frac{1}{R_n^{2p}}m_{2p}(\prsc{\nu_{R_n}}{\phi}).
\end{equation*}
Using Proposition~\ref{prop variance} if $p=1$ and Theorem~\ref{thm moments} otherwise, we have $R_n^{-2p}m_{2p}(\prsc{\nu_{R_n}}{\phi}) = O(R_n^{-p})$. Thus, the sum on the right-hand side of the previous equation is finite. This proves that, almost surely, we have:
\begin{equation*}
\sum_{n \in \N}\left(\frac{1}{R_n}\prsc{\nu_{R_n}}{\phi}-\frac{1}{\pi}\int_\R \phi(x) \dx x\right)^{2p} < +\infty,
\end{equation*}
hence $\displaystyle\frac{1}{R_n}\prsc{\nu_{R_n}}{\phi} \xrightarrow[n \to +\infty]{}\frac{1}{\pi}\int_\R \phi(x) \dx x$. This proves our first claim. The almost sure convergence of $\frac{1}{R_n}\card(Z \cap [0,R_n])$ is obtained by applying this result with $\phi=\mathbf{1}_{[0,1]}$.

Recall that the space $\mathcal{C}^0_c(\R)$ of continuous functions with compact support is separable for the topology induced by $\Norm{\cdot}_\infty$. Let $(\phi_k)_{k \in \N^*}$ denote a dense sequence in $\left(\mathcal{C}^0_c(\R),\Norm{\cdot}_\infty\right)$. We also denote by $\phi_0 = \mathbf{1}_\R$. For any $N \in \N$, let $\chi_N:\R \to \R$ denote the even continuous function defined by:
\begin{equation*}
\chi_N: x \longmapsto \left\{ \begin{aligned}
& 1 & &\text{if} \ \norm{x} \leq N,\\
& 1-(\norm{x}-N) & &\text{if} \ N \leq \norm{x} \leq N+1,\\
& 0 & &\text{if} \ \norm{x} \geq N+1.
\end{aligned}\right.
\end{equation*}
Note that $\phi_0 \notin \mathcal{C}^0_c(\R)$ but that $\chi_N\phi_0 = \chi_N \in \mathcal{C}^0_c(\R)$ for all $N \in \N$.

Using the first part of Theorem~\ref{thm LLN} proved above and the countability of $\N^2$, the following happens almost surely:
\begin{equation}
\label{eq as CV countable}
\forall (k,N) \in \N^2, \qquad \frac{1}{R_n}\prsc{\nu_{R_n}}{\chi_N\phi_k} \xrightarrow[n \to +\infty]{} \frac{1}{\pi} \int_\R \chi_N(x)\phi_k(x) \dx x. 
\end{equation}
In the following, we consider a realization of the random process $f$ such that~\eqref{eq as CV countable} holds. For this realization, we will prove by an approximation argument that:
\begin{equation}
\label{eq goal LLN}
\forall \phi \in \mathcal{C}^0_c(\R), \qquad \frac{1}{R_n}\prsc{\nu_{R_n}}{\phi} \xrightarrow[n \to +\infty]{}\frac{1}{\pi}\int_\R \phi(x) \dx x,
\end{equation}
i.e.~that $\displaystyle \frac{1}{R_n}\nu_{R_n} \xrightarrow[n \to +\infty]{} \frac{1}{\pi}\dx x$ in the weak-$*$ sense in the topological dual of $\left(\mathcal{C}^0_c(\R),\Norm{\cdot}_\infty\right)$. This yields the result.

Let us consider a realization of $f$ such that~\eqref{eq as CV countable} holds. Let $\phi \in \mathcal{C}^0_c(\R)$ and let $N \in \N$ be large enough that the support of $\phi$ is included in $[-N,N]$. By Equation~\eqref{eq as CV countable} with $k=0$, the non-negative sequence $\left(R_n^{-1}\prsc{\nu_{R_n}}{\chi_N}\right)_{n \in \N}$ converges towards $\frac{2N+1}{\pi}$. In particular, this sequence is bounded by some constant $C_N > 0$.

Let $\epsilon >0$ and let $k \in \N^*$ be such that $\Norm{\phi - \phi_k}_\infty \leq \epsilon$. For all $x \in \R$, we have:
\begin{equation}
\label{eq test function LLN}
\norm{\phi(x)-\chi_N(x)\phi_k(x)} = \norm{\chi_N(x)\left(\phi(x)-\phi_k(x)\right)} = \chi_N(x)\norm{\phi(x)-\phi_k(x)} \leq \epsilon \chi_N(x).
\end{equation}
Then, for all $n \in \N$, we have:
\begin{multline}
\label{eq 3 terms LLN}
\norm{\frac{1}{R_n}\prsc{\nu_{R_n}}{\phi}-\frac{1}{\pi} \int_\R \phi(x) \dx x} \leq \frac{1}{R_n}\norm{\prsc{\nu_{R_n}}{\phi-\chi_N \phi_k}} + \norm{\frac{1}{R_n}\prsc{\nu_{R_n}}{\chi_N\phi_k}-\frac{1}{\pi} \int_\R \chi_N\phi_k(x) \dx x}\\
+ \frac{1}{\pi} \norm{\int_\R \phi(x) - \chi_N(x) \phi_k(x) \dx x}.
\end{multline}
Using Equation~\eqref{eq test function LLN}, the first term on the right-hand side of Equation~\eqref{eq 3 terms LLN} satisfies:
\begin{equation*}
\frac{1}{R_n}\norm{\prsc{\nu_{R_n}}{\phi-\chi_N \phi_k}} \leq \frac{1}{R_n}\prsc{\nu_{R_n}}{\norm{\phi - \chi_N\phi_k}} \leq \epsilon \frac{1}{R_n}\prsc{\nu_{R_n}}{\chi_N} \leq \epsilon C_N.
\end{equation*}
Similarly, the third term on the right-hand side of Equation~\eqref{eq 3 terms LLN} satisfies:
\begin{equation*}
\frac{1}{\pi}\norm{\int_\R \phi(x)-\chi_N(x)\phi_k(x) \dx x} \leq \frac{1}{\pi} \int_\R \norm{\phi(x)-\chi_N(x)\phi_k(x)} \dx x \leq \frac{\epsilon}{\pi} \int_\R \chi_N(x) \dx x \leq \epsilon\frac{2N+1}{\pi}.
\end{equation*}
Using our hypothesis that~\eqref{eq as CV countable} holds for $(k,N)$ the middle term on the right-hand side of Equation~\eqref{eq 3 terms LLN} goes to $0$ as $n \to +\infty$. Finally, for all $n$ large enough we have:
\begin{equation*}
\norm{\frac{1}{R_n}\prsc{\nu_{R_n}}{\phi}-\frac{1}{\pi} \int_\R \phi(x) \dx x} \leq \epsilon \left(C_N +1 + \frac{2N+1}{\pi}\right).
\end{equation*}
This proves that Equation~\eqref{eq goal LLN} holds for $\phi$, hence for all $\phi \in \mathcal{C}^0_c(\R)$, as claimed. Thus~\eqref{eq goal LLN} holds almost surely, which concludes the proof.
\end{proof}


\subsection{Proof of Theorem~\ref{thm CLT}: Central Limit Theorem}
\label{subsec proof thm CLT}

In this section we deduce Theorem~\ref{thm CLT} from the moments estimates of Theorem~\ref{thm moments}. The Central Limit Theorem for a fixed test-function follows from Theorem~\ref{thm moments} by the method of moments, see~\cite[Chapter~30]{Bil1995}. Then, we obtain the functional Central Limit Theorem by the Lévy--Fernique Theorem, which is a generalization of Lévy's Continuity Theorem for random generalized functions. The result of Fernique~\cite[Theorem~III.6.5]{Fer1967} is not exactly what we need, and we refer to~\cite{BDW2018} instead, for a version of this result that suits us better.

\begin{proof}[Proof of Theorem~\ref{thm CLT}]
Let $f$ be a $\mathcal{C}^\infty$ centered Gaussian process which is stationary and normalized. Let $\kappa$ denote its correlation function, we assume that $\kappa \in \mathcal{S}(\R)$, see Definition~\ref{def SR}. Let $\sigma$ be the positive constant appearing in the variance asymptotics of Proposition~\ref{prop variance}, which is defined by Equation~\eqref{eq def sigma}.

Let $R>0$, we denote by $\nu_R$ the counting measure of $\{x \in \R \mid f(Rx)=0\}$. Let $T_R$ be the random measure on $\R$ defined by:
\begin{equation*}
T_R = \frac{1}{R^\frac{1}{2}\sigma}\left(\nu_R - \frac{R}{\pi} \dx x\right),
\end{equation*}
where $\dx x$ stands for the Lebesgue measure of $\R$. Let us consider a test-function $\phi:\R \to \R$, see Definition~\ref{def test-function}. By Proposition~\ref{prop expectation} and Remark~\ref{rem as well defined},
\begin{equation*}
\prsc{T_R}{\phi} = \frac{1}{R^\frac{1}{2}\sigma}\left(\prsc{\nu_R}{\phi} - \frac{R}{\pi} \int_\R \phi(x) \dx x\right)
\end{equation*}
is an almost-surely well-defined centered random variable. Then, by Theorem~\ref{thm moments} and Proposition~\ref{prop variance}, for any integer $p \geq 2$, we have:
\begin{equation*}
\esp{\prsc{T_R}{\phi}^p} \xrightarrow[R \to +\infty]{} \mu_p \Norm{\phi}^p_{L^2},
\end{equation*}
where $\mu_p$ is the $p$-th moment of an $\mathcal{N}(0,1)$ random variable, as in Notation~\ref{ntn mu p}. Hence, we have:
\begin{equation}
\label{eq CLT}
\prsc{T_R}{\phi} \xrightarrow[R \to +\infty]{} \mathcal{N}\left(0,\Norm{\phi}_{L^2}^2\right)
\end{equation}
in distribution, by~\cite[Theorem~30.2]{Bil1995}. Note that if $\Norm{\phi}_{L^2}=0$ then $\phi$ vanishes almost everywhere. In this case $\prsc{T_R}{\phi}=0$ almost surely, by Proposition~\ref{prop expectation} and Remark~\ref{rem as well defined}. This proves the first claim in Theorem~\ref{thm CLT}. The Central Limit Theorem follows since $\card(Z \cap [0,R]) = \prsc{\nu_R}{\mathbf{1}_{[0,1]}}$.

Let $R>0$, Lemma~\ref{lem nu R as tempered distribution} shows that $T_R$ is an almost surely well-defined random element of $\mathcal{S}'(\R)$. For all $\phi \in \mathcal{S}(\R)$, the convergence in distribution of Equation~\eqref{eq CLT} and the definition of the standard Gaussian White Noise (cf.~Definition~\ref{def white noise}) show that:
\begin{equation*}
\prsc{T_R}{\phi} \xrightarrow[R \to +\infty]{} \prsc{W}{\phi}.
\end{equation*}
By~\cite[Corollary~2.4]{BDW2018}, we have $T_R \xrightarrow[R \to +\infty]{}W$ in distribution in $\mathcal{S}'(\R)$. This concludes the proof.
\end{proof}


\appendix

\section{Examples of smooth non-degenerate processes}
\label{sec examples of smooth non-degenerate processes}

In this section, we build examples of Gaussian processes satisfying the hypotheses of Theorems~\ref{thm moments}, \ref{thm vanishing order} and~\ref{thm clustering}. First, we need to recall the definition of the spectral measure of a stationary Gaussian process and some of its properties. This is done in Section~\ref{subsec spectral measure}. In Section~\ref{subsec non-degeneracy spectral measure and ergodicity}, we give a non-degeneracy criterion on the spectral measure for a $\mathcal{C}^p$ Gaussian process. Finally, in Section~\ref{subsec smooth non-degenerate processes with fast-decreasing correlations}, we give examples of Gaussian processes whose correlation functions lie in the Schwartz space $\mathcal{S}(\R)$ of smooth fast-decreasing functions.


\subsection{Spectral measure}
\label{subsec spectral measure}

This section is concerned with the definition and the properties of the spectral measure of a stationary Gaussian process. Let $f:\R \to \R$ be a non-zero stationary centered Gaussian process of class $\mathcal{C}^0$ and let $\kappa$ denote its correlation function. We assume that $f$ is normalized so that $\var{f(0)}=1$. Then, $\kappa:\R \to \R$ is a continuous function such that $\kappa(0)=1$. Moreover, $\kappa$ is \emph{positive semi-definite}, in the sense that, for all $m \in \N^*$, for all $x_1,\dots,x_m \in \R$, for all $a_1,\dots,a_m \in \C$, we have:
\begin{equation}
\label{eq positive semi-definite}
\sum_{1 \leq j,k \leq m} a_j \bar{a_k} \kappa(x_k-x_j) = \esp{\norm{\sum_{j=1}^m a_j f(x_j)}^2} \geq 0.
\end{equation}
By Bochner's Theorem, there exists a unique Borel probability measure $\lambda$ on $\R$ such that $\kappa$ is the characteristic function of $\lambda$, i.e.:
\begin{equation}
\label{eq spectral measure}
\forall x \in \R, \quad \kappa(x) = \int_{-\infty}^{+\infty} e^{ix\xi}\dx \lambda(\xi).
\end{equation}
Since $\kappa$ is real-valued, $\lambda$ is \emph{symmetric}. That is $(-\Id)_*\lambda = \lambda$, where $\Id$ is the identity of~$\R$.

\begin{dfn}[Spectral measure]
\label{def spectral measure}
Let $f$ be a stationary centered Gaussian process of class $\mathcal{C}^0$, normalized so that its correlation function $\kappa$ satisfies $\kappa(0)=1$. The unique symmetric Borel probability measure $\lambda$ such that~\eqref{eq spectral measure} holds is called the \emph{spectral measure} of $f$.
\end{dfn}

Conversely, let $\lambda$ be a symmetric Borel probability measure on $\R$ and let $\kappa$ denote its characteristic function. Then, $\kappa$ is a continuous real-valued function such that $\kappa(0)=1$, and $\kappa$ is positive semi-definite (cf.~Equation~\eqref{eq positive semi-definite}). By a theorem of Kolmogorov (see~\cite[p.~19]{AW2009}, for example), there exists a stationary centered Gaussian process $f$ whose correlation function equals $\kappa$.

Let us now relate the properties of the process $f$, its correlation function $\kappa$ and its spectral measure $\lambda$. As explained in Section~\ref{subsec stationary Gaussian processes and correlation functions}, if $f$ is of class $\mathcal{C}^p$ for some $p \in \N^*$, then $\kappa$ is of class $\mathcal{C}^{2p}$. In this case, $\lambda$ admits a finite moment of order $2p$, that is $\int_{\R} \xi^{2p} \dx \lambda(\xi) < +\infty$. Conversely, if $\lambda$ admits a finite moment of order $2p$ then $\kappa$ is $\mathcal{C}^{2p}$. If these conditions are satisfied then, for all $j \in \{0,\dots,2p\}$, for all $x \in \R$, we have:
\begin{equation}
\label{eq derivatives kappa}
\kappa^{(j)}(x) = \int_{-\infty}^{+\infty} (i\xi)^j e^{ix\xi}\dx \lambda(\xi).
\end{equation}
The fact that $\kappa$ is of class $\mathcal{C}^{2p}$ is not enough to ensure that $f$ is a $\mathcal{C}^p$-process. However, by Kolmogorov's Theorem~\cite[Appendix~A.9]{NS2016}, the process $f$ is of class~$\mathcal{C}^{p-1}$, in the sense that there exists a version of $f$ which is of class $\mathcal{C}^{p-1}$. We can say a bit more about the regularity of $f$:
\begin{itemize}
\item for all $\alpha \in (0,1)$, the process $f^{(p-1)}$ is almost surely $\alpha$-Hölder (see~\cite[Appendix~A.11.2]{NS2016});
\item for all $x \in \R$, the variable $f^{(2p)}(x)$ is well-defined and Gaussian (cf.~\cite[Proposition~1.13]{AW2009});
\item if there exists $\alpha >0$ such that $\kappa^{(2p)}(0)-\kappa^{(2p)}(x) = O(\norm{x}^\alpha)$ as $x \to 0$, then there exists a version of $f$ of class~$\mathcal{C}^p$ (cf.~\cite[Corollary~1.7.b]{AW2009}).
\end{itemize}


\subsection{Non-degeneracy, spectral measure and ergodicity}
\label{subsec non-degeneracy spectral measure and ergodicity}

In this section, we give a condition on the spectral measure of a stationary Gaussian process implying that the finite-dimensional marginal distributions of this process are non-degenerate. A similar criterion already appeared in~\cite[p.~64]{AW2009}. Then, we use this result to prove Lemma~\ref{lem non-degeneracy}.

\begin{lem}[Non-degeneracy]
\label{lem spectral measure and non-degeneracy}
Let $f:\R \to \R$ be a stationary Gaussian process of class $\mathcal{C}^p$. Let $\lambda$ denote its spectral measure. If the support of $\lambda$ has an accumulation point in $\R$ then: for all $m \in \N^*$, for any $k_1,\dots,k_m \in \{0,\dots,p\}$ and any $x_1,\dots,x_m \in \R$ such that the couples $((k_j,x_j))_{1 \leq j \leq m}$ are pairwise distinct, the centered Gaussian vector $\left(f^{(k_j)}(x_j)\right)_{1 \leq j \leq m}$ is non-degenerate.
\end{lem}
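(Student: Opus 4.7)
The plan is to argue by contradiction, using the standard reformulation of non-degeneracy as linear independence of certain exponential polynomials, and then invoking the identity theorem together with the accumulation point hypothesis.

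Concretely, assume the Gaussian vector $(f^{(k_j)}(x_j))_{1 \leq j \leq m}$ is degenerate. Then there exist real coefficients $a_1,\dots,a_m$, not all zero, such that $\sum_{j=1}^m a_j f^{(k_j)}(x_j) = 0$ almost surely, and in particular its variance vanishes. Using the spectral representation~\eqref{eq spectral measure} together with the differentiation identity~\eqref{eq derivatives kappa}, I would compute
\begin{equation*}
0 = \var{\sum_{j=1}^m a_j f^{(k_j)}(x_j)} = \int_{-\infty}^{+\infty} \left\lvert P(\xi) \right\rvert^2 \dx \lambda(\xi), \qquad P(\xi) = \sum_{j=1}^m a_j (i\xi)^{k_j} e^{i x_j \xi}.
\end{equation*}
This step is the routine one: expand the variance as a double sum, recognize $(-1)^{k_j}\kappa^{(k_j+k_l)}(x_l-x_j)$ as the Fourier transform evaluation $\int (i\xi)^{k_j}\overline{(i\xi)^{k_l}} e^{i(x_l-x_j)\xi}\dx \lambda(\xi)$, and factor the integrand as $|P(\xi)|^2$.

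From the vanishing of this integral, $P$ must vanish $\lambda$-almost everywhere, and by continuity of $P$, identically on the support of $\lambda$. Now $P$ is the restriction to $\R$ of an entire function on $\C$, being a finite linear combination of products of polynomials and exponentials. Since $\mathrm{supp}(\lambda)$ contains an accumulation point in $\R$, the identity theorem for holomorphic functions gives $P \equiv 0$ on~$\C$.

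The main technical step — and the only mildly delicate one — is to deduce from $P \equiv 0$ that all $a_j$ are zero, which contradicts our assumption and finishes the proof. I would proceed as follows: let $y_1 < \dots < y_r$ be the distinct values taken by the $x_j$, and for each $\alpha \in \{1,\dots,r\}$, collect the terms of $P$ with $x_j = y_\alpha$ into a polynomial $Q_\alpha(\xi) = \sum_{\{j \mid x_j = y_\alpha\}} a_j (i\xi)^{k_j}$. Since the couples $(k_j, x_j)$ are pairwise distinct, the exponents $k_j$ appearing in $Q_\alpha$ are pairwise distinct, so $Q_\alpha$ vanishes identically if and only if all corresponding $a_j$ vanish. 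It thus suffices to show that the identity
\begin{equation*}
\sum_{\alpha=1}^r Q_\alpha(\xi) e^{i y_\alpha \xi} = 0 \qquad \forall \xi \in \C
\end{equation*}
forces each $Q_\alpha \equiv 0$. This is the classical linear independence of exponential-polynomials (the $\{y_\alpha\}$ being distinct), which can be proved quickly by induction on $r$: assuming the result for $r-1$, divide by $e^{iy_r \xi}$ and let $\mathrm{Im}(\xi)\to +\infty$ along the imaginary axis to isolate $Q_r$, or alternatively differentiate sufficiently many times and use a Vandermonde-type argument on the distinct $y_\alpha$. Either variant yields $Q_\alpha \equiv 0$ for every $\alpha$, hence $a_j = 0$ for every $j$, contradicting the choice of the $a_j$.
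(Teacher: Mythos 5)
Your proof is correct and its first half — writing the vanishing variance as $\int |P(\xi)|^2 \dx\lambda(\xi)$, deducing $P\equiv 0$ on $\mathrm{supp}(\lambda)$, and invoking the identity theorem for the entire function $P$ together with the accumulation-point hypothesis — coincides with the paper's argument (the paper works with $g_a(\xi)=\sum_j a_j(-i\xi)^{k_j}e^{-ix_j\xi}$, which is your $P$ up to complex conjugation). The two proofs diverge at the final step. The paper recognizes $g_a$ as the Fourier transform, in $\mathcal{S}'(\R)$, of the tempered distribution $\sum_{j=1}^m a_j\,\bigl(\delta_{x_j}\bigr)^{(k_j)}$; since the Fourier transform is an isomorphism of $\mathcal{S}'(\R)$ and the couples $(k_j,x_j)$ are pairwise distinct, $g_a\equiv 0$ forces this distribution to vanish and hence all $a_j=0$. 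You instead prove directly that the exponential polynomial $\sum_\alpha Q_\alpha(\xi)e^{iy_\alpha\xi}$ with distinct frequencies $y_\alpha$ and polynomial coefficients $Q_\alpha$ is identically zero only if each $Q_\alpha\equiv 0$, by induction on $r$. Both are valid; the paper's is a one-line appeal to distribution theory, while yours is more elementary and self-contained. One small slip in your sketch: to isolate a coefficient you should divide by $e^{iy_1\xi}$ (the \emph{smallest} $y_\alpha$) and let $\mathrm{Im}(\xi)\to+\infty$, or equivalently divide by $e^{iy_r\xi}$ and send $\mathrm{Im}(\xi)\to-\infty$; as written, dividing by $e^{iy_r\xi}$ and taking $\mathrm{Im}(\xi)\to+\infty$ makes the other terms blow up rather than vanish. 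The Vandermonde alternative you mention is fine.
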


\begin{proof}
Let us assume that the support of $\lambda$ has an accumulation point in $\R$. Let $m \in \N^*$, let $k_1,\dots,k_m \in \{0,\dots,p\}$ and let $x_1,\dots,x_m \in \R$ be such that the couples $((k_j,x_j))_{1 \leq j \leq m}$ are pairwise distinct. The Gaussian vector $\left(f^{(k_j)}(x_j)\right)_{1 \leq j \leq m}$ is non-degenerate if and only if its variance matrix $\Lambda = \left((-1)^{k_j}\kappa^{(k_j+k_l)}(x_l-x_j)\right)_{1 \leq i,j \leq m}$ is non-singular, that is, if and only if $\ker(\Lambda)=\{0\}$.

Let $a = \trans{(a_1,\dots,a_m)} \in \ker(\Lambda)$, by Equations~\eqref{eq covariance} and~\eqref{eq derivatives kappa}, we have:
\begin{align*}
0 = \trans{a}\Lambda a &= \sum_{1 \leq j,l \leq m} a_ja_l(-1)^{k_j}\kappa^{(k_j+k_l)}(x_l-x_j)\\
&= \sum_{1 \leq j,l \leq m} a_j a_l \int_{-\infty}^{+\infty} (i\xi)^{k_l}(-i\xi)^{k_j} e^{i(x_l-x_j)\xi} \dx\lambda(\xi)\\
&= \int_{-\infty}^{+\infty} \norm{\sum_{j=1}^m a_j(-i\xi)^{k_j} e^{-ix_j\xi}}^2 \dx \lambda(\xi).
\end{align*}
Hence $g_a:\xi \mapsto \sum_{j=1}^m a_j (-i\xi)^{k_j} e^{-ix_j\xi}$ vanishes on the support of $\lambda$. Since $g_a$ is analytic and the support of $\lambda$ has an accumulation point, we have $g_a=0$. Besides, $g_a$ is the Fourier transform, in the sense of tempered generalized functions, of $\sum_{j=1}^m a_j \left(\delta_{x_j}\right)^{(k_j)}$, where $\delta_x$ stands for the unit Dirac mass at $x \in \R$. Since the Fourier transform is an isomorphism from $\mathcal{S}'(\R)$ to itself, we have $\sum_{j=1}^m a_j \left(\delta_{x_j}\right)^{(k_j)}=0$. The couples $((k_j,x_j))_{1 \leq j \leq m}$ being pairwise distinct, this implies that $a=0$. Thus $\ker(\Lambda) = \{0\}$ and $\left(f^{(k_j)}(x_j)\right)_{1 \leq j \leq m}$ is non-degenerate.
\end{proof}

We can now prove Lemma~\ref{lem non-degeneracy} as a corollary of Lemma~\ref{lem spectral measure and non-degeneracy}.

\begin{proof}[Proof of Lemma~\ref{lem non-degeneracy}]
Since $\kappa(x) \xrightarrow[x \to +\infty]{}0$, the random process $f$ is ergodic, cf.~\cite[Theorem~6.5.4]{Adl2010}. By the Fomin--Grenander--Maruyama Theorem (see~\cite[Section~6]{NS2016}), this is equivalent to the fact that the spectral measure $\lambda$ of $f$ has no atom. Note that, in~\cite{NS2016}, the authors only state one implication in the body of the text, which is not the one we are interested in. The fact that equivalence holds appears as a footnote. Then, any point in the support of $\lambda$ must be an accumulation point. The conclusion follows by Lemma~\ref{lem spectral measure and non-degeneracy}.
\end{proof}


\subsection{Smooth non-degenerate processes with fast-decreasing correlations}
\label{subsec smooth non-degenerate processes with fast-decreasing correlations}

In this section, we build examples of normalized stationary centered Gaussian $\mathcal{C}^p$-processes whose correlation functions, as well as their derivatives, decay as $O(x^{-k})$ at infinity.

\begin{lem}
\label{lem spectral density and decay}
Let $\lambda$ be a probability measure on $\R$ admitting a density $g$ with respect to the Lebesgue measure. Let $k \in \N$ and $p \in \N^*$, we assume that $g$ is even, of class $\mathcal{C}^k$, and satisfies the following conditions:
\begin{enumerate}
\item $\int_{\R} \xi^2 g(\xi) \dx \xi =1$;
\label{cond 1 lem spectral density and decay}
\item $\int_{\R} \xi^{2p+2} g(\xi) \dx \xi <+\infty$;
\label{cond 2 lem spectral density and decay}
\item for all $j \in \{0,\dots,k\}$, $\int_{\R} \norm{\xi}^p \norm{g^{(j)}(\xi)} \dx \xi <+\infty$;
\label{cond 3 lem spectral density and decay}
\item for all $j \in \{0,\dots,k-1\}$, $\norm{g^{(j)}(\xi)} = o(\norm{\xi}^{-p})$ as $\norm{\xi} \to +\infty$.
\label{cond 4 lem spectral density and decay}
\end{enumerate}
Then $\lambda$ is the spectral measure of normalized stationary centered Gaussian process $f$ of class $\mathcal{C}^p$. Moreover, denoting by~$\kappa$ the correlation function of $f$, we have $\kappa^{(j)}(x) = o(\norm{x}^{-k})$ as $\norm{x} \to +\infty$, for all $j \in \{0,\dots,p\}$.
\end{lem}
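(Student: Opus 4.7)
The strategy splits naturally into two stages: first, constructing a normalized stationary Gaussian $\mathcal{C}^p$-process with spectral measure $\lambda$; second, deriving the decay estimate $\kappa^{(j)}(x) = o(|x|^{-k})$ by Fourier-analytic means.

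\emph{Stage 1 (construction).} The plan is to define $\kappa : x \mapsto \int_{\R} e^{ix\xi} g(\xi) \dx \xi$. Since $g \geq 0$ is an even integrable density with $\int g = 1$, the function $\kappa$ is real-valued, continuous, satisfies $\kappa(0)=1$, and is positive semi-definite (as in Equation~\eqref{eq positive semi-definite}). By the converse to Bochner's theorem together with Kolmogorov's theorem cited in Section~\ref{subsec spectral measure}, there exists a stationary centered Gaussian process $f$ whose correlation function is $\kappa$, so that $\lambda$ is its spectral measure by Definition~\ref{def spectral measure}. Condition~\ref{cond 2 lem spectral density and decay} ensures that $\lambda$ admits a finite moment of order $2p+2$, so $\kappa \in \mathcal{C}^{2p+2}(\R)$ and Equation~\eqref{eq derivatives kappa} combined with Condition~\ref{cond 1 lem spectral density and decay} gives $\kappa''(0) = -\int \xi^2 g(\xi) \dx \xi = -1$, hence $f$ is normalized. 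Since $\kappa$ is $\mathcal{C}^{2p+2}$, the function $\kappa^{(2p)}$ is at least $\mathcal{C}^1$, so $\kappa^{(2p)}(0) - \kappa^{(2p)}(x) = O(|x|)$ as $x \to 0$, which by the regularity criterion recalled at the end of Section~\ref{subsec spectral measure} yields a version of $f$ of class $\mathcal{C}^p$.

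\emph{Stage 2 (decay).} Fix $j \in \{0,\dots,p\}$. By the discussion of Section~\ref{subsec spectral measure},
\begin{equation*}
\kappa^{(j)}(x) = \int_{-\infty}^{+\infty} h_j(\xi) e^{ix\xi} \dx \xi, \qquad h_j(\xi) = (i\xi)^j g(\xi),
\end{equation*}
and the plan is to integrate by parts $k$ times in $\xi$ to extract the factor $(ix)^{-k}$ and then invoke the Riemann--Lebesgue lemma. By Leibniz's rule,
\begin{equation*}
h_j^{(m)}(\xi) = i^j \sum_{l=0}^{\min(j,m)} \binom{m}{l} \frac{j!}{(j-l)!} \xi^{j-l} g^{(m-l)}(\xi)
\end{equation*}
for every $m \in \{0,\dots,k\}$. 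For $m \leq k-1$, each term involves $g^{(m-l)}$ with $m-l \in \{0,\dots,k-1\}$, so Condition~\ref{cond 4 lem spectral density and decay} gives $|\xi^{j-l} g^{(m-l)}(\xi)| = o(|\xi|^{j-l-p}) = o(1)$ as $|\xi|\to+\infty$, since $j-l \leq j \leq p$; the boundary terms produced by the successive integrations by parts therefore vanish. For $m=k$, each term in $h_j^{(k)}$ is dominated by $|\xi|^{j-l}|g^{(k-l)}(\xi)|$ with $k-l \in \{0,\dots,k\}$; splitting the integral at $|\xi|=1$, local integrability follows from the continuity of $g^{(k-l)}$, while integrability at infinity uses $|\xi|^{j-l} \leq |\xi|^p$ together with Condition~\ref{cond 3 lem spectral density and decay}. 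One then concludes
\begin{equation*}
\kappa^{(j)}(x) = \frac{(-1)^k}{(ix)^k} \int_{-\infty}^{+\infty} h_j^{(k)}(\xi) e^{ix\xi} \dx \xi = o(|x|^{-k})
\end{equation*}
via the Riemann--Lebesgue lemma applied to $h_j^{(k)} \in L^1(\R)$.

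The main obstacle is purely bookkeeping: one must check that Conditions~\ref{cond 3 lem spectral density and decay} and~\ref{cond 4 lem spectral density and decay} are matched precisely with the indices appearing in Leibniz's expansion of $(\xi^j g(\xi))^{(m)}$ for all $m \leq k$, so that the boundary terms all vanish and the final integrand is integrable. The alignment hinges on the uniform bound $j \leq p$: otherwise the weights $|\xi|^{j-l}$ could exceed $|\xi|^p$ and neither the pointwise decay of Condition~\ref{cond 4 lem spectral density and decay} nor the integrability of Condition~\ref{cond 3 lem spectral density and decay} would suffice.
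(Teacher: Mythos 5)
Your proposal is correct and follows essentially the same route as the paper: construct $\kappa$ as the characteristic function of $\lambda$, verify $\kappa(0)=1$ and $\kappa''(0)=-1$ from Conditions~\ref{cond 1 lem spectral density and decay}--\ref{cond 2 lem spectral density and decay}, obtain a $\mathcal{C}^p$ version via the moment/regularity criterion of Section~\ref{subsec spectral measure}, then integrate $\kappa^{(j)}(x) = \int (i\xi)^j g(\xi) e^{ix\xi}\dx\xi$ by parts $k$ times (using Condition~\ref{cond 4 lem spectral density and decay} to kill boundary terms and Condition~\ref{cond 3 lem spectral density and decay} for integrability) and invoke the Riemann--Lebesgue Lemma. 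Your Leibniz expansion of $h_j^{(k)}$ reproduces the paper's formula exactly, and your closing remark about the role of $j\leq p$ correctly identifies what makes the index bookkeeping close up.
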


\begin{proof}
Since $g$ is even and continuous, $\lambda$ is a symmetric Borel probability measure. Let $\kappa$ denote the characteristic function of $\lambda$. By Condition~\ref{cond 2 lem spectral density and decay}, the function $\kappa$ is of class $\mathcal{C}^{2p+2}$. As discussed in Section~\ref{subsec spectral measure}, this means that $\kappa$ is the correlation function of a stationary centered Gaussian process~$f$, at least of class $\mathcal{C}^p$. Since $\lambda$ is a probability measure, we have $\kappa(0)=1$. Moreover, by Condition~\ref{cond 1 lem spectral density and decay}, we have $\kappa''(0)=-1$. Thus $f$ is normalized (see Definition~\ref{def normalized process}).

For any $j \in \{0,\dots,p\}$, an expression of $\kappa^{(j)}$ is given by Equation~\eqref{eq derivatives kappa}. Using Conditions~\ref{cond 3 lem spectral density and decay} and~\ref{cond 4 lem spectral density and decay}, we integrate by parts $k$ times, and obtain for any $x \neq 0$:
\begin{equation*}
\kappa^{(j)}(x) = \frac{i^{j+k}}{x^k} \sum_{m=0}^{\min(j,k)}\binom{k}{m}\frac{j!}{(j-m)!} \int_{-\infty}^{+\infty}e^{ix\xi}\xi^{j-m}g^{(k-m)}(\xi) \dx \xi.
\end{equation*}
By Condition~\ref{cond 3 lem spectral density and decay}, the function $\xi \mapsto \xi^{j-m}g^{(k-m)}(\xi)$ is integrable for all $m \leq \min(j,k)$. Hence, by the Riemann--Lebesgue Lemma, the integrals in the previous expression tend to~$0$ as $\norm{x} \to +\infty$, which shows that $\kappa^{(j)}(x) = o(\norm{x}^{-k})$.
\end{proof}

Lemmas~\ref{lem spectral measure and non-degeneracy} and~\ref{lem spectral density and decay} allow to build examples of Gaussian processes satisfying the hypotheses of our main results, see Theorems~\ref{thm moments}, \ref{thm vanishing order} and~\ref{thm clustering}. Let us conclude this section by giving a few of them. Recall that the Schwartz space $\mathcal{S}(\R)$ is defined by Definition~\ref{def SR}.

\begin{exs}
\label{ex non-degenerate and fast-decreasing}
Let $g$ be a non-negative continuous even function such that $\int_{\R} \xi^4 g(\xi) \dx \xi < +\infty$ and $\int_{\R} g(\xi) \dx \xi = 1=\int_{\R} \xi^2 g(\xi) \dx \xi$. Let $\lambda$ denote the measure having the density $g$ with respect to the Lebesgue measure. As explained above, $\lambda$ is the spectral measure of a normalized stationary centered Gaussian $\mathcal{C}^1$-process $f$, whose correlation function is denoted by $\kappa$.
\begin{itemize}
\item If $g$ is $\mathcal{C}^1$ and we have $g(\xi) = O(\xi^{-8})$ and $g'(\xi)=O(\xi^{-4})$ as $\xi \to +\infty$, then $f$ is $\mathcal{C}^2$ and $\kappa(x)$, $\kappa'(x)$ and $\kappa''(x)$ are $o(\norm{x}^{-1})$ as $x \to +\infty$. In particular, $f$ satisfies the hypotheses of Proposition~\ref{prop variance}.
\item Let $p \in \N^*$, if $g(\xi) = O(\xi^{-2p-4})$ as $\xi \to +\infty$ then $f$ is of class $\mathcal{C}^p$ and $\kappa^{(j)}(x) \xrightarrow[x \to +\infty]{}0$ for all $j \in \{0,\dots,p\}$. In particular, $f$ satisfies the hypotheses of Theorems~\ref{thm vanishing order} and~\ref{thm clustering}.
\item If $g \in \mathcal{S}(\R)$, then $f$ is $\mathcal{C}^\infty$ and $\kappa \in \mathcal{S}(\R)$. Hence $f$ satisfies the hypotheses of Proposition~\ref{prop variance} and Theorems~\ref{thm moments}, \ref{thm vanishing order} and~\ref{thm clustering}.
\item If $g : \xi \mapsto \frac{1}{\sqrt{2\pi}}\exp\left(-\frac{1}{2}\xi^2\right)$ is the standard Gaussian density, then $\kappa:x \mapsto \exp\left(-\frac{1}{2}x^2\right)$ and $f$ is the Bargmann--Fock process discussed in Section~\ref{subsec moments asymptotics}.
\item If $g=\frac{1}{2\sqrt{3}}\mathbf{1}_{[-\sqrt{3},\sqrt{3}]}$ then $\kappa : x \mapsto \sinc(\sqrt{3}x)$, where $\sinc:x \mapsto \sum_{k \geq 0} \frac{(-1)^k x^{2k}}{(2k+1)!}$ is the smooth extension of $x \mapsto \frac{1}{x}\sin(x)$ to $\R$. The density $g$ is not regular enough to apply Lemma~\ref{lem spectral density and decay}, but $\kappa$ is still the correlation function of a normalized stationary centered Gaussian process $f$ of class $\mathcal{C}^\infty$ (cf.~Section~\ref{subsec spectral measure}). By Lemma~\ref{lem spectral measure and non-degeneracy}, $f$ satisfies the hypotheses of Theorem~\ref{thm vanishing order}. Moreover, one can check that $\kappa$ and all its derivatives are square-integrable on $\R$ and tend to~$0$ at infinity, hence $f$ satisfies the hypotheses of Proposition~\ref{prop variance} and Theorem~\ref{thm clustering} for any $p \in \N^*$.
\item If $g: \xi \mapsto \frac{1}{\sqrt{2}}\exp\left(-\sqrt{2}\norm{\xi}\right)$, then $\kappa:x \mapsto \left(1 + \frac{x^2}{2}\right)^{-1}$ (one can check this using his favorite computational software). As above $g$ is not regular enough to apply Lemma~\ref{lem spectral density and decay} directly. Yet, $\kappa$ is the correlation function of a normalized stationary centered Gaussian $\mathcal{C}^\infty$-process satisfying the hypotheses of Proposition~\ref{prop variance} and Theorems~\ref{thm vanishing order} and~\ref{thm clustering} for any $p \in \N^*$.
\end{itemize}
\end{exs}

\begin{rem}
\label{rem normalization g}
Let $h$ be a non-negative even function. If we have $\int_\R h(\xi) \dx \xi = A >0$ and $\int_\R \xi^2 h(\xi) \dx \xi =B$, then $g: \xi \mapsto \sqrt{\frac{B}{A^3}} h\left(\sqrt{\frac{B}{A}}\xi\right)$ is such that $\int_R g(\xi) \dx \xi = 1=\int_\R \xi^2 g(\xi) \dx \xi $. Moreover, $g$ is non-negative, even and it has the same regularity and asymptotic behavior as~$h$.
\end{rem}


\section{Properties of the density function \texorpdfstring{$F$}{}}
\label{sec properties of the density function F}


\subsection{Proof of Lemma~\ref{lem expression F}}
\label{subsec proof of Lemma expression F}

The goal of this section is to prove Lemma~\ref{lem expression F}, which gives an expression of the function $F$ defined by Definition~\ref{def F}. Recall that in Lemma~\ref{lem expression F} we work under the hypotheses of Proposition~\ref{prop variance}. In particular, $f$ is a $\mathcal{C}^2$ Gaussian process whose correlation function $\kappa$ tends to $0$ at infinity.

\begin{lem}
\label{lem expression N20z}
For all $z >0$ we have $N_2(0,z) = \dfrac{2}{\pi}b(z) \left(\sqrt{1-a(z)^2} + a(z) \arcsin(a(z))\right)$, where
\begin{align*}
a(z) &= \frac{\kappa(z)\kappa'(z)^2 - \kappa(z)^2\kappa''(z) + \kappa''(z)}{1 - \kappa(z)^2 -\kappa'(z)^2} & &\text{and} & b(z) &= 1 - \frac{\kappa'(z)^2}{1-\kappa(z)^2}.
\end{align*}
Moreover, $\norm{a(z)} \leq 1$.
\end{lem}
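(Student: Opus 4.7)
My plan is to compute the conditional distribution of $(f'(0), f'(z))$ given $f(0) = f(z) = 0$, identify it explicitly, and then invoke the classical formula for $\esp{\norm{X}\norm{Y}}$ in a centered bivariate Gaussian. Using the stationarity and normalization of~$f$ together with Equation~\eqref{eq covariance}, the covariance matrix of $(f(0), f(z), f'(0), f'(z))$ decomposes by $2 \times 2$ blocks as $\Sigma = \left(\begin{smallmatrix} A & \trans{B} \\ B & C \end{smallmatrix}\right)$, where
\[
A = \begin{pmatrix} 1 & \kappa(z) \\ \kappa(z) & 1 \end{pmatrix},\qquad B = \begin{pmatrix} 0 & -\kappa'(z) \\ \kappa'(z) & 0 \end{pmatrix},\qquad C = \begin{pmatrix} 1 & -\kappa''(z) \\ -\kappa''(z) & 1 \end{pmatrix}.
\]
Under the hypotheses of Proposition~\ref{prop variance}, Lemma~\ref{lem non-degeneracy} yields $\kappa(z)^2 < 1$ for every $z > 0$, so $A$ is invertible. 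By \cite[Proposition~1.2]{AW2009}, the conditional law of $(f'(0), f'(z))$ given $f(0) = f(z) = 0$ is then centered Gaussian with variance matrix $\Lambda = C - B A^{-1} \trans{B}$.

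A direct computation, using $A^{-1} = (1-\kappa(z)^2)^{-1}\left(\begin{smallmatrix} 1 & -\kappa(z) \\ -\kappa(z) & 1 \end{smallmatrix}\right)$, shows that both diagonal entries of $\Lambda$ equal $b(z) = 1 - \kappa'(z)^2/(1 - \kappa(z)^2)$, while the two off-diagonal entries equal $-a(z)\,b(z)$, with $a(z)$ as in the statement. Equivalently, $(f'(0), f'(z))$ conditioned on $f(0) = f(z) = 0$ is distributed as $\sqrt{b(z)}\,(X,Y)$, where $(X,Y)$ is a centered Gaussian pair of unit variances and correlation coefficient $-a(z)$. Since $-a(z)$ is a correlation coefficient, one has $\norm{a(z)} \leq 1$ (with the convention that $a(z) = 0$ when $b(z) = 0$, in which case $N_2(0,z) = 0$ and the claimed identity is trivial).

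To conclude, I will invoke the classical identity
\[
\esp{\norm{X}\norm{Y}} = \tfrac{2}{\pi}\bigl(\sqrt{1-\rho^2} + \rho \arcsin(\rho)\bigr),
\]
valid for any centered Gaussian pair $(X,Y)$ with unit variances and correlation $\rho \in [-1,1]$. This formula can be recovered by writing $Y = \rho X + \sqrt{1-\rho^2}\,Z$ with $Z$ independent of $X$ and integrating explicitly, or by checking that both sides have the same derivative in $\rho$ and agree at $\rho = 0$. Applied with $\rho = -a(z)$, and using the oddness of $\arcsin$ to rewrite $(-a(z))\arcsin(-a(z)) = a(z)\arcsin(a(z))$, this yields the desired expression of $N_2(0,z)$.

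The proof is essentially a direct computation, and the only mild obstacle is keeping track of signs in the Schur complement: the antisymmetry of $B$ combined with the off-diagonal sign pattern of $A^{-1}$ produces an off-diagonal entry of $\Lambda$ whose sign is opposite to the naive guess, which is why $-a(z)$, rather than $+a(z)$, plays the role of the conditional correlation coefficient.
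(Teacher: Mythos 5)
Your proof is correct and follows essentially the same approach as the paper: compute the conditional variance matrix $\Lambda(z)$ of $(f'(0),f'(z))$ given $f(0)=f(z)=0$ as a Schur complement, identify its diagonal entries with $b(z)$ and off-diagonal entries with $-a(z)b(z)$, and invoke the classical formula for $\mathbb{E}[\lvert X\rvert\lvert Y\rvert]$ for a centered bivariate Gaussian. The only cosmetic difference is that you normalize to unit variances and apply the identity in its correlation-coefficient form $\frac{2}{\pi}(\sqrt{1-\rho^2}+\rho\arcsin\rho)$, whereas the paper invokes \cite[Eq.~(A.1)]{BD1997} in terms of the entries of $\Lambda(z)^{-1}$; both routes produce the same expression, and your observation that $\lvert a(z)\rvert\le 1$ because $-a(z)$ is a correlation coefficient is the same argument as the paper's remark that $b(z)\ge\lvert c(z)\rvert$ since $\Lambda(z)$ is a variance matrix.
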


\begin{proof}
Let $z >0$. The random vector $(f(0),f(z),f'(0),f'(z))$ is a centered Gaussian in $\R^4$ whose variance matrix equals:
\begin{equation*}
\begin{pmatrix}
1 & \kappa(z) & 0 & \kappa'(z) \\
\kappa(z) & 1 & -\kappa'(z) & 0 \\
0 & -\kappa'(z) & 1 & -\kappa''(z)\\
\kappa'(z) & 0 & -\kappa''(z) & 1
\end{pmatrix}.
\end{equation*}
Since $\kappa$ tends to $0$ at infinity, by Lemma~\ref{lem non-degeneracy} the Gaussian vector $(f(0),f(z))$ is non-degenerate, i.e.~$\kappa(z)^2 < 1$. Then, by~\cite[Proposition~1.2]{AW2009}, $(f'(0),f'(z))$ given that $f(0)=0=f(z)$ is a centered Gaussian vector in~$\R^2$. Moreover, its variance matrix is:
\begin{align*}
\Lambda(z) &= \begin{pmatrix}
1 & -\kappa''(z) \\ -\kappa''(z) & 1
\end{pmatrix} - \begin{pmatrix}
0 & -\kappa'(z) \\ \kappa(z) & 0
\end{pmatrix} \begin{pmatrix}
1 & \kappa(z) \\ \kappa(z) & 1
\end{pmatrix}^{-1} \begin{pmatrix}
0 & \kappa'(z) \\ -\kappa'(z) & 0
\end{pmatrix}\\
&= \begin{pmatrix}
b(z) & c(z) \\ c(z) & b(z)
\end{pmatrix},
\end{align*}
where
\begin{align*}
b(z) &= 1 - \frac{\kappa'(z)^2}{1-\kappa(z)^2} & &\text{and} & c(z) &= -\kappa''(z) - \frac{\kappa(z)\kappa'(z)^2}{1-\kappa(z)^2}.
\end{align*}
Note that, since $\Lambda(z)$ is a variance matrix, we have $b(z) \geq \norm{c(z)} \geq 0$.

Let $(X,Y) \sim \mathcal{N}(0,\Lambda(z))$ in $\R^2$, we have $N_2(0,z) = \esp{\norm{X}\norm{Y}}$. If $b(z) > \norm{c(z)}$, then $\Lambda(z)$ is invertible and $\Lambda^{-1}(z) = \left(\begin{smallmatrix} A(z) & B(z) \\ B(z) & A(z) \end{smallmatrix} \right)$,
where
\begin{align*}
A(z) &= \frac{b(z)}{b(z)^2 - c(z)^2} & &\text{and} & B(z) &= -\frac{c(z)}{b(z)^2 - c(z)^2}.
\end{align*}
Then, using~\cite[Equation~(A.1)]{BD1997}, we have:
\begin{align*}
\esp{\norm{X}\norm{Y}} &= \frac{1}{2\pi\sqrt{b(z)^2 - c(z)^2}} \int_{\R^2} \norm{x}\norm{y} \exp\left(-A(z)\frac{x^2 + y^2}{2}-B(z)xy\right)\dx x \dx y \\
&= \frac{2}{\pi}b(z) \left(\sqrt{1 - \frac{c(z)^2}{b(z)^2}} + \frac{c(z)}{b(z)} \arcsin\left(\frac{c(z)}{b(z)}\right)\right)
\end{align*}
On the other hand, if $b(z) = \norm{c(z)}$ then $\norm{X} = \norm{Y}$ almost surely. Hence,
\begin{equation*}
\esp{\norm{X}\norm{Y}} = \esp{X^2} = b(z) = \frac{2}{\pi}b(z) \left(\sqrt{1 - \frac{c(z)^2}{b(z)^2}} + \frac{c(z)}{b(z)} \arcsin\left(\frac{c(z)}{b(z)}\right)\right).
\end{equation*}

To conclude, note that $a(z) = -\frac{c(z)}{b(z)}$, so that $\norm{a(z)} \leq 1$ and:
\begin{equation*}
N_2(0,z) = \esp{\norm{X}\norm{Y}} = \frac{2}{\pi}b(z) \left(\sqrt{1-a(z)^2} + a(z) \arcsin(a(z))\right).\qedhere
\end{equation*}
\end{proof}

\begin{proof}[Proof of Lemma~\ref{lem expression F}]
By definition of $F$ and $\rho_2$ (see Definitions~\ref{def F} and~\ref{def Kac-Rice densities}), for all $z \neq 0$, we have:
\begin{equation*}
F(z) = \frac{1}{2\pi} \frac{N_2(0,z)}{D_2(0,z)^\frac{1}{2}} - \frac{1}{\pi^2}.
\end{equation*}
Note that, $N_2$ and $D_2$ are symmetric functions on $\R^2 \setminus \Delta_2$. Then,
using the stationarity of $f$, we have $N_2(0,z) = N_2(z,0) = N_2(0,-z)$, and similarly $D_2(0,z) = D_2(0,-z)$. Thus $F(z) = F(-z)$ for all $z \neq 0$.

Let $z >0$, we have $D_2(0,z) =1 - \kappa(z)^2$ (see Example~\ref{ex Kac-Rice densities}) and the expression of $N_2(0,z)$ is given by Lemma~\ref{lem expression N20z}. Then, a direct computation yields:
\begin{equation*}
F(z) = \frac{1}{\pi^2} \left(\frac{1-\kappa(z)^2-\kappa'(z)^2}{\left(1-\kappa(z)^2\right)^\frac{3}{2}}\left(\sqrt{1-a(z)^2} + a(z) \arcsin(a(z))\right) - 1\right),
\end{equation*}
where $a(z)$ is defined as in Lemmas~\ref{lem expression F} and~\ref{lem expression N20z}. In particular, $\norm{a(z)} \leq 1$.
\end{proof}


\subsection{Proof of Lemma~\ref{lem integrability F}}
\label{subsec proof of Lemma integrability F}

In this section, we prove the integrability of the function $F$ defined by Definition~\ref{def F}, under the hypotheses of Proposition~\ref{prop variance}.

\begin{lem}
\label{lem integrability F 0}
Under the hypotheses of Proposition~\ref{prop variance}, we have $F(z) \xrightarrow[z \to 0]{} -\frac{1}{\pi^2}$.
\end{lem}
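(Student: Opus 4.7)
The plan is to use the explicit expression of $F(z)$ derived in Lemma~\ref{lem expression F} together with a Taylor expansion of $\kappa$ at the origin. Under the hypotheses of Proposition~\ref{prop variance}, the process $f$ is of class $\mathcal{C}^2$, so $\kappa \in \mathcal{C}^4(\R)$. Since $\kappa$ is even, odd derivatives at $0$ vanish; combined with the normalization $\kappa(0)=1$ and $\kappa''(0)=-1$, Taylor's formula gives
\begin{equation*}
\kappa(z) = 1 - \tfrac{z^2}{2} + \tfrac{\kappa^{(4)}(0)}{24}z^4 + o(z^4), \qquad \kappa'(z) = -z + \tfrac{\kappa^{(4)}(0)}{6}z^3 + o(z^3),
\end{equation*}
as $z \to 0$.

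First, I would compute $1 - \kappa(z)^2 = z^2 + O(z^4)$, so that $(1 - \kappa(z)^2)^{3/2} \sim z^3$. Then, squaring and subtracting the Taylor expansions above yields the key cancellation of leading-order terms:
\begin{equation*}
1 - \kappa(z)^2 - \kappa'(z)^2 = \tfrac{\kappa^{(4)}(0) - 1}{4}\, z^4 + o(z^4) = O(z^4).
\end{equation*}
Combining these two estimates gives
\begin{equation*}
\frac{1 - \kappa(z)^2 - \kappa'(z)^2}{\left(1 - \kappa(z)^2\right)^{3/2}} = O(z) \xrightarrow[z \to 0]{} 0.
\end{equation*}

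Second, I would invoke the uniform bound $\norm{a(z)} \leq 1$ from Lemma~\ref{lem expression F} to observe that the factor $\sqrt{1 - a(z)^2} + a(z)\arcsin(a(z))$ is bounded above by $1 + \pi/2$ on $(0,+\infty)$. Multiplying this bounded factor by the vanishing ratio above, the whole bracketed quantity in the expression of $F(z)$ tends to $0$, and therefore $F(z) \to -\tfrac{1}{\pi^2}$ as $z \to 0^+$. By Lemma~\ref{lem expression F}, $F$ is even, so the two-sided limit is the same.

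The only nontrivial point is the exact cancellation of $z^2$ terms in $1 - \kappa^2 - \kappa'^2$, which is automatic from the normalization $\kappa(0) = 1 = -\kappa''(0)$; everything else is bookkeeping.
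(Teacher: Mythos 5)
Your proof is correct and follows essentially the same route as the paper's: bound the $\sqrt{1-a^2}+a\arcsin a$ factor by $1+\pi/2$ using $\norm{a(z)}\leq 1$, and show the ratio $\frac{1-\kappa^2-\kappa'^2}{(1-\kappa^2)^{3/2}}$ is $O(z)$ via the Taylor expansion of $\kappa$ and $\kappa'$ at $0$ forced by the normalization. The only difference is that you spell out the precise fourth-order coefficient $\tfrac{\kappa^{(4)}(0)-1}{4}$ and explicitly invoke evenness for the two-sided limit, whereas the paper stops at the $O$-estimates; both are the same argument at the same level of rigor.
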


\begin{proof}
Let us consider the expression of $F$ derived in Lemma~\ref{lem expression F}. Note that, for all $z >0$, we have:
\begin{equation*}
0 \leq \sqrt{1-a(z)^2} + a(z) \arcsin(a(z)) \leq 1 + \frac{\pi}{2}.
\end{equation*}
Hence, it is enough to prove that:
\begin{equation*}
\frac{1-\kappa(z)^2 - \kappa'(z)^2}{\left(1-\kappa(z)^2\right)^\frac{3}{2}} \xrightarrow[z \to 0]{} 0.
\end{equation*}
We know that $\kappa$ is $\mathcal{C}^4$. Moreover, $\kappa(0)=1=-\kappa''(0)$ and $\kappa'(0) = 0 = \kappa^{(3)}(0)$. Thus, as $z \to 0$, we have:
\begin{align*}
\kappa(z) &= 1 - \frac{z^2}{2} + O(z^4) & &\text{and} & \kappa'(z) &= -z +O(z^3).
\end{align*}
These estimates yield that $\dfrac{1-\kappa(z)^2 - \kappa'(z)^2}{\left(1-\kappa(z)^2\right)^\frac{3}{2}} = O(z)$ as $z \to 0$, which concludes the proof.
\end{proof}

\begin{lem}
\label{lem integrability F infty}
Under the hypotheses of Proposition~\ref{prop variance}, as $z \to +\infty$, we have:
\begin{equation*}
F(z) = O(\kappa(z)^2+\kappa'(z)^2+\kappa''(z)^2).
\end{equation*}
\end{lem}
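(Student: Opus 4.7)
The plan is to start from the explicit formula for $F$ derived in Lemma~\ref{lem expression F} and perform a Taylor expansion as $z\to+\infty$, exploiting the fact that under the hypotheses of Proposition~\ref{prop variance} the function $\Norm{\kappa}_{2,\eta}$ tends to $0$, so that $\kappa(z)$, $\kappa'(z)$ and $\kappa''(z)$ all tend to $0$ at infinity. Set, for short, $\varepsilon(z)=\kappa(z)^2+\kappa'(z)^2+\kappa''(z)^2$; the goal is to show $F(z)=O(\varepsilon(z))$.

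First I would analyse the auxiliary function $a(z)$. Its numerator equals $\kappa''(z)(1-\kappa(z)^2)+\kappa(z)\kappa'(z)^2$, and since the denominator $1-\kappa(z)^2-\kappa'(z)^2$ tends to $1$, a routine bound gives $|a(z)|\le C(|\kappa''(z)|+|\kappa(z)|\,\kappa'(z)^2)$ for $z$ large. In particular $a(z)\to 0$ and $a(z)^2 = O(\kappa''(z)^2 + \kappa(z)^2\kappa'(z)^4) = O(\varepsilon(z))$. Using the elementary expansions $\sqrt{1-t^2}=1-\tfrac{t^2}{2}+O(t^4)$ and $t\arcsin(t)=t^2+O(t^4)$ as $t\to 0$, one gets
\begin{equation*}
\sqrt{1-a(z)^2}+a(z)\arcsin(a(z)) = 1+\tfrac{1}{2}a(z)^2+O(a(z)^4) = 1+O(\varepsilon(z)).
\end{equation*}

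Next I would expand the rational prefactor in the formula of Lemma~\ref{lem expression F}. Writing $u=\kappa(z)^2$ and $v=\kappa'(z)^2$, both $o(1)$, we have $(1-u)^{-3/2}=1+\tfrac{3}{2}u+O(u^2)$, hence
\begin{equation*}
\frac{1-\kappa(z)^2-\kappa'(z)^2}{(1-\kappa(z)^2)^{3/2}} = (1-u-v)\left(1+\tfrac{3}{2}u+O(u^2)\right) = 1+\tfrac{1}{2}\kappa(z)^2-\kappa'(z)^2+O(\varepsilon(z)^2).
\end{equation*}
Multiplying this expansion with $1+O(\varepsilon(z))$ and subtracting $1$ (as prescribed by Lemma~\ref{lem expression F}), the constant cancels and the remainder is controlled by $\tfrac{1}{2}\kappa(z)^2+\kappa'(z)^2+\tfrac{1}{2}a(z)^2+O(\varepsilon(z)^2)$, which is $O(\varepsilon(z))=O(\kappa(z)^2+\kappa'(z)^2+\kappa''(z)^2)$.

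There is no real obstacle here: the only thing to be slightly careful about is tracking that the $O(\cdot)$ constants are uniform for $z$ large enough, which follows from the uniform boundedness of $\kappa,\kappa',\kappa''$ and the fact that $1-\kappa(z)^2-\kappa'(z)^2\to 1$, so that all denominators appearing in the Taylor expansions stay bounded below by a positive constant. Once the expansion of $F(z)$ up to the cancellation of the leading $1$ is established, the desired estimate $F(z)=O(\kappa(z)^2+\kappa'(z)^2+\kappa''(z)^2)$ follows immediately.
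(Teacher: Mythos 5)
Your argument is correct and follows essentially the same route as the paper: both proofs start from the explicit formula of Lemma~\ref{lem expression F}, estimate $a(z)$ from its definition, Taylor-expand $\sqrt{1-a^2}+a\arcsin(a)$ and the rational prefactor around their limits at infinity, and observe that the constant terms cancel. Your version is slightly more explicit in tracking the leading correction terms, but no new idea is introduced.
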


\begin{proof}
Once again, we use the expression of $F$ derived in Lemma~\ref{lem expression F}. First, note that since $\kappa(z)$, $\kappa'(z)$ and $\kappa''(z)$ tend to $0$ as $z\to +\infty$, we have: $a(z) = O(\kappa(z)) + O(\kappa''(z))$ as $z \to +\infty$. Then,
\begin{equation*}
\sqrt{1-a(z)^2} + a(z) \arcsin(a(z)) = 1 + O(a(z)^2) = 1 + O(\kappa(z)^2) + O(\kappa''(z)^2),
\end{equation*}
as $z \to +\infty$. On the other hand, as $z \to +\infty$, we have:
\begin{equation*}
\frac{1-\kappa(z)^2 - \kappa'(z)^2}{\left(1-\kappa(z)^2\right)^\frac{3}{2}} = 1 + O(\kappa(z)^2) + O(\kappa'(z)^2).
\end{equation*}
These two estimates yield that $F(z) = O(\kappa(z)^2) + O(\kappa'(z)^2) + O(\kappa''(z)^2)$ as $z \to +\infty$.
\end{proof}

\begin{proof}[Proof of Lemma~\ref{lem integrability F}]
First, note that $F$ is well-defined and continuous on $\R \setminus \{0\}$. Indeed, $\rho_2$ is continuous on $\R^2 \setminus \Delta_2$ by Lemma~\ref{lem Dk and Nk continuous}. By Lemma~\ref{lem integrability F 0}, $F(z) \xrightarrow[z \to 0]{}-\frac{1}{\pi^2}$. In particular, $F$ is integrable near $0$. Since $F$ is even and $\kappa(z)$, $\kappa'(z)$ and $\kappa''(z)$ tend to $0$ as $z \to +\infty$, we have $F(z) \xrightarrow[\norm{z} \to +\infty]{}0$, by Lemma~\ref{lem integrability F infty}. 

Under the hypotheses of Proposition~\ref{prop variance}, both $\kappa$ and $\kappa''$ are square-integrable. By an integration by parts and the Cauchy-Schwarz Inequality, this implies that $\kappa'$ is also square-integrable. Finally, $F$ is integrable at infinity by Lemma~\ref{lem integrability F infty}.
\end{proof}


\section{A Gaussian lemma}
\label{sec a Gaussian lemma}

In this appendix, we prove an estimate that we used in our study of the Kac--Rice numerators $N_\I$ (see Definition~\ref{def Kac-Rice densities partition}). More precisely, Corollary~\ref{cor Pi k} below is used in the proofs of Lemmas~\ref{lem DI and NI continuous} and~\ref{lem clustering NI add}.

Let $k \in \N^*$, we denote by $\sym_k(\R)$ the space of symmetric matrices of size $k$ with real coefficients and by $\sym_k^+(\R) \subset \sym_k(\R)$ the subset of positive semi-definite matrices. We equip $\sym_k(\R)$ with the sup-norm $\Norm{\cdot}_\infty$, see Notation~\ref{ntn norm sup matrix}.

\begin{dfn}
\label{def Pi k}
Let $U \in \sym_k^+(\R)$ and let $(X_1,\dots,X_k) \sim \mathcal{N}(0,U)$, we denote by
\begin{equation*}
\Pi_k(U) = \esp{\prod_{i=1}^k \norm{X_i}}.
\end{equation*}
\end{dfn}

\begin{lem}
\label{lem Pi k}
Let $k \in \N^*$, there exists $C_k >0$ such that, for all $U$ and $V \in \sym_k^+(\R)$:
\begin{equation*}
\norm{\Pi_k(V) - \Pi_k(U)} \leq C_k \Norm{V-U}_\infty^\frac{1}{2} \left(\max(\Norm{U}_\infty,\Norm{V}_\infty)\right)^\frac{k-1}{2}.
\end{equation*}
\end{lem}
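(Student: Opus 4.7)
The plan is to bound $\Pi_k(U)-\Pi_k(V)$ via an explicit coupling of the two Gaussian vectors. Let $Z=(Z_1,\dots,Z_k)\sim\mathcal N(0,I_k)$ and set $X=U^{1/2}Z$, $Y=V^{1/2}Z$, so that $X\sim\mathcal N(0,U)$, $Y\sim\mathcal N(0,V)$, and hence
\[
\Pi_k(V)-\Pi_k(U)=\esp{\prod_{i=1}^k|Y_i|-\prod_{i=1}^k|X_i|}.
\]
First I would apply the telescoping identity
\[
\prod_{i=1}^k b_i-\prod_{i=1}^k a_i=\sum_{j=1}^k\Bigl(\prod_{i<j}b_i\Bigr)(b_j-a_j)\Bigl(\prod_{i>j}a_i\Bigr)
\]
with $a_i=|X_i|$ and $b_i=|Y_i|$, using the reverse triangle inequality $\bigl||Y_j|-|X_j|\bigr|\le|Y_j-X_j|$ to reduce matters to bounding, for each $j$, the expectation
\[
\esp{\Bigl(\prod_{i<j}|Y_i|\Bigr)|Y_j-X_j|\Bigl(\prod_{i>j}|X_i|\Bigr)}.
\]

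Next I would apply Hölder's inequality with $k$ equal exponents, which gives each such term as a product of $k$ factors of the form $\esp{|Y_i|^k}^{1/k}$, $\esp{|X_i|^k}^{1/k}$, and $\esp{|Y_j-X_j|^k}^{1/k}$. Since $X_i\sim\mathcal N(0,U_{ii})$ with $U_{ii}\leq\Norm{U}_\infty$, the standard Gaussian moment formula yields $\esp{|X_i|^k}^{1/k}\le c_k\,\Norm{U}_\infty^{1/2}$, and similarly for $Y_i$ with $\Norm{V}_\infty$. For the difference, writing $Y_j-X_j=\bigl((V^{1/2}-U^{1/2})Z\bigr)_j$, it is a centered Gaussian whose variance equals the squared $\ell^2$-norm of the $j$-th row of $V^{1/2}-U^{1/2}$; this is bounded above by $k\,\Norm{V^{1/2}-U^{1/2}}_\infty^2$, so $\esp{|Y_j-X_j|^k}^{1/k}\le c_k\,\Norm{V^{1/2}-U^{1/2}}_\infty$. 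Combining these and summing the $k$ telescoped terms produces
\[
\bigl|\Pi_k(V)-\Pi_k(U)\bigr|\le c_k\,\Norm{V^{1/2}-U^{1/2}}_\infty\,\bigl(\max(\Norm{U}_\infty,\Norm{V}_\infty)\bigr)^{\frac{k-1}{2}}.
\]

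The final and only substantial input is the classical operator Hölder estimate for the matrix square root: for $U,V\in\sym_k^+(\R)$, one has $\Norm{V^{1/2}-U^{1/2}}_{\mathrm{op}}\le\Norm{V-U}_{\mathrm{op}}^{1/2}$ (Powers--Størmer type inequality). Combined with the equivalence $\Norm{\cdot}_\infty\asymp_k\Norm{\cdot}_{\mathrm{op}}$ on matrices of fixed size, this gives $\Norm{V^{1/2}-U^{1/2}}_\infty\le C_k\,\Norm{V-U}_\infty^{1/2}$, which inserted above completes the proof. The only real obstacle is invoking this square-root Hölder estimate correctly; once it is in hand, everything else reduces to Hölder's inequality, the telescoping identity, and moment bounds for one-dimensional centered Gaussians, and the $1/2$-Hölder exponent as well as the $(k-1)/2$ power of $\max(\Norm U_\infty,\Norm V_\infty)$ both appear for the natural scaling reasons just described.
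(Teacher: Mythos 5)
Your proof is correct, but it takes a genuinely different route from the paper's. The paper couples $X\sim\mathcal N(0,U)$ and $Y\sim\mathcal N(0,V)$ additively: when $V-U$ is positive semi-definite it sets $Y=X+T$ with $T\sim\mathcal N(0,V-U)$ independent of $X$, gets $\var{T_j}\leq\Norm{V-U}_\infty$ directly, and then handles the general case by a trick — introducing $W=U+k\Norm{V-U}_\infty\Id_k$, verifying that both $W-U$ and $W-V$ are positive semi-definite, and applying the triangle inequality through $\Pi_k(W)$. You instead couple multiplicatively via $X=U^{1/2}Z$, $Y=V^{1/2}Z$ with a single $Z\sim\mathcal N(0,I_k)$. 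This needs no case distinction at all and is cleaner in that respect, but the $\Norm{\cdot}^{1/2}$ exponent no longer falls out for free: you must invoke the operator-monotonicity estimate $\Norm{V^{1/2}-U^{1/2}}_{\mathrm{op}}\leq\Norm{V-U}_{\mathrm{op}}^{1/2}$ for positive semi-definite matrices (due to Ando; Powers--Størmer is the trace-norm analogue), plus the equivalence of $\Norm{\cdot}_\infty$ and $\Norm{\cdot}_{\mathrm{op}}$ in fixed dimension. Both the telescoping identity, the $k$-fold Hölder step, and the Gaussian moment bounds are the same in both arguments. In short: the paper's proof is entirely elementary at the cost of the auxiliary-$W$ maneuver, while yours is structurally simpler but imports a nontrivial matrix-analysis fact. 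Either is a legitimate proof; yours is arguably the more natural one to find if one already knows the square-root Hölder estimate.
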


\begin{proof}
Let $(X_1,\dots,X_k) \sim \mathcal{N}(0,U)$ and $(Y_1,\dots,Y_k) \sim \mathcal{N}(0,V)$ be centered Gaussian vectors in~$\R^k$ of variance matrices $U$ and $V$, respectively.

Let us first assume that $V-U \in \sym_k^+(\R)$ and let $T=(T_1,\dots,T_k) \sim \mathcal{N}(0,V-U)$ be independent of $X$. In this case $X+T \sim \mathcal{N}(0,V)$, and we can assume that $Y=X+T$ without loss of generality. Then,
\begin{align*}
\norm{\Pi_k(V)-\Pi_k(U)} &= \norm{\esp{\prod_{i=1}^k \norm{Y_i}}-\esp{\prod_{i=1}^k \norm{X_i}}}\\
&\leq \esp{\norm{\prod_{i=1}^k
Y_i - \prod_{i=1}^k X_i}}\\
&\leq \sum_{j=1}^k \esp{\left(\prod_{i=1}^{j-1}\norm{Y_i}\right)\norm{Y_j-X_j}\left(\prod_{i=j+1}^k \norm{X_i}\right)}\\
&\leq \sum_{j=1}^k \prod_{i=1}^{j-1} \left(\esp{\norm{Y_i}^k}^\frac{1}{k}\right) \esp{\norm{T_j}^k}^\frac{1}{k} \left(\prod_{i=j+1}^k \esp{\norm{X_i}^k}^\frac{1}{k}\right),
\end{align*}
where we obtained the last line by applying Hölder's Inequality. Now, for all $i \in \{1,\dots,k\}$, we have:
\begin{equation*}
\esp{\norm{X_i}^k} \leq \esp{(X_i)^{2k}}^\frac{1}{2} = (\mu_{2k})^\frac{1}{2} \var{X_i}^\frac{k}{2} \leq (\mu_{2k})^\frac{1}{2} \Norm{U}_\infty^\frac{k}{2},
\end{equation*}
where $\mu_{2k}$ stands for the $2k$-th moment of an $\mathcal{N}(0,1)$ real variable, as in Notation~\ref{ntn mu p}. Similarly, for all $i \in \{1,\dots,k\}$, we have $\esp{\norm{Y_i}^k}\leq (\mu_{2k})^\frac{1}{2} \Norm{V}_\infty^\frac{k}{2}$ and $\esp{\norm{T_i}^k} \leq (\mu_{2k})^\frac{1}{2} \Norm{V-U}_\infty^\frac{k}{2}$. Hence,
\begin{equation}
\label{eq Pi k}
\norm{\Pi_k(V)-\Pi_k(U)} \leq k (\mu_{2k})^\frac{1}{2} \Norm{V-U}_\infty^\frac{1}{2} \left(\max(\Norm{U}_\infty,\Norm{V}_\infty)\right)^\frac{k-1}{2}.
\end{equation}
This concludes the proof in the special case where $V-U$ is positive semi-definite.

Let us now consider the general case and let us denote by $\epsilon = \Norm{V-U}_\infty$. Let $\Id_k$ denote the identity matrix of size $k$ and let $W = U + k\epsilon \Id_k$. Then, $W-U = k \epsilon \Id_k \in \sym_k^+(\R)$. Moreover, we have $W-V = k\epsilon \Id_k + U-V$. Since for all $x= (x_i)_{1 \leq i \leq k} \in \R^k$, we have
\begin{equation*}
\norm{\trans{x} (U-V) x} \leq \epsilon \sum_{1 \leq i,j \leq k} \norm{x_i}\norm{x_j} = \epsilon \left(\sum_{i=1}^k \norm{x_i}\right)^2 \leq k \epsilon \sum_{i=1}^k x_i^2,
\end{equation*}
the matrix $W-V$ is also positive semi-definite. Let $Z =(Z_1,\dots,Z_k) \sim \mathcal{N}(0,W)$, using Equation~\eqref{eq Pi k}, we obtain:
\begin{multline*}
\norm{\Pi_k(V) - \Pi_k(U)} \leq \norm{\Pi_k(W) - \Pi_k(V)} + \norm{\Pi_k(W) - \Pi_k(U)}\\
\leq k (\mu_{2k})^\frac{1}{2} \left(\Norm{W-U}_\infty^\frac{1}{2} +\Norm{W-V}_\infty^\frac{1}{2}\right) \left(\max(\Norm{U}_\infty,\Norm{V}_\infty,\Norm{W}_\infty)\right)^\frac{k-1}{2}.
\end{multline*}
We know that $\Norm{W-V}_\infty \leq \Norm{W-U}_\infty + \Norm{V-U}_\infty$. Hence, by definition of $W$ and $\epsilon$, we get $\Norm{W-U}_\infty = k\epsilon = k \Norm{V-U}_\infty$ and $\Norm{W-V}_\infty \leq (k+1)\Norm{V-U}_\infty$. Moreover,
\begin{equation*}
\Norm{W}_\infty \leq \Norm{U}_\infty + \Norm{W-U}_\infty = \Norm{U}_\infty + k \Norm{V-U}_\infty \leq (2k+1) \max(\Norm{U}_\infty,\Norm{V}_\infty). 
\end{equation*}
Finally, setting $C_k = k(2k+1)^\frac{k+1}{2}(\mu_{2k})^\frac{1}{2}$, we have:
\begin{equation*}
\norm{\Pi_k(V) - \Pi_k(U)} \leq C_k \Norm{V-U}_\infty^\frac{1}{2} \left(\max(\Norm{U}_\infty,\Norm{V}_\infty)\right)^\frac{k-1}{2}. \qedhere
\end{equation*}
\end{proof}

\begin{cor}[Regularity]
\label{cor Pi k}
Let $k \in \N^*$, the map $\Pi_k:\sym_k^+(\R) \to \R$ defined by Definition~\ref{def Pi k} is $\frac{1}{2}$-Hölder on compact subsets of $\sym_k^+(\R)$, for the sup-norm $\Norm{\cdot}_\infty$. In particular, the map $\Pi_k$ is continuous.
\end{cor}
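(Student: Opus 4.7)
The proof is essentially immediate from Lemma~\ref{lem Pi k}, which is the substantial technical input. The plan is to simply restrict the bound provided by that lemma to an arbitrary compact subset of $\sym_k^+(\R)$ and observe that the troublesome factor becomes a uniform constant.

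Concretely, let $K \subset \sym_k^+(\R)$ be compact. Since the norm $\Norm{\cdot}_\infty$ is continuous on $K$, it attains its maximum there, so there exists $M = M(K) \geq 0$ such that $\Norm{U}_\infty \leq M$ for all $U \in K$. Applying Lemma~\ref{lem Pi k} to any two matrices $U, V \in K$, we obtain
\begin{equation*}
\norm{\Pi_k(V) - \Pi_k(U)} \leq C_k \Norm{V-U}_\infty^{\frac{1}{2}} \left(\max(\Norm{U}_\infty,\Norm{V}_\infty)\right)^{\frac{k-1}{2}} \leq C_k M^{\frac{k-1}{2}} \Norm{V-U}_\infty^{\frac{1}{2}}.
\end{equation*}
Setting $C_k(K) = C_k M^{(k-1)/2}$, this is exactly the statement that $\Pi_k$ is $\frac{1}{2}$-Hölder on $K$ with constant $C_k(K)$.

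For the final assertion, continuity of $\Pi_k$ at a point $U_0 \in \sym_k^+(\R)$ follows by applying the above with $K$ equal to any compact neighborhood of $U_0$ (for instance, the closed ball of radius $1$ centered at $U_0$, intersected with $\sym_k^+(\R)$): then $\norm{\Pi_k(V) - \Pi_k(U_0)} \to 0$ as $\Norm{V-U_0}_\infty \to 0$. Alternatively, since every point has a compact neighborhood in $\sym_k^+(\R)$ and Hölder functions are continuous, continuity on the whole of $\sym_k^+(\R)$ is immediate.

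There is no real obstacle here, as all the work has been done in Lemma~\ref{lem Pi k}; the corollary is essentially a rephrasing adapted for the applications in Sections~\ref{subsec Kac--Rice densities revisited} and~\ref{subsec numerator clustering}, where one needs to know continuity of $N_\I = \Pi_{\norm{A}} \circ \Lambda_\I$ and Hölder control of $\Pi_{\norm{A}}$ on the compact ball $\{U \in \sym_{\norm{A}}^+(\R) \mid \Norm{U}_\infty \leq \Norm{\kappa}_{2\norm{A}}\}$ that contains the image of $\Lambda_\I$ by Lemma~\ref{lem boundedness Lambda I}.
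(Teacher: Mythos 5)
Your proof is correct and matches the paper's argument essentially verbatim: restrict Lemma~\ref{lem Pi k} to a compact set $K$, absorb the factor $\left(\max(\Norm{U}_\infty,\Norm{V}_\infty)\right)^{\frac{k-1}{2}}$ into a constant $M^{\frac{k-1}{2}}$ with $M = \max_{U \in K}\Norm{U}_\infty$, and conclude. Nothing to add.
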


\begin{proof}
Let $K \subset \sym_k^+(\R)$ be a compact subset and let $M = \max_{U \in K} \Norm{U}_\infty$. By Lemma~\ref{lem Pi k}, for all $U,V \in K$, we have $\norm{\Pi_k(V) -\Pi_k(U)} \leq C_k M^\frac{k-1}{2} \Norm{V-U}_\infty^\frac{1}{2}$. Thus, $\Pi_k$ is $\frac{1}{2}$-Hölder on compact subsets of $\sym_k^+(\R)$, hence continuous.
\end{proof}


\paragraph{Acknowledgments.} Thomas Letendre thanks Julien Fageot for useful discussions about Fernique's Theorem, Benoit Laslier for his help in the proof of Lemma~\ref{lem Pi k} and Hugo Vanneuville for pointing out the relation between ergodicity and decay of correlations. The authors are grateful to Damien Gayet for suggesting they write this paper in the first place, and to Misha Sodin for bringing to their attention the intrinsic interest of clustering properties for $k$-point functions. They also thank Jean-Yves Welschinger for his support and Louis Gass for spotting an error in an earlier version of Lemma~\ref{lem partitions into pairs}. Finally, the authors thank the anonymous referee for their careful reading of the paper, and their comments that helped to improve the exposition of the main results.

\bibliographystyle{amsplain}
\bibliography{ZerosGaussianProcesses}

\end{document}